\newcommand{\be}{\begin{equation}}
\newcommand{\ee}{\end{equation}}
\newcommand{\iden}{\mathbbm{1}}
\newcommand{\ts}{\textstyle}
\newcommand{\ds}{\displaystyle}
\newcommand{\Monoid}{M}
\newcommand{\supp}{\mathrm{supp}}
\newcommand{\su}{\mathop{\boldsymbol{s}}}
\newcommand{\RR}{{\mathscr R}}
\newcommand{\LL}{{\mathscr L}}
\newcommand{\lex}{{\operatorname{lex}}}
\newcommand{\source}{\sigma}
\newcommand{\toomgens}{\partial}
\newcommand{\B}{\mathcal{B}}
\newcommand{\Z}{\mathbb{Z}}
\newcommand{\N}{\mathbb{N}}
\newcommand{\Poset}{\mathcal{P}}
\renewcommand{\supp}{\mathop{\mathrm{supp}}}
\newcommand{\LeftIdeal}{\mathop{\Upsilon}}
\newcommand{\inv}{^{-1}}
\newcommand{\wh}{\widehat}
\newcommand{\LO}[1]{\ell^1(#1)}
\newcommand{\suchthat}{\mid}
\newcommand{\fhb}{free tree monoid}
\newcommand{\FHB}{\operatorname{FT}}
\newtheorem{theorem}{Theorem}[section]
\newtheorem{proposition}[theorem]{Proposition}
\newtheorem{lemma}[theorem]{Lemma}
\newtheorem{corollary}[theorem]{Corollary}
\newtheorem{conjecture}[theorem]{Conjecture}
{\theoremstyle{definition}
\newtheorem{definition}[theorem]{Definition}}
{\theoremstyle{remark}
\newtheorem{remark}[theorem]{Remark}}
{\theoremstyle{remark}
\newtheorem{example}[theorem]{Example}
}
\numberwithin{equation}{section}
\begin{document}

\title[Markov chains and $\RR$-trivial monoids]{Markov chains, $\RR$-trivial monoids and representation theory}

\author[A. Ayyer]{Arvind Ayyer}
\address{Department of Mathematics, Department of Mathematics, Indian Institute of Science, Bangalore - 560012, India.}
\email{arvind@math.iisc.ernet.in}

\author[A. Schilling]{Anne Schilling}
\address{Department of Mathematics, UC Davis, One Shields Ave., Davis, CA 95616-8633, U.S.A.}
\email{anne@math.ucdavis.edu}
%\thanks{A.S. was partially supported by NSF grants DMS--1001256, OCI--1147247 and a Simons Fellowship.}

\author[B. Steinberg]{Benjamin Steinberg}
\address{Department of Mathematics, City College of New York, Convent Avenue at 138th Street,
New York, NY 10031, U.S.A.}
\email{bsteinberg@ccny.cuny.edu}
%\thanks{B.S. was partially supported by a grant from the Simons Foundation (\#245268 to Benjamin Steinberg)
%and the Binational Science
%Foundation of Israel and the US (\#2012080 to Benjamin Steinberg)}

\author[N. M. Thi\'ery]{Nicolas M.~Thi\'ery}
\address{Univ Paris-Sud, Laboratoire de Math\'ematiques d'Orsay,
  Orsay, F-91405; CNRS, Orsay, F-91405, France}
\email{Nicolas.Thiery@u-psud.fr}

\dedicatory{Dedicated to Stuart Margolis on the occasion of his sixtieth birthday}

\begin{abstract}
We develop a general theory of Markov chains realizable as random walks on $\RR$-trivial monoids.
It provides explicit and simple formulas for the eigenvalues of the transition matrix, for multiplicities of the eigenvalues
via M\"obius inversion along a lattice, a condition for diagonalizability of the transition matrix and some techniques for bounding the mixing time.
In addition, we discuss several examples, such as Toom-Tsetlin models, an exchange walk for finite Coxeter groups,
as well as examples previously studied by the authors, such as nonabelian sandpile models and the promotion
Markov chain on posets. Many of these examples can be viewed as random walks on quotients of free tree monoids,
a new class of monoids whose combinatorics we develop.
\end{abstract}

\date{\today}

\maketitle
\tableofcontents

%%%%%%%%%%%%%%%%%%%%%%%%%%%%%%%%%%%%%%%%%%%%%%%%%%%%%%%%
\section{Introduction}
%%%%%%%%%%%%%%%%%%%%%%%%%%%%%%%%%%%%%%%%%%%%%%%%%%%%%%%%

A finite state Markov chain is a stochastic dynamical system where
the current state only depends on its history via the previous state.
The only data needed to define it is a finite set $\Omega$
and a transition matrix $T\colon \Omega\times \Omega\to \mathbb R$
that describes the probability to transition from one state to the next one
at each step. The matrix $T$ is required to be non-negative, with each column summing to $1$ (i.e., $T$ should be a
\emph{column stochastic} matrix).

As highlighted in~\cite{DiaconisLectures,silberstein}, the representation theory of
finite groups can be a powerful technique for analyzing the Markov chain when $T$
is bistochastic (meaning that each row and each column sums to $1$). The starting point
is to decompose the transition matrix $T$ as a convex combination $T=
\sum x_a \sigma_a$ of permutation matrices $\sigma_a$, and to consider
the finite permutation subgroup $G=\langle \sigma_a\rangle_a$ of the
symmetric group $\mathcal S_{\Omega}$ generated by the permutations
$\sigma_a$. At this point, the Markov chain can be interpreted as
arising from a random walk on $G$ or on cosets thereof.
By the classical Birkhoff-von Neumann theorem there always exists such a convex
decomposition of the transition matrix $T$ (see e.g.~\cite[Example~0.12]{Ziegler}).

The representation theory of $G$ allows one to decompose
the space $\mathbb C\Omega$ into a direct sum of irreducible representations of $G$, which are in particular
invariant subspaces for the operators $\sigma_a$ and therefore for the transition matrix $T$. This has the effect of
turning $T$ into a block diagonal matrix, where each
block can be analyzed separately using that the subspace is an
irreducible representation of $G$. Furthermore, \emph{character
  theory} can be employed to recover all irreducible constituents and
their multiplicities without explicitly block diagonalizing the
matrix.

Whether this approach is practical or not depends on how nice the representation theory of $G$
is, how fine the decomposition of $\mathbb C\Omega$ into irreducible
representations is, and on properties of $T$ itself. For instance,
when $G$ is abelian, each irreducible subrepresentation is
one-dimensional; therefore, the decomposition of $\mathbb C\Omega$ into
irreducibles diagonalizes $T$.  More generally, when the weights $x_a$ are
constant along conjugacy classes of $G$, Schur's lemma implies that
the decomposition of $\mathbb C\Omega$ into irreducibles diagonalizes
$T$, and one can completely analyze the Markov chain via representation
theory~\cite{DiaconisLectures,silberstein}.  Another particularly nice
case is when $G$ is the full symmetric group, giving
connections between card-shuffling Markov chains and symmetric
functions~\cite{DiaconisLectures,DiaconisSha.1981,DiaconisShah.1986,DiaconisRam.2012}.

When the transition matrix is only column stochastic, one can still
use an easy variant of Birkhoff-von Neumann theorem to decompose $T$
as a convex combination $T = \sum x_a \sigma_a$, where the $\sigma_a$
are operators on $\Omega$ which we sometimes refer to informally as the generators of the Markov chain. These
operators are not necessarily invertible anymore and in general they generate a monoid $M=\langle
\sigma_a\rangle_a$ instead of a group. Nevertheless one can still try to use its representation theory.

The representation theory of finite monoids is much less well
understood than that of groups (see~\cite[Chapter~5]{CP}
and~\cite{McAlisterCharacter,RhodesZalc}), but there has been much
recent progress, see for instance~\cite{Putcharep5,Putcharep3,myirreps,denton_hivert_schilling_thiery.2010,
hivert_schilling_thiery.2013,Saliola,MargolisSteinbergQuiver,rrbg,mobius1,mobius2}. The analysis
of random walks on hyperplane arrangements~\cite{BidigareThesis, BHR,DiaconisBrown},
and in particular the Tsetlin library, provided motivation for Brown to develop a successful analysis
of Markov chains via the representation theory of \emph{left regular bands}, which are semigroups satisfying a
certain ``deletion property''~\cite{BrownLRB} (for details see Section~\ref{section.semigroups}).
This theory has been further developed and applied
in~\cite{DiaconisBrown,DiaconisBrown2,bjorner1,bjorner2,DiaconisAthan,GrahamChung,saliolaeigen}.

In his 1998 ICM address, Diaconis~\cite{DiaconisICM} asked for the ultimate generalization of these monoid techniques.
In this paper, we make progress toward answering this question by generalizing Brown's theory of Markov chains on
left regular bands~\cite{BrownLRB} to Markov chains on \emph{$\RR$-trivial monoids}. This large generalization
potentially finds applications in combinatorics, statistical physics, and computer science. We remark that left regular bands
are precisely those $\RR$-trivial monoids whose elements are all idempotent.
From the point of view of combinatorics, natural Markov chains on objects such as permutations
(i.e., the Tsetlin library)~\cite{hendricks1}, hyperplane arrangements~\cite{BHR} and linear
extensions~\cite{ayyer_klee_schilling.2012} are of intrinsic interest. As in the case of left regular bands,
combinatorial sequences such as derangement numbers arise as the multiplicities of eigenvalues of the
transition matrices of these Markov chains which deserves some
uniform explanation.

Statistical physicists and computer scientists model real-life phenomena probabilistically as Markov chains
and are interested in both the stationary distribution of the chain (given by the eigenvector of the transition matrix
with eigenvalue $1$) and the time to approach stationarity (which for reversible chains is controlled by the
second-largest eigenvalue, or spectral gap). Recently many interesting Markov chains have emerged
which fit into the $\RR$-trivial monoid theory~\cite{ayyer_strehl_2010,ayyer_2011,ayyer_strehl_2013, ayyer_klee_schilling.2012,
ayyer_schilling_steinberg_thiery.sandpile.2013}.

In Sections~\ref{preliminaries}--\ref{rtrivial-markov-chain} we develop the general theory of Markov chains which
are random walks on $\RR$-trivial monoids and describe how the unified approach of \emph{$\RR$-trivial monoids}
provides techniques for the  calculation of these quantities.

Let us briefly summarize how the representation theory of $\RR$-trivial monoids compares
to that of groups. First of all, we lose semisimplicity (or complete reducibility) of representations, which means
that the transition matrix can no longer be put in a block diagonal form,
but rather in block triangular form. On the other hand, the irreducible representations
are one-dimensional, which means that the transition matrix can
actually be transformed into upper triangular form. For example, this makes it
easy to recover the eigenvalues using character theory, and to determine the irreducible constituents via M\"obius
inversion. In fact, the eigenvalues take a particularly nice form, given as a sum of a subset of the probabilities
$x_a$ assigned to each generator~\cite{mobius1, mobius2}.  Note that, in the group case, it is non-trivial to compute eigenvalues of random
walks unless the probability measure is constant on conjugacy classes (e.g., for abelian groups).  For instance,
it is easier to compute the eigenvalues for the top-to-random shuffle as a left regular band walk~\cite{BHR} than
as a symmetric group walk.

%%%%%%%%%%%%%%%%%%%%%%%%%%%

As this is a long paper, it seems worthwhile to informally describe
some of the Markov chains that we analyze in
Sections~\ref{section.free tree monoid}--\ref{section.coxeter}, and
have analyzed in previous papers, using $\RR$-trivial monoid techniques.
The reader should also consult Brown~\cite{BrownLRB} for numerous examples using the particular case of left regular
bands.  See also~\cite{GrahamChung,bjorner1,bjorner2,DiaconisAthan} for further left regular band random walks.

%%%%%%%%%%%%%%%%%%%%%%%%%%%%%%%%%%%%%%%%%%%%%%%%%%%%%%%%%
\smallskip
\paragraph{\textbf{The Toom--Tsetlin model}}
The classical Tsetlin library Markov chain \cite{hendricks1,Fill1,Fill2,BHR} consists of a shelf of books with an imposed
self-organizing system for the books.  Each time a book is removed from the shelf, it is placed back to
the front of the shelf. This way, eventually the most commonly used books will be toward the front of the shelf
while the least commonly used books will be toward the back.  The Tsetlin library is one of the first chains to be analyzed from the
$\RR$-trivial monoid point-of-view (actually from the left regular band point of view)~\cite{BHR,BrownLRB,DiaconisBrown}.
Using these tools one can explicitly compute the eigenvalues (which are the probabilities of picking a book from a
given subset of the books) and their multiplicities (which are derangement numbers), a bound on the mixing time,
and an explicit formula for the stationary distribution.

In this paper, we consider a generalization called the \emph{Toom--Tsetlin model}.  There are two versions, but we discuss
here only the first one  and refer the reader to Section~\ref{section.toom} for the second variant and details.
In this model one has $n_i\geq 1$ copies of book $b_i$ on the shelf.  When the $j^{th}$ copy of $b_i$ is removed
from the shelf, it is replaced immediately after the $(j-1)^{st}$ copy of $b_i$ (where if $j=1$, it is
simply placed at the front of the shelf). The Tsetlin library is the special case where there is only one
copy of each book. For the Toom--Tsetlin Markov chain we explicitly compute the eigenvalues (which again are probabilities
of choosing a book from a certain subset of books) and their multiplicities (which are derangement numbers
for words, or multipermutations). See Theorem~\ref{theorem.toom.interval.eigenvalues}.

%%%%%%%%%%%%%%%%%%%%%%%%%%%%%%%%%%%%%%%%%%%%%%%%%%%%%%
\smallskip
\paragraph{\textbf{The landslide sandpile model}}
The abelian sandpile model~\cite{dhar1990,dhar1999} has proved influential in understanding the phenomenon
of self-organized criticality~\cite{btw1987}. The model can be thought of as a discrete-time Markov chain. It is
defined on any finite  directed graph with a global sink
(a sink is a vertex with out-degree zero; it is furthermore a global sink
if there is a directed path from any vertex to it). The abelian sandpile model works as follows.
The state space of the system is \[\Omega = \{ (t_v)_{v\in V} \mid 0\le t_v \le \mathrm{outdeg}(v)\},\]
where $V$ is the vertex set of the underlying graph and $\mathrm{outdeg}(v)$ is the out-degree of the vertex $v$.
The variable $t_v$ is a nonnegative integer which denotes the number of grains at vertex $v$. Notice in particular, that
sinks can carry no grains of sand. Moreover, whenever a grain of sand enters a sink it is considered removed from the system.

At every time step, a process of toppling and stabilization occurs. Suppose that one is in configuration $(t_v)_{v\in V}\in \Omega$.
One randomly deposits a grain of sand at one of the vertices $w$. If the total number of grains at $w$ after adding this grain is
below its out-degree, then we are done with this step;  if on the other hand the total number exceeds the out-degree of $w$, then
$w$ {\em topples}, sending one grain along each of its outgoing edges to the edge's other endpoint. This may then force some
of these endpoints to topple.   Because there is a global sink, after some sequence of topples one will reach a valid configuration
in $\Omega$ (this process is called stabilization in the literature).  The resulting configuration turns out to be independent of the
order in which the topples are performed. Let $\theta_w\colon \Omega\to \Omega$ be the operator of adding a grain of sand
at $w$ and then performing topples until stabilization occurs.

It is not completely obvious, but nonetheless true, that the operators $\theta_v$ with $v\in V$ commute and hence generate a
finite commutative monoid $M$. The minimal ideal of this commutative monoid is an abelian group $A$ which acts freely and
transitively on the recurrent states of the abelian sandpile Markov chain, which are precisely the fixed points of the identity $e$
of $A$ on $\Omega$.  Moreover, the operators $\theta_v$ act as permutations on the recurrent states since $\theta_v$ acts as
$\theta_ve$.  The group $A$ is called the sandpile group in the literature.

We study the following variant of the abelian sandpile model, called the landslide sandpile model, which is
\emph{nonabelian} in the sense that the generators of the model do not commute. The model is defined on a
\emph{directed} tree or arborescence and we analyzed this model
using monoid theoretic methods in~\cite{ayyer_schilling_steinberg_thiery.sandpile.2013}.

One has a directed rooted tree with all edges oriented toward the root.  Each vertex $v$ (hereafter called a site)
is assigned a threshold $T_v$, which is the number of grains of sand it can hold, and it contains some number of grains
up to its threshold.  At each time step, one of two things can happen: either a new particle can enter the system at a leaf, filling
the first available site along the geodesic from the leaf to the root (and if none are available, then it leaves
the system); or a site can topple, moving its grains along the geodesic
to the root and filling the first available sites (possibly some grains will leave the system).

Using the techniques developed in this paper, we computed the eigenvalues with multiplicities
and a reasonable upper bound on the mixing time.  A key ingredient was proving the $\RR$-triviality of the
monoid corresponding to the landslide nonabelian directed sandpile model. In Section~\ref{sandpile} we provide
an alternative proof of this fact. When all thresholds are $1$, we proved that the stationary distribution admits an explicit product measure.
See~\cite{ayyer_schilling_steinberg_thiery.sandpile.2013} for details.

%%%%%%%%%%%%%%%%%%%%%%%%%%%%%%%%%%%%%%%%%%%%%%%%%%%%%%%%
\smallskip
\paragraph{\textbf{The exchange walk on a finite Coxeter group}}
In Section~\ref{section.coxeter} we examine another generalization of the Tsetlin library, this time associated to
a finite Coxeter system $(W,S)$~\cite{bjornerbrenti}.  The state space
for this finite Markov chain consists of all reduced
decompositions for the longest element $w_0$ of $W$. The transitions, called \emph{exchange moves}, are as follows.
If the system is in state $s_1\cdots s_n$, then a generator $s\in S$ is randomly chosen and a transition is made to the new state
$ss_1\cdots s_{i-1}\widehat s_is_{i+1}\cdots s_n$ where $\widehat s_i$ means omit $s_i$.  The generator $s_i$ to
omit in order to obtain a reduced decomposition of $w_0$ is unique according to the Exchange Condition
for Coxeter groups~\cite{bjornerbrenti}.

For example, if $W=(\mathbb Z/2\mathbb Z)^n$ and $S$ is the standard basis for $W$, then $(W,S)$ is a
Coxeter system, $w_0$ is the all-ones vector, the reduced decompositions for $w_0$ are those words over
$S$ containing all letters and no repetitions (i.e., the permutations of $S$) and an exchange move is just
move-to-front. So we recover the Tsetlin library in this case.

When $W=\mathcal S_n$ is the symmetric group and $S$ is the set of adjacent transpositions, then $(W,S)$
is a Coxeter system.  The longest element $w_0$ is $i\mapsto n-i+1$ (in one-line notation it is $n,n-1,\ldots,1$).
A well known result of Stanley~\cite{stanley.1984} says that reduced decompositions of
$w_0$ are equinumerous with standard tableaux of staircase shape.  An explicit bijection was given by Edelman
and Greene~\cite{edelmangreen}. So this chain can be viewed as a stochastic process on such tableaux.

Using the techniques of $\RR$-trivial monoids, we are able to compute the eigenvalues with multiplicities,
give a simple formula for the stationary distribution and provide an upper bound on the mixing time for the exchange walk
on a finite Coxeter group.

%%%%%%%%%%%%%%%%%%%%%%%%%%%%%%%%%%%%%%%%%%%%%%%%%%%%%%
\smallskip
\paragraph{\textbf{Promotion chains}}
In~\cite{ayyer_klee_schilling.2012}, the Tsetlin library Markov chain was generalized by looking at
linear extensions $\mathcal{L}$ of a finite poset $P$ of size $n$. The transition between two linear extensions is given by
a variant of the promotion operator on posets~\cite{schuetzenberger.1972}.

For a linear extension $\pi=\pi_1\cdots \pi_n \in \mathcal L$ in one-line notation, the generalized promotion
operator $\partial_i$ for $1\le i<n$ can be defined as~\cite{haiman.1992,malvenuto_reutenauer.1994,stanley.2009}
\[
	\partial_i(\pi) = \tau_{n-1} \tau_{n-2} \cdots \tau_i(\pi)\;.
\]
Here $\tau_i$ acts on $\pi$ by interchanging $\pi_i$ and $\pi_{i+1}$ if $\pi_i$ and $\pi_{i+1}$ are not comparable
in $P$. Otherwise, it acts as the identity.
Define $\hat{\partial}_i(\pi) = \partial_{\pi_i^{-1}}(\pi)$. Assigning probability $x_i$ to the operator $\hat{\partial}_i$ defines
the promotion Markov chain on $\mathcal L$. For any poset $P$, the stationary distribution of the Markov chain was given
by an explicit product formula~\cite{ayyer_klee_schilling.2012}.

When $P$ is the antichain on $n$ vertices (that is, there are no imposed ordering relations between
any of the vertices), then $\mathcal L$ is the set of all linear orderings and the promotion Markov chain reduces
to the Tsetlin library (where now books are moved to the end of the stack instead of the front due to a difference
in conventions).

For special posets, called rooted forests, the eigenvalues and their multiplicities of the transition matrices can also
be computed explicitly. Recall that a rooted forest is a poset where each vertex has at most one successor.
It was shown~\cite{ayyer_klee_schilling.2012} that in that case, the underlying transition monoid is $\RR$-trivial. The eigenvalues
can then be computed using the techniques presented in this paper.
In~\cite{ayyer_klee_schilling.2013} the mixing time for this Markov chain was also estimated using monoid techniques.

%%%%%%%%%%%%%%%%%%%%%%%%%%%%%%%%%%%%%%%%%%%%%%%%%%%%%%
\smallskip
\paragraph{\textbf{Structure of the paper}}
Let us now describe the content of each section in more detail.
Since this paper is intended for an audience of algebraists,
combinatorialists and probabilists, we include in
Section~\ref{preliminaries} some background about each of these areas.

In Section~\ref{section.random walks on monoids}, we present general
results for random walks on monoids before specializing to
$\RR$-trivial Markov chains in Section~\ref{rtrivial-markov-chain}.
In particular, for $\RR$-trivial monoids, we describe combinatorially the eigenvalues by character theory,
or equivalently through inclusion-exclusion on a lattice (Theorem~\ref{theorem.eigenvalues}),
give a sufficient condition for diagonalizability (Theorem~\ref{diagonal}) generalizing the result of
Brown~\cite{BrownLRB} (see also~\cite{DiaconisBrown}),
provide a formula for the stationary distribution (Theorems~\ref{productformula} and~\ref{productformula2}),
relate the rate of convergence with some properties of the monoid
(Corollary~\ref{lrbcaseofconvergencetime}), and conclude with a bound on the mixing time (Corollary~\ref{corollary.mixing time}).
This theory subsumes that of left regular band random
walks developed in~\cite{BrownLRB}.

When investigating examples, we discovered that the generators of the
transition monoids often satisfy certain types of relations,
reminiscent of the plactic relations~\cite[Chapter~5]{lothaire2}. In Section~\ref{section.free tree monoid}, we
study the largest such monoid. The relations admit a nice Knuth--Bendix completion, and it follows that its combinatorics
is governed by a certain class of trees, which motivates its name: the \emph{\fhb}.
One of the main results is that the \fhb{} is $\RR$-trivial (Corollary~\ref{corollary.fhb.r_trivial}).
The lattice of regular $\mathscr L$-classes of idempotents of the \fhb{} is the
Boolean lattice and we provide a simple transversal of idempotents.

In the remaining sections, we study several examples of $\RR$-trivial
Markov chains, applying results of Section~\ref{rtrivial-markov-chain}, and using the
\fhb{} on several occasions for concise proofs of $\RR$-triviality and using its representation theory
in order to benefit from its simple combinatorics.

In Section~\ref{section.toom} we consider two new generalizations of
the Tsetlin library, with multiple copies of books and with storage or interlibrary loan, respectively.
This model can also be regarded as a generalization
of the Toom model~\cite{toom1980,jnr1996} to finite size as well as arbitrary particles.
Theorems~\ref{theorem.toom.interval.eigenvalues} and~\ref{theorem.toom.window.eigenvalues}
provide the spectra of these models.
In Section~\ref{sandpile} we provide a short proof of the
$\RR$-triviality of the landslide nonabelian directed sandpile model
of~\cite{ayyer_schilling_steinberg_thiery.sandpile.2013} using the \fhb{} of
Section~\ref{section.free tree monoid}. Finally, in
Section~\ref{section.coxeter}, we consider a Markov chain on the set of
reduced words of the longest element of a finite Coxeter group and provide its spectrum and stationary
distribution, as well as an upper bound on its mixing time. This model is also a generalization of the Tsetlin library,
which appears in the case of a finite right-angled Coxeter group.

%%%%%%%%%%%%%%%%%%%%%%%%%%%%%%%%%%%%%%%%%%%%%%%%%%%%%%%%
\subsection*{Acknowledgments}
We would like to thank Persi Diaconis, Lionel Levine, John Pike, John Rhodes, and Dan Romik
for helpful discussions, as well as the organizers of the conference ``International Conference on Geometric, Combinatorial
and Dynamics aspects of Semigroup and Group Theory" in June 2013 at Bar Ilan University, Israel, where part of this research
was presented. Thanks to Zachary Hamaker for pointing out the relation of the exchange walk in Section~\ref{section.coxeter}
with~\cite{BBHM06,AHR09}.
This research was driven by computer exploration using Maple${}^{{\text{TM}}}$~\cite{Maple}, KBMag~\cite{KBMag},
{\sc Sage}~\cite{sage} and {\sc Sage-combinat}~\cite{sage-combinat}. The Maple package for the Toom-Tsetlin
model \texttt{ToomTsetlin} is available on the first author's webpage and as an ancillary file in the arXiv source.
Maple packages for the promotion chain \texttt{LinearExtensions} and the sandpile model \texttt{Nonabelian\-Sandpiles}
are available from the first author's webpage.

All the authors would like to thank ICERM, where part of this work was performed, for its hospitality.
This work was partially supported by a grant from the Simons Foundation (\#245268
to Benjamin Steinberg) and the Binational Science Foundation of Israel and the US (\#2012080 to Benjamin Steinberg).
AS was partially supported by NSF grants DMS--1001256, OCI--1147247,
and a grant from the Simons Foundation (\#226108 to Anne Schilling).

%%%%%%%%%%%%%%%%%%%%%%%%%%%%%%%%%%%%%%%%%%%%%%%%%%%%%%%%
\section{Background on Markov chains and monoids}
%%%%%%%%%%%%%%%%%%%%%%%%%%%%%%%%%%%%%%%%%%%%%%%%%%%%%%%%
\label{preliminaries}
Since this paper is intended for an audience of algebraists, combinatorialists and probabilists, we include some
background about each of these areas.

%%%%%%%%%%%%%%%%%%%%%%%%%%%%%%%%%%%%%%%%%%%%%%%%%%%%%%%%
\subsection{Markov chains}

We recall here some basic notions from Markov chain theory.  Details
can be found in e.g.~\cite{Markovmixing}.
Let $\Omega$ be a finite set.  A \emph{probability distribution} (or simply a probability) on $\Omega$ is a
mapping $P\colon \Omega\to \mathbb R$ such that $P(\omega)\geq 0$ for all $\omega\in \Omega$ and
\[
	\sum_{\omega\in \Omega}P(\omega)=1\,.
\]
The probability that an element of $\Omega$ chosen randomly according
to $P$ belongs to some subset $A\subseteq\Omega$ is given by
\[
	P(A) = \sum_{\omega\in A}P(\omega)\,.
\]

A (finite state) \emph{Markov chain} is a pair $\mathcal M=(\Omega,T)$ consisting of a (finite) state space
$\Omega$ and a (column) \emph{stochastic matrix} $T\colon \Omega\times \Omega\to \mathbb R$.  Recall
that $T$ is stochastic if:
\begin{enumerate}
\item $T(\alpha,\beta)\geq 0$ for all $\alpha,\beta\in \Omega$;
\item for all $\beta\in \Omega$,
\[
	\sum_{\alpha\in \Omega}T(\alpha,\beta)=1.
\]
\end{enumerate}
One calls $T$  the \emph{transition matrix} of the chain.  The intuition is that if you are in state $\beta$,
then with probability $T(\alpha,\beta)$ you move to state $\alpha$.\footnote{Note that some authors prefer
to denote this probability as $T(\beta,\alpha)$.}

We can view $T$ as an operator on $\mathbb R^{\Omega}$ in the usual way:
$Tf(\omega) = \sum_{\beta\in \Omega}T(\omega,\beta)f(\beta)$.  It is easy to see that $T$ preserves
probability distributions and so if $\nu$ is an initial distribution, then $T^n\nu$ is a probability distribution
known as the \emph{$n^{th}$-step} distribution of the Markov chain.  That is, $T^n\nu(\omega)$ is the
probability of being in state $\omega$ on the $n^{th}$-step of the chain if the chain starts with initial
distribution $\nu$.

We say that $\pi$ is a \emph{stationary distribution} for $T$ if $T\pi=\pi$.  It is a consequence of the
Perron-Frobenius theorem that each Markov chain has at least one stationary distribution. A Markov chain
$\mathcal M=(\Omega,T)$ is called \emph{irreducible} if, for each $\alpha,\beta\in \Omega$, there
exists $n\geq 0$ such that $T^n(\alpha,\beta)>0$. In the language of
graphs, this translates as follows.
Define a digraph $\Gamma(\mathcal M)$ with vertex set $\Omega$ and a directed edge
$\beta\to \alpha$ if $T(\alpha,\beta)> 0$.  Then $\mathcal M$ is irreducible if and only if
$\Gamma(\mathcal M)$ is strongly connected.  Irreducible Markov chains have a unique stationary
distribution $\pi$ and moreover $\pi>0$ (has strictly positive entries).  The
Markov chain $\mathcal M$ is said to be \emph{ergodic} if $T^n>0$ for
some $n\geq 0$, or equivalently for any large enough $n$.  It is
well known that this is equivalent to asking that the chain be irreducible and that the greatest common
divisor of the lengths of the cycles of $\Gamma(\mathcal M)$ be $1$ (that is, the associated digraph is \emph{primitive}).
In this case, for any initial distribution $\nu$, the sequence $T^n\nu$ converges to the stationary distribution $\pi$.

Strongly connected components of $\Gamma(\mathcal M)$ are called \emph{communicating classes} in Markov chain
theory.  A communicating class is called \emph{essential} if the corresponding strong component is
minimal under the ordering on strongly connected components defined by $C\leq C'$ if there is a directed path
from  $C'$ to $C$. States which belong to essential communicating classes are said to be \emph{recurrent};
the remaining states are called \emph{transient}. It is well known and easy to see that
$\lim_{n\to \infty} T^n(\alpha,\beta)=0$ if $\alpha,\beta$ do not
belong to the same essential
communicating class, and that if $\pi$ is a stationary distribution of $T$, then $\pi(\omega)=0$ for
each transient state $\omega\in \Omega$ (see~\cite[Section~1.7]{Markovmixing}).

In Markov chain theory, one usually measures the rate of convergence in terms of the total variation
distance.  Recall that $\mathbb R^{\Omega}$ is a real Banach space with the $\ell^1$-norm:
$\|f\|_1 = \sum_{\omega\in \Omega} |f(\omega)|$.  Let $\mathcal P(\Omega)$ be the space of
probability distributions on $\Omega$; it is a compact subspace of the $\ell^1$-unit ball.
The \emph{total variation distance} between two probability distributions $\nu,\mu$ is defined by
\[
	\|\nu-\mu\|_{TV} = \max_{A\subseteq \Omega}|\nu(A)-\mu(A)|.
\]
The following equivalent expressions are extremely useful.
\begin{proposition}[See e.g. Proposition~4.2 and Remark~4.3 of\cite{Markovmixing}]\label{p:totalvariationdist}
Let $\nu,\mu$ be probabilities on $\Omega$, and $A=\{\omega\in
\Omega\mid \nu(\omega)\geq \mu(\omega)\}$. Then,
\[
	\|\nu-\mu\|_{TV} = \frac{1}{2}\|\nu -\mu\|_1 = \nu(A)-\mu(A)\,.
\]
\end{proposition}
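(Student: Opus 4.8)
The plan is to reduce everything to an elementary computation with the signed function $\nu-\mu$ on the finite set $\Omega$. First I would note that for any subset $B\subseteq\Omega$ one has $\nu(B)-\mu(B)=\sum_{\omega\in B}\bigl(\nu(\omega)-\mu(\omega)\bigr)$, so this sum is largest exactly when $B$ collects all points $\omega$ with $\nu(\omega)-\mu(\omega)\geq 0$; that is,
\[
	\max_{B\subseteq\Omega}\bigl(\nu(B)-\mu(B)\bigr)=\nu(A)-\mu(A),
\]
with $A$ as in the statement. By the symmetric argument, writing $A^c=\Omega\setminus A$, we get $\max_{B\subseteq\Omega}\bigl(\mu(B)-\nu(B)\bigr)=\mu(A^c)-\nu(A^c)$.

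Next I would use that $\nu$ and $\mu$ are probability distributions, so $\nu(A)+\nu(A^c)=1=\mu(A)+\mu(A^c)$, whence $\nu(A)-\mu(A)=\mu(A^c)-\nu(A^c)$. Consequently the two maxima computed above coincide, and therefore
\[
	\|\nu-\mu\|_{TV}=\max_{B\subseteq\Omega}\bigl|\nu(B)-\mu(B)\bigr|=\nu(A)-\mu(A),
\]
which is the second claimed equality (and in particular this quantity is nonnegative, as it must be).

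Finally, for the $\ell^1$ identity I would split the defining sum of $\|\nu-\mu\|_1$ according to the sign of $\nu(\omega)-\mu(\omega)$:
\[
	\|\nu-\mu\|_1=\sum_{\omega\in A}\bigl(\nu(\omega)-\mu(\omega)\bigr)+\sum_{\omega\in A^c}\bigl(\mu(\omega)-\nu(\omega)\bigr)=\bigl(\nu(A)-\mu(A)\bigr)+\bigl(\mu(A^c)-\nu(A^c)\bigr),
\]
and then invoke the identity $\nu(A)-\mu(A)=\mu(A^c)-\nu(A^c)$ from the previous paragraph to see that both bracketed terms equal $\nu(A)-\mu(A)$, so $\|\nu-\mu\|_1=2\bigl(\nu(A)-\mu(A)\bigr)$; dividing by $2$ finishes the argument. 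There is no genuine obstacle here: the only point demanding a little care is the bookkeeping with the complement $A^c$ together with the normalization $\nu(\Omega)=\mu(\Omega)=1$, which is what makes the ``positive part'' mass and the ``negative part'' mass of the signed measure $\nu-\mu$ equal; everything else is a finite sum manipulation.
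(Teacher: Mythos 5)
Your argument is correct and is precisely the standard proof of this fact; the paper itself gives no proof, deferring to Proposition~4.2 of the cited reference, whose argument is the same splitting of $\nu-\mu$ into its positive and negative parts on $A$ and $A^c$ together with the normalization $\nu(\Omega)=\mu(\Omega)=1$. The one small point worth noting is that the maximizer of $\mu(B)-\nu(B)$ is a priori $\{\omega\mid \mu(\omega)\geq\nu(\omega)\}$ rather than $A^c$, but since points with $\nu(\omega)=\mu(\omega)$ contribute zero, your identification with $\mu(A^c)-\nu(A^c)$ is still valid.
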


Let $\mathcal M=(\Omega,T)$ be a finite state ergodic Markov chain with stationary distribution $\pi$. Let
$d(n) = \sup_{\nu\in \mathcal P(\Omega)} \|T^n\nu-\pi\|_{TV}$.  Then, for $\varepsilon >0$, the \emph{mixing time}
of $\mathcal M$ is $t_{mix}(\varepsilon) =\min\{n\mid d(n)\leq \varepsilon\}$~\cite{Markovmixing}.  Often
authors choose $\varepsilon =e^{-1}$ or $\varepsilon = 1/4$ to define the mixing time.  We usually try to
bound, for $c>0$, when $\|T^n\nu -\pi\|_{TV}\leq e^{-c}$.

%%%%%%%%%%%%%%%%%%%%%%%%%%%%%%%%%%%%%%%%%%%%%%%%%%%%%%%%
\subsection{Semigroups and monoids}
\label{section.semigroups}
We recall here some basic notions from semigroup theory.  Details can be found
in~\cite{CP,Arbib,Howie,Almeida:book,EilenbergB,Pin.MPRI} or~\cite[Appendix A]{qtheor}.

A \emph{semigroup} $S$ is a set with an associative multiplication $S\times S\to S$.  It is called a
\emph{monoid} if additionally it contains an identity element, usually denoted $1$.

An element $e$ of a semigroup $S$ is \emph{idempotent} if $e^2=e$. The set of idempotents
is denoted $E(S)$.   Each element  $s$ of a finite semigroup has unique idempotent (positive) power,
traditionally denoted $s^{\omega}$. In particular, every non-empty finite semigroup contains an idempotent.

A finite semigroup $S$ is said to be \emph{aperiodic} if
$s^{\omega}s=s^{\omega}$, for all $s\in S$. Equivalently, $S$ is aperiodic if
there is a positive integer $n$ such that $s^n=s^{n+1}$ for any $s\in S$. Trivially, any subsemigroup
or homomorphic image of an aperiodic semigroup is aperiodic.

An \emph{ideal} of a monoid $M$ is a non-empty subset $I$ such that $MIM\subseteq I$.  Left ideals and right
ideals are defined analogously. If $I,J$ are ideals of a monoid $M$, then $IJ\subseteq I\cap J$ and hence
$I\cap J\neq \emptyset$. It follows that every finite monoid has a unique minimal ideal. Let $M$ be a finite monoid.
Any ideal of $M$ is a subsemigroup and hence contains an idempotent. If $I$ is the minimal ideal of $M$ and $e\in E(I)$,
then $eMe=eIe$ is a group with identity $e$.  In particular, if $I$ is aperiodic, then $eMe=\{e\}$.

We now introduce two of Green's relations~\cite{Green}, namely $\LL$ and $\RR$.
Let $M$ be a monoid.  Then the \emph{principal right} ideal generated by
$m\in M$ is $mM$.  One defines a preorder on $M$ by putting
$m\leq_{\RR} m'$ if $mM\subseteq m'M$.  One defines
$m\mathrel{\RR} m'$ if $m\leq_{\RR} m'$ and $m'\leq_{\RR} m$  (i.e., $mM=m'M$).
The classes for this relation are called $\RR$-classes; they are
the strongly connected components of the right Cayley graph of $M$ with respect to any generating set.

A monoid $M$ is \emph{$\RR$-trivial} if Green's relation $\RR$ is trivial, that is, if $m\mathrel{\RR} m'$
(i.e., $mM=m'M$) implies $m=m'$. Equivalently, $M$ is $\RR$-trivial if the right Cayley
graph of $M$ with respect to any generating set is acyclic. In this case $\leq_{\RR}$ is a partial order on $M$. Note that $\leq_{\RR}$
is compatible with left multiplication, that is, $m\leq_{\RR} m'$ implies $nm\leq_{\RR} nm'$ for all $n\in M$.
A finite $\RR$-trivial monoid is necessarily aperiodic since
$s^{\omega}\mathrel{\RR} s^{\omega}s$ in any finite monoid. The class of finite $\mathscr R$-trivial
monoids is closed under taking finite direct products, submonoids, and homomorphic images.

Green's relation $\LL$ and $\LL$-trivial monoids are defined
symmetrically on the left.

A \emph{left zero semigroup} is a semigroup $S$ satisfying the identity
$xy=x$ for all $x,y\in S$. Let $L$ be the $\LL$-class of an
idempotent of an $\RR$-trivial monoid; such an $\LL$-class is called a \emph{regular} $\LL$-class. A regular $\LL$-class $L$ is always a left zero
semigroup; more generally, for any $x\in L$ and $y\in M$, one has $xy=x$ if and only
if $Mx\subseteq My$.  The minimal ideal of an $\RR$-trivial monoid $M$ is a left zero semigroup and is
the unique minimal left ideal of $M$.

A monoid $M$ is called a \emph{left regular band} if $x^2=x$ and $xyx=xy$ for all $x,y\in M$.
Left regular bands are $\RR$-trivial, which can be seen as follows. Suppose $x$ and $y$ are in the
same $\RR$-class, that is, there exist $u,v\in M$ such that $xu=y$ and $yv=x$. Then
\[
	x=yv=xuv=xuvu=yvu=xu=y
\]
since $uv=uvu$. Hence all $\RR$-classes are singletons. More generally, a finite monoid $M$ is $\RR$-trivial
if and only if $(xy)^{\omega}x=(xy)^{\omega}$ for all $x,y\in M$.

%%%%%%%%%%%%%%%%%%%%%%%%%%%%%%%%%%%%%%%%%%%%%%%%%%%%%%%%%
\subsection{Random mapping representations}
A \emph{left action}
of a monoid on a set $\Omega$ is a mapping $M\times \Omega\to \Omega$, written
$(m,\omega)\mapsto m\omega$, such that
\begin{enumerate}
\item $m(m'\omega)=(mm')\omega$
\item $1\omega=\omega$
\end{enumerate}
for all $m,m'\in M$ and $\omega\in \Omega$. Right actions are defined symmetrically.

If $X\subseteq M$, then the \emph{Cayley digraph} of
the action of $X$ on $\Omega$ is the digraph $\Gamma(\Omega,X)$ with vertex set $\Omega$ and
edges $\omega\to x\omega$ for $\omega\in \Omega$ and $x\in X$ (sometimes we use the Cayley digraph
with labelled edges $\omega\xrightarrow{\,\,x\,\,} x\omega$).

If $X\subseteq M$, then $\langle X\rangle$ denotes the submonoid of $M$ generated by $X$,
that is, the smallest submonoid of $M$ containing $X$.

Let $M$ be a (finite) monoid acting on the left of a (finite) set $\Omega$.  Suppose that $P$ is a
probability on $M$.  Then we have an induced Markov chain $\mathcal M=(\Omega,T)$ where
\[
	T(\alpha,\beta) =\sum_{m\beta=\alpha}P(m) = P(\{m\in M\mid m\beta=\alpha\})\,.
\]
We call this Markov chain the \emph{random walk of $M$ on $\Omega$ driven by $P$}. The fact that $T$ is
stochastic is simply the computation
\begin{align*}
\sum_{\alpha\in \Omega}T(\alpha,\beta)&= \sum_{\alpha\in \Omega}\sum_{m\in M}P(m)\delta_{\alpha,m\beta}\\
&= \sum_{m\in M}P(m)\sum_{\alpha\in \Omega} \delta_{\alpha,m\beta} = \sum_{m\in M}P(m)=1\,.
\end{align*}
The data consisting of the action of $M\times \Omega\to \Omega$ and the probability $P$ on $M$
is called a \emph{random mapping representation} of the Markov chain $\mathcal M$.

A matrix $A\colon \Omega\times \Omega\to \mathbb R$ is
called \emph{column monomial} if each column of $A$ is a standard basis vector (i.e., contains
exactly one non-zero entry, which must be a one).  Note that such a column monomial matrix is
stochastic. Column monomial matrices are exactly the linear operators induced by mappings
$f\colon \Omega\to \Omega$, the corresponding column monomial matrix $A_f$ being given by
\[
	A_f(\alpha,\beta) = \begin{cases} 1, & \text{if}\ f(\beta)=\alpha,\\ 0, & \text{else.}\end{cases}
\]
To prove that every Markov chain has a random mapping representation we use the following well-known lemma.

\begin{lemma}\label{l:convexcombocolumnmonomial}
Every stochastic matrix is a convex combination of column monomial matrices.
\end{lemma}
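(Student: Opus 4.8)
The plan is to prove this by a greedy ``peeling'' induction on the number of nonzero entries of the stochastic matrix $T\colon\Omega\times\Omega\to\mathbb R$. First I would record the base case: a stochastic matrix with exactly $|\Omega|$ nonzero entries has a single nonzero entry in each column, and since each column sums to $1$ that entry must be a $1$, so the matrix is already column monomial. This is the minimum possible number of nonzero entries for a stochastic matrix.

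For the inductive step, assume $T$ has strictly more than $|\Omega|$ nonzero entries, so some column has at least two. Let $m=\min\{T(\alpha,\beta):T(\alpha,\beta)>0\}$, attained at some position $(\alpha_0,\beta_0)$; then $0<m<1$. I would choose a function $f\colon\Omega\to\Omega$ with $f(\beta_0)=\alpha_0$ and, for every $\beta$, $T(f(\beta),\beta)>0$, and set $T'=\tfrac{1}{1-m}(T-mA_f)$. Minimality of $m$ guarantees $T-mA_f\geq 0$, since at each position $(f(\beta),\beta)$ where $A_f$ has a $1$ the entry of $T$ is positive, hence $\geq m$; and each column of $T-mA_f$ sums to $1-m$, so $T'$ is stochastic. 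Moreover $T'$ has strictly fewer nonzero entries than $T$, because the entry at $(\alpha_0,\beta_0)$ has dropped to $0$ and no new nonzero entries were created.

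Applying the induction hypothesis to $T'$ gives $T'=\sum_a y_aA_{f_a}$ with $y_a\geq 0$ and $\sum_a y_a=1$, whence
\[
	T=(1-m)T'+mA_f=\sum_a(1-m)y_aA_{f_a}+mA_f
\]
is the desired convex combination of column monomial matrices, completing the induction.

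The only step needing care is the bookkeeping in the peeling move — that it preserves nonnegativity and strictly reduces the number of nonzero entries — but both parts are immediate from the choice of $f$ and the minimality of $m$, so there is no real obstacle. An alternative, more structural route avoids the induction entirely: the column stochastic matrices on $\Omega$ are exactly the points of the polytope $\mathcal P(\Omega)^{\Omega}$, a product of $|\Omega|$ probability simplices, whose vertices are precisely the tuples of standard basis vectors, i.e.\ the column monomial matrices; and every point of a polytope is a convex combination of its vertices. This is the column-stochastic analogue of the Birkhoff--von Neumann theorem.
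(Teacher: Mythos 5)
Your proof is correct, but your main argument takes a different route from the paper's. The paper simply observes that the set of column stochastic matrices is a polytope whose vertices are the column monomial matrices (citing Berman--Plemmons for that fact) and invokes the statement that every point of a polytope is a convex combination of its vertices. You instead give a self-contained constructive ``peeling'' induction on the number of nonzero entries: subtract $m\,A_f$ where $m$ is the smallest positive entry and $f$ is any section of the support of $T$ passing through a position where the minimum is attained, renormalize, and recurse. All the steps check out --- in particular the existence of $f$ is immediate since every column has a positive entry (this is much easier than in the bistochastic Birkhoff--von Neumann setting, where one needs Hall's theorem to extract a permutation), $m<1$ because some column has two positive entries, and the nonzero count strictly drops. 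Your argument buys an explicit algorithm and an explicit bound of at most $(\text{number of nonzero entries of }T)-|\Omega|+1$ terms in the decomposition, at the cost of a page of bookkeeping; the paper's argument buys brevity at the cost of an external reference for the vertex description. Your closing alternative, identifying the column stochastic matrices with the product of simplices $\mathcal P(\Omega)^{\Omega}$ whose vertices are tuples of standard basis vectors, is essentially the paper's proof, and in fact gives a cleaner, citation-free justification of the vertex claim than the one the paper uses.
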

\begin{proof}
The set $\mathcal S$ of stochastic matrices is a polytope whose vertices are the column monomial matrices
(cf.~the discussion after~\cite[Theorem~5.3]{BermanPlemmons}). As each point of a polytope is a convex combination
of vertices, the lemma follows.
\end{proof}

\begin{theorem}\label{t:hasmappingrep}
Every finite state Markov chain has a random mapping representation.
\end{theorem}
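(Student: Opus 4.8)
The plan is to combine Lemma~\ref{l:convexcombocolumnmonomial} with the correspondence between column monomial matrices and self-maps of $\Omega$, and then to package the data as a random walk driven by a probability on a suitable monoid.

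\medskip

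\noindent\textbf{Proof plan.} Let $\mathcal M=(\Omega,T)$ be a finite state Markov chain. First I would apply Lemma~\ref{l:convexcombocolumnmonomial} to write $T=\sum_{a}x_a A_a$ as a convex combination of column monomial matrices, so each $x_a\geq 0$, $\sum_a x_a=1$, and each $A_a$ is column monomial. By the observation preceding the theorem, each column monomial matrix $A_a$ equals $A_{f_a}$ for a unique self-map $f_a\colon\Omega\to\Omega$. Now consider the full transformation monoid $M=\Omega^{\Omega}$ of all maps $\Omega\to\Omega$ (or, if one prefers a smaller monoid, the submonoid $\langle f_a\rangle$ generated by the $f_a$ appearing in the decomposition), acting on $\Omega$ on the left in the natural way $(f,\omega)\mapsto f(\omega)$; associativity and the identity axiom are immediate. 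Define a probability $P$ on $M$ by $P(m)=\sum_{a\,:\,f_a=m}x_a$ for $m\in M$ (and $P(m)=0$ if no $f_a$ equals $m$); since the $x_a$ are nonnegative and sum to $1$, $P$ is a genuine probability distribution on the finite set $M$.

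\medskip

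It remains to check that the random walk of $M$ on $\Omega$ driven by $P$, as defined in the subsection on random mapping representations, has transition matrix exactly $T$. For $\alpha,\beta\in\Omega$ the induced transition matrix has entry
\[
	T'(\alpha,\beta)=\sum_{m\in M,\ m\beta=\alpha}P(m)
	=\sum_{m\in M,\ m\beta=\alpha}\ \sum_{a\,:\,f_a=m}x_a
	=\sum_{a\,:\,f_a(\beta)=\alpha}x_a
	=\sum_a x_a A_{f_a}(\alpha,\beta)=T(\alpha,\beta),
\]
using the definition of $A_{f}$ in the next-to-last equality and the convex decomposition $T=\sum_a x_a A_a=\sum_a x_a A_{f_a}$ in the last. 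Hence $T'=T$, and the data $(M\times\Omega\to\Omega,\ P)$ is a random mapping representation of $\mathcal M$, as required. \qedhere is not needed since this is a plan; in the actual write-up one closes with $\qed$.

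\medskip

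\noindent\textbf{Remark on the main obstacle.} There is essentially no hard step here: the content is entirely carried by Lemma~\ref{l:convexcombocolumnmonomial}, whose proof in turn rests on the Birkhoff-type fact that the vertices of the polytope of column stochastic matrices are precisely the column monomial ones. The only points requiring a little care are (i) that distinct indices $a$ may give the same map $f_a$, which is why $P$ is defined by \emph{summing} the weights $x_a$ over the fiber $\{a:f_a=m\}$ rather than just reading off a single weight; and (ii) confirming that the bookkeeping in the displayed computation correctly reassembles the convex combination. One could also note that taking $M=\Omega^{\Omega}$ makes the construction canonical, independent of the chosen (generally non-unique) convex decomposition, at the cost of a larger monoid; for the applications in later sections one typically keeps the smaller generated submonoid $\langle f_a\rangle$ so as to exploit structural hypotheses such as $\RR$-triviality.
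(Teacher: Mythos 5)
Your proposal is correct and follows essentially the same route as the paper: apply Lemma~\ref{l:convexcombocolumnmonomial}, identify the column monomial matrices with self-maps of $\Omega$, take the full transformation monoid, and transfer the convex weights to a probability $P$ on it. The paper's proof is just a terser version of yours (it omits the explicit verification that the induced transition matrix equals $T$), and your care about summing $x_a$ over the fiber $\{a : f_a = m\}$ is a correct reading of what the paper means by giving $f$ ``the same weight'' as $A_f$.
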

\begin{proof}
Let $\mathcal M=(\Omega,T)$ be a Markov chain.  Then $T$ can be written as a convex combination of column monomial
matrices by Lemma~\ref{l:convexcombocolumnmonomial}.  If $M$ is the monoid of all mappings on $\Omega$ and if $P$
is the probability on $M$ that gives a mapping $f$ the same weight that its corresponding column monomial matrix  $A_f$ receives
in the above convex combination expressing $T$, then $\mathcal M$ is the random walk of $M$ on $\Omega$ driven by $P$.
 \end{proof}

\begin{remark}
Theorem~\ref{t:hasmappingrep} is a basic fact of probability theory, although
it is usually stated in a different language: namely, it is equivalent with~\cite[Proposition~1.5]{Markovmixing}.  In~\cite{Markovmixing}
a random mapping representation of $\mathcal M=(\Omega,T)$ is defined as consisting of a mapping
$f\colon \Lambda\times \Omega\to \Omega$ and a $\Lambda$-valued random variable $Z$ such that the probability that
$f(Z,\beta)=\alpha$ is $T(\alpha,\beta)$ for all $\alpha,\beta\in \Omega$. (Actually, since~\cite{Markovmixing} uses row
stochastic matrices, they provide a dual formulation.)

In other words, a random mapping representation of $\mathcal M=(\Omega,T)$ in the sense of~\cite{Markovmixing}
consists of a deterministic automaton with state set $\Omega$ and input alphabet $\Lambda$ (but no initial or accepting
states) together with a $\Lambda$-valued random variable $Z$.  If you are in state $\omega\in \Omega$, the Markov chain
works by choosing a random letter $a\in \Lambda$, distributed identically to $Z$, and performing the transition
$\omega\xrightarrow{\,\,a\,\,} a\omega$ in the automaton. (Note that our convention is to process words in an automaton from right-to-left.)

Given such a mapping $f$, we can define a mapping $F\colon \Lambda\to M$,
where $M$ is the monoid of all mappings on $\Omega$, by currying: $F(\lambda)(\alpha)=f(\lambda,\alpha)$.  Then $F(Z)$ is an
$M$-valued random variable given by some probability distribution $P$ on $M$. It is straightforward to verify that $\mathcal M$
is the random walk of $M$ on $\Omega$ driven by $P$.

Conversely, if $f\colon M\times \Omega\to \Omega$ is an action and $P$ is a probability on $M$, then the random walk
$\mathcal M=(\Omega,T)$ of $M$ on $\Omega$ driven by the probability $P$ has a random mapping representation in
the sense of~\cite{Markovmixing} by taking $\Lambda=M$, $f$ to be the action and $Z$ to be the $M$-valued random variable
with distribution $P$.

In summary, a random mapping representation of a Markov chain can also be specified by giving a collection $S$ of mappings
on the state space $\Omega$ and a probability distribution $P$ on $S$.
We can then take $M$ to be the monoid of mappings on $\Omega$ generated by $S$ and view $P$ as a probability
on $M$.  Sometimes we refer informally to $S$ as the \emph{generators} of the Markov chain.
\end{remark}

%%%%%%%%%%%%%%%%%%%%%%%%%%%%%%%%%%%%%%%%%%%%%%%%%%%%%%%%%%
\subsection{Random mapping representations with constants}
If $P,Q$ are probabilities on a monoid $M$, their convolution is the probability
\[P\ast Q(m) = \sum_{m_1m_2=m}P(m_1)Q(m_2)\] on $M$.
Recall that the \emph{support} of a probability $P$  on $M$ is the set
\[
	\supp P=\{m\in M\mid P(m)>0\}\,.
\]
Denote by $P^{\ast n}$ the $n^{th}$-convolution power of $P$. It is the distribution of $X_nX_{n-1}\cdots X_1$
where $X_1,\ldots, X_n$ are independent random variables distributed according to $P$.
Then $P^{\ast n}(m)>0$ for some $n\geq 0$ if and only if $m$ is in the
submonoid $\langle \supp P\rangle$ generated by the support of $P$.

Let $\mathcal M$ be a Markov chain with random mapping representation $M\times \Omega\to \Omega$ driven by a probability $P$.
Then, $\Gamma(\mathcal M)$ is the Cayley digraph of the
action of $M$ on $\Omega$ with respect to the set $X=\supp P$. In
particular, $\mathcal M$ is irreducible (that is, $\Gamma(\mathcal M)$
is strongly connected) if and only if the action of $\langle \supp
P\rangle$ is transitive on $\Omega$ (that is, for any $\alpha,\beta
\in \Omega$, there exists $n\in \langle\supp P\rangle$ with $n\alpha=\beta$).

The following proposition is folklore.
\begin{proposition}\label{p:hasallconstant}
Let $\mathcal M=(\Omega,T)$ be an irreducible Markov chain with a random mapping
representation $M\times \Omega\to \Omega$ driven by a probability $P$. Let
$N=\langle \supp P\rangle$ and suppose that some $m\in N$ acts as a constant map on
$\Omega$.  Then $N$ contains all constant maps on $\Omega$ and the Markov chain
$\mathcal M$ is ergodic.
\end{proposition}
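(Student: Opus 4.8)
The plan is to handle the two assertions in order. For the first, write $c_{\omega}\colon\Omega\to\Omega$ for the constant map with value $\omega$, so that $fc_{\omega}=c_{\omega}$ for every $f\in M$ (indeed $(fc_{\omega})(\alpha)=f(c_{\omega}(\alpha))$, but careful: with our left action, $nc_{\omega}=c_{n\omega}$, while $c_{\omega}f=c_{\omega}$). By hypothesis some element of $N$ is a constant map, say $c_{\alpha}\in N$. Since $\mathcal M$ is irreducible, the discussion preceding the proposition shows that $N=\langle\supp P\rangle$ acts transitively on $\Omega$; hence for an arbitrary $\beta\in\Omega$ there is $n\in N$ with $n\alpha=\beta$. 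Then $nc_{\alpha}=c_{\beta}$, and as $N$ is a submonoid this gives $c_{\beta}\in N$. Since $\beta$ was arbitrary, $N$ contains all constant maps.

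For ergodicity the plan is to produce a single $k$ with $T^{k}>0$. First observe that $c_{\gamma}f=c_{\gamma}$ for all $\gamma\in\Omega$ and all $f\in M$, since $(c_{\gamma}f)(\alpha)=c_{\gamma}(f\alpha)=\gamma$. Fix $g\in\supp P$ (this set is nonempty because $P$ is a probability distribution). For each $\gamma\in\Omega$ we have $c_{\gamma}\in N=\langle\supp P\rangle$ by the first part, so by the criterion recalled earlier in the excerpt there is $n_{\gamma}\geq 0$ with $P^{\ast n_{\gamma}}(c_{\gamma})>0$. Using $P^{\ast(n+1)}=P^{\ast n}\ast P$ (associativity of convolution) together with $c_{\gamma}g=c_{\gamma}$ and $P(g)>0$, an immediate induction gives $P^{\ast n}(c_{\gamma})>0$ for every $n\geq n_{\gamma}$. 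Taking $k=\max_{\gamma\in\Omega}n_{\gamma}$, we get $P^{\ast k}(c_{\gamma})>0$ for all $\gamma$, whence for any $\gamma,\delta\in\Omega$,
\[
T^{k}(\gamma,\delta)=P^{\ast k}\bigl(\{f\in M\mid f\delta=\gamma\}\bigr)\geq P^{\ast k}(c_{\gamma})>0 .
\]
Thus $T^{k}>0$ and $\mathcal M$ is ergodic.

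I do not expect a serious obstacle here; the argument is essentially a bookkeeping exercise once one uses that constant maps absorb anything on the right. The one point requiring slight care is to obtain a uniform exponent $k$ valid simultaneously for every target state $\gamma$, and this is exactly what the padding step $P^{\ast n}(c_{\gamma})>0$ for all $n\geq n_{\gamma}$ provides; the degenerate case $\lvert\Omega\rvert=1$ is trivial (then $T$ is the $1\times 1$ identity).
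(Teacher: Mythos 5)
Your proof is correct and follows essentially the same route as the paper's: transitivity of $N$ on $\Omega$ gives all constant maps from one, and then a padding argument (appending an element of $\supp P$, which a constant map absorbs on the right) yields a single exponent $k$ with $T^k>0$. The only cosmetic caveat is that when $M$ does not act faithfully you should read identities like $c_\gamma g=c_\gamma$ as statements about the induced maps (equivalently, work with the set of elements of $M$ acting as the constant to $\gamma$), which is exactly how the paper phrases the same step.
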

\begin{proof}
By irreducibility $N$ acts transitively on $\Omega$. If $m$ acts as a constant mapping with image
$\omega$ and $m'\omega=\alpha$, then $m'm$ acts as the constant map to $\alpha$ and hence
$N$ contains all constant maps on $\omega$.

Note that, if the constant map to $\beta\in \Omega$ can be represented by a product $m_1\cdots m_k$
of $k$ elements of $\supp P$, then for any $m\in \supp P$ one has that $m_1\cdots m_k m$ acts as the
constant map to $\beta$.  Thus the constant map to $\beta$ can be represented as a product of $r$
elements of $\supp P$ for any $r\geq k$. It now follows that there exists $t\geq 0$ such that the constant
map on $\Omega$ with image $\alpha$ can be represented by a product $m_{\alpha}$ of $t$ elements
of $\supp P$ for all $\alpha\in \Omega$. But then $T^t(\alpha,\beta)\geq P^{\ast t}(m_{\alpha})>0$ and so
$\mathcal M$ is ergodic.
\end{proof}

Note that, under any action of a monoid $M$
on a set $\Omega$, the fixed-point set of an idempotent $e$ is its image $e\Omega$.

The following result is well known to automata theorists.

\begin{proposition}\label{p:aperiodichasconstants}
Let $M$ be a monoid acting transitively on a set $\Omega$ and suppose that the minimal ideal $I$
of $M$ is aperiodic. Then, for any $\omega\in \Omega$, there is an element $m\in I$ acting as a
constant map to $\omega$.
\end{proposition}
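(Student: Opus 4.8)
The plan is to exploit the structure of the minimal ideal $I$ of an $\RR$-trivial-flavored argument—here, however, $M$ is an arbitrary monoid, so I cannot assume $\RR$-triviality; instead I use only that $I$ is aperiodic. Recall from the preliminaries that $I$ is a subsemigroup, hence contains an idempotent $e$, and that $eMe$ is a group with identity $e$; aperiodicity of $I$ forces $eMe = \{e\}$. The first step is to understand how $I$ acts on $\Omega$. Since $I$ is an ideal, $I\Omega$ is an $M$-invariant subset of $\Omega$, and transitivity of $M$ on $\Omega$ forces $I\Omega = \Omega$. In particular, for the fixed idempotent $e$ we have the image $e\Omega$, which is exactly the fixed-point set of $e$.

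Next I would show that $e$ acts as a constant map on $\Omega$. Pick any $\omega_0 \in e\Omega$, so $e\omega_0 = \omega_0$. I claim $e\Omega = \{\omega_0\}$. Take any $\alpha \in \Omega$. By transitivity there is $m \in M$ with $m\omega_0 = \alpha$, so $eme\omega_0 = em\omega_0 = e\alpha$. But $eme \in eMe = \{e\}$, hence $e\alpha = e\omega_0 = \omega_0$. Therefore $e\Omega = \{\omega_0\}$, i.e. $e$ is the constant map to $\omega_0$. This is the crux of the argument, and the only subtle point is invoking $eMe = \{e\}$, which is precisely where aperiodicity of the minimal ideal is used.

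Finally, I would upgrade "there is \emph{some} constant in $I$" to "there is a constant to \emph{any prescribed} $\omega \in \Omega$." Given $\omega \in \Omega$, use transitivity to choose $m \in M$ with $m\omega_0 = \omega$. Then $me$ acts as the constant map to $\omega$: for every $\alpha \in \Omega$, $(me)\alpha = m(e\alpha) = m\omega_0 = \omega$. Moreover $me \in I$ because $I$ is an ideal (it absorbs the left multiplier $m$). This completes the proof.

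I expect no serious obstacle here; the entire content is the identity $eMe = \{e\}$ coming from aperiodicity of $I$, combined with transitivity to move the constant image wherever we like. The one thing to be careful about is the direction of the action (left action, and products act as $m(m'\omega) = (mm')\omega$), so that $me$ really is "first apply $e$, then apply $m$" and thus lands in the singleton $\{m\omega_0\}$ regardless of the starting point.
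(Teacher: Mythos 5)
Your proof is correct and follows essentially the same route as the paper's: take an idempotent $e$ in the minimal ideal, use aperiodicity to get $eMe=\{e\}$, combine with transitivity to conclude $e$ is constant, and then left-translate by $m$ to place the constant's image at any prescribed $\omega$. The only cosmetic difference is that the paper shows $e\Omega$ is a singleton by comparing two of its elements, whereas you show $e\alpha=\omega_0$ for every $\alpha$ directly; the observation $I\Omega=\Omega$ in your first paragraph is true but unused.
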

\begin{proof}
It suffices by the proof of Proposition~\ref{p:hasallconstant} to show that $I$ contains some element
$m$ acting as constant map, since then $Mm\subseteq I$ will contain all the constant maps by
transitivity.  Let $e\in I$ be an idempotent. Suppose that $\alpha,\beta\in e\Omega$.  By transitivity
there exists $m\in M$ with $m\alpha=\beta$.  As $eMe=\{e\}$ by aperiodicity of $I$, we conclude
that $\beta=e\beta=em\alpha =eme\alpha=e\alpha=\alpha$.  Thus $e$ acts as a constant map.
\end{proof}

As an immediate corollary of the preceding results we obtain the following result.

\begin{corollary}\label{c:aperiodicchains}
Let $\mathcal M=(\Omega,T)$ be an irreducible Markov chain with a random mapping
representation $M\times \Omega\to \Omega$ driven by a probability $P$. Suppose further that $M$ is aperiodic
or, more generally, the minimal ideal $I$ of the submonoid $N$ generated by the support of $P$ is
aperiodic.  Then, all elements of $I$ act on $\Omega$ as constant
maps and every constant map on $\Omega$ is obtained via the action of some element of $I$.
In particular $\mathcal M$ is ergodic.
\end{corollary}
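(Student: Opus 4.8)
The statement is essentially a repackaging of Propositions~\ref{p:hasallconstant} and~\ref{p:aperiodichasconstants}, so the plan is to assemble those two results and supply the one small structural observation they do not already contain. First I would dispose of the reduction to the ``more general'' hypothesis: if $M$ is aperiodic then so is its submonoid $N=\langle\supp P\rangle$, and hence so is the subsemigroup $I$ (the minimal ideal of $N$), since aperiodicity passes to subsemigroups. So it suffices to treat the case where $I$ is aperiodic. Also, since $\mathcal M$ is irreducible, $N$ acts transitively on $\Omega$, as recorded just before Proposition~\ref{p:hasallconstant}.

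Next I would apply Proposition~\ref{p:aperiodichasconstants} to the transitive action of $N$ on $\Omega$, whose minimal ideal $I$ is aperiodic: this produces, for every $\omega\in\Omega$, an element of $I$ acting as the constant map with image $\omega$. In particular some element of $N$ acts as a constant map, so Proposition~\ref{p:hasallconstant} (taken with $N$ in the role of its monoid) applies and gives both that $N$ contains all constant maps on $\Omega$ and that $\mathcal M$ is ergodic. Moreover, reading the proof of Proposition~\ref{p:hasallconstant}, the constant map to any $\alpha\in\Omega$ is obtained as $m'e$ for an idempotent $e\in I$ realizing a constant map and a suitable $m'\in N$; since $I$ is an ideal, $m'e\in I$. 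Thus every constant map on $\Omega$ is realized by an element of $I$.

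It remains to show that \emph{every} element of $I$ acts as a constant map; this is the only point not already contained in the cited propositions. I would argue in one of two equivalent ways. (a) Structural: the minimal ideal $I$ of the finite monoid $N$ is completely simple, and an aperiodic completely simple semigroup is a rectangular band, so every element of $I$ is idempotent; by the proof of Proposition~\ref{p:aperiodichasconstants}, each idempotent of $I$ acts as a constant map, and we are done. (b) Ideal-theoretic, avoiding structure theory: let $\varphi\colon N\to T_\Omega$ be the homomorphism into the full transformation monoid afforded by the action, and set $\bar N=\varphi(N)$, $\bar I=\varphi(I)$. Then $\bar I$ is the minimal ideal of $\bar N$ (the image of the minimal ideal under a surjective homomorphism is the minimal ideal of the image, since preimages of ideals are ideals). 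On the other hand, the set $C$ of all constant maps on $\Omega$ is an ideal of $\bar N$: for $f\in\bar N$ one has $c_\omega f=c_\omega$ and $fc_\omega=c_{f(\omega)}$, where $c_\omega$ denotes the constant map to $\omega$; and $C$ is nonempty inside $\bar N$ by the previous paragraph. Minimality of $\bar I$ then forces $\bar I\subseteq C$, that is, every element of $I$ acts as a constant map (and in fact $\bar I=C$).

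There is no genuine obstacle here — the result really is an immediate corollary — and the only subtlety worth flagging is that the hypothesis concerns aperiodicity of $I$ alone (not of $N$), so one must either invoke ``an aperiodic minimal ideal is a rectangular band'' as in step~(a) or run the short ideal-image argument of step~(b), rather than appealing to any $\RR$-triviality of $N$.
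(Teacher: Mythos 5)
Your proof is correct, and its skeleton (reduce to the case where $I$ is aperiodic, apply Proposition~\ref{p:aperiodichasconstants} to the transitive action of $N$, then Proposition~\ref{p:hasallconstant} for ergodicity) is exactly the paper's. The one step you rightly flag as not contained in the cited propositions — that \emph{every} element of $I$ acts as a constant — is where you diverge. The paper handles it with a one-line argument entirely inside $N$: if $m\in I$ acts as a constant map, then every element of $ImI$ does too, since any map factoring through a constant map is constant; $ImI$ is an ideal contained in $I$, so $ImI=I$ by minimality. This avoids both of your devices: it needs neither the structure theory in your option (a) (minimal ideal completely simple, aperiodic completely simple $\Rightarrow$ rectangular band, so all elements idempotent), nor the passage to the faithful image $\bar N=\varphi(N)$ in your option (b) together with the fact that a surjective homomorphism carries the minimal ideal onto the minimal ideal of the image. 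Both of your variants are sound; (b) is closest in spirit to the paper, being the same minimality-of-ideals trick carried out in $\bar N$ with the ideal of constant maps (and the fact about images of minimal ideals is one the paper itself invokes later, in the proof of Corollary~\ref{t:couplingfromthepast}), while (a) buys generality at the cost of invoking Rees-type structure theory that the paper's background never states. A minor remark: your detour through the proof of Proposition~\ref{p:hasallconstant} to realize each constant map as $m'e\in I$ is unnecessary, since Proposition~\ref{p:aperiodichasconstants} already supplies, for every $\omega\in\Omega$, an element of $I$ acting as the constant map to $\omega$.
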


\begin{proof}
Since $\mathcal M$ is irreducible, the submonoid $N=\langle \supp P\rangle$ acts transitively on $\Omega$.
Assume that its minimal ideal $I$ is aperiodic. By Proposition~\ref{p:aperiodichasconstants}, $I$ contains
elements acting as every constant map on $\Omega$.  Note that if $m\in I$ acts as a constant map, then
each element of $ImI$ also acts as a constant map because any map factoring through a constant map
is a constant map.  As $ImI$ is an ideal contained in $I$, we must have $ImI=I$ by minimality of $I$.  Thus
every element of $I$ acts on $\Omega$ as a constant map. Ergodicity follows from Proposition~\ref{p:hasallconstant}.

Finally, note that if $M$ is aperiodic, then so is any of its subsemigroups.  In particular, $I$ will be aperiodic.
\end{proof}

\begin{remark}
Assume that the action of $M$ is faithful, as will be the case in
most of our examples. Then the minimal ideal $I$ of submonoid $N$ generated by the support of $P$ consists
precisely of the constant maps on $\Omega$ and hence is canonically in bijection with $\Omega$ (by sending
a constant map to its image) and, moreover, that bijection is an isomorphism of the action of $N$ on the left
of $I$ with the action of $N$ on $\Omega$.
\end{remark}

%%%%%%%%%%%%%%%%%%%%%%%%%%%%%%%%%%%%%%%%%%%%%%%%%%%%%%%%
\section{Random walks on monoids}
\label{section.random walks on monoids}
%%%%%%%%%%%%%%%%%%%%%%%%%%%%%%%%%%%%%%%%%%%%%%%%%%%%%%%%
A number of results from this section can be viewed as special cases of results about probability measures
on compact semigroups~\cite{mukherjea}, but it seems better in our context to just prove them.
Let $M$ be a finite monoid.  Denote by $\LO M$ the vector space of all functions $f\colon M\to \mathbb R$
equipped with the $\ell^1$-norm $\|\cdot\|_1$.  Then $\LO M$ is a finite-dimensional real Banach
algebra with respect to the convolution product
\[
	(f\ast g)(m) = \sum_{xy=m}f(x)g(y)\,.
\]
As an algebra, we can identify $\LO M$ with the monoid algebra $\mathbb RM$ via
$f\mapsto \sum_{m\in M}f(m)m$ and we shall do this when convenient.

A probability distribution $P$ on $M$ can be viewed as an element of $\LO M$.
The probability distributions form a compact multiplicative submonoid of $\LO M$.  Notice that if
$X$ and $Y$ are independent $M$-valued random variables with respective distributions $\nu$
and $\mu$, then the distribution of the random variable $X\cdot Y$ is $\nu\ast \mu$.
Recall that $P^{\ast n}$ denotes the $n^{th}$-convolution power of $P$, which is the distribution of
$X_nX_{n-1}\cdots X_1$ where $X_1,\ldots, X_n$ are independent random variables distributed according to $P$.

The \emph{left random walk on $M$ driven by $P$} is the Markov chain with random mapping representation
coming from the action of $M$ on itself by left multiplication and the probability $P$.  The right random walk is defined dually.

Suppose that $M$ acts on a finite set $\Omega$.  We can identify $\mathbb R^{\Omega}$ with
$\mathbb R\Omega$. We then have a natural $\LO M$-module structure on $\mathbb R\Omega$
given by having $f\in \LO M$ act on a basis element $\omega\in \Omega$ by
\[
	f\cdot \omega = \sum_{m\in M}f(m)m\omega\,.
\]
From the point of view of functions, for $f\in \LO M$, $g\in \mathbb R^{\Omega}$ and $\omega\in \Omega$, the module
structure is given by
\[
	(f\cdot g)(\omega) =\sum_{m\in M}\sum_{m\alpha=\omega}f(m)g(\alpha)\,.
\]
The following proposition is well known, but important.

\begin{proposition}\label{p:transitionmatrix}
Let $M$ act on $\Omega$ and let $P$ be a probability on $M$ (viewed as
an element of $\LO M$). Then, the transition matrix $T$
of the random walk of $M$ on $\Omega$ driven by $P$ is the matrix with respect to the basis $\Omega$
of the operator on $\mathbb R\Omega$ defined by $v\mapsto Pv$.  It follows that, if $\nu$ is a probability on
$\Omega$ (viewed as an element of $\mathbb R\Omega$), then $T^n\nu = P^{\ast n}\nu$.
\end{proposition}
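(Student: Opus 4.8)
The plan is simply to unwind the definitions and match conventions. For the first assertion, recall that the matrix of a linear operator $A$ on $\mathbb R\Omega$ with respect to the basis $\Omega$ has $(\alpha,\beta)$-entry equal to the coefficient of the basis vector $\alpha$ in $A\beta$. I would apply this with $A$ the operator $v\mapsto Pv$. Using the module structure on $\mathbb R\Omega$, one has $P\cdot\beta=\sum_{m\in M}P(m)\,m\beta$, so the coefficient of $\alpha$ in $P\cdot\beta$ is obtained by collecting all terms $m\beta$ equal to $\alpha$, namely $\sum_{m\colon m\beta=\alpha}P(m)=P(\{m\in M\mid m\beta=\alpha\})$. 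By the definition of the random walk of $M$ on $\Omega$ driven by $P$ given in Section~\ref{preliminaries}, this quantity is exactly $T(\alpha,\beta)$, so the matrix of $v\mapsto Pv$ is $T$.

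For the second assertion, I would first record that $\mathbb R\Omega$ is a module over the convolution algebra $\LO M$, that is, $(f\ast g)\cdot\omega=f\cdot(g\cdot\omega)$ for all $f,g\in\LO M$ and $\omega\in\Omega$. This is a one-line check: the left-hand side is $\sum_{m\in M}\bigl(\sum_{xy=m}f(x)g(y)\bigr)m\omega$ while the right-hand side is $\sum_{x\in M}f(x)\sum_{y\in M}g(y)\,(xy)\omega$, and the two agree after using the action axiom $(xy)\omega=x(y\omega)$ and reindexing by $m=xy$; the unit axiom $1\cdot\omega=\omega$ handles the base behaviour. Consequently the $n$-fold composite of the operator $v\mapsto Pv$ with itself is the operator $v\mapsto P^{\ast n}v$, since $P\ast P^{\ast(n-1)}=P^{\ast n}$. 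Because the matrix of $v\mapsto Pv$ is $T$, the matrix of its $n$-fold iterate is $T^n$, and evaluating on a probability $\nu\in\mathbb R\Omega$ gives $T^n\nu=P^{\ast n}\nu$.

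The computations here are entirely routine; there is no genuine obstacle. The only point demanding attention is keeping the conventions aligned: that $T$ is column stochastic and acts on $\mathbb R^\Omega\cong\mathbb R\Omega$ on the left, that $M$ acts on $\Omega$ on the left, and that $P^{\ast n}$ is the distribution of $X_nX_{n-1}\cdots X_1$, so that the left $M$-action, left convolution on $\LO M$, and left multiplication by $T$ are all mutually compatible. The proposition is essentially a bookkeeping lemma establishing the dictionary between the probabilistic language of Section~\ref{preliminaries} and the module-theoretic language used throughout the rest of the section.
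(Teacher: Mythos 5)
Your proof is correct and follows essentially the same route as the paper: the paper computes the coefficient of $\alpha$ in $P\beta$ exactly as you do, and the iterate statement follows from the $\LO M$-module structure on $\mathbb R\Omega$, which the paper takes as already established and you simply verify explicitly.
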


\begin{proof}
We have $P\beta =\sum_{m\in M}P(m)m\beta$ and thus the coefficient of $\alpha$ in
$P\beta$ is $\sum_{m\beta=\alpha}P(m)=T(\alpha,\beta)$.
\end{proof}

A crucial consequence of the proposition is that any $\LO M$-submodule of $\mathbb R\Omega$
is an invariant subspace for the transition matrix $T$.

Recall that the minimal ideal $I$ of a finite monoid $M$ is the disjoint union of all the minimal left
ideals of $M$~\cite{CP,Arbib}. Let us say that a probability $P$ on $M$ is \emph{adapted} if the submonoid
generated by the support of $P$ contains the minimal ideal.  Note that a probability on a group
is adapted if and only if the support generates the group, which is the usual definition in that context.
In general, if the support generates the monoid, then the probability is adapted but the converse
need not be true. The following result is straightforward and well known~\cite{mukherjea}, but
we include it for completeness.

\begin{proposition}\label{trivium}
Let $M$ be a finite monoid with minimal ideal $I$ and let $P$ be an adapted probability on $M$.
Then the recurrent states of the left random walk on $M$ driven by $P$ are the elements of $I$.
The essential communicating classes of the chain are the minimal left ideals of $M$.  The restriction
of the random walk to any minimal left ideal is irreducible.  Moreover, the
chain so obtained is independent of which minimal left ideal is chosen.
\end{proposition}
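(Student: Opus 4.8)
The plan is to analyze the left random walk driven by $P$ by comparing its behavior to the structure of the minimal ideal $I$, which (being the disjoint union of the minimal left ideals) provides a natural candidate for the recurrent part of the chain. First I would recall from Section~\ref{section.semigroups} that $I$ is an ideal, so $MI \subseteq I$, and in particular $mI \subseteq I$ for every $m$ in the support of $P$; this immediately shows that once the walk enters $I$ it never leaves, so every element of $I$ lies in an essential communicating class provided we can show the walk actually reaches $I$ from anywhere. Reachability is exactly where adaptedness is used: by hypothesis the submonoid $N = \langle \supp P\rangle$ contains $I$, and if $m \in I$ then, by Proposition~\ref{p:hasallconstant}-style reasoning applied to the left multiplication action of $M$ on itself (or directly, since $P^{\ast n}(m) > 0$ for some $n$ whenever $m \in N$), there is positive probability of passing from the start state $1$ — and hence, by left-multiplying, from any state — into $I$. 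This shows that no element outside $I$ can be recurrent (from $x \notin I$ one can reach $I$ but, since $Mx \not\subseteq I$ would be needed to return and $I$ being an ideal forbids leaving, one cannot come back), so the recurrent states are precisely the elements of $I$.

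Next I would identify the essential communicating classes with the minimal left ideals. Fix a minimal left ideal $L \subseteq I$; then $L = M\ell$ for any $\ell \in L$, and $M\ell = N\ell$ by adaptedness together with the fact that a minimal left ideal absorbs left multiplication. For $\ell, \ell' \in L$ I must produce $n \in N$ with $n\ell = \ell'$: since $L = M\ell$, write $\ell' = m\ell$ for some $m \in M$; because $I$ is the disjoint union of the minimal left ideals and $L$ is one of them, $m\ell \in L$, and adaptedness lets us replace $m$ by an element of $N$ having the same effect on $L$ (multiply $m$ on the right by an appropriate element of $I \subseteq N$ that acts as identity on $L$, using that $eMe$ structure for an idempotent $e \in L$; when $L$ is a left zero semigroup as in the $\RR$-trivial case this is even cleaner, but the argument goes through in general for the minimal ideal of an arbitrary finite monoid). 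Hence the restriction of the chain to $L$ is irreducible. Conversely, distinct minimal left ideals $L \neq L'$ are disjoint, and from $\ell \in L$ one has $m\ell \in L$ for all $m$ (left ideal), so there is no transition from $L$ to $L'$ — the minimal left ideals are exactly the essential communicating classes.

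Finally, for independence of the minimal left ideal chosen, I would use the standard fact that any two minimal left ideals $L, L'$ of a finite monoid are isomorphic as left $M$-sets: if $e \in L$ is idempotent and $f = ge$ for a suitable $g$ generating $L'$ as $L' = Ml$, then right multiplication $x \mapsto xf$ (or the appropriate partial translation) restricts to an $M$-equivariant bijection $L \to L'$. Transporting the probability $P$-driven transition structure across this equivariant bijection shows the two restricted chains are isomorphic as Markov chains, hence in particular have the same transition matrix up to relabeling.

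\emph{Main obstacle.} The delicate point is the claim that adaptedness — rather than the stronger hypothesis that $\supp P$ generates all of $M$ — suffices to make the restriction to a single minimal left ideal irreducible. The subtlety is that $N = \langle \supp P \rangle$ need not contain elements mapping a given $\ell \in L$ to a given $\ell' \in L$ \emph{directly}; one only knows $N \supseteq I$, so one must argue that the action of $N$ on $L$ already exhausts the action of $M$ on $L$. The key is that $I \subseteq N$ already acts transitively on each minimal left ideal $L$ (since $IL = I \cap \text{(stuff)}$; more precisely $I\ell = L$ because $I\ell$ is a left ideal contained in $L$ and $L$ is minimal, using $\ell \in I$), so transitivity of the restricted walk follows from $I \subseteq N$ alone. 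Getting this containment-and-minimality bookkeeping exactly right — and handling the case where $L$ is not aperiodic, so $eMe$ is a nontrivial group rather than $\{e\}$ — is the crux; everything else is routine manipulation of ideals in a finite monoid.
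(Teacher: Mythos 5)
Your proposal is correct and follows essentially the same route as the paper: adaptedness gives $I\subseteq\langle\supp P\rangle$, so the minimal left ideals are precisely the minimal strongly connected components of the left Cayley digraph with respect to $\supp P$ (which yields the recurrent states, the essential classes, and irreducibility on each minimal left ideal), and right multiplication via Green's lemma gives an $M$-equivariant bijection between any two minimal left ideals, transporting the chain. If anything, your argument that $I\ell=L$ by minimality handles the case of a nontrivial maximal subgroup in $I$ more carefully than the paper's one-line appeal to minimal left ideals being left zero semigroups.
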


\begin{proof}
Because $\langle \supp P\rangle$ contains $I$ and each minimal left ideal of $M$ is a left
zero semigroup, it follows that the minimal left ideals are precisely the minimal strong
components of the left Cayley digraph of $M$ with respect to the set $\supp P$.   This explains
the recurrent elements and the essential communicating classes.  By Green's lemma~\cite{Green},
any two minimal left ideals are isomorphic via right multiplication by a monoid element.  This
gives an isomorphism of the corresponding Markov chains.
\end{proof}

Let us assume from now that $M$ is a monoid whose minimal ideal is a left zero semigroup $\wh 0$,
that is, $mt=m$ for all $m\in \wh 0$ and $t\in M$.  Equivalently, the minimal ideal of $M$ is the unique
minimal left ideal of $M$ and has a trivial maximal subgroup.  As we have seen, this is the case for
aperiodic monoids acting faithfully and transitively on the left of a finite set. It is also the case for
$\mathscr R$-trivial monoids, which form the primary object of study for most of the paper.

If $\pi$ is a probability with support contained in $\wh 0$ and $P$ is any probability, then
since $\wh 0$ is a two-sided ideal, $\pi\ast P$ is supported on $\wh 0$ and one has, for
$m\in \wh 0$,
\begin{equation*}
	(\pi \ast P)(m) = \sum_{xy=m}\pi(x)P(y) =\pi(m)\sum_{y\in M}P(y)=\pi(m).
\end{equation*}
Thus we have proved:

\begin{lemma}\label{trivialobs}
If $M$ is a monoid whose minimal ideal $\wh 0$ is a left zero semigroup and if $\pi$ is a probability on
$M$ with support contained in $\wh 0$, then $\pi\ast P=\pi$ for any probability $P$ on $M$. In particular,
$\pi$ is idempotent.
\end{lemma}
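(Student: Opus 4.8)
The plan is to verify the two claimed identities directly from the definition of convolution, using only the left-zero property of the minimal ideal $\wh 0$ and the fact that $P$ sums to $1$. First I would recall that $\supp \pi \subseteq \wh 0$ means $\pi(x) = 0$ whenever $x \notin \wh 0$, so in any sum $\sum_{xy=m}\pi(x)P(y)$ only terms with $x \in \wh 0$ contribute. Since $\wh 0$ is a two-sided ideal, for such $x$ and any $y \in M$ we have $xy \in \wh 0$; in fact, because $\wh 0$ is a \emph{left} zero semigroup, $xy = x$ for all $x \in \wh 0$ and $y \in M$. Hence the constraint $xy = m$ in the convolution sum becomes simply $x = m$ (and this is only satisfiable when $m \in \wh 0$, which is the case of interest).

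Carrying this out, for $m \in \wh 0$ one computes
\[
	(\pi \ast P)(m) = \sum_{xy = m} \pi(x) P(y) = \sum_{y \in M} \pi(m) P(y) = \pi(m) \sum_{y \in M} P(y) = \pi(m),
\]
where the second equality uses $xy = x = m$ for $x \in \wh 0$ together with $\supp\pi \subseteq \wh 0$, and the last equality uses that $P$ is a probability distribution. For $m \notin \wh 0$, both $(\pi\ast P)(m)$ and $\pi(m)$ vanish: the left side because no product $xy$ with $x \in \wh 0$ can equal $m$, and the right side because $\supp\pi \subseteq \wh 0$. This establishes $\pi \ast P = \pi$ for every probability $P$ on $M$.

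For the final assertion, I would simply specialize $P = \pi$ in the identity just proved, giving $\pi \ast \pi = \pi$, i.e., $\pi$ is idempotent in $\LO M$ (equivalently in $\mathbb{R}M$). There is no real obstacle here; the only point that needs a moment's care is making sure the left-zero identity $xy = x$ is invoked with $x$ ranging over $\wh 0$ (the support of $\pi$) and $y$ arbitrary — which is exactly the hypothesis stated just before the lemma — rather than the other way around. The rest is a one-line manipulation of the convolution formula.
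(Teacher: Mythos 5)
Your proposal is correct and is essentially the same argument as the paper's: both exploit that $xy=x$ for $x\in\wh 0$ to collapse the convolution sum $\sum_{xy=m}\pi(x)P(y)$ to $\pi(m)\sum_{y\in M}P(y)=\pi(m)$, with idempotency obtained by taking $P=\pi$. Your only addition is the (harmless, routine) explicit check that both sides vanish for $m\notin\wh 0$.
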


We can now describe in the following theorem the stationary
distribution for a random walk on a monoid whose minimal ideal is a
left zero semigroup, and derive in
Corollary~\ref{t:couplingfromthepast} a bound on mixing times of
Markov chains with a random mapping representation containing constant
maps. Roughly speaking, the mixing time is bounded by the
probability that a product of $n$ elements does not act as a
constant. This is essentially a variation of a technique that goes
under the name ``coupling from the past'' in the literature and can be
found in~\cite{DiaconisBrown} for the case when the action of $M$ is
faithful. It is the key tool we shall use to obtain mixing times.

\begin{theorem}\label{idempotentconvergence}
  Let $M$ be a finite monoid whose minimal ideal $\wh 0$ is a left
  zero semigroup, and let $P$ be an adapted probability on $M$. Then,
\begin{enumerate}
\item The sequence $P^{\ast n}$ converges to an idempotent probability $\pi$ with support $\wh 0$ and
\begin{equation}\label{firstconv}
  \|P^{\ast n}-\pi\|_{TV} = P^{\ast n}(M\setminus \wh 0)\,.
\end{equation}
\item The random walk on $\wh 0$ driven by $P$ is ergodic with $\pi$
  as stationary distribution. Moreover, for any distribution $\nu$ on
  $\wh 0$,
\begin{equation}\label{sndconv}
  \|P^{\ast n}\ast\nu-\pi\|_{TV}\leq P^{\ast n}(M\setminus \wh 0)\,.
\end{equation}
\end{enumerate}
\end{theorem}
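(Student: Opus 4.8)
The plan is to exploit the compactness of the space $\mathcal P(M)$ of probability distributions on $M$ together with the structure of the minimal ideal $\wh 0$. First I would observe that since $P$ is adapted, $\langle \supp P\rangle$ contains $\wh 0$, so there is some $N$ with $P^{\ast N}(\wh 0)>0$; since $\wh 0$ is a two-sided ideal, $P^{\ast n}(\wh 0)$ is non-decreasing in $n$ and, because the complement $M\setminus\wh 0$ is a subsemigroup-free set in the sense that any product landing in $\wh 0$ stays there, a standard argument (or an appeal to Corollary~\ref{c:aperiodicchains} applied to the left regular action) gives $P^{\ast n}(M\setminus\wh 0)\to 0$ as $n\to\infty$. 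Then I would decompose $P^{\ast n} = P^{\ast n}|_{\wh 0} + P^{\ast n}|_{M\setminus\wh 0}$, write the first piece as $\rho_n$ and note $\|\rho_n\|_1 = P^{\ast n}(\wh 0)\to 1$, so $P^{\ast n}(M\setminus\wh 0)\to 0$ forces $P^{\ast n}$ to have all its mass asymptotically concentrated on $\wh 0$.

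Next I would identify the limit. By compactness of $\mathcal P(M)$, any subsequential limit $\pi$ of $P^{\ast n}$ is supported on $\wh 0$, since the mass off $\wh 0$ vanishes. For such a $\pi$, Lemma~\ref{trivialobs} gives $\pi\ast P = \pi$, and also $P\ast\pi = \pi$ by a symmetric computation using that $\wh 0$ is a \emph{left} ideal and is a left zero semigroup (for $m\in\wh 0$ and $x\in M$, $xm\in\wh 0$, and summing appropriately recovers $\pi(m)$ — this is where left-zero-ness is used). Hence $\pi$ is a left and right identity for $P$ in the convolution algebra restricted to the relevant subspace, and since $P^{\ast n}\ast\pi = \pi$ for all $n$, passing to the limit along \emph{any} subsequence shows every subsequential limit equals $\pi\ast\pi$ and in fact all subsequential limits coincide; thus $P^{\ast n}\to\pi$, and $\pi\ast\pi = \pi$ by Lemma~\ref{trivialobs} again, so $\pi$ is idempotent. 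The total variation identity~\eqref{firstconv} is then immediate from Proposition~\ref{p:totalvariationdist}: since $\pi$ is supported on $\wh 0$ and $\pi\geq P^{\ast n}$ cannot fail on $\wh 0$ in the obvious way, one computes $\|P^{\ast n}-\pi\|_{TV}$ by taking $A = M\setminus\wh 0$ (the set where $P^{\ast n}$ exceeds $\pi$, as $\pi$ vanishes there) giving $\|P^{\ast n}-\pi\|_{TV} = P^{\ast n}(M\setminus\wh 0) - \pi(M\setminus\wh 0) = P^{\ast n}(M\setminus\wh 0)$.

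For part (2), I would first note that by Proposition~\ref{trivium} the restriction of the left random walk to the minimal left ideal $\wh 0$ is irreducible, and ergodicity follows once we know $P^{\ast n}(\wh 0)>0$ for all large $n$, which we established above; alternatively ergodicity follows directly from Corollary~\ref{c:aperiodicchains} since the minimal ideal of $\langle\supp P\rangle$ is a left zero semigroup, hence aperiodic. That $\pi$ is the stationary distribution follows from $P\ast\pi = \pi$. For the bound~\eqref{sndconv}, given a distribution $\nu$ on $\wh 0$, I would write $P^{\ast n}\ast\nu - \pi = (P^{\ast n} - \pi)\ast\nu$ using $\pi\ast\nu = \pi$ (true because $\pi$ is supported on $\wh 0$, $\nu$ is a probability, and $\wh 0$ is left zero: $(\pi\ast\nu)(m) = \sum_{xy=m}\pi(x)\nu(y) = \pi(m)$ for $m\in\wh 0$). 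Since convolution by a probability distribution $\nu$ is a contraction for the $\ell^1$-norm, $\|(P^{\ast n}-\pi)\ast\nu\|_1 \leq \|P^{\ast n}-\pi\|_1$, and halving via Proposition~\ref{p:totalvariationdist} together with part (1) yields $\|P^{\ast n}\ast\nu - \pi\|_{TV} \leq \|P^{\ast n}-\pi\|_{TV} = P^{\ast n}(M\setminus\wh 0)$.

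The main obstacle I anticipate is the convergence $P^{\ast n}(M\setminus\wh 0)\to 0$: one must rule out the mass staying trapped off the minimal ideal forever. The cleanest route is to notice that $M\setminus\wh 0$ need not be a subsemigroup, but once a product of $P$-factors enters $\wh 0$ it never leaves (as $\wh 0$ is a two-sided ideal), so $\{n : P^{\ast n}(\wh 0) \geq c\}$ is upward closed for suitable thresholds; combined with $P^{\ast N}(\wh 0)>0$ for some $N$ (adaptedness) and a pigeonhole/recurrence argument on finitely many states as in the proof of Proposition~\ref{p:hasallconstant}, one gets $P^{\ast n}(\wh 0)\to 1$. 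Everything else is routine manipulation in the Banach algebra $\LO M$ using Lemma~\ref{trivialobs} and the left-zero property.
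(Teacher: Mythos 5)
Your overall architecture matches the paper's in its second half (Lemma~\ref{trivialobs} to get $\pi\ast\nu=\pi$, submultiplicativity of the $\ell^1$-norm, and Proposition~\ref{p:totalvariationdist}), but your convergence argument has a genuine gap. You claim that a subsequential limit $\pi$ supported on $\wh 0$ satisfies $P\ast\pi=\pi$ ``by a symmetric computation'' from the left-zero property, and then that $P^{\ast n}\ast\pi=\pi$ for all $n$. This is backwards: the left-zero property says $mt=m$ for $m\in\wh 0$ and $t\in M$, i.e.\ \emph{right} multiplication acts trivially on $\wh 0$; that is exactly Lemma~\ref{trivialobs} and yields $\pi\ast P=\pi$, not $P\ast\pi=\pi$. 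Left multiplication does \emph{not} fix elements of $\wh 0$ --- it is precisely the nontrivial dynamics of the random walk on $\wh 0$ --- so for a generic probability $\pi$ on $\wh 0$ one has $P\ast\pi\neq\pi$ (in the free left regular band on $\{a,b\}$, $\delta_b\ast\delta_{ab}=\delta_{ba}$). The identity $P\ast\pi=\pi$ is the stationarity of $\pi$, a conclusion of the theorem, not a formal consequence of $\supp\pi\subseteq\wh 0$. A second, related soft spot: the equality \eqref{firstconv} requires $P^{\ast n}(m)\leq\pi(m)$ for every $m\in\wh 0$ (otherwise the maximizing set $A$ in Proposition~\ref{p:totalvariationdist} is strictly larger than $M\setminus\wh 0$ and you only get an inequality); ``cannot fail in the obvious way'' is not an argument.

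Both gaps are closed by the one observation your proposal is missing, which is the engine of the paper's proof: for $m\in\wh 0$,
\[
P^{\ast(n+1)}(m)=\sum_{xy=m}P^{\ast n}(x)P(y)\;\geq\; P^{\ast n}(m)\sum_{y\in M}P(y)=P^{\ast n}(m)\,,
\]
since the terms with $x=m$ already contribute everything ($my=m$ for all $y$). Hence $P^{\ast n}(m)$ is non-decreasing on $\wh 0$ and converges pointwise there with no compactness or subsequences needed; the limit dominates every $P^{\ast n}$ on $\wh 0$, which fixes \eqref{firstconv}; and combined with the transience of $M\setminus\wh 0$ (Proposition~\ref{trivium}) and adaptedness you obtain full convergence $P^{\ast n}\to\pi$ with $\supp\pi=\wh 0$. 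Stationarity $P\ast\pi=\pi$ then follows from full convergence and norm-continuity of convolution, since $P\ast P^{\ast n}=P^{\ast(n+1)}$. If you insist on the compactness route, you must separately prove that consecutive convolution powers become asymptotically close (for instance by splitting $P^{\ast n}$ into its restrictions to $\wh 0$ and to $M\setminus\wh 0$ and applying the right-invariance to the first piece); as written, your identification of the limit does not go through.
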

\begin{proof}
  Recall that, by Proposition~\ref{trivium}, $\wh 0$ is the set of
  recurrent elements for the left random walk on $M$ driven by
  $P$. Therefore, for $m\notin \wh 0$, the sequence $P^{\ast n}(m)$
  converges to zero as $n \to \infty$. Take now $m\in \wh 0$. The
  sequence $P^{\ast n}(m)$ is non-decreasing: indeed, since $mt=m$ for
  all $t\in M$,
  \[
  P^{\ast (n+1)}(m)\geq P^{\ast n}(m)\sum_{t\in M}P(t) = P^{\ast n}(m)\,.
  \]
  Moreover, since $P$ is adapted, there exists $n>0$ such that
  $P^{\ast n}(m)>0$. Finally, the sequence $P^{\ast n}(m)$ is bounded
  by $1$ and therefore converges to some real number $\pi(m)$ with
  $0<\pi(m)\leq 1$.
  Altogether, using that probability distributions are closed in $\LO
  M$ in conjunction with Lemma~\ref{trivialobs}, we obtain that $P^{\ast n}$
  converges to an idempotent probability $\pi$ with support $\wh 0$.

  Observe that the set $A$ of elements of $M$ on which $P^{\ast n}$ is
  greater than $\pi$ is precisely $M\setminus \wh 0$.
  Proposition~\ref{p:totalvariationdist} then implies that
  \begin{equation}\label{bound}
    \|P^{\ast n}-\pi\|_{TV} = P^{\ast n}(M\setminus \wh 0)-\pi(M\setminus \wh 0) = P^{\ast n}(M\setminus \wh 0)\,.
  \end{equation}

  Let us now turn to (2). Since $P^{\ast n}\to \pi$ and multiplication
  in $\LO M$ is norm-continuous, $\pi$ commutes with $P$. Combining
  this with Lemma~\ref{trivialobs} gives that $P\ast \pi = \pi\ast
  P=\pi$. Therefore, $\pi$ is the unique stationary distribution for
  the left random walk on $\wh 0$ driven by $P$ (uniqueness is given
  by the irreducibly of the walk and Proposition~\ref{trivium}). The
  random walk is ergodic by Proposition~\ref{p:hasallconstant}.

  To conclude, take any initial distribution $\nu$ on $\wh 0$. Using
  successively Lemma~\ref{trivialobs}, that the $\ell^1$-norm is
  submultiplicative, that probabilities have $\ell^1$-norm $1$, and
  Equation~\eqref{bound} we obtain as desired:
\begin{equation}\label{equationstationary}
\begin{split}
	\|P^{\ast n}\ast\nu -\pi\|_{TV} &= \|P^{\ast n}\ast\nu -\pi\ast\nu\|_{TV}\\
        &= \frac{1}{2}\|P^{\ast n}\ast\nu -\pi\ast\nu\|_1\\
	&\leq \frac{1}{2}\|P^{\ast n}-\pi\|_1\cdot \|\nu\|_1 \\
        &= \|P^{\ast n}-\pi\|_{TV}\\
        &= P^{\ast n}(M\setminus \wh 0)\,.\qedhere
\end{split}
  \end{equation}
\end{proof}

\begin{corollary}\label{t:couplingfromthepast}
Let $\mathcal M=(\Omega,T)$ be an irreducible Markov chain with random mapping representation
$M\times \Omega\to \Omega$ driven by a probability $P$. Suppose, moreover, that $M$ contains an
element acting as a constant on $\Omega$ (e.g., if the minimal ideal of $M$ is aperiodic) and that
$P$ is adapted.  Then, the following hold.
\begin{enumerate}
\item $\mathcal M$ is ergodic.
\item Let $\mu$ be the stationary distribution for the random walk of $M$ on a minimal left ideal $L$
driven by $P$ and let $\pi$ be the stationary distribution for $\mathcal M$. Then
\begin{equation}\label{lumpeddist}
	\pi(\omega) = \sum_{\{x\in L\mid x\omega=\omega\}}\mu(x)\,.
\end{equation}
\item Let $I$ be the ideal of those elements of $M$ acting as constant maps on $\Omega$.  Then, for
any probability distribution $\nu$ on $\Omega$, we have
\begin{equation}\label{sndconv2}
	\|T^n\nu-\pi\|_{TV}\leq P^{\ast n}(M\setminus I)\,.
\end{equation}
\end{enumerate}
\end{corollary}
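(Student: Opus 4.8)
The plan is to reduce to the case of a faithful action and then read off all three claims from Theorem~\ref{idempotentconvergence}. Since the transition matrix $T$, the chain $\mathcal M$, the random walk on a minimal left ideal, the stationary distributions, and the numbers $P^{\ast n}(M\setminus I)$ all depend only on how the elements of $M$ act on $\Omega$, I would first observe that replacing $M$ by its image $\ov M$ in the monoid of self-maps of $\Omega$ (the transition monoid of the action) and $P$ by its pushforward $\ov P$ changes none of these quantities; adaptedness survives because a surjective homomorphism of finite monoids carries the minimal ideal onto the minimal ideal. So I may assume $M$ acts faithfully on $\Omega$. Next I would pin down the minimal ideal $\wh 0$ of $M$: as $\wh 0$ lies inside the ideal generated by any element acting as a constant map, every element of $\wh 0$ is a constant map; conversely, since $N=\langle\supp P\rangle$ acts transitively (irreducibility of $\mathcal M$) and $\wh 0\subseteq N$ (adaptedness), one checks that the set of all constant maps on $\Omega$ is itself a minimal ideal of $M$, hence equals $\wh 0$. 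Thus $\wh 0=I$ is a left zero semigroup, and — because $Mx=\wh 0$ for every $x\in\wh 0$ — it is the unique minimal left ideal, so $\wh 0=I=L$.

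Claim (1) is then immediate: $\mathcal M$ is irreducible and $N$ contains $\wh 0$, hence a constant map, so Proposition~\ref{p:hasallconstant} gives ergodicity. For (2) and (3) I would apply Theorem~\ref{idempotentconvergence} to $M$ with the adapted probability $P$, writing $\mu$ for the idempotent limit it calls $\pi$: then $P^{\ast n}\to\mu$ in $\LO M$ with $\supp\mu=\wh 0$, we have $\|P^{\ast n}-\mu\|_{TV}=P^{\ast n}(M\setminus\wh 0)$, and $\mu$ is the stationary distribution of the ergodic walk on $\wh 0=L$, i.e. exactly the measure in the statement. By Proposition~\ref{p:transitionmatrix}, $T^n\nu=P^{\ast n}\cdot\nu$ for every probability $\nu$ on $\Omega$; since the action of $\LO M$ on $\mathbb R\Omega$ is submultiplicative for the $\ell^1$-norms, hence norm-continuous, $T^n\nu\to\mu\cdot\nu$, while ergodicity forces $T^n\nu\to\pi$. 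Therefore $\pi=\mu\cdot\nu$ for every such $\nu$.

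From here, (2) follows by writing $v(x)\in\Omega$ for the value of the constant map $x\in\wh 0$, noting that $x\cdot\nu=v(x)$, and extracting the coefficient of $\omega$ in $\pi=\mu\cdot\nu=\sum_{x\in\wh 0}\mu(x)\,v(x)$, using that $v(x)=\omega$ precisely when $x\omega=\omega$. For (3), fix a probability $\nu$ on $\Omega$; combining $\pi=\mu\cdot\nu$, Proposition~\ref{p:totalvariationdist}, submultiplicativity, $\|\nu\|_1=1$, and $\|P^{\ast n}-\mu\|_{TV}=P^{\ast n}(M\setminus\wh 0)=P^{\ast n}(M\setminus I)$ yields
\[
\|T^n\nu-\pi\|_{TV}=\tfrac12\bigl\|(P^{\ast n}-\mu)\cdot\nu\bigr\|_1\le\tfrac12\|P^{\ast n}-\mu\|_1=\|P^{\ast n}-\mu\|_{TV}=P^{\ast n}(M\setminus I),
\]
which is the claimed bound.

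The step I expect to require the most care is the reduction in the first paragraph: one must verify that passing to the transition monoid leaves every quantity appearing in the statement unchanged (including identifying $L$ and $I$ across the quotient), and that the minimal ideal one then works with is genuinely a left zero semigroup coinciding with both $I$ and $L$, so that Theorem~\ref{idempotentconvergence} is literally applicable. Once that bookkeeping is in place, (1)--(3) are a routine assembly of the preceding propositions and the theorem.
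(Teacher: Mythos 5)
Your proposal is correct, and its part (3) is literally the paper's argument: the paper also passes to the transition monoid $\varphi\colon M\to \mathcal T_{\Omega}$, observes that the pushforward probability is adapted (surjections carry minimal ideals onto minimal ideals), and applies Theorem~\ref{idempotentconvergence}, using $\varphi^{-1}(N\setminus J)=M\setminus I$. The difference is in (2) and in the organization. The paper does not reduce to the faithful case for (2); instead it defines the lumping map $\Psi\colon L\to\Omega$ by $x\Omega=\{\Psi(x)\}$, checks $\Psi(mx)=m\Psi(x)$ so that $\Psi$ extends to an $\LO M$-module map, and gets $\pi=\Psi(\mu)$ purely algebraically from $T\Psi(\mu)=\Psi(P\mu)=\Psi(\mu)$ together with uniqueness of the stationary distribution; you instead obtain (2), after the faithful reduction, from the limit identity $\pi=\lim_n T^n\nu=\mu\cdot\nu$, which is equally valid. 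Be aware, though, that the ``bookkeeping'' you defer is not empty: you must check that $\varphi(L)$ is the unique minimal left ideal $\wh 0$ of the faithful quotient, that the stationary distribution $\mu$ of the walk of $M$ on $L$ pushes forward along $\varphi|_L$ to the stationary distribution of the walk on $\wh 0$ (stationarity of the pushforward plus uniqueness, via Proposition~\ref{trivium}), and that $\{x\in L\mid x\omega=\omega\}$ is exactly the $\varphi|_L$-fiber of the corresponding set downstairs. That verification is precisely the content of the paper's map $\Psi$ (which is the composite $L\to\varphi(L)\to\Omega$), so once you write it out the two proofs essentially coincide; what your arrangement buys is a single up-front reduction and the clean identification $\wh 0=I=L$ in the faithful setting, while the paper's arrangement handles (2) with no reduction and no limiting argument.
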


\begin{proof}
The first item is part of Proposition~\ref{p:hasallconstant}.
The idea for the second item is that $\mathcal M$ is a lumping of the random walk of $M$ on $L$.
The set $I$ of elements of $M$ acting as a constant map is an ideal and hence contains the minimal
ideal (and consequently $L$). Let $\Psi\colon L\to \Omega$ be defined by $x\Omega=\{\Psi(x)\}$ for
$x\in L$.  It is easily checked that $\Psi(mx)=m\Psi(x)$ for all $x\in L$ and $m\in M$.  It follows that
$\Psi$ induces an $\LO M$-module homomorphism $\Psi\colon \mathbb RL\to \mathbb R\Omega$.
We claim that $\pi=\Psi(\mu)$.

First note that $\Psi(\mu)$ is a probability distribution.  Indeed, it is easy to check that
$\Psi(\mu)(\omega) = \mu(\Psi^{-1}(\omega))$, which is the right hand side of~\eqref{lumpeddist}.
Next we have that $T\Psi(\mu) = P\Psi(\mu)=\Psi(P\mu)=\Psi(\mu)$ and hence $\pi=\Psi(\mu)$.
This establishes the second item.

To prove the third item, observe that the action of $M$ on $\Omega$ induces a homomorphism
$\varphi\colon M\to \mathcal T_{\Omega}$. Let $N=\varphi(M)$.  Then $N$ acts faithfully on
$\Omega$ and, in particular, the minimal ideal $J$ of $N$ is a left zero semigroup consisting of
the constant maps on $\Omega$ (cf.~Proposition~\ref{p:hasallconstant}).  Let $Q$ be the probability
on $N$ defined by $Q(n) = P(\varphi\inv(n))$; so $Q(A)=P(\varphi\inv(A))$ for any $A\subseteq N$.
As a surjective monoid homomorphism maps minimal ideals onto minimal ideals, it follows that $Q$
is adapted.   Observe that if $\Phi\colon \LO M\to \LO N$ is the homomorphism induced by
$\delta_m\mapsto \delta_{\varphi(m)}$ (i.e., $(\Phi(f))(n) = \sum_{m\in \varphi^{-1}(n)}f(m)$), then $Q=\Phi(P)$.
It is then easy to see that $\mathcal M$ is the random walk of $N$ on $\Omega$ driven by $Q$, which is isomorphic to
the random walk of $N$ on $J$ driven by
$Q$. Theorem~\ref{idempotentconvergence} then yields as desired that, for any probability $\nu$ on $\Omega$,
\[
	\|T^n\nu-\pi\|_{TV}\leq Q^{\ast n}(N\setminus J)=P^{\ast n}(\varphi\inv(N\setminus J)) = P^{\ast n}(M\setminus I)\,.\qedhere
\]
\end{proof}

The following lemma provides a technique for applying Corollary~\ref{t:couplingfromthepast}.  It is based on the same
arguments as in~\cite{ayyer_schilling_steinberg_thiery.sandpile.2013}[Sections 2.4 and 5.3]
and \cite{ayyer_klee_schilling.2013}[Section 6].

\begin{lemma}\label{l:statisticbound}
Let $\mathcal M=(\Omega,T)$ be an irreducible Markov chain with random mapping representation
$M\times \Omega\to \Omega$ driven by a probability $P$. Let $\pi$ be the stationary distribution.  Suppose that $M$ contains
an element acting as a constant on $\Omega$ and that $P$ is adapted.  Let $f\colon M\to \mathbb N$ be a function, called
a \emph{statistic}, such that:
\begin{enumerate}
\item $f(mm')\leq f(m)$ for all $m,m'\in M$;
\item if $f(m)>0$, then there exists $m'\in M$ with $P(m')>0$ and $f(mm')<f(m)$;
\item $f(m)=0$ if and only if $m$ acts as a constant on $\Omega$.
\end{enumerate}
Then if $p=\min\{P(m)\mid m\in M, P(m)>0\}$  and $n=f(1)$, we have that
\[
  \|T^k\nu -\pi\|_{TV} \le \sum_{i=0}^{n-1} {k\choose i}p^i(1-p)^{k-i}
  \leq \exp\left(-\frac{(kp-(n-1))^2}{2kp}\right)\,,
  \]
for any probability distribution $\nu$ on $\Omega$, where the last inequality holds as long as $k\ge (n-1)/p$.
\end{lemma}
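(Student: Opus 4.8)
The plan is to bound the total variation distance using the coupling-from-the-past estimate already packaged as Corollary~\ref{t:couplingfromthepast}, and then to control the resulting convolution tail by comparison with a binomial random variable built from the ``good moves'' furnished by hypothesis~(2). The hypotheses of Corollary~\ref{t:couplingfromthepast} hold here verbatim --- irreducibility, the presence of an element acting as a constant on $\Omega$, and adaptedness of $P$ --- so its third item gives $\|T^k\nu-\pi\|_{TV}\le P^{\ast k}(M\setminus I)$, where $I$ is the ideal of elements of $M$ acting as constant maps on $\Omega$. By hypothesis~(3) on the statistic, $M\setminus I=\{m\in M\mid f(m)>0\}$, so the whole problem reduces to bounding $P^{\ast k}(\{m\mid f(m)>0\})$. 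Realizing $P^{\ast k}$ as the law of $Z_k:=X_1X_2\cdots X_k$, where $X_1,X_2,\dots$ are independent with common law $P$ (the order of the factors is irrelevant to the law since the $X_i$ are identically distributed), this is $\Pr[f(Z_k)>0]$.

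I would then follow the partial products $Z_0=1$, $Z_j=Z_{j-1}X_j$ relative to the filtration $\mathcal F_j=\sigma(X_1,\dots,X_j)$. For each $j$, using hypothesis~(2) choose $m_j'$ as a deterministic (hence $\mathcal F_{j-1}$-measurable) function of $Z_{j-1}$ with $P(m_j')>0$ and, whenever $f(Z_{j-1})>0$, also $f(Z_{j-1}m_j')<f(Z_{j-1})$; when $f(Z_{j-1})=0$ let $m_j'$ be any fixed element of $\supp P$. Put $\xi_j=\mathbbm{1}[X_j=m_j']$ and $N_k=\xi_1+\cdots+\xi_k$. Two observations complete the argument. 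First, hypothesis~(1) forces $f(Z_j)\le f(Z_{j-1})$ for all $j$, and on $\{\xi_j=1\}\cap\{f(Z_{j-1})>0\}$ one gets the strict drop $f(Z_j)\le f(Z_{j-1})-1$; since $f(Z_0)=f(1)=n$ and $f$ is $\mathbb N$-valued and non-increasing along $(Z_j)$, once $n$ strict drops have occurred $f$ has reached $0$, whence $\{f(Z_k)>0\}\subseteq\{N_k\le n-1\}$. Second, since $X_j$ is independent of $\mathcal F_{j-1}$ while $m_j'$ is $\mathcal F_{j-1}$-measurable, $\Pr[\xi_j=1\mid\mathcal F_{j-1}]=P(m_j')\ge p$, so the standard coupling of a $\{0,1\}$-valued adapted sequence with independent $\mathrm{Bernoulli}(p)$ variables shows that $N_k$ stochastically dominates $\mathrm{Bin}(k,p)$.

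Combining the two observations yields $\Pr[f(Z_k)>0]\le\Pr[N_k\le n-1]\le\Pr[\mathrm{Bin}(k,p)\le n-1]=\sum_{i=0}^{n-1}\binom{k}{i}p^i(1-p)^{k-i}$, the first claimed bound. For the second, apply the multiplicative Chernoff bound to the lower tail of $\mathrm{Bin}(k,p)$: with mean $\mu=kp$ and $\delta=(kp-(n-1))/(kp)$ --- which satisfies $\delta\le 1$ always and $\delta\ge 0$ exactly when $k\ge(n-1)/p$ --- one has $(1-\delta)\mu=n-1$ and $\Pr[\mathrm{Bin}(k,p)\le(1-\delta)\mu]\le\exp(-\delta^2\mu/2)=\exp\!\big(-(kp-(n-1))^2/(2kp)\big)$.

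The routine parts are the identification of the law of $Z_k$ with $P^{\ast k}$ and the Chernoff estimate. The delicate point is the bookkeeping behind $\{f(Z_k)>0\}\subseteq\{N_k\le n-1\}$ together with the stochastic domination: because the good move $m_j'$, and hence $\xi_j$, depends on the past, the $\xi_j$ are not independent, so the comparison with $\mathrm{Bin}(k,p)$ must be made conditionally on $\mathcal F_{j-1}$ rather than by pretending the $\xi_j$ are i.i.d.
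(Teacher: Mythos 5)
Your proposal is correct and follows essentially the same route as the paper's proof: reduce to $P^{\ast k}(M\setminus I)$ via Corollary~\ref{t:couplingfromthepast}, count steps of the right random walk that strictly decrease the statistic, compare with at most $n-1$ successes among $k$ Bernoulli$(p)$ trials, and finish with Chernoff. The only difference is that you make the binomial comparison rigorous via conditioning on the filtration and stochastic domination, where the paper argues this step more informally.
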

\begin{proof}
Corollary~\ref{t:couplingfromthepast} yields $\|T^k\nu-\pi\|_{TV}\leq P^{\ast k}(M\setminus I)$ where $I$ is the ideal of
elements of $M$ acting as constant maps.  Consider the right random walk on $M$ driven by $P$,  that is, the Markov
chain whose state set is $M$ and if we are in state $m$, then we choose a random element $m'\in M$ distributed according
to $P$ and move to $mm'$.  Then $P^{\ast k}(M\setminus I)$ is the probability that if we start at $1$, then on step $k$ of the
right random walk on $M$ we are not in $I$.

Let us call a step $m_i\mapsto m_{i+1}$ in the right random walk on $M$ \emph{successful} if either $m_i\in I$ or
$f(m_{i+1})<f(m_i)$. Property~1 of $f$ implies that $f(m_i)=f(m_{i+1})$ if the step is not successful. By Property~3, if
$m$ is the current state after $k$-steps of the walk, then the probability $f(m)>0$ is precisely $P^{\ast k}(M\setminus I)$.
The probability that $f(m)>0$ after $k$ steps of the right random walk on $M$ is less than the probability of having at most
$n-1$ successful steps in the first $k$ steps.

Property~2 of $f$ says that each step has probability at least $p$ to be successful.  Therefore, the probability that $f(m)>0$
after $k$ steps of the right random walk on $M$ is bounded above by the probability of having at most $n-1$ successes
in $k$ Bernoulli trials with success probability $p$.

Using Chernoff's inequality for the cumulative distribution function of a binomial random variable we obtain
that (see for example~\cite[after Theorem~2.1]{devroye_lugosi.2001})
\[
  ||P^k-\pi ||_{TV} \le \sum_{i=0}^{n-1} {k\choose i}p^i(1-p)^{k-i}
  \leq \exp\left(-\frac{(kp-(n-1))^2}{2kp}\right)\,,
\]
where the last inequality holds as long as $k\ge (n-1)/p$.
\end{proof}

%%%%%%%%%%%%%%%%%%%%%%%%%%%%%%%%%%%%%%%%%%%%%%%%%%%%%%%%
\section{Generalities on $\RR$-trivial random walks}
%%%%%%%%%%%%%%%%%%%%%%%%%%%%%%%%%%%%%%%%%%%%%%%%%%%%%%%%
\label{rtrivial-markov-chain}

From now on we confine our attention to $\RR$-trivial monoids, which
form a class rich enough to contain many interesting examples, but
restrictive enough to provide a workable theory. In particular, this
theory subsumes the left regular band theory of Brown~\cite{BrownLRB}.

%%%%%%%%%%%%%%%%%%%%%%%%%%%%%%%%%%%%%%%%%%%%%%%%%%%%%%%%%%%%%%
\subsection{The spectrum of the transition matrix}
\label{subsection.spectrum}

The spectra of random walks on minimal left ideals of a fairly general
class of monoids -- those with simple modules of dimension $1$ -- was
computed in~\cite{mobius1,mobius2}. We recap here for completeness the
special case of $\mathscr R$-trivial monoids, where no group theoretic considerations intervene.

Suppose that $M$ is a finite $\mathscr R$-trivial monoid. Let
\[
\LeftIdeal(M)=\{Mm\mid m\in M\}
\] be the poset
of principal left ideals of $M$ ordered by inclusion.  Note that
$M/{\mathscr L}$ is partially ordered by $\leq_{\mathscr L}$ and is isomorphic to $\LeftIdeal(M)$.

Let
\[
	\Lambda(M)=\{Me\mid e\in E(M)\}
\]
be the subposet of idempotent-generated principal left ideals.
It is well known that $\Lambda(M)$ is a lattice and that $Me\wedge Mf=M(ef)^{\omega}$.  Moreover,
the mapping $c\colon M\to \Lambda(M)$ defined by $c(m)= Mm^{\omega}$ is a homomorphism
(details can be found, for example, in~\cite{MargolisSteinbergQuiver}). Sometimes $c$ is called the \emph{content map}.

Define $d\colon M\to \Lambda(M)$ by $d(m)=Me$ where $e$ is any element of the minimal ideal
of the right stabilizer of $m$.  One has that $mt=m$ if and only if $c(t)\geq d(m)$. Sometimes $d$ is called the \emph{right descent map}.

The mappings $c,d$ descend to order
preserving maps $c,d\colon \LeftIdeal(M)\to \Lambda(M)$ with
\begin{align*}
c(Mm) &= \bigvee \{Me\in \Lambda(M)\mid Me\leq Mm\}\\
d(Mm) &= \bigwedge \{Me\in \Lambda(M)\mid Mm\leq Me\}
\end{align*}
and so in particular $c(Mm)\leq Mm\leq d(Mm)$ and $c(Mm)=d(Mm)$ if and only if $Mm\in \Lambda(M)$.
Thus one has $c=d$ if and only if $M$ is a left regular band.

\begin{remark}
For the categorically minded, we observe that if $e\in E$, then $Me\leq Mm$ if and only if $Me\leq c(Mm)$
and $Mm\leq Me$ if and only if $d(Mm)\leq Me$ and therefore $c,d$ are right and left adjoints, respectively,
of the inclusion of $\Lambda(M)$ into $\LeftIdeal(M)$.
\end{remark}

It is well known that, if $M$ is $\mathscr R$-trivial, then every simple $\mathbb RM$-module is
one-dimensional, cf.~\cite{myirreps,AMSV}.  More precisely, there is one irreducible character
$\chi_X\colon M\to \mathbb R$ for each $X\in \Lambda(M)$ given by
\[
	\chi_X(m) =\begin{cases} 1, & \text{if}\ Mm\geq X\ \text{(i.e., $c(m)\geq X$)},\\ 0, &\text{else.}\end{cases}
\]

The following is a reformulation of a theorem of the third author
from~\cite{mobius1} to a slightly more general setting. It generalizes
straightforwardly to any monoid whose regular $\mathscr J$-classes are aperiodic semigroups.
For representation theorists this theorem and its proof can be
summarized as follows: the multiplicities of the eigenvalues are given
by the multiplicities of the isomorphism types of simple modules in
the composition factors of $\mathbb{R}\Omega$; the later can be
computed by character theory, counting fixed points of appropriate
elements of the monoid and inverting the character table. This last
step boils down to a M\"obius inversion since the character table is
given by the incidence matrix of the poset $\Lambda(M)$.

\begin{theorem}[Steinberg~\cite{mobius1}]
\label{theorem.eigenvalues}
Let $P$ be a probability on an $\mathscr R$-trivial monoid $M$ and let $M$ act on $\Omega$.  Let
$T$ be the transition matrix for the random walk of $M$ on $\Omega$ driven by $P$.  Fix, for each
$X\in \Lambda(M)$, an idempotent $e_X$ with $X=Me_X$ and let $\mu$ be the M\"obius function of $\Lambda(M)$.
Then each $X\in \Lambda(M)$ contributes an eigenvalue
\begin{equation}\label{eigenvalueseq}
	\lambda_X = \sum_{Mm\geq X}P(m)=\sum_{c(m)\geq X}P(m)\,,
\end{equation}
with (possibly null) multiplicity given by
\[
	m_X =\sum_{Y\leq X}|e_Y\Omega|\cdot \mu(Y,X) \,.
\]
All eigenvalues of $T$ are obtained this way.
\end{theorem}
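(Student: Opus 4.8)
The plan is to realize $\mathbb{R}\Omega$ as an $\mathbb{R}M$-module, pass to a composition series, and read off the eigenvalues from the one-dimensional composition factors. First I would note that since $M$ is $\mathscr{R}$-trivial, by Proposition~\ref{p:transitionmatrix} the transition matrix $T$ is exactly the matrix of the action of $P \in \ell^1(M) \cong \mathbb{R}M$ on $\mathbb{R}\Omega$. Now $\mathbb{R}M$ is a finite-dimensional algebra, so $\mathbb{R}\Omega$ admits a composition series $0 = V_0 \subset V_1 \subset \cdots \subset V_k = \mathbb{R}\Omega$ of $\mathbb{R}M$-submodules, each of which (by the remark following Proposition~\ref{p:transitionmatrix}) is $T$-invariant. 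Each quotient $V_i/V_{i-1}$ is a simple $\mathbb{R}M$-module, hence by the cited results of \cite{myirreps,AMSV} is one-dimensional and isomorphic to some $S_X$ with $X \in \Lambda(M)$, on which $m \in M$ acts by the scalar $\chi_X(m)$. Choosing a basis of $\mathbb{R}\Omega$ adapted to the filtration, $T$ becomes block upper-triangular with $1 \times 1$ diagonal blocks, and the diagonal entry coming from the factor $S_X$ is $\sum_{m \in M} P(m)\chi_X(m) = \sum_{c(m) \geq X} P(m) = \lambda_X$, using the explicit formula for $\chi_X$. This shows that the eigenvalues of $T$ are precisely the $\lambda_X$ for those $X$ appearing as composition factors, and that the multiplicity of $\lambda_X$ (counted with the composition-factor convention) is the number of $i$ with $V_i/V_{i-1} \cong S_X$, i.e., the composition multiplicity $[\mathbb{R}\Omega : S_X]$.

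The second step is to compute these composition multiplicities by character theory. Over a field of characteristic zero (or one prime to relevant group orders — here there are no groups, as the regular $\mathscr{J}$-classes are aperiodic), the Brauer character of $\mathbb{R}\Omega$ equals the sum of the Brauer characters of its composition factors. The Brauer character of $\mathbb{R}\Omega$ evaluated at an idempotent $e_Y$ is just the trace of the permutation-type action, namely the number of fixed points $|e_Y\Omega|$. On the other hand the character of $S_X$ at $e_Y$ is $\chi_X(e_Y)$, which equals $1$ if $c(e_Y) = Me_Y \geq X$, i.e., if $Y \geq X$, and $0$ otherwise. Therefore
\begin{equation*}
  |e_Y\Omega| = \sum_{X \in \Lambda(M)} [\mathbb{R}\Omega : S_X]\,\chi_X(e_Y) = \sum_{X \leq Y} m_X,
\end{equation*}
where I write $m_X = [\mathbb{R}\Omega : S_X]$. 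This is an upper-triangular linear system relating the unknowns $m_X$ to the known fixed-point counts $|e_Y\Omega|$, and the matrix $(\chi_X(e_Y))$ is precisely the zeta matrix (incidence matrix) of the lattice $\Lambda(M)$.

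The third step is simply to invert this system by M\"obius inversion on the poset $\Lambda(M)$: since $|e_Y\Omega| = \sum_{X \leq Y} m_X$, we recover $m_X = \sum_{Y \leq X} |e_Y\Omega|\,\mu(Y,X)$, which is the claimed formula. It remains to justify that the answer does not depend on the choice of idempotent transversal $\{e_X\}$: this follows because the character values $\chi_X(e)$ depend only on $c(e) = Me$, which is $X$ by construction, so the linear system above is independent of the choice. Finally the clause "all eigenvalues of $T$ are obtained this way" is immediate: the diagonal of the triangularized $T$ consists exactly of the scalars $\lambda_X$ for $X$ with $m_X > 0$, and any eigenvalue of a triangular matrix appears on its diagonal.

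The main obstacle I anticipate is not any single computation but rather getting the representation-theoretic inputs cleanly lined up in the stated generality: one must be careful that $\mathbb{R}$ is a splitting field for $M$ (guaranteed here since all simple modules are one-dimensional with rational — in fact $\{0,1\}$-valued — character), that Brauer characters behave additively on short exact sequences with the field having characteristic coprime to the orders of the maximal subgroups of $M$ (vacuous here, as the relevant subgroups are trivial by $\mathscr{R}$-triviality, or more generally by aperiodicity of the regular $\mathscr{J}$-classes), and that the character table of $M$ restricted to the transversal of idempotents really is the incidence matrix of $\Lambda(M)$ — this last point is where the structure theory of $\mathscr{R}$-trivial monoids (the content map $c$ and the formula $\chi_X(m) = [c(m) \geq X]$) does the real work. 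Once those facts are in hand, the argument is the short three-step chain above, and it is essentially the reformulation of \cite{mobius1} promised in the text.
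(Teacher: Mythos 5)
Your proposal is correct and follows essentially the same route as the paper's proof: pass to a composition series of $\mathbb{R}\Omega$, use the one-dimensionality of the simple $\mathbb{R}M$-modules to triangularize $T$ with diagonal entries $\chi_X(P)=\lambda_X$, equate the fixed-point count $|e_Y\Omega|$ with $\sum_{X\leq Y}m_X$ via the character of the permutation action, and apply M\"obius inversion on $\Lambda(M)$. The extra care you take with Brauer characters and splitting fields is harmless but unnecessary here, since over $\mathbb{R}$ in characteristic zero the ordinary character is already additive along the composition series.
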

\begin{proof}
In what follows we identify $\LO M$ with $\mathbb RM$.  Choose a composition series for the $\mathbb RM$-module
$\mathbb R\Omega=V_n\supseteq V_{n-1}\supseteq \cdots\supseteq V_0=\{0\}$.
Each simple $\mathbb RM$-module $V_j/V_{j-1}$ is one-dimensional.  As each $V_j$
is an invariant subspace for $T$ (which acts on $\mathbb R\Omega$ as $P$),  we see, by choosing a
basis adapted to this composition series, that $T$ is similar to an upper triangular matrix of the form
\begin{equation}\label{uppdecomp}
\begin{bmatrix}\chi_1(P)     & \ast   &\cdots  &\ast\\
                    0       & \chi_2(P)    &\ddots  &\vdots\\
                    \vdots  & \ddots &\ddots  & \ast\\
                    0       & \cdots &0       &\chi_{|\Omega|}(P)
\end{bmatrix}\,,
\end{equation}
where the $\chi_i$ are characters of $M$.  Therefore, the eigenvalues
are given by the $\chi_i(P)$.  If $\chi_i$ is
the character $\chi_X$ corresponding to $X\in \Lambda(M)$, then
\[
	\chi_i(P) =\sum_{m\in M}P(m)\chi_X(m) = \sum_{Mm\geq X} P(m)=\lambda_X\,.
\]
To compute the multiplicity of $\lambda_X$, observe that the character $\theta$ of the module
$\mathbb R\Omega$ counts the number of fixed points, that is, for
$m\in M$,
\[
	\theta(m)=|\{\omega\in \Omega\mid m\omega=\omega\}|\,.
\]
In particular, $\theta(e_X)=|e_X\Omega|$.  On the other hand,
$\theta(e_X)=\sum_{i=1}^n\chi_i(e_X)$, and using that
\[
	\chi_Y(e_X)=\begin{cases} 1, & \text{if}\ Y\leq X,\\ 0, & \text{else,}\end{cases}
\]
we get
\[
	|e_X\Omega|=\theta(e_X)=\sum_{Y\leq X}m_Y\,.
\]
M\"obius inversion then yields the desired multiplicity:
 \[m_X=\sum_{Y\leq X}|e_Y\Omega|\cdot \mu(Y,X)\,.\qedhere\]
\end{proof}

%%%%%%%%%%%%%%%%%%%%%%%%%%%%%%%%%%%%%%%%%%%%%%%%%%%%%%%%%%%%%%%%%%%%
\subsection{A sufficient condition for diagonalizability}
Let $P$ be a probability on an $\mathscr R$-trivial monoid $M$.  We give a sufficient condition for
diagonalizability of $P$ as an operator on $\LO M$.  This implies the diagonalizablity of the transition matrix
$T$ of any random walk of $M$ on a set $\Omega$ driven by $P$.  This is because the subalgebra
$\mathbb R[P]$ of $\LO M$ generated by $P$ will be split semisimple and thus its quotient algebra
$\mathbb R[T]$ will also be split semisimple, which is the same thing as saying that $T$ is diagonalizable.

This generalizes Brown's diagonalizability result~\cite{BrownLRB} for left regular band walks.
In what follows we write $m$ for $\delta_m$ and omit the $\ast$
for convolution (i.e., we identify $\LO M$ with $\mathbb RM$).

\begin{theorem}\label{diagonal}
  Let $P$ be a probability on an $\mathscr R$-trivial monoid $M$ and
  let $N$ be the submonoid generated by the support of $P$. Recall from
  Theorem~\ref{theorem.eigenvalues} that the eigenvalues of $P$ are of
  the form
  \[
	\lambda_X=\sum_{c(m)\geq X} P(m)\,,
  \]
  where $X\in \Lambda(M)$.

  Assume that $\lambda_{d(m)}\neq \lambda_{d(m')}$ whenever $m\in M$,
  $m'\in mN$ and $m'\neq m$. Then, $P(m)$ is diagonalizable as an operator on the left of $\LO M$ and hence the transition
  matrix of any random walk of $M$ on a finite set $\Omega$ driven by $P$ is diagonalizable.
\end{theorem}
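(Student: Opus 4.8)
The plan is to analyze the algebra $\mathbb{R}[P]\subseteq \LO M$ directly by finding an explicit set of orthogonal idempotents, mimicking Brown's strategy for left regular bands but replacing the content map by the right descent map $d$. Since $M$ is $\mathscr R$-trivial, $\LO M$ is a basic algebra whose simple modules are the one-dimensional $\chi_X$, $X\in\Lambda(M)$, and $P$ acts on the simple module $\chi_X$ by the scalar $\lambda_X$. The operator $P$ (acting by left convolution on $\LO M$) is diagonalizable precisely when $\mathbb{R}[P]$ is split semisimple, i.e. when the minimal polynomial of $P$ has distinct roots. So the first reduction is: it suffices to show that, for each distinct value $\lambda$ among the $\lambda_X$, the generalized eigenspace of $P$ for $\lambda$ coincides with the actual eigenspace; equivalently, that $P$ is semisimple on the radical filtration pieces.

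The main step is to construct eigenvectors. First I would set up the filtration of $\LO M$ by the left ideals $\LO{Mm}$ or better by $\mathscr R$-order: for $m\in M$, the submonoid $N=\langle\supp P\rangle$ acts on $m$ by right multiplication, and the orbit $mN$ is finite with $m$ as its $\leq_{\mathscr R}$-maximum (within the orbit). The key algebraic fact is that $P\cdot m = \lambda_{d(m)}\, m + (\text{terms strictly below }m\text{ in }\mathscr R\text{-order})$: indeed $P m=\sum_t P(t)\,(tm)$, and $tm=m$ exactly when $c(t)\geq d(m)$, which contributes $\lambda_{d(m)}m$, while all other $t$ give $tm<_{\mathscr R} m$. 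This puts $P$ in triangular form with respect to any linear extension of $\leq_{\mathscr R}$ on $M$, with diagonal entries $\lambda_{d(m)}$. Now the hypothesis enters: if $m'\in mN$ with $m'\neq m$, then $m'<_{\mathscr R}m$ strictly and $\lambda_{d(m')}\neq\lambda_{d(m)}$. This is exactly the separation condition needed to solve, recursively down the orbit, for a correction vector: given the eigenvalue $\lambda_{d(m)}$ on the top, one can find $v_m = m + \sum_{m'<_{\mathscr R}m} a_{m'} m'$ with $Pv_m=\lambda_{d(m)}v_m$, because at each lower element $m'$ the diagonal entry $\lambda_{d(m')}$ differs from $\lambda_{d(m)}$, so the linear system $(P-\lambda_{d(m)})v_m=0$ has a unique solution expanding $m$. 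The vectors $\{v_m\mid m\in M\}$ are triangular with respect to the standard basis, hence a basis of $\LO M$, and each is an eigenvector; therefore $P$ is diagonalizable on $\LO M$.

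Finally, the passage to arbitrary $\Omega$ is the easy part already sketched before the theorem: the action of $M$ on $\Omega$ makes $\mathbb{R}\Omega$ a module over $\mathbb{R}[P]$, and diagonalizability of $P$ on $\LO M$ means $\mathbb{R}[P]$ is a split semisimple commutative algebra; its quotient $\mathbb{R}[T]$ acting on $\mathbb{R}\Omega$ is then also split semisimple, so $T$ is diagonalizable. The step I expect to be the main obstacle is the careful bookkeeping in the recursive construction of $v_m$: one must verify that the orbit $mN$ is closed under the operations appearing (so the recursion stays inside a finite poset), that $d(m')$ is the right invariant to track — i.e. that $Pm'$ really has diagonal coefficient $\lambda_{d(m')}$ and lower-order tail — and that the separation hypothesis $\lambda_{d(m')}\neq\lambda_{d(m)}$ is available for \emph{every} $m'$ strictly below $m$ in the orbit, not merely the immediate predecessors. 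Checking that $tm = m \iff c(t)\ge d(m)$ (stated earlier via the right descent map) and that otherwise $tm<_{\mathscr R}m$ strictly is the crucial lemma that makes the triangular picture correct.
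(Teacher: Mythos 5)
Your overall strategy (triangularize $P$ against the basis $M$ of $\LO M$, use the separation hypothesis to build an eigenbasis recursively, then pass to $\mathbb R\Omega$ via the quotient of the split semisimple algebra $\mathbb R[P]$) is sound, but your ``key algebraic fact'' is stated on the wrong side and is false as written. You claim $Pm=\sum_t P(t)\,tm$ has diagonal coefficient $\lambda_{d(m)}$ because ``$tm=m$ exactly when $c(t)\geq d(m)$,'' and that the remaining terms satisfy $tm<_{\mathscr R}m$. The characterization via the right descent map is $mt=m$ if and only if $c(t)\geq d(m)$: it describes the \emph{right} stabilizer of $m$, not the left one, so $\lambda_{d(m)}=\sum_{mt=m}P(t)$ is not the coefficient of $m$ in $P\ast\delta_m$. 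Worse, left multiplication does not move down in the $\mathscr R$-order: in the free left regular band on $\{a,b\}$ one has $a\cdot b=ab$, and $ab\notin bM$, so $ab\not\leq_{\mathscr R}b$. Hence left convolution by $P$ is not triangular with respect to $\leq_{\mathscr R}$ on the basis $M$, and the recursive construction of the vectors $v_m$ collapses. (Triangularity of the left action does hold, but only with respect to a composition series of the module, with diagonal entries $\lambda_X$ indexed by $\Lambda(M)$ rather than by basis elements $m$ — that is the content of Theorem~\ref{theorem.eigenvalues}, and it does not by itself give diagonalizability.)

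The repair is to run your argument for the \emph{right} action of $P$ on $\LO M$: there one genuinely has $mP=\lambda_{d(m)}m+\sum_{c(t)\ngeq d(m)}P(t)\,mt$ with each $mt<_{\mathscr R}m$ (indeed $mt\in mN\setminus\{m\}$ when $P(t)>0$), the recursion stays inside the finite orbit $mN$, and the hypothesis $\lambda_{d(m')}\neq\lambda_{d(m)}$ for all $m'\in mN$ with $m'\neq m$ is exactly the separation needed to solve for the corrections $a_{m'}$. One then transfers back to the left action by observing that the minimal polynomial of $P$ as an element of the unital algebra $\LO M$ controls both left and right multiplication ($q(P)=0$ iff $q(P)1=0$ iff $1q(P)=0$), so square-freeness of the minimal polynomial — equivalently, diagonalizability of right multiplication by $P$ — gives diagonalizability of left multiplication as well. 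With that correction your proof is essentially the paper's: the paper likewise works with the right action and the orbit order $m'\preceq m$ iff $m'\in mN$, except that instead of building an eigenbasis it exhibits, by the same induction, a square-free polynomial $q_1$ annihilating $P$; the two formulations are equivalent.
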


Before proving the theorem, we recover Brown's theorem on diagonalizability of left regular band walks~\cite{Brown1}.

\begin{corollary}
  A random walk of a left regular band $M$ on a set $\Omega$ has a diagonalizable transition
  matrix.
\end{corollary}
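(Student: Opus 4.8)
The plan is to deduce this straight from Theorem~\ref{diagonal} by checking that, for a left regular band $M$, its diagonalizability hypothesis holds for every probability $P$. First I would specialize the relevant data. Since every element of a left regular band is idempotent, $m^{\omega}=m$, so the content map is $c(m)=Mm$; and since $M$ is a left regular band we have $c=d$ (as noted above), hence also $d(m)=Mm$ for all $m$. By Theorem~\ref{theorem.eigenvalues} the eigenvalue attached to $X\in\Lambda(M)$ is then $\lambda_X=\sum_{X\subseteq Mk}P(k)$, so that $X\subseteq Y$ in $\Lambda(M)$ forces $\lambda_X\geq\lambda_Y$, with $\lambda_X-\lambda_Y=\sum_{k\,:\,X\subseteq Mk,\ Y\not\subseteq Mk}P(k)$ in that case. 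Thus the whole task reduces to this: given $m\in M$ and $m'\in mN\setminus\{m\}$, where $N=\langle\supp P\rangle$, exhibit a generator $a\in\supp P$ with $Mm'\subseteq Ma$ but $Mm\not\subseteq Ma$ (while also $Mm'\subseteq Mm$), for then $\lambda_{d(m')}-\lambda_{d(m)}\geq P(a)>0$.

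Next I would record two one-line facts valid in any left regular band: (i) $Mmn\subseteq Mm$ for all $m,n$, since $mn=mnm$ by the identity $xyx=xy$, whence $mn\in Mm$; and (ii) if $m=wa$ for some $w$, then $ma=waa=wa=m$. To finish, write $m'=ma_1\cdots a_r$ with each $a_i\in\supp P$, where $r\geq1$ since $m'\neq m$; fact~(i) gives $Mm'=Mm(a_1\cdots a_r)\subseteq Mm$. Let $j$ be the least index with $ma_1\cdots a_j\neq m$, which exists because $m'\neq m$; then $ma_1\cdots a_{j-1}=m$, so $ma_1\cdots a_j=ma_j\neq m$, and taking $a=a_j$ we get $Mm'=M(ma_j)(a_{j+1}\cdots a_r)\subseteq M(ma_j)\subseteq Ma_j$ by fact~(i), while $Mm\not\subseteq Ma_j$ by fact~(ii). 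Since $P(a_j)>0$ this yields
\[
  \lambda_{d(m')}-\lambda_{d(m)}=\lambda_{Mm'}-\lambda_{Mm}\geq P(a_j)>0\,,
\]
so in particular $\lambda_{d(m)}\neq\lambda_{d(m')}$, the hypothesis of Theorem~\ref{diagonal} is met, and the corollary follows.

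I do not anticipate a genuine obstacle; the argument is short once the bookkeeping is set up. The one point needing a little care is converting the (easy) strict inclusion $Mm'\subsetneq Mm$ into the strict inequality $\lambda_{Mm'}>\lambda_{Mm}$: this requires producing an \emph{explicit} generator separating the two principal left ideals, and the device that makes one available is to walk along $m,\,ma_1,\,ma_1a_2,\dots,\,m'$ and take the very first step $a_j$ at which the product leaves $m$.
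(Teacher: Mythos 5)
Your proof is correct and follows essentially the same route as the paper: one verifies the hypothesis of Theorem~\ref{diagonal} using that $c=d$ for a left regular band, that $Mm'\subseteq Mm$ whenever $m'\in mN$, and that a strict drop in the principal left ideal forces a strict increase in the eigenvalue. Your extra step of walking along $m, ma_1, ma_1a_2,\dots$ to extract an explicit generator $a_j\in\supp P$ with $P(a_j)>0$ is in fact slightly more careful than the paper's one-line version, which invokes $P(n)>0$ for an element $n$ of the generated submonoid $N$ rather than of the support itself.
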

\begin{proof}
We verify the criterion in Theorem~\ref{diagonal} applies.
  Take $m\in M$ and $m'=mn\in mN$ such that $m'\ne m$.
  Then, $c(n)\geq c(m')=d(m')$ and $c(n)\ngeq d(m)=c(m)$.  On the
  other hand $d(m')=c(m')\leq c(m)=d(m)$. Thus $\lambda_{d(m')}\geq
  \lambda_{d(m)}+P(n)>\lambda_{d(m)}$.
\end{proof}

\begin{proof}[Proof of Theorem~\ref{diagonal}]
  We will prove that the minimal polynomial $q$ of $P$ is square-free.
  Note that $q$ coincides with the minimal polynomial of $P$ acting on
  the left and on the right: indeed, $q(P)=0$ if and only if
  $0=q(P)1=1q(P)$. We consider here the action of $P$ on the right of
  $\LO M$ to exploit the $\mathscr R$-triviality of $M$, i.e., that $\leq_\RR$ is
  a partial order.  Define a partial order $\preceq$ on $M$ by $m'\preceq m$ if $m'\in mN$.
  Note that $m'\preceq m$ implies $m'\leq_\RR m$ and so $\preceq$ is indeed a partial order.
  We write $m'\prec m$ if $m'\preceq m$ and $m'\neq m$.

\begin{lemma}
\label{theformula}
Let $m\in M$. Then,
\[
	m(P-\lambda_{d(m)}) = \sum_{c(t)\ngeq d(m)} P(t)mt\,,
\]
  with all the non-zero terms of the summand on the right hand side satisfying $mt\prec m$.
\end{lemma}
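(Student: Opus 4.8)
The plan is to compute $m(P - \lambda_{d(m)})$ directly from the definition of $\lambda_{d(m)}$ and then verify the claimed ordering assertion. First I would write $P = \sum_{t \in M} P(t)\,t$, so that
\[
	m(P - \lambda_{d(m)}) = \sum_{t \in M} P(t)\, mt - \lambda_{d(m)} m\,.
\]
Using the formula $\lambda_{d(m)} = \sum_{c(t) \geq d(m)} P(t)$ from Theorem~\ref{theorem.eigenvalues}, the second term becomes $\sum_{c(t) \geq d(m)} P(t)\, m$. Now the key point is that $mt = m$ exactly when $c(t) \geq d(m)$: this is precisely the characterization of $d$ recalled just before Lemma~\ref{theformula} (``one has $mt = m$ if and only if $c(t) \geq d(m)$''). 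Hence for every $t$ with $c(t) \geq d(m)$ the term $P(t)\,mt$ in the first sum cancels exactly against the corresponding term $P(t)\,m$ in the second sum, leaving
\[
	m(P - \lambda_{d(m)}) = \sum_{c(t) \ngeq d(m)} P(t)\, mt\,,
\]
which is the displayed identity.

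It remains to check that every non-zero term on the right satisfies $mt \prec m$. A term is non-zero only when $P(t) > 0$, i.e.\ $t \in \supp P \subseteq N$, so $mt \in mN$ and therefore $mt \preceq m$ by definition of $\preceq$. To get strict inequality $mt \prec m$, I must rule out $mt = m$; but $mt = m$ forces $c(t) \geq d(m)$, contradicting the summation condition $c(t) \ngeq d(m)$. Hence $mt \neq m$, and combined with $mt \preceq m$ this gives $mt \prec m$, as claimed.

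I do not anticipate a genuine obstacle here: the statement is essentially a bookkeeping identity, and the only substantive input is the already-stated equivalence $mt = m \iff c(t) \geq d(m)$, together with the observation that $\preceq$ refines $\leq_\RR$ (so that it is a partial order and $mN \ni mt \preceq m$ is meaningful). The one small subtlety worth stating explicitly is that the cancellation is term-by-term and uses $\sum_{c(t)\ge d(m)} P(t) = \lambda_{d(m)}$ with the \emph{same} index set on both sides; no reindexing or regrouping of the sum is needed, so the argument is purely formal once the characterization of $d(m)$ is invoked.
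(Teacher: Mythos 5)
Your proof is correct and is essentially identical to the paper's: both split $mP$ over the index sets $\{t : c(t)\geq d(m)\}$ and its complement, invoke the equivalence $mt=m \iff c(t)\geq d(m)$ together with the definition of $\lambda_{d(m)}$, and then obtain $mt\prec m$ from $P(t)>0\Rightarrow t\in N$ plus $\RR$-triviality. No differences worth noting.
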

\begin{proof}
  Recall that $c(t)\geq d(m)$ if and only if $mt=m$, and otherwise
  $mt<_\RR m$ by $\RR$-triviality. Therefore,
\begin{align*}
mP
   &= \sum_{c(t)\geq d(m)}P(t)m+\sum_{c(t)\ngeq d(m)}P(t)mt\\
   &= \lambda_{d(m)}m+\sum_{c(t)\ngeq d(m)}P(t)mt\,.\qedhere
\end{align*}
Since $P(t)>0$ implies $t\in N$, the lemma follows.
\end{proof}

For $m\in M$, let $\sigma(m) = \{\lambda_{d(mn)}\mid n\in N\}$ and
consider the square-free polyomials
\begin{equation*}
	q_m(x) = \prod_{\lambda\in \sigma(m)}(x-\lambda) \qquad \text{and} \qquad
	Q_m(x) = \frac{q_m(x)}{x-\lambda_{d(m)}}\,.
\end{equation*}
By our hypothesis on $P$, $q_{m'}(x)$ divides $Q_m(x)$ whenever
$m'\prec m$ because $\sigma(m')\subseteq \sigma(m)\setminus \{\lambda_{d(m)}\}$.

\begin{lemma}
\label{kill}
If $m\in M$, then $m\cdot q_m(P)=0$.
\end{lemma}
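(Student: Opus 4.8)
The plan is to prove this by Noetherian induction on $m$ with respect to the partial order $\preceq$ (equivalently, $\leq_\RR$ restricted to $N$-translates), using Lemma~\ref{theformula} as the key computational engine. First I would observe that the base case is handled by the $\preceq$-minimal elements $m$: for such $m$, the only element of $mN$ is $m$ itself, so $\sigma(m) = \{\lambda_{d(m)}\}$, whence $q_m(x) = x - \lambda_{d(m)}$, and Lemma~\ref{theformula} gives $m(P - \lambda_{d(m)}) = \sum_{c(t) \not\geq d(m)} P(t)\, mt$ with every nonzero term $mt \prec m$; but minimality forces this sum to be empty, so $m \cdot q_m(P) = m(P - \lambda_{d(m)}) = 0$.

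For the inductive step, suppose $m \cdot q_m(P) = 0$ has been established for all $m' \prec m$, and consider $m$ itself. By Lemma~\ref{theformula},
\[
	m \cdot q_m(P) = m(P - \lambda_{d(m)}) Q_m(P) = \left( \sum_{c(t) \not\geq d(m)} P(t)\, mt \right) Q_m(P)\,,
\]
where each nonzero term satisfies $mt \prec m$. Now comes the crucial point: for each such $t$, set $m' = mt$. By the hypothesis on $P$ in Theorem~\ref{diagonal} together with the observation recorded just before Lemma~\ref{kill}, namely that $m' \prec m$ implies $\sigma(m') \subseteq \sigma(m) \setminus \{\lambda_{d(m)}\}$, we get that $q_{m'}(x)$ divides $Q_m(x)$. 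Therefore $q_{m'}(P) Q_m(P) = q_{m'}(P) \cdot \big(Q_m(P)/q_{m'}(P)\big)$ makes sense and equals $\big(Q_m(P)/q_{m'}(P)\big) q_{m'}(P)$ since polynomials in $P$ commute; applying the inductive hypothesis $m' \cdot q_{m'}(P) = 0$ kills each term:
\[
	(mt)\, Q_m(P) = m'\, Q_m(P) = m'\, q_{m'}(P) \cdot \frac{Q_m(P)}{q_{m'}(P)} = 0\,.
\]
Summing over $t$ yields $m \cdot q_m(P) = 0$, completing the induction.

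The main obstacle is ensuring the divisibility $q_{m'}(x) \mid Q_m(x)$ is genuinely available for every term appearing in the expansion, which is exactly where the nondegeneracy hypothesis $\lambda_{d(m)} \neq \lambda_{d(m')}$ (for $m' \in mN$, $m' \neq m$) is used: without it one could only conclude $q_{m'}$ divides $q_m$, not the reduced polynomial $Q_m = q_m/(x - \lambda_{d(m)})$, and the induction would fail to produce a square-free annihilator. One should also double-check that $\sigma(m') \subseteq \sigma(m)$ holds for $m' \preceq m$ — this is immediate because $m' \in mN$ forces $m'N \subseteq mN$, so $\{\lambda_{d(m'n)} : n \in N\} \subseteq \{\lambda_{d(mn)} : n \in N\}$ — and that $\lambda_{d(m)} \notin \sigma(m')$ when $m' \prec m$, which is precisely the hypothesis applied to the pair $(m, m')$. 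Everything else is routine manipulation of commuting operators on the finite-dimensional algebra $\LO M$.
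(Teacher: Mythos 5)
Your proof is correct and follows essentially the same route as the paper's: induction on $\preceq$, with the base case for $\preceq$-minimal elements handled by noting $\sigma(m)=\{\lambda_{d(m)}\}$ and the sum in Lemma~\ref{theformula} being empty, and the inductive step using the divisibility $q_{m'}(x)\mid Q_m(x)$ for $m'\prec m$ to kill each term $mt\cdot Q_m(P)$. The only difference is expository; your explicit verification of $\sigma(m')\subseteq\sigma(m)$ and of where the nondegeneracy hypothesis enters matches the discussion the paper places just before the lemma.
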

\begin{proof}
  The proof is by induction on the order $\preceq$. Suppose first
  that $m$ is $\preceq$-minimal. Then, $m=mn$ for all $n\in N$, i.e.,
  $c(n)\geq d(m)$ for all $n\in N$. Hence,
  $\sigma(m)=\{\lambda_{d(m)}\}$, and Lemma~\ref{theformula}
  immediately yields $m\cdot q_m(P) = m(P-\lambda_{d(m)}) = 0$.

  In general, assume that the lemma holds for all $m'\in M$ with
  $m'\prec m$. Since $q_{m'}(P)$ divides $Q_m(P)$, this implies
  $m'Q_m(P)=0$. Therefore, using Lemma~\ref{theformula},
\begin{align*}
	m\cdot q_{m}(P)&= m\cdot (P-\lambda_{d(m)})\cdot Q_m(P)
	\\ &= \sum_{c(t)\ngeq d(m)}P(t)mt\cdot Q_m(P)
	=0
\end{align*}
(since $c(t)\ngeq d(m)$ and $P(t)>0$ implies $mt\prec m$).
\end{proof}
The theorem follows by taking $m=1$: since $1\cdot q_1(P)=0$, the
minimal polynomial $q$ of $P$ divides $q_1$ and is therefore
square-free.
\end{proof}
Note that the above proof does not use that $P$ is a probability. In
fact, independently of the ground field, Theorem~\ref{diagonal}
applies to any element of the algebra of an $\RR$-trivial monoid.

Let us define a probability $P$ on $M$ to be \emph{generic} if, for all $X\neq Y\in \Lambda(M)$, we
have that
\[
	\lambda_X=\sum_{Mm\geq X} P(m)\neq \sum_{Mm\geq Y} P(m)=\lambda_Y.
\]
Note that generic probabilities are those probabilities that do no lie on a certain finite set of hyperplanes and
hence are generic in all reasonable senses of the word.

\begin{corollary}\label{c:generic}
Suppose that $M$ is an $\mathscr R$-trivial monoid such that $m>_{\mathscr R} m'$ implies that
$d(m)\neq d(m')$.  Then every generic probability $P$ is diagonalizable as an operator on $\LO M$ and
consequently, the transition matrix of any random walk of $M$ on a set
driven by a
generic probability is diagonalizable.
\end{corollary}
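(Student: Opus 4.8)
The plan is to deduce Corollary~\ref{c:generic} directly from Theorem~\ref{diagonal}, by checking that the hypotheses of the corollary (genericity of $P$ together with the implication $m>_{\mathscr R}m'\Rightarrow d(m)\neq d(m')$) force the separation condition $\lambda_{d(m)}\neq\lambda_{d(m')}$ required in that theorem.

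First I would unwind the condition appearing in Theorem~\ref{diagonal}: take $m\in M$ and $m'\in mN$ with $m'\neq m$, where $N=\langle\supp P\rangle$. Writing $m'=mn$ with $n\in N\subseteq M$ gives $m'M=mnM\subseteq mM$, so $m'\leq_{\mathscr R}m$. If we had $m'\mathrel{\mathscr R}m$, then $\mathscr R$-triviality of $M$ would force $m'=m$, contradicting $m'\neq m$; hence $m>_{\mathscr R}m'$ strictly. By the hypothesis of the corollary this yields $d(m)\neq d(m')$ as elements of $\Lambda(M)$. Now genericity of $P$ says precisely that $\lambda_X\neq\lambda_Y$ for all $X\neq Y$ in $\Lambda(M)$; applying this with $X=d(m)$ and $Y=d(m')$ gives $\lambda_{d(m)}\neq\lambda_{d(m')}$. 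This is exactly the hypothesis of Theorem~\ref{diagonal}, so $P$ is diagonalizable as an operator on $\LO M$.

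The statement about an arbitrary random walk of $M$ on a finite set $\Omega$ driven by $P$ then follows as in the discussion preceding Theorem~\ref{diagonal}: the subalgebra $\mathbb R[P]\subseteq\LO M$ generated by $P$ is split semisimple, hence so is its quotient algebra $\mathbb R[T]$, and this is the assertion that the transition matrix $T$ is diagonalizable. There is no serious obstacle here; the only point requiring a moment's care is the passage from ``$m'\in mN$, $m'\neq m$'' to the \emph{strict} relation $m>_{\mathscr R}m'$, which is where $\mathscr R$-triviality (equivalently, antisymmetry of $\leq_{\mathscr R}$) is used. Everything else is a substitution into the hypotheses of Theorem~\ref{diagonal} and the definition of a generic probability.
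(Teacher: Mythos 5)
Your proposal is correct and follows exactly the route the paper takes: deduce the corollary from Theorem~\ref{diagonal} by observing that $m'\in mN$ with $m'\neq m$ forces $m>_{\mathscr R}m'$ by $\mathscr R$-triviality, whence the hypothesis gives $d(m)\neq d(m')$ and genericity converts this into $\lambda_{d(m)}\neq\lambda_{d(m')}$. The paper's own proof is a one-line version of the same argument.
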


\begin{proof}
The result is immediate from Theorem~\ref{diagonal} since for a generic probability we have $d(m)\neq d(m')$
implies $\lambda_{d(m)}\neq \lambda_{d(m')}$.
\end{proof}

%%%%%%%%%%%%%%%%%%%%%%%%%%%%%%%%%%%%%%%%%%%%%%%%%%%%%%%%%%%%%
\subsection{A formula for the stationary distribution for $\mathscr R$-trivial monoids}

We continue to assume that $M$ is an $\mathscr R$-trivial monoid with minimal ideal $\wh 0$ and let $P$
be an adapted probability on $M$.  Our goal is to give an explicit formula for the stationary distribution of the random
walk on $\wh 0$ driven by $P$. We continue to use the notation \eqref{eigenvalueseq}.

Let $T$ be the transition matrix for the \emph{right} random walk on $M$ driven by $P$.  So $T$ is a row stochastic
$M\times M$-matrix with
\[
	T(m,t) = \sum_{mx=t}P(x)\,.
\]
Note that $T(m,m) = \lambda_{d(m)}$ and that
\begin{equation}\label{convolved}
	T^n(1,m)=P^{\ast n}(m)\,.
\end{equation}

Also observe that $T$ belongs to the incidence algebra of $(M,
\geq_\RR$) (recall that the \emph{incidence algebra} of a
finite poset $\Poset$ is the algebra of all upper triangular
$\Poset\times \Poset$-matrices over $\mathbb R$; that is, all $A\colon
\Poset\times \Poset\to \mathbb R$ such that $A(p,q)=0$ if $p\nleq q$).
In particular, $T$ is an upper triangular matrix if we order $M$ along
a linear extension of $\geq_\RR$.

We recall that if $\Poset$ is a finite poset, then the \emph{order complex} of $\Poset$ is the simplicial complex
whose vertex set is $\Poset$ and whose $q$-simplices are strictly decreasing chains
$\sigma=\sigma_0>\sigma_1>\cdots>\sigma_q$ of elements of $\Poset$.

Let $\Delta(M)$ be the order complex of $(M,\leq_\RR)$.  Let $\mathrm{St}(1)$ be the \emph{star} of $1$; it consists
of all simplices $\sigma$ containing $1$ as a vertex.  If $m\in M$, let $N(m)$ be the set of all simplices in
$\mathrm{St}(1)$ with minimal vertex $m$, i.e., it consists of all strictly decreasing chains
$1=\sigma_0>_{\mathscr R}\cdots>_{\mathscr R}\sigma_q=m$. A simplex $\sigma\in \mathrm{St}(1)$ will
always be written $\sigma = (\sigma_0,\sigma_1,\ldots, \sigma_q)$ where $q=\dim \sigma$, $\sigma_0=1$
and $\sigma_i>_{\mathscr R} \sigma_{i+1}$. Let us put
\[
	P(\sigma)=\prod_{i=1}^qT(\sigma_{i-1},\sigma_i)\;.
\]
Notice that $P(\sigma)$ will be $0$ unless there is a product of elements in the support of $P$
which visits precisely the $\mathscr R$-classes of $\sigma$.

The \emph{complete homogeneous symmetric polynomial} of degree $j$ in variables $x_1,\ldots, x_n$ is
denoted $h_j(x_1,\ldots, x_n)$; it is the sum of all monomials of degree $j$.

\begin{proposition}\label{explicitcomputation}
  Let $m\in M$.  Then,
\[
	P^{\ast n}(m) = \sum_{\substack{\sigma\in N(m)\\
	\dim \sigma\leq n}} P(\sigma)h_{n-\dim \sigma}(\lambda_{d(\sigma_0)},\ldots,\lambda_{d(\sigma_{\dim \sigma})}).
\]
\end{proposition}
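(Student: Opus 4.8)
The plan is to compute the $n$-step transition probabilities of the \emph{right} random walk directly, by expanding powers of its transition matrix $T$ along walks and collecting terms according to which chain of $\mathscr R$-classes each walk visits.

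By~\eqref{convolved} we have $P^{\ast n}(m) = T^n(1,m)$, so it suffices to compute $T^n(1,m)$. Expanding the $n$-th matrix power,
\[
	T^n(1,m) = \sum \; \prod_{i=1}^n T(m_{i-1},m_i),
\]
the sum being over all sequences $1 = m_0, m_1, \dots, m_n = m$ of elements of $M$. Since $T$ lies in the incidence algebra of $(M,\geq_{\mathscr R})$, a summand vanishes unless $1 = m_0 \geq_{\mathscr R} m_1 \geq_{\mathscr R} \cdots \geq_{\mathscr R} m_n = m$, so we may restrict the sum to weakly $\geq_{\mathscr R}$-decreasing sequences. For such a sequence, antisymmetry of $\leq_{\mathscr R}$ (a partial order by $\mathscr R$-triviality) forces each value to be attained on a contiguous block of positions; hence its distinct values, read in order, form a strictly decreasing chain $\sigma = (\sigma_0,\dots,\sigma_q)$ with $\sigma_0 = 1$ and $\sigma_q = m$ --- that is, a simplex of $N(m)$, necessarily with $\dim\sigma = q \le n$.

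Next I would group the sequences by their associated simplex. Fix $\sigma = (\sigma_0,\dots,\sigma_q)\in N(m)$ with $q\le n$. A weakly $\geq_{\mathscr R}$-decreasing sequence with associated simplex $\sigma$ is uniquely specified by the numbers $b_0,\dots,b_q \ge 0$ of ``loop'' steps $\sigma_\ell \to \sigma_\ell$ it performs at each $\sigma_\ell$: it consists of the $q$ ``move'' steps $\sigma_{\ell-1}\to\sigma_\ell$ together with these loop steps, whence $b_0+\cdots+b_q = n-q$; this is a bijection between such sequences and nonnegative integer solutions of $b_0+\cdots+b_q = n-q$. The weight of the sequence factors as
\[
	\Bigl(\prod_{\ell=1}^q T(\sigma_{\ell-1},\sigma_\ell)\Bigr)\cdot \prod_{\ell=0}^q T(\sigma_\ell,\sigma_\ell)^{\,b_\ell} = P(\sigma)\cdot\prod_{\ell=0}^q \lambda_{d(\sigma_\ell)}^{\,b_\ell},
\]
using that $T(\sigma_\ell,\sigma_\ell) = \sum_{\sigma_\ell x = \sigma_\ell}P(x) = \lambda_{d(\sigma_\ell)}$, which follows from the characterization ``$\sigma_\ell x = \sigma_\ell$ if and only if $c(x)\ge d(\sigma_\ell)$'' together with~\eqref{eigenvalueseq}.

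Finally, summing this weight over all $b_0,\dots,b_q\ge 0$ with $b_0+\cdots+b_q = n-q$ yields $P(\sigma)\,h_{n-q}(\lambda_{d(\sigma_0)},\dots,\lambda_{d(\sigma_q)})$, by the definition of the complete homogeneous symmetric polynomial as the sum of all monomials of the given degree. Summing over all $\sigma\in N(m)$ with $\dim\sigma\le n$ then gives the asserted formula; chains $\sigma$ not realized by a product of elements of $\supp P$ contribute $P(\sigma)=0$ and do no harm. This argument is essentially bookkeeping, and I do not anticipate a genuine obstacle: the only step needing care is the bijection of the previous paragraph, which rests precisely on the block structure of weakly $\geq_{\mathscr R}$-decreasing sequences noted above.
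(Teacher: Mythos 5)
Your proof is correct and follows essentially the same route as the paper's: both expand $P^{\ast n}(m)=T^n(1,m)$ using that $T$ lies in the incidence algebra of $(M,\geq_{\mathscr R})$, group the walks from $1$ to $m$ by the strictly decreasing chain of distinct values visited (a simplex in $N(m)$), use $T(\sigma_i,\sigma_i)=\lambda_{d(\sigma_i)}$ for the loop steps, and recognize the sum over loop-count distributions $r_0+\cdots+r_q=n-q$ as $h_{n-\dim\sigma}$. Your write-up merely spells out the bookkeeping (the block structure of weakly decreasing sequences and the bijection with loop counts) that the paper leaves implicit.
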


\begin{proof}
We have that $P^{\ast n}(m) = T^n(1,m)$.  As $T^n(1,m)$ is in the incidence algebra of $(M,\geq_{\mathscr R})$, it
follows (using $T(m,m) = \lambda_{d(m)}$) that
\begin{equation}\label{simplecomputation}
	P^{\ast n}(m) = \sum\sum \lambda_{d(\sigma_0)}^{r_0}T(\sigma_0,\sigma_1)\lambda_{d(\sigma_1)}^{r_1}\cdots
	T(\sigma_{q-1},\sigma_q)\lambda_{d(\sigma_q)}^{r_q}
\end{equation}
where the sum runs over all $\sigma=(\sigma_0,\sigma_1,\ldots, \sigma_q)\in N(m)$
with $q\leq n$ and $r_0+\cdots+r_q=n-q$. As desired, this gives:
\[
	\sum_{\substack{\sigma\in N(m)\\\dim \sigma\leq n}} P(\sigma)
	h_{n-\dim \sigma}(\lambda_{d(\sigma_0)},\lambda_{d(\sigma_1)},\ldots,\lambda_{d(\sigma_{\dim \sigma})})\,.\qedhere
\]
\end{proof}

If $m\in \wh 0$, then $c(m)=d(m) = \wh 0$ and $\lambda_{d(m)} =1$.  Thus we have the following specialization of
Proposition~\ref{explicitcomputation} for $m\in \wh 0$.

\begin{corollary}\label{minimalidealcase}
Let $m\in \wh 0$.  Then
\[
	P^{\ast n}(m) = \sum_{\substack{\sigma\in N(m)\\\dim \sigma\leq n}}P(\sigma)\cdot
	\sum_{r\leq n-\dim \sigma}h_{r}(\lambda_{d(\sigma_0)},\ldots,\lambda_{d(\sigma_{\dim \sigma-1})}).
\]
\end{corollary}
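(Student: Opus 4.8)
The plan is to derive Corollary~\ref{minimalidealcase} directly from Proposition~\ref{explicitcomputation} by specializing the complete homogeneous symmetric polynomials. First I would record the two facts already noted just before the statement: for $m\in\wh 0$ one has $c(m)=d(m)=\wh 0$, and $\lambda_{\wh 0}=\sum_{Mm'\ge\wh 0}P(m')=\sum_{m'\in M}P(m')=1$ since $\wh 0$ is the bottom element of $\Lambda(M)$. Consequently, for every simplex $\sigma=(\sigma_0,\dots,\sigma_q)\in N(m)$ the minimal vertex is $\sigma_q=m$, so the last argument of the polynomial appearing in Proposition~\ref{explicitcomputation}, namely $\lambda_{d(\sigma_q)}=\lambda_{d(m)}$, equals $1$.

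The only computational ingredient is the specialization identity
\[
h_j(x_1,\dots,x_{k-1},1)=\sum_{r=0}^{j}h_r(x_1,\dots,x_{k-1}),
\]
which I would prove in one line from the generating function $\sum_{j\ge0}h_j(x_1,\dots,x_k)t^j=\prod_{i=1}^k(1-x_it)^{-1}$: setting $x_k=1$ turns the $k$-th factor into $(1-t)^{-1}=\sum_{s\ge0}t^s$, so the generating function of the left-hand side is the product of $\sum_{r\ge0}h_r(x_1,\dots,x_{k-1})t^r$ with $\sum_{s\ge0}t^s$, and comparing coefficients of $t^j$ gives the claim.

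Applying this with $j=n-\dim\sigma$, with the $\dim\sigma$ variables $\lambda_{d(\sigma_0)},\dots,\lambda_{d(\sigma_{\dim\sigma-1})}$, and with the final variable equal to $\lambda_{d(\sigma_{\dim\sigma})}=\lambda_{d(m)}=1$, each summand $P(\sigma)\,h_{n-\dim\sigma}(\lambda_{d(\sigma_0)},\dots,\lambda_{d(\sigma_{\dim\sigma})})$ in Proposition~\ref{explicitcomputation} becomes $P(\sigma)\sum_{r\le n-\dim\sigma}h_r(\lambda_{d(\sigma_0)},\dots,\lambda_{d(\sigma_{\dim\sigma-1})})$, which is exactly the formula in the statement. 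There is no serious obstacle here; the only points requiring care are making the index shift in the symmetric-polynomial identity line up with the convention of Proposition~\ref{explicitcomputation} that the variables are indexed $0,1,\dots,\dim\sigma$, and checking that one indeed has $\lambda_{d(m)}=1$ (and not merely $\lambda_{c(m)}=1$), which is why one needs $d(m)=\wh 0$ for $m\in\wh 0$.
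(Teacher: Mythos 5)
Your proof is correct and follows exactly the route the paper intends: the paper states $c(m)=d(m)=\wh 0$ and $\lambda_{d(m)}=1$ for $m\in\wh 0$ immediately before the corollary and presents it as a direct specialization of Proposition~\ref{explicitcomputation}, which is precisely your argument. The generating-function identity $h_j(x_1,\ldots,x_{k-1},1)=\sum_{r=0}^{j}h_r(x_1,\ldots,x_{k-1})$ that you make explicit is the (unstated) computational content of that specialization.
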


We now can compute a formula for the stationary distribution.

\begin{theorem}\label{productformula}
Let $P$ be an adapted probability on a finite $\mathscr R$-trivial monoid $M$ with minimal ideal $\wh 0$.
Then the stationary distribution $\pi$ of the random walk on $\wh 0$ driven by $P$ is given by
\[
	\pi(m) = \sum_{\sigma\in N(m)} \prod_{i=1}^{\dim \sigma} \frac{T(\sigma_{i-1},\sigma_i)}{1-\lambda_{d(\sigma_{i-1})}}
	= \sum_{\sigma\in N(m)} \prod_{i=1}^{\dim \sigma} \frac{\displaystyle{\sum_{\sigma_{i-1}x=\sigma_i}
	P(x)}}{\displaystyle{1-\sum_{c(x)\geq d(\sigma_{i-1})}P(x)}} \;,
\]
where $N(m)$ consists of all chains $1=\sigma_0>_{\mathscr R}\sigma_1>_{\mathscr R}\cdots>_{\mathscr R}\sigma_q=m$.
\end{theorem}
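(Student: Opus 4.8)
The plan is to derive the formula by letting $n\to\infty$ in the explicit expression for $P^{\ast n}(m)$ provided by Corollary~\ref{minimalidealcase}. By Theorem~\ref{idempotentconvergence}, $P^{\ast n}$ converges in $\LO M$ to the idempotent probability $\pi$, which is supported on $\wh 0$; in particular $\pi(m)=\lim_{n\to\infty}P^{\ast n}(m)$ for every $m\in\wh 0$. Since $M$ is finite, for each such $m$ the index set $N(m)$ of chains $1=\sigma_0>_\RR\cdots>_\RR\sigma_q=m$ is finite, so the limit passes inside the outer sum of Corollary~\ref{minimalidealcase}, and the problem reduces to evaluating, for each fixed chain $\sigma$ of dimension $q$, the limit $\lim_{n\to\infty}\sum_{r\leq n-q}h_r\bigl(\lambda_{d(\sigma_0)},\ldots,\lambda_{d(\sigma_{q-1})}\bigr)$.

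The only step that is not pure bookkeeping, and the one I expect to be the crux, is showing that each $\lambda_{d(\sigma_i)}$ appearing here (that is, for $0\leq i<q$) satisfies $0\leq\lambda_{d(\sigma_i)}<1$. Nonnegativity is clear, as $\lambda_{d(\sigma_i)}$ is a sum of values of $P$. For the strict upper bound, I would first observe that none of $\sigma_0,\ldots,\sigma_{q-1}$ lies in $\wh 0$: each satisfies $\sigma_i>_\RR\sigma_{i+1}$ and so is not $\leq_\RR$-minimal, whereas every element $m$ of $\wh 0$ is $\leq_\RR$-minimal, since $m'\leq_\RR m$ gives $m'=mt$ for some $t\in M$ and $mt=m$ holds for all $m\in\wh 0$, $t\in M$ by the standing hypothesis on $\wh 0$; hence $m'=m$. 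Next, for any $m\notin\wh 0$, the fact (recalled in the text) that $mt=m$ holds precisely when $c(t)\geq d(m)$, together with \eqref{eigenvalueseq}, gives $\lambda_{d(m)}=\sum_{c(t)\geq d(m)}P(t)=P(\{t\mid mt=m\})$. Were this equal to $1$, we would have $mt=m$ for every $t\in\supp P$, hence $mn=m$ for every $n\in\langle\supp P\rangle$; since $P$ is adapted, $\wh 0\subseteq\langle\supp P\rangle$, so $mz=m$ for all $z\in\wh 0$, and as $mz\in\wh 0$ this forces $m\in\wh 0$, a contradiction. This is exactly where the adaptedness of $P$ and the left-zero hypothesis on $\wh 0$ enter, and it also ensures --- regardless of whether $P(\sigma)=0$ --- that none of the denominators $1-\lambda_{d(\sigma_{i-1})}$ in the final formula vanishes.

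With $|\lambda_{d(\sigma_i)}|<1$ in hand, I would conclude using the generating-function identity $\sum_{r\geq0}h_r(x_1,\ldots,x_k)t^r=\prod_{j=1}^k(1-x_jt)^{-1}$, whose right-hand side is an absolutely convergent power series at $t=1$ when all $|x_j|<1$; this yields
\[
	\lim_{n\to\infty}\sum_{r\leq n-q}h_r\bigl(\lambda_{d(\sigma_0)},\ldots,\lambda_{d(\sigma_{q-1})}\bigr)=\prod_{i=0}^{q-1}\frac{1}{1-\lambda_{d(\sigma_i)}}\,.
\]
Substituting this into Corollary~\ref{minimalidealcase}, recalling $P(\sigma)=\prod_{i=1}^{q}T(\sigma_{i-1},\sigma_i)$, and reindexing the denominator product gives
\[
	\pi(m)=\sum_{\sigma\in N(m)}P(\sigma)\prod_{i=0}^{\dim\sigma-1}\frac{1}{1-\lambda_{d(\sigma_i)}}=\sum_{\sigma\in N(m)}\prod_{i=1}^{\dim\sigma}\frac{T(\sigma_{i-1},\sigma_i)}{1-\lambda_{d(\sigma_{i-1})}}\,,
\]
which is the first claimed expression; the second then follows at once by substituting the definitions $T(\sigma_{i-1},\sigma_i)=\sum_{\sigma_{i-1}x=\sigma_i}P(x)$ and $\lambda_{d(\sigma_{i-1})}=\sum_{c(x)\geq d(\sigma_{i-1})}P(x)$. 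Apart from the strict inequality above, the argument is just manipulation of Corollary~\ref{minimalidealcase} and this symmetric-function identity, so I anticipate no further difficulty.
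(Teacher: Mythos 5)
Your proposal is correct and follows essentially the same route as the paper: take the limit of Corollary~\ref{minimalidealcase} via Theorem~\ref{idempotentconvergence} and sum the complete homogeneous symmetric polynomials as a product of geometric series. The only difference is that you explicitly verify $\lambda_{d(\sigma_i)}<1$ for $\sigma_i\notin\wh 0$ using adaptedness and the left-zero property of $\wh 0$, a point the paper's proof leaves implicit; this is a welcome addition rather than a deviation.
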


\begin{proof}
By Theorem~\ref{idempotentconvergence} we know that $\pi(m) = \lim_{n\to \infty} P^{\ast n}(m)$.  By
Corollary~\ref{minimalidealcase}
\begin{align*}
\lim_{n\to \infty} P^{\ast n}(m) &=
	\sum_{\sigma\in N(m)}P(\sigma)\cdot \sum_{r=0}^{\infty}h_{r}(\lambda_{d(\sigma_0)},\ldots,\lambda_{d(\sigma_{\dim \sigma-1})})\\
	&= \sum_{\sigma\in N(m)}P(\sigma)\cdot \prod_{i=0}^{\dim\sigma-1}\sum_{j=0}^{\infty}\lambda_{d(\sigma_i)}^j\\
	&= \sum_{\sigma\in N(m)}P(\sigma)\cdot \prod_{i=0}^{\dim\sigma-1}\frac{1}{1-\lambda_{d(\sigma_i)}}\\
	&=  \sum_{\sigma\in N(m)} \prod_{i=1}^{\dim \sigma} \frac{T(\sigma_{i-1},\sigma_i)}{1-\lambda_{d(\sigma_{i-1})}}\,.\qedhere
\end{align*}
\end{proof}

\begin{remark}
The stationary distribution $\pi$ admits the following probabilistic interpretation. It is the probability of obtaining $m$ via the
following process. You start at the identity and continue the process until you arrive at the minimal ideal $\wh 0$ at which
point you stop.  If you are at $t\in M$, then you remove from the support of $P$ all elements in the right stabilizer of $t$
and then renormalize to obtain a probability $Q_t$.  Select an element $x$ of $S$ according to $Q_t$ and move to $tx$.

Equivalently, this is the usual right random walk on the monoid,
except one rejects each step that does not go strictly down in the $\RR$-order.
\end{remark}

%%%%%%%%%%%%%%%%%%%%%%%%%%%%%%%%%%%%%%%%%%%%%%%%%%%%%%%%%%%%%%%%
\subsection{Reduced words and product formulas}
Let $P$ be an adapted  probability on an $\mathscr R$-trivial monoid $M$ with minimal left ideal $\wh 0$ and denote by $X$
the support of $P$.  We write $[w]_M$ for the image in $M$ of a word $w$ in the free monoid $X^*$.  If $w=w_1\cdots w_n$
is in $X^*$, let $\sigma(w)$ be the simplex of $\Delta(M)$ given by the set
\[
	\sigma(w)=\left\{1,[w_1]_M,[w_1w_2]_M,\ldots, [w_1\cdots w_n]_M\right\}.
\]
Note that the elements $[w_1\cdots w_i]_M$ with $i=0,\ldots, n$ need not be distinct; if they are we call the word
$w$ \emph{reduced}.  Define the reduction $\rho(w)$ of $w$ to be the word obtained by removing those letters
$w_i$ with $[w_1\cdots w_i]_M=[w_1\cdots w_{i-1}]_M$. It is easy to see that $[\rho(w)]_M=[w]_M$ and
$\sigma(w)=\sigma(\rho(w))$. For $m\in M$, denote by $\mathrm{Red}(m)$ the set of all reduced words $w\in X^*$
with $[w]_M=m$.  The reduced words are precisely the elements of the Karnofsky--Rhodes expansion of
$M$ with respect to the set $X$~\cite{Elston}; they were used by Brown in his proof of the diagonalizability of
left regular band walks~\cite{BrownLRB}.

It is immediate from the definition that if $\sigma$ is a simplex of $\Delta(M)$ and $R(\sigma)$ is the set of
reduced words $w$ with $\sigma(w)=\sigma$, then
\[
	P(\sigma) = \sum_{w\in R(\sigma)}P(w_1)\cdots P(w_{|w|})\,.
\]
In light of this, Theorem~\ref{productformula} admits the following reformulation.

\begin{theorem}\label{productformula2}
Let $P$ be an adapted probability on a finite $\mathscr R$-trivial monoid $M$ with minimal ideal $\wh 0$.  Then, the stationary
distribution $\pi$ of the random walk on $\wh 0$ driven
by $P$ is given by
\begin{align*}
	\pi(m) &= \sum_{w\in \mathrm{Red}(m)} \prod_{i=1}^{|w|} \frac{P(w_i)}{1-\lambda_{d([w_1\cdots w_{i-1}]_M)}} \\
	&= \sum_{w\in \mathrm{Red}(m)} \prod_{i=1}^{|w|} \frac{P(w_i)}{\displaystyle{1-\sum_{c(x)\geq
	d([w_1\cdots w_{i-1}]_M)}P(x)}}\;.
\end{align*}
\end{theorem}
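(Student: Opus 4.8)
The plan is to deduce Theorem~\ref{productformula2} directly from Theorem~\ref{productformula}, by re-expressing the sum over simplices $\sigma\in N(m)$ as a sum over reduced words. The only new ingredient is the identity recorded just before the statement: for a simplex $\sigma$ of $\Delta(M)$, writing $R(\sigma)$ for the set of reduced words $w\in X^*$ with $\sigma(w)=\sigma$, one has $P(\sigma)=\sum_{w\in R(\sigma)}P(w_1)\cdots P(w_{|w|})$. Everything else is bookkeeping on indices.

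First I would start from the penultimate line in the proof of Theorem~\ref{productformula}, equivalently rewriting its conclusion as $\pi(m)=\sum_{\sigma\in N(m)}P(\sigma)\prod_{i=0}^{\dim\sigma-1}\frac{1}{1-\lambda_{d(\sigma_i)}}$, where a simplex of $N(m)$ is written $\sigma=(\sigma_0,\dots,\sigma_q)$ with $\sigma_0=1$, $\sigma_q=m$, $q=\dim\sigma$. Substituting the identity for $P(\sigma)$ produces the double sum $\sum_{\sigma\in N(m)}\sum_{w\in R(\sigma)}\big(P(w_1)\cdots P(w_{|w|})\big)\prod_{i=0}^{\dim\sigma-1}\frac{1}{1-\lambda_{d(\sigma_i)}}$, and I may move the $\lambda$-factors inside the inner sum since they depend only on $\sigma$, not on the chosen $w\in R(\sigma)$. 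Next I would use that a reduced word $w$ of length $n$ has $\dim\sigma(w)=n$, with vertices exactly $\sigma_i=[w_1\cdots w_i]_M$ for $i=0,\dots,n$ — immediate since, $\leq_\RR$ being a partial order, the chain of partial products of a reduced word is strictly decreasing with $n+1$ distinct entries. Shifting the product index $i\mapsto i-1$ then rewrites each summand as $\prod_{i=1}^{|w|}\frac{P(w_i)}{1-\lambda_{d([w_1\cdots w_{i-1}]_M)}}$. Finally, since distinct $\sigma\in N(m)$ have disjoint $R(\sigma)$, and $\bigsqcup_{\sigma\in N(m)}R(\sigma)$ is exactly $\mathrm{Red}(m)$ (each reduced $w$ with $[w]_M=m$ yields $\sigma(w)\in N(m)$, while simplices $\sigma$ with $R(\sigma)=\emptyset$ contribute $P(\sigma)=0$), the double sum collapses to $\sum_{w\in\mathrm{Red}(m)}\prod_{i=1}^{|w|}\frac{P(w_i)}{1-\lambda_{d([w_1\cdots w_{i-1}]_M)}}$, which is the first displayed formula. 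The second displayed formula then follows by substituting $\lambda_{d(\cdot)}=\sum_{c(x)\geq d(\cdot)}P(x)$ from~\eqref{eigenvalueseq}.

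There is no genuinely hard step: the statement is a reformulation, and all the care is in the bookkeeping — the index shift $i\mapsto i-1$, the identification $|w|=\dim\sigma(w)$ for reduced $w$, the observation that the factors $1/(1-\lambda_{d(\sigma_{i-1})})$ depend only on the simplex (hence factor out of the sum over $R(\sigma)$) whereas the factors $P(w_i)$ genuinely depend on $w$, and the verification that $\mathrm{Red}(m)=\bigsqcup_{\sigma\in N(m)}R(\sigma)$ including degenerate cases ($R(\sigma)=\emptyset$, and the empty word when $m=1$). A useful sanity check to record is that $P(\sigma)=0$ unless some product of elements of $\supp P$ visits precisely the $\RR$-classes of $\sigma$, which is consistent with dropping such $\sigma$ from the sum.
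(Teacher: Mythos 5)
Your proof is correct and matches the paper's intended argument: the paper presents Theorem~\ref{productformula2} precisely as a reformulation of Theorem~\ref{productformula} via the identity $P(\sigma)=\sum_{w\in R(\sigma)}P(w_1)\cdots P(w_{|w|})$, which is exactly the substitution you carry out. The bookkeeping you make explicit (a reduced word of length $n$ yields an $n$-dimensional simplex whose vertices are the partial products, the $\lambda$-factors depend only on $\sigma$, and $\mathrm{Red}(m)=\bigsqcup_{\sigma\in N(m)}R(\sigma)$ with empty $R(\sigma)$ giving $P(\sigma)=0$) is just the content the paper leaves implicit.
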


Theorem~\ref{productformula2} reduces to a product formula in the special case that each element of the monoid
admits a unique reduced representative. In fact, much of the random walk theory becomes particularly simple in this
case.  So let $M$ be an $\mathscr R$-trivial monoid with generating set $X$.  We say that $M$ is \emph{Karnofsky--Rhodes
with respect to $X$} if each element of $M$ can be represented by a unique reduced word over $X$.  This is equivalent to
saying that the right Cayley digraph of $M$ with respect to $X$ becomes a directed rooted tree after removal of loop edges.  Free left regular
bands are examples, and we shall encounter others in this paper. Abusing notation slightly, we write $\mathrm{Red}(m)$
for the unique reduced word representing the element $m$. Notice that if $M$ is Karnofsky-Rhodes with respect to $X$,
then $m\leq_{\mathscr R}n$ if and only if $\mathrm{Red}(n)$ is a prefix of $\mathrm{Red}(m)$; in particular, if $e\in E(M)$,
then $em=m$ if and only if $\mathrm{Red}(e)$ is a prefix of $\mathrm{Red}(m)$.  The following corollary is immediate
from this discussion and Theorem~\ref{productformula2}.

\begin{corollary}\label{productformulacor}
Let $M$ be a finite $\RR$-trivial monoid which is Karnofsky--Rhodes with respect to a generating set $X$.
Let $P$ be a probability on $M$ with support $X$. Denote by $\wh 0$ the minimal ideal of $M$.  Let $\pi$ be the
stationary distribution of the random walk on $\wh 0$ driven by $P$. For an idempotent $e$, let $r_e$ be the number
of elements of $\wh 0$ whose reduced expression has $\mathrm{Red}(e)$ as a prefix.
\begin{enumerate}
\item If $e\in E(M)$, then the multiplicity of the eigenvalue of the transition matrix corresponding to $Me$ is
\[
\sum_{Mf\leq Me} r_f\mu(Mf,Me)\,
\]
where $\mu$ is the M\"obius function of $\Lambda(M)$.
\item If $m\in \wh 0$ with $\mathrm{Red}(m)=w_1\cdots w_n$, then
\[
	\pi(m) = \prod_{i=1}^{n} \frac{P(w_i)}{1-\lambda_{d([w_1\cdots w_{i-1}]_M)}}
	=\prod_{i=1}^{n} \frac{P(w_i)}{\displaystyle{1-\sum_{c(x)\geq d([w_1\cdots w_{i-1}]_M)}P(x)}}\,.
\]
\end{enumerate}
\end{corollary}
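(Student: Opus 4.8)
The plan is to obtain both items by specializing Theorems~\ref{theorem.eigenvalues} and~\ref{productformula2} to the present situation and then invoking the Karnofsky--Rhodes description of $\leq_{\mathscr R}$ recalled just above the statement. First one checks that the hypotheses apply: since $\supp P = X$ generates $M$, the submonoid $\langle\supp P\rangle$ equals $M$ and in particular contains $\wh 0$, so $P$ is adapted; and $M$ acts on $\Omega = \wh 0$ by left multiplication because $\wh 0$ is a left ideal. Thus Theorem~\ref{theorem.eigenvalues} computes the eigenvalues $\lambda_X$ and their multiplicities $m_X$ for $X \in \Lambda(M)$ with $\Omega = \wh 0$, and Theorem~\ref{productformula2} computes the stationary distribution $\pi$ of the walk on $\wh 0$.

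For item~(2) I would simply read off Theorem~\ref{productformula2}. Being Karnofsky--Rhodes with respect to $X$ means each $m \in M$ has a \emph{unique} reduced word, so $\mathrm{Red}(m) = w_1\cdots w_n$ is a single word and the outer sum $\sum_{w \in \mathrm{Red}(m)}$ in that theorem collapses to the single product $\prod_{i=1}^n P(w_i)/(1-\lambda_{d([w_1\cdots w_{i-1}]_M)})$. The second displayed expression in~(2) is then just the substitution $\lambda_{d(\cdot)} = \sum_{c(x)\geq d(\cdot)} P(x)$ coming from~\eqref{eigenvalueseq}, already present in Theorem~\ref{productformula2}.

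For item~(1) I would apply Theorem~\ref{theorem.eigenvalues} with $\Omega = \wh 0$: the multiplicity of the eigenvalue $\lambda_{Me}$ attached to $Me$ is $\sum_{Mf \leq Me} |e_{Mf}\,\wh 0|\cdot\mu(Mf,Me)$, where $e_{Mf}$ denotes a chosen idempotent generating the left ideal $Mf$. The only thing left is to recognize $|e_{Mf}\,\wh 0| = r_f$. Since the fixed-point set of an idempotent acting on a set is its image, $e_{Mf}\,\wh 0 = \{m \in \wh 0 \mid e_{Mf}\,m = m\}$; and by the Karnofsky--Rhodes characterization recalled above the statement, $e_{Mf}\,m = m$ holds precisely when $\mathrm{Red}(e_{Mf})$ is a prefix of $\mathrm{Red}(m)$. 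Hence $|e_{Mf}\,\wh 0|$ is exactly the number $r_f$ of elements of $\wh 0$ whose reduced expression has $\mathrm{Red}(f)$ as a prefix, and the formula follows. I do not expect any real obstacle here; the one point deserving a sentence of care is that $r_f$ must depend only on the left ideal $Mf$ and not on the chosen idempotent $f$ generating it. This is immediate from the identity $|f\,\wh 0| = \theta(f) = \sum_{Y \leq Mf} m_Y$ used in the proof of Theorem~\ref{theorem.eigenvalues} (with $\theta$ the character of $\mathbb R\wh 0$), which shows $r_f = |f\,\wh 0|$ is a function of $Mf$ alone, so the expression $\sum_{Mf \leq Me} r_f\,\mu(Mf,Me)$ is well defined.
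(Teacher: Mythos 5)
Your proposal is correct and follows the paper's intended route exactly: the paper derives the corollary as "immediate" from Theorem~\ref{theorem.eigenvalues}, Theorem~\ref{productformula2}, and the observation that in the Karnofsky--Rhodes case $em=m$ iff $\mathrm{Red}(e)$ is a prefix of $\mathrm{Red}(m)$, which is precisely how you identify $|e_{Mf}\,\wh 0|$ with $r_f$ and collapse the sum over reduced words to a single product. Your extra remark on the well-definedness of $r_f$ as a function of $Mf$ alone is a sensible point of care that the paper leaves implicit.
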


It is not hard to see how to recover the stationary distribution for the Tsetlin library from this corollary.
If $w=w_1\cdots w_n$ is a repetition-free word over an $n$-letter alphabet and we use the free LRB as the
monoid $M$, then $w$ itself is its only reduced representative.

\begin{remark}
  One more generally obtains a product formula as long as $\sigma(w)$
  is constant along the reduced words $w$ of each given element $m$.
\end{remark}

%%%%%%%%%%%%%%%%%%%%%%%%%%%%%%%%%%%%%%%%%%%%%%%%%%%%%%%%%%%%%%%%%%
\subsection{Rates of convergence for $\mathscr R$-trivial monoids}

We continue to assume that $P$ is an adapted probability on an $\mathscr R$-trivial monoid $M$ with minimal left ideal
$\wh 0$.  In this section we give a crude upper bound on the rate of convergence to stationarity of the random walk on $\wh 0$.
Let $\nu$ be a probability on $\wh 0$. Then, by Theorem~\ref{idempotentconvergence}, we know that
\[
	\|P^{\ast n}\nu-\pi\|_{TV}\leq P^{\ast n}(M\setminus \wh 0)\,.
\]
We proceed by bounding the right hand side.

For $L$ an $\mathscr L$-class, let
\[
	M_{\geq_{\mathscr L}L} = \{m\in M\mid m\geq_{\mathscr L} L\}\,.
\]
Clearly
\[
	P^{\ast n}(M_{\geq_{\mathscr L}L}) = \sum_{L'\geq_{\mathscr L} L}P^{\ast n}(L')\,,
\]
and so by M\"obius inversion we have
\begin{equation}\label{mobiussum}
P^{\ast n}(L) = \sum_{L'\geq_{\mathscr L} L} P^{\ast n}(M_{\geq_{\mathscr L}L'})\cdot \mu(L,L')\,.
\end{equation}
where $\mu$ denotes the M\"obius function for the induced order on $M/{\mathscr L}$, then

Note that, if $L'\in \Lambda(M)$, then $P^{\ast n}(M_{\geq_{\mathscr L}L'})=\lambda_{L'}^n$. One then has the following
result in the left regular band case.

\begin{corollary}\label{lrbcaseofconvergencetime}
Suppose that $M$ is a left regular band and $P$ is an adapted probability.  Then,
\[
	P^{\ast n}(M \setminus \wh 0) = -\sum_{X>\wh 0}\lambda_X^n\cdot \mu(\wh 0,X).
\]
In particular, if $\nu$ is a probability on $\wh 0$, then
\[
	\|P^{\ast n}\nu-\pi\|_{TV}\leq -\sum_{X>\wh 0}\lambda_X^n\cdot \mu(\wh 0,X)
\]
where $\pi$ is the stationary distribution.
\end{corollary}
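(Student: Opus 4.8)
The plan is to derive the claimed identity directly from the Möbius inversion formula \eqref{mobiussum} evaluated at the $\mathscr L$-class $\wh 0$, combined with the observation recorded just before the corollary that $P^{\ast n}(M_{\geq_{\mathscr L}X}) = \lambda_X^n$ for $X \in \Lambda(M)$; the total variation bound is then immediate from Theorem~\ref{idempotentconvergence}.

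First I would record the two facts particular to left regular bands. Since every element of a left regular band is idempotent, every $\mathscr L$-class is regular, so $M/{\mathscr L} = \Lambda(M)$ and the Möbius function occurring in \eqref{mobiussum} is exactly the Möbius function $\mu$ of the lattice $\Lambda(M)$. Moreover $\wh 0$, being the unique minimal left ideal, is a single $\mathscr L$-class and the bottom element of $\Lambda(M)$; in particular $Mm \geq \wh 0$ for every $m \in M$, so $\lambda_{\wh 0} = \sum_{m \in M} P(m) = 1$. Now I would apply \eqref{mobiussum} with $L = \wh 0$: because $\wh 0$ is minimal, $L'$ ranges over all of $\Lambda(M)$, and since each such $L'$ lies in $\Lambda(M)$ the identity $P^{\ast n}(M_{\geq_{\mathscr L}L'}) = \lambda_{L'}^n$ applies. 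Separating out the $X = \wh 0$ term, whose value is $\lambda_{\wh 0}^n\,\mu(\wh 0,\wh 0) = 1$, this gives
\[
	P^{\ast n}(\wh 0) = \sum_{X \in \Lambda(M)} \lambda_X^n\,\mu(\wh 0, X) = 1 + \sum_{X > \wh 0} \lambda_X^n\,\mu(\wh 0, X).
\]
Since $P^{\ast n}$ is a probability distribution on $M$, we have $P^{\ast n}(M\setminus \wh 0) = 1 - P^{\ast n}(\wh 0) = -\sum_{X > \wh 0} \lambda_X^n\,\mu(\wh 0, X)$, which is the first assertion. Plugging this into the inequality $\|P^{\ast n}\ast\nu - \pi\|_{TV} \leq P^{\ast n}(M\setminus \wh 0)$ of Theorem~\ref{idempotentconvergence}(2), valid for any probability $\nu$ supported on $\wh 0$, yields the stated estimate.

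The one step that is not pure bookkeeping is the equality $P^{\ast n}(M_{\geq_{\mathscr L}X}) = \lambda_X^n$, so that is where I expect the real content to lie --- though if it may simply be invoked from the surrounding text there is essentially nothing left to prove. The point is that $M_{\geq_{\mathscr L}X}$ is the preimage under the content homomorphism $c\colon M \to \Lambda(M)$ of the principal up-set $\{Y \in \Lambda(M) \mid Y \geq X\}$; since the multiplication on $\Lambda(M)$ is the meet, this up-set is a submonoid, hence $M_{\geq_{\mathscr L}X}$ is a submonoid of $M$, and a product of elements of $\supp P$ lies in it precisely when every factor does. Therefore $P^{\ast n}(M_{\geq_{\mathscr L}X}) = P(M_{\geq_{\mathscr L}X})^n = \lambda_X^n$. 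Everything else is formal manipulation with Möbius inversion and the fact that a convolution of probabilities is a probability.
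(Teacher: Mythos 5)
Your proof is correct and follows essentially the same route as the paper: apply \eqref{mobiussum} at $L=\wh 0$, use that in a left regular band every $\mathscr L$-class is regular so each term equals $\lambda_{L'}^n\mu(\wh 0,L')$, split off the $X=\wh 0$ term, and invoke Theorem~\ref{idempotentconvergence}. Your extra paragraph justifying $P^{\ast n}(M_{\geq_{\mathscr L}X})=\lambda_X^n$ via the content homomorphism correctly fills in a step the paper only asserts.
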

\begin{proof}
By \eqref{mobiussum} and using that $\lambda_{\wh 0}=1=\mu(\wh 0,\wh 0)$ and $P^{\ast n}(M_{\geq_{\mathscr L}L'})
=\lambda_{L'}^n$, we have that
\[
P(M\setminus \wh 0) = 1- P(\wh 0) = 1-\sum_{X\geq \wh 0} \lambda^n_X\cdot \mu(\wh 0,X)= -\sum_{X>\wh 0}\lambda_X^n\cdot \mu(\wh 0,X).
\]
\end{proof}

Note that this bound immediately implies that of Brown and Diaconis~\cite{BrownLRB,DiaconisBrown} for left regular band walks,
as well as the bound in~\cite{BHR} for hyperplane walks.

When $L$ does not consist of idempotents, computing $P^{\ast n}(M_{\geq_{\mathscr L}L})$ seems
to be challenging.

%%%%%%%%%%%%%%%%%%%%%%%%%%%%%%%%%%%%%%%%%%%%%%%%%%%%%%%%%%%%%%%%
\subsection{Absorption times and mixing times}\label{subsection.absorb}

If $P$ is an adapted probability on an $\mathscr R$-trivial monoid $M$, then the right random walk on $M$ driven
by $P$ is absorbing with absorbing states the elements of the minimal ideal $\wh 0$.  Let $\tau$ be the random
variable which is the time that the random walk is absorbed into the minimal ideal.
Theorem~\ref{idempotentconvergence} essentially shows that $\tau$ is a \emph{strong stationary time}~\cite{Markovmixing}
for the random walk on $\wh 0$ driven by $P$ (or more generally, by Corollary~\ref{t:couplingfromthepast}, for any
ergodic random walk of $M$ on some set). More precisely, if $M$ acts transitively on $\Omega$, $P$ is an
adapted measure, $\nu$ is an initial probability on $\Omega$, and $\pi$ is the stationary distribution,
Corollary~\ref{t:couplingfromthepast} implies
\begin{equation}\label{stationary}
	\|P^{\ast n}\nu-\pi\|_{TV}=P^{\ast n}(M\setminus \wh 0)= \mathrm{Pr}\{\tau>n\}=\mathrm{Pr}\{\tau\geq n+1\}\,.
\end{equation}

As a consequence of our computations for left regular bands, we obtain the following.
\begin{theorem}
\label{expectedabsorption}
Let $M$ be a left regular band and $P$ an adapted probability on $M$.  Let $\tau$ be the absorption
time of the right random walk on $M$ driven by $P$, and let $\mu$ be the M\"obius function of
$\Lambda(M)$.  Then
\[
E[\tau] = -\sum_{X>\wh 0} \frac{1}{1-\lambda_X}\cdot \mu(\wh 0,X)\,,
\]
where $\lambda_X = \sum_{c(m)\geq X}P(m)$.
\end{theorem}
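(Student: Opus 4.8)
The plan is to obtain $E[\tau]$ from the tail-sum identity $E[\tau]=\sum_{n\geq 0}\mathrm{Pr}\{\tau>n\}$ and to evaluate each term using results already established. First I would observe that, since $\wh 0$ is a two-sided ideal, the right random walk on $M$ remains in $\wh 0$ once it reaches it; hence $\{\tau>n\}$ is precisely the event that the walk started at $1$ has not reached $\wh 0$ within $n$ steps, an event of probability $P^{\ast n}(M\setminus\wh 0)$ by~\eqref{convolved} (this is exactly~\eqref{stationary}). Because $M$ is a left regular band, Corollary~\ref{lrbcaseofconvergencetime} then gives
\[
	\mathrm{Pr}\{\tau>n\}=P^{\ast n}(M\setminus\wh 0)=-\sum_{X>\wh 0}\lambda_X^n\,\mu(\wh 0,X)\,,
\]
so it remains only to sum this over $n\geq 0$.

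Next I would sum over $n$. Since $\Lambda(M)$ is finite, the double sum $\sum_{n\geq 0}\sum_{X>\wh 0}$ may be reordered as soon as each geometric series $\sum_{n\geq 0}\lambda_X^n$ converges, that is, as soon as $\lambda_X<1$ whenever $X>\wh 0$. To check this, recall that $\chi_X\colon M\to\{0,1\}$, defined by $\chi_X(m)=1$ iff $c(m)\geq X$, is a monoid homomorphism and that $\lambda_X=\sum_{m\in\supp P}P(m)\chi_X(m)$. If $\lambda_X=1$ then, as $\chi_X$ is $\{0,1\}$-valued and $P$ is strictly positive on its support, $\chi_X$ must equal $1$ on all of $\supp P$, and hence on the whole submonoid $N=\langle\supp P\rangle$. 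But $P$ is adapted, so $N\supseteq\wh 0$; and for $m\in\wh 0$ one has $Mm=\wh 0$, whence $c(m)=\wh 0\not\geq X$ and $\chi_X(m)=0$ --- a contradiction. Thus $0\leq\lambda_X<1$ for all $X>\wh 0$, which also shows $P^{\ast n}(M\setminus\wh 0)\to 0$, so that $\tau<\infty$ almost surely and the tail-sum formula is legitimate.

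Finally I would reverse the order of summation and evaluate the geometric series to conclude
\[
	E[\tau]=\sum_{n\geq 0}\mathrm{Pr}\{\tau>n\}=-\sum_{X>\wh 0}\mu(\wh 0,X)\sum_{n\geq 0}\lambda_X^n=-\sum_{X>\wh 0}\frac{\mu(\wh 0,X)}{1-\lambda_X}\,,
\]
as desired. I do not expect any genuine obstacle here: the argument is essentially a bookkeeping assembly of Corollary~\ref{lrbcaseofconvergencetime} with an elementary manipulation of convergent series, the one point deserving a short separate verification being the strict inequality $\lambda_X<1$ for $X>\wh 0$, which is exactly where the adaptedness of $P$ is used.
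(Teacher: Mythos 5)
Your proposal is correct and follows essentially the same route as the paper: the paper's proof is exactly the tail-sum identity $E[\tau]=\sum_{n\geq 0}\mathrm{Pr}\{\tau>n\}=\sum_{n\geq 0}P^{\ast n}(M\setminus\wh 0)$ combined with Corollary~\ref{lrbcaseofconvergencetime}, followed by summing the geometric series. Your additional check that adaptedness forces $\lambda_X<1$ for $X>\wh 0$ (so the series converge and $\tau<\infty$ a.s.) is a correct justification of a point the paper leaves implicit.
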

\begin{proof}
  Apply Corollary~\ref{lrbcaseofconvergencetime} using the standard
  fact about non-negative integer valued random variables
  (see~\cite{Markovmixing}) that the expected value of $\tau$ is given
  by
  \begin{equation}
    \label{expectation}
    E[\tau] = \sum_{n=0}^{\infty} \mathrm{Pr}\{\tau>n\} = \sum_{n=0}^{\infty}P^{\ast n}(M\setminus \wh 0)\,.\qedhere
  \end{equation}
\end{proof}

As an example, we obtain the usual formula for the expected waiting time for the coupon
collector problem, as well as the non-uniform version considered in~\cite{flajoletpaper}.

\begin{example}[Coupon collector]
Suppose we wish to collect $k$ different types of coupons.  With probability $p_i$ we
draw coupon $i$.  What is the expected number of draws to collect all $k$ coupons?
Let $\tau$ be the number of draws to collect all $k$ coupons.  Then $\tau$ is the absorption
time for the random walk on the join semilattice $P(\{1,\ldots,k\})$ driven by the adapted
probability $P(i)=p_i$.  For $I\subseteq \{1,\ldots, k\}$, let
\[
	\lambda_I = \sum_{i\in I} p_i.
\]
Then by Theorem~\ref{expectedabsorption} we retrieve the result of~\cite{flajoletpaper}:
\[
	E(\tau) = \sum_{I\subsetneq \{1,\ldots,k\}}(-1)^{k-|I|-1}\cdot \frac{1}{1-\lambda_I}\,.
\]

In particular, if $p_i=1/k$ for all $i$, this reduces to
\[
	E[\tau] = k\sum_{j=0}^{k-1}(-1)^{k-j-1}\binom{k}{j}\frac{1}{k-j}
	= k\sum_{q=1}^k(-1)^{q-1}\frac{1}{q}\binom{k}{q}
	=k\left[\sum_{i=1}^k\frac{1}{i}\right]\,,
\]
which is the standard computation for the coupon collector expectation.  One easily
obtains from this bound that
\[
  E[\tau]\leq k\log k+\gamma k+1/2+o(1)\,,
\]
where $\gamma$ is the Euler-Mascheroni constant.
\end{example}

As a consequence of Theorems~\ref{idempotentconvergence}
and~\ref{expectedabsorption}, we obtain the following bound on the
rate of convergence to stationarity for a random walk on an $\mathscr
R$-trivial monoid.
\begin{corollary}
\label{adaptedwalkmarkovbound}
Let $P$ be an adapted probability on an $\mathscr R$-trivial monoid $M$.  Let $\tau$ be
the absorption time of the right random walk on $M$, let $\nu$ be an initial distribution on $\wh 0$
and $\pi$ the stationary distribution. Then,
\[
	\|P^{\ast n}\nu -\pi\|_{TV}\leq \frac{1}{n+1}E[\tau].
\]
In particular, if $M$ is a left regular band, then
\[
	\|P^{\ast n}\nu -\pi\|_{TV}\leq -\frac{1}{n+1}\sum_{X>\wh 0}\frac{1}{1-\lambda_X}\cdot \mu(\wh 0, X).
\]
\end{corollary}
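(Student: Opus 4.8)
The plan is to combine the exact total-variation identity already available from Theorem~\ref{idempotentconvergence} with a Markov-type inequality on the tail of the absorption time $\tau$, and then to feed in the closed formula for $E[\tau]$ from Theorem~\ref{expectedabsorption} for the left regular band specialization.

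First I would recall that, as noted at the start of Section~\ref{subsection.absorb}, the right random walk on $M$ driven by $P$ is absorbing with absorbing set $\wh 0$, and that $T^n(1,m)=P^{\ast n}(m)$ by~\eqref{convolved}. Hence $P^{\ast n}(M\setminus\wh 0)$ is precisely the probability that the walk started at $1$ has not yet entered $\wh 0$ by time $n$, i.e.\ $P^{\ast n}(M\setminus\wh 0)=\mathrm{Pr}\{\tau>n\}$, which is equation~\eqref{stationary}. Combining this with Theorem~\ref{idempotentconvergence}(2) gives, for any probability $\nu$ on $\wh 0$,
\[
\|P^{\ast n}\nu-\pi\|_{TV}\leq P^{\ast n}(M\setminus\wh 0)=\mathrm{Pr}\{\tau>n\}\,.
\]

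Next I would observe that the tail probabilities $a_m:=\mathrm{Pr}\{\tau>m\}$ form a non-increasing sequence, since $\{\tau>m+1\}\subseteq\{\tau>m\}$. Using the standard expectation formula~\eqref{expectation} for a non-negative integer valued random variable, it then follows that
\[
E[\tau]=\sum_{m=0}^{\infty}a_m\ \geq\ \sum_{m=0}^{n}a_m\ \geq\ (n+1)\,a_n\ =\ (n+1)\,\mathrm{Pr}\{\tau>n\}\,,
\]
and dividing through by $n+1$ and chaining with the previous display yields the first assertion $\|P^{\ast n}\nu-\pi\|_{TV}\leq\frac{1}{n+1}E[\tau]$.

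Finally, for the left regular band case I would simply substitute the value $E[\tau]=-\sum_{X>\wh 0}\frac{1}{1-\lambda_X}\mu(\wh 0,X)$ supplied by Theorem~\ref{expectedabsorption} into this bound. There is no genuine obstacle in this argument; the only point deserving care is the identification of $P^{\ast n}(M\setminus\wh 0)$ with the absorption-time tail $\mathrm{Pr}\{\tau>n\}$, which, as indicated, is immediate from the absorbing structure of the right walk together with~\eqref{convolved}.
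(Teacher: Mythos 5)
Your proposal is correct and follows essentially the same route as the paper: bound $\|P^{\ast n}\nu-\pi\|_{TV}$ by $P^{\ast n}(M\setminus\wh 0)=\mathrm{Pr}\{\tau>n\}$ via Theorem~\ref{idempotentconvergence}, control this tail by $\frac{1}{n+1}E[\tau]$, and substitute Theorem~\ref{expectedabsorption} for the left regular band case. The only cosmetic difference is that the paper simply quotes Markov's inequality for $\mathrm{Pr}\{\tau\geq n+1\}$, whereas you rederive it from the monotonicity of the tails together with \eqref{expectation}, which amounts to the same estimate.
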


\begin{proof}
  Recall Markov's inequality~\cite{Markovmixing} for a non-negative
  discrete random variable $\tau$:
  \begin{equation*}
    \mathrm{Pr}\{\tau\geq a\}\leq \frac{1}{a}E[\tau]\,.
  \end{equation*}
  Using Theorem~\ref{idempotentconvergence} we then have
  \[
    \|P^{\ast n}\nu -\pi\|_{TV} = P^{\ast n}(M\setminus \wh 0) =
    \mathrm{Pr}\{\tau\geq n+1\} \leq \frac{1}{n+1}E[\tau]\,.
  \]
  Theorem~\ref{expectedabsorption} gives the second statement.
\end{proof}

\begin{example}[Tsetlin library]
Consider the Tsetlin library with $k$ books as a random walk on the free
left regular band on $\{1,\ldots,k\}$. We recall that the free left regular band on a set $A$ consists of all repetition-free words over $A$.
The product is concatenation followed by removing repetitions as you scan from left to right. A word belongs to the minimal ideal precisely
when it contains all letters.  Thus $\tau$ is the coupon collector random variable for $k$
coupons.  So if $p_i$ is the probability of selecting book $i$, then
\[
	\|P^{\ast n}\nu -\pi\|_{TV}\leq \frac{1}{n+1}\sum_{I\subsetneq \{1,\ldots,k\}}(-1)^{k-|I|-1}\cdot \frac{1}{1-\lambda_I}.
\]
In particular, if the weights are uniform, we recover the usual order $k\log k$ mixing time for the top-to-random shuffle.
\end{example}

\begin{example}[Promotion on a union of chains]
As our next example, let $j_1,\ldots, j_k\geq 1$ and let $M$ be the quotient of the free monoid on $x_1,\ldots, x_k$
by the relations which state that if $w$ is a word  with $j_i$ occurrences of $x_i$, then $wx_i=w$.  It is easy to
see that $M$ is a finite $\mathscr R$-trivial monoid.  The minimal ideal consists of those words with exactly
$j_i$ occurrences of $x_i$ for each $1\leq i\leq k$.

If we consider a probability $P$ supported on $x_1,\ldots, x_k$ with $P(x_i)=p_i$, then the random walk
on $\wh 0$ driven by $P$ admits the following description as a generalization of the Tsetlin library.  On a shelf one
has books $x_1,\ldots, x_k$ with $j_i$ copies of book $x_i$.  One chooses a book $x_i$ with probability $p_i$ and
moves the last copy of this book to the front.  This is a special case of the promotion random walk on a union of chains
considered in~\cite{ayyer_klee_schilling.2012}.

Note that the absorption time $\tau$ is the following well-studied variant of the coupon collecting problem,
see~\cite{coupongeneralized}.  As before, one has $k$ types of coupons
with different probabilities
of being chosen, but now one wants to collect $j_i$ copies of coupon $i$.  The expected value was
given in~\cite{coupongeneralized}. The result is
\begin{equation}
\label{gencoupeq}
	E[\tau]=\sum_{\emptyset\neq I\subset \{1,\ldots,k\}}(-1)^{|I|+1}\sum_{(r_i)\in
	\prod_{i\in I}\{0,1,\ldots,j_i-1\}}\frac{\sum_{i\in I}r_i\cdot \prod_{i\in I}p_i^{r_i}}{\left(\sum_{i\in I}p_i\right)^{1+\sum_{i\in I}r_i}}\,.
\end{equation}
It is not clear how useful this formula is for direct computation.  However, the case of uniform weights and an equal number
of copies of each book was studied earlier by Newmann and Shepp~\cite{NewmanShepp}.  A more precise
result was obtained by Erd\"os and R\'enyi~\cite{ErdosRenyi}.  If $j_1=\cdots=j_k=j$, then
\[
	E[\tau]=k\log k+(j-1)k\log\log k+k(\gamma-\log (j-1)!)+o(k)\,.
\]
Treating $j$ as a constant,  this gives a mixing time of order $k\log k+(j-1)k\log\log k$ for this generalized
Tsetlin library with equal multiplicities and uniform weights.
\end{example}

Our final result of the subsection gives a formula for the expected value of the absorption time for an arbitrary
$\mathscr R$-trivial monoid. However, this formula might be too cumbersome from a computational view point.

\begin{theorem}
Let $M$ be an $\mathscr R$-trivial monoid and $P$ an adapted probability on $M$.
Let $\tau$ be the absorption time of the right random walk on $M$ driven by $P$. Then,
retaining earlier notation,
\[
	E[\tau] = \sum_{\sigma\in \mathrm{St}(1)\cap \Delta(M\setminus \wh 0)} \frac{P(\sigma)}
	{\prod_{i=0}^{\dim \sigma}(1-\lambda_{d(\sigma_i)})}.
\]
\end{theorem}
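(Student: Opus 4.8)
The plan is to evaluate the right-hand side of the identity
\[
	E[\tau] = \sum_{n=0}^{\infty} P^{\ast n}(M\setminus \wh 0)
\]
from~\eqref{expectation} using the closed form for $P^{\ast n}(m)$ supplied by Proposition~\ref{explicitcomputation}. The first step is a bookkeeping observation about which simplices occur. Since $\wh 0$ is the minimal ideal, its elements are $\leq_\RR$-minimal in $M$, so if $m\in M\setminus\wh 0$ then every vertex $\sigma_i$ of a chain $1=\sigma_0>_\RR\cdots>_\RR\sigma_q=m$ in $N(m)$ again lies in $M\setminus\wh 0$ (as $\sigma_i\geq_\RR m$ and $M\setminus\wh 0$ is an up-set for $\leq_\RR$). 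Consequently the disjoint union $\bigsqcup_{m\in M\setminus\wh 0}N(m)$ is exactly the set of simplices $\mathrm{St}(1)\cap\Delta(M\setminus\wh 0)$, such a simplex belonging to $N(m)$ for $m$ its minimal vertex. Summing the formula of Proposition~\ref{explicitcomputation} over $m\in M\setminus\wh 0$ therefore gives
\[
	P^{\ast n}(M\setminus\wh 0) = \sum_{\substack{\sigma\in\mathrm{St}(1)\cap\Delta(M\setminus\wh 0)\\ \dim\sigma\leq n}} P(\sigma)\,h_{n-\dim\sigma}(\lambda_{d(\sigma_0)},\ldots,\lambda_{d(\sigma_{\dim\sigma})})\,.
\]

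Next I would sum over $n\geq 0$. Since every term is non-negative, the order of summation may be interchanged, so that
\[
	E[\tau] = \sum_{\sigma\in\mathrm{St}(1)\cap\Delta(M\setminus\wh 0)} P(\sigma)\sum_{r=0}^{\infty} h_r(\lambda_{d(\sigma_0)},\ldots,\lambda_{d(\sigma_{\dim\sigma})})\,,
\]
and the inner sum equals $\prod_{i=0}^{\dim\sigma}(1-\lambda_{d(\sigma_i)})^{-1}$ by the identity $\sum_{r\geq0}h_r(x_0,\ldots,x_q)=\prod_{i=0}^q(1-x_i)^{-1}$, exactly as in the proof of Theorem~\ref{productformula}, provided each $\lambda_{d(\sigma_i)}<1$. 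This produces the claimed formula and, as a byproduct, shows $E[\tau]<\infty$, the right-hand side being a finite sum of finite terms.

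The step requiring the most care --- and the one that also makes the statement well-posed, since it excludes vanishing denominators --- is the strict inequality $\lambda_{d(m)}<1$ for $m\in M\setminus\wh 0$. I would first note that $d(m)=\wh 0$ forces $m\in\wh 0$: writing $d(m)=Me$ with $e$ an idempotent in the minimal ideal of the right stabilizer of $m$, one has $me=m$ and $e\in Me=\wh 0$, hence $m=me\in M\wh 0\subseteq\wh 0$. Thus $m\in M\setminus\wh 0$ gives $d(m)>\wh 0$, since $\wh 0$ is the bottom of the lattice $\Lambda(M)$. It then suffices to show $\lambda_X<1$ for every $X\in\Lambda(M)$ with $X>\wh 0$. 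If instead $\lambda_X=\sum_{c(m)\geq X}P(m)=1$, then $\supp P\subseteq\{m\in M\mid c(m)\geq X\}$, and this set, being the preimage under the homomorphism $c$ of the principal up-set of $X$ in $\Lambda(M)$, is a submonoid; hence it contains $\langle\supp P\rangle$. Adaptedness of $P$ gives $\wh 0\subseteq\langle\supp P\rangle$, and since $c(m)=Mm^{\omega}=Mm=\wh 0$ for every $m\in\wh 0$, this forces $\wh 0\geq X$, contradicting $X>\wh 0$ and completing the argument.
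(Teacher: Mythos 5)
Your proof is correct and takes essentially the same route as the paper, which obtains the theorem directly by combining Proposition~\ref{explicitcomputation} with the expectation identity~\eqref{expectation}, i.e.\ summing $P^{\ast n}(M\setminus \wh 0)$ over $n$ and collapsing the geometric series. Your only addition is the explicit check that $\lambda_{d(m)}<1$ for $m\notin \wh 0$ (using adaptedness), a convergence detail the paper leaves implicit, just as it does in the proof of Theorem~\ref{productformula}.
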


\begin{proof}
This is immediate from Proposition~\ref{explicitcomputation} and \eqref{expectation}.
\end{proof}

As a consequence, we obtain the following bound on the mixing time for random walks on $\mathscr R$-trivial monoids.

\begin{corollary}
\label{corollary.mixing time}
Let $P$ be an adapted probability on an $\mathscr R$-trivial monoid $M$.  Let $\nu$ be a distribution
on the minimal ideal $\wh 0$ of $M$.  Let $\pi$ be the stationary distribution. Then
\[
	\|P^{\ast n}\nu -\pi\|_{TV}\leq \frac{1}{n+1}\sum_{\sigma\in \mathrm{St}(1)\cap
	\Delta(M\setminus \wh 0)} \frac{P(\sigma)}{\prod_{i=0}^{\dim \sigma}(1-\lambda_{d(\sigma_i)})}.
\]
\end{corollary}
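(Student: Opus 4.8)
The proof will follow exactly the pattern of Corollary~\ref{adaptedwalkmarkovbound}, the only change being that one plugs in the general closed form for the expected absorption time instead of its left regular band specialization. The three ingredients are: the total variation estimate of Theorem~\ref{idempotentconvergence}, the identification of $P^{\ast n}(M\setminus\wh 0)$ with a tail probability of the absorption time $\tau$ together with Markov's inequality, and the formula for $E[\tau]$ supplied by the theorem immediately preceding this corollary.

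In more detail, I would proceed as follows. Let $\tau$ denote the absorption time of the right random walk on $M$ driven by $P$ (started at the identity); this is well defined since, by the discussion of Section~\ref{subsection.absorb}, that walk is absorbing with set of absorbing states $\wh 0$. By Theorem~\ref{idempotentconvergence}, for any probability $\nu$ on $\wh 0$ one has $\|P^{\ast n}\nu - \pi\|_{TV}\leq P^{\ast n}(M\setminus\wh 0)$, and by \eqref{stationary} (equivalently, by \eqref{convolved} together with the fact that $\wh 0$ is absorbing) one has $P^{\ast n}(M\setminus\wh 0) = \mathrm{Pr}\{\tau\geq n+1\}$. Applying Markov's inequality to the non-negative integer valued random variable $\tau$ gives $\mathrm{Pr}\{\tau\geq n+1\}\leq \frac{1}{n+1}E[\tau]$. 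It then remains only to substitute the value
\[
	E[\tau] = \sum_{\sigma\in \mathrm{St}(1)\cap \Delta(M\setminus\wh 0)}\frac{P(\sigma)}{\prod_{i=0}^{\dim\sigma}(1-\lambda_{d(\sigma_i)})}
\]
provided by the preceding theorem, which yields the claimed bound.

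I do not expect any genuine obstacle here: everything substantial is already contained in Theorem~\ref{idempotentconvergence}, in the absorption-time discussion of Section~\ref{subsection.absorb}, and in the preceding theorem computing $E[\tau]$ (whose proof in turn rested on Proposition~\ref{explicitcomputation}). The only points deserving a passing remark are that $\tau$ is almost surely finite --- which holds because $P$ is adapted, so the absorbing set $\wh 0$ is reachable from every state of the right walk --- and that the right-hand side is genuinely finite, since for $\sigma_i\notin\wh 0$ the element $d(\sigma_i)$ is not the bottom of $\Lambda(M)$, whence $\lambda_{d(\sigma_i)} = \sum_{c(x)\geq d(\sigma_i)}P(x) < 1$. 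Both facts are already implicit in the cited results, so the argument ultimately amounts to the short chain of inequalities described above.
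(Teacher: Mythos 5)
Your argument is correct and is exactly the route the paper (implicitly) takes: the paper states this corollary as an immediate consequence of the preceding theorem on $E[\tau]$ together with the chain of inequalities already used for Corollary~\ref{adaptedwalkmarkovbound}, namely Theorem~\ref{idempotentconvergence}, the identification $P^{\ast n}(M\setminus \wh 0)=\mathrm{Pr}\{\tau\geq n+1\}$, and Markov's inequality. Your side remarks on finiteness (with adaptedness guaranteeing $\lambda_{d(\sigma_i)}<1$ off the minimal ideal) are consistent with what the paper leaves implicit, so there is nothing to add.
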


%%%%%%%%%%%%%%%%%%%%%%%%%%%%%%%%%%%%%%%%%%%%%%%%%%%%%%%
\section{The \fhb}
\label{section.free tree monoid}
%%%%%%%%%%%%%%%%%%%%%%%%%%%%%%%%%%%%%%%%%%%%%%%%%%%%%%%

Let $X$ be a finite alphabet endowed with a total order $<$. The
\emph{\fhb} on $X$ is the monoid $\FHB(X)$ generated by $X$ subject to the
relations $x^2=x$ for $x\in X$, as well as $yxy=yx$ whenever $x<y\in
X$. We shall sometimes call quotients of $\FHB(X)$ (together with their distinguished ordered generating sets)
\emph{tree monoids} in this context.  Note that if $M$ is a tree monoid with respect to an ordered generating set
$X$ and $Y\subseteq X$ is considered with the induced order, then $\langle Y\rangle$ is a tree monoid with
respect to the generating set $Y$.

In this section we show that $\FHB(X)$ is $\RR$-trivial
(Corollary~\ref{corollary.fhb.r_trivial}), its combinatorics is governed by trees (Proposition~\ref{proposition.fhb.count}),
and that the lattice $\Lambda(\FHB(X))$ is the Boolean lattice~(Proposition~\ref{proposition.fhb.l_classes}).
In Section~\ref{subsection.fhb generalization}, we present a slight generalization, which does not
require the generators to be idempotent, but still yields an $\RR$-trivial monoid.

%%%%%%%%%%%%%%%%%%%%%%%%%%%%%%%%%%%%%%%%%%%%%%%%%%%%%%%
\subsection{Properties of the \fhb}

The defining relations of $\FHB(X)$ can be made into a length-reducing rewriting system in the
obvious way; this rewriting system is not necessarily confluent, meaning
that terms which can be rewritten in more than one way eventually yield the same result.
But it turns out that the Knuth--Bendix
completion terminates and the resulting system admits a nice
combinatorial description.

Formally speaking, a \emph{rewriting system} $R$ over an alphabet $X$ consists of a collection
of rules $\ell\to r$ with $\ell,r$ words over $X$. It is called \emph{length-reducing} if $|\ell|>|r|$ for
each rule $\ell\to r$.  If $u,v\in X^*$, then the one-step rewriting relation $u\Rightarrow_R v$ holds if there is a rule
$\ell\to r$ and a factorization $u=w\ell z$ with $v=wrz$.  One writes $\Rightarrow_R^*$ for the reflexive-transitive
closure of $\Rightarrow_R$.  The rewriting system $R$ is \emph{confluent} if $v\mathrel{\prescript{*}{R}\Leftarrow} u\Rightarrow_R^* w$
implies that there is a word $z$ such that $v\Rightarrow_R^* z\prescript{*}{R}\Leftarrow w$.  If the system is length-reducing,
it is enough to check that $v\mathrel{\prescript{}{R}\Leftarrow} u\Rightarrow_R w$ implies there is a word $z$ such that
$v\Rightarrow_R^* z\mathrel{\prescript{*}{R}\Leftarrow}w$. In fact, it is enough to check the case that the left hand sides  of the
two rules applied to obtain $v$ and $w$ from $u$ overlap.

A word $w$ is said to be \emph{reduced} with respect to $R$ (or \emph{irreducible}) if it contains no factor
which is the left hand side of a rule, i.e., it cannot be rewritten.  For a confluent, length-reducing rewriting
system, each word can be rewritten to a unique reduced word and each reduced word represents a distinct
element of the monoid with generating set $X$ and defining relations obtained by turning the rewriting
rules $\ell\to r$ into formal equalities $\ell =r$.  The Knuth--Bendix completion process is a way to take an
arbitrary rewriting system and complete it to a confluent one defining the same quotient monoid of the free
monoid $X^*$ (if the process terminates). See~\cite{ChurchRosser} for details.

The following proposition gives an inductive construction of the Knuth--Bendix completion of the rewriting
system defining $\FHB(X)$.

\begin{proposition}
  \label{proposition.fhb.kb}
  The Knuth--Bendix completion of the rewriting system $x^2\rightarrow x$
  and $yxy\rightarrow yx$ whenever $x<y$ over $X$ is given by the rewriting rules:
  \begin{align*}
    yuy\rightarrow yu & \quad
    \text{for $y\in X$ and $u$ a reduced word (possibly empty)}\\
    & \quad \text{in $\FHB(\{x\in X\suchthat x<y\})$.}
  \end{align*}
\end{proposition}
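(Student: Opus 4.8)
The strategy is the standard Knuth--Bendix approach: first verify that the proposed system $R$ is equivalent to the original presentation (each rule of $R$ is a consequence of the defining relations, and conversely), and then verify that $R$ is confluent and length-reducing, so that it is a genuine completion. Since every rule $yuy\to yu$ with $u$ reduced in $\FHB(\{x < y\})$ visibly shortens the word, length-reduction is immediate; and termination of the rewriting process follows. For equivalence of the presentations, the original relations $x^2\to x$ and $yxy\to yx$ are the instances of the new rules with $u$ empty and $u$ a single letter $x<y$ respectively, so the old relations are among the new ones; conversely, I would argue by induction on $|u|$ that each rule $yuy = yu$ is derivable from the original relations, using that $u$ reduced in $\FHB(\{x<y\})$ means (by the inductive description of $\FHB$ on a smaller alphabet, together with the fact that $\langle\{x<y\}\rangle$ is itself a tree monoid) that $u$ factors as $u = u' z u''$ appropriately, allowing one to move the trailing $y$ leftward past the letters of $u$ one at a time via $yxy = yx$ and $x^2=x$. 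This last point is where I would be most careful: I want to phrase it so the induction is really on the recursive structure of reduced words of $\FHB(Y)$ for proper subalphabets $Y$, which is exactly what the rewriting rules themselves describe.

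The main work is checking confluence. Because the system is length-reducing, it suffices (as recalled in the excerpt) to resolve the critical pairs arising from overlaps of left-hand sides $yuy$ and $y'u'y'$. An overlap occurs when a suffix of $yuy$ coincides with a prefix of $y'u'y'$ (or one left-hand side sits inside the other). The key structural observation to exploit is that in a left-hand side $yuy$, the letter $y$ is strictly the largest letter occurring in $yuy$ (since $u$ lives in $\FHB(\{x<y\})$). I expect the overlap analysis to break into a small number of cases according to how the two "big" letters $y, y'$ are positioned relative to each other and whether they are equal; the largest-letter property should force strong constraints — e.g. if the two occurrences of $y$ that bracket $u$ are involved in the overlap with $y'u'y'$, then comparing largest letters pins down $y$ versus $y'$. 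In each case one rewrites the overlap word both ways and checks the two results reduce to a common word, typically using a further instance of a rule $y v y \to yv$ together with idempotency $x^2\to x$ to absorb duplicated letters. I would present perhaps two or three representative overlap configurations in detail and indicate that the rest are analogous.

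The hard part will be organizing the critical-pair bookkeeping cleanly rather than any single computation: there are potentially many ways a block $yuy$ can overlap another such block, and without the largest-letter invariant the case analysis would be unwieldy. So the plan is to isolate that invariant as a lemma first (the set of letters appearing in a reduced word, and in particular its maximum, is well-behaved), then feed it into the overlap resolution so that most configurations are immediately excluded or trivially resolved. An alternative, cleaner route — which I might use to corroborate the Knuth--Bendix computation — is to exhibit a transversal of reduced words together with an explicit rewriting-to-normal-form map and check directly that applying any rule preserves the represented element and that irreducible words biject with the intended combinatorial objects (the trees in Proposition~\ref{proposition.fhb.count}); confluence of a terminating length-reducing system is equivalent to each element having a unique irreducible representative, so a bijective count of irreducibles against a model known to have the right size would close the argument without an exhaustive overlap analysis. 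I would likely do the direct critical-pair check here since the proposition is specifically about the completed rewriting system, and defer the tree bijection to the subsequent results.
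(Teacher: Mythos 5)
Your plan is essentially the paper's proof: first check that the original rules are among the proposed ones and that each rule $yuy\to yu$ is a consequence of the defining relations (by induction, inserting and absorbing copies of $y$), then establish confluence of the length-reducing system by resolving overlaps of left-hand sides, using precisely your observation that the bracketing letter $y$ is the strictly largest letter of $yuy$, so rules with distinct top letters cannot overlap. The one organizational point to make explicit when you write it up is that the paper runs the confluence check as an induction on the alphabet (adjoin the largest letter $y$ last, assuming the completed system on $\{x\in X\suchthat x<y\}$ is already confluent and its irreducible words are the reduced words of $\FHB(\{x\in X\suchthat x<y\})$); this collapses the overlap analysis to the single configuration $yuyvy$ coming from $yuy$ and $yvy$ sharing one $y$, and, after reducing $uv$ to a reduced word $r$, guarantees that the joint reduct $yr$ of $yuvy$ and $yuv$ is reached via a legitimate rule $yry\to yr$ of the proposed system — exactly the recursion on subalphabets you say you want your ``largest-letter lemma'' to capture.
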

See Corollary~\ref{corollary.fhb.reduced_words} for an explicit
description of the reduced words and Remark~\ref{remark.fhb.kbcount}
for their number.
\begin{proof}
Let $R$ be the rewriting system consisting of the rules $x^2\to x$ and $yxy\to yx$ with $x<y$, for $x,y\in X$ and
let $R'$ be the rewriting system in the statement of the proposition.  Note that $R\subseteq R'$ because
the empty word and alphabet symbols are reduced with respect to $R'$.
We next show that the left and right hand sides of each rule of $R'$ are equal in the monoid
defined by the rewriting system $R$.  Indeed, if $u=u_1\cdots u_m$ is a  word with each $u_i<y$,
then a simple induction argument shows that $yu_1yu_2\cdots yu_m\Rightarrow_R^* yu_1\cdots u_m=yu$.
Thus $yuy\mathrel{\prescript{*}{R}\Leftarrow} yu_1yu_2\cdots yu_my\Rightarrow_R yu_1yu_2\cdots yu_m
\Rightarrow_R^* yu$
and so $yuy$ and $yu$ represent the same element of the monoid defined by $R$.

Let us take for $y$ the largest letter in $X$. Since the rewriting
rules in $R$ and $R'$ do not change the letters that appear in a word, we may assume
that the Knuth--Bendix completion for the alphabet $X\setminus\{y\}$ is as
given in the proposition, i.e., that $R'$ is confluent on $X\setminus \{y\}$. (Note that the base cases of $|X|\leq 1$ are trivial.)
We now apply a single step of the Knuth--Bendix completion after adding the relations involving $y$. The only left
hand sides that may overlap are of the form:
  \begin{itemize}
  \item $yuy$ with $yvy$ with $u$ and $v$ reduced words in
    $\FHB(X\setminus \{y\})$ (possibly empty). Suppose that $uv\Rightarrow^*_{R'} r$ with $r$ reduced over $X\setminus \{y\}$. Then we have
    \begin{displaymath}
      yry\mathrel{\prescript{*}{R'}\Leftarrow} yuvy \mathrel{\prescript{}{R'}\Leftarrow} yuyvy \Rightarrow_{R'} yuyv
      \Rightarrow_{R'} yuv\Rightarrow^*_{R'}yr\,.
    \end{displaymath}
    Since the rule $yry\rightarrow yr$ belongs to
    the rewriting system $R'$ over $X$, we have established confluence of $R'$.\qedhere
  \end{itemize}
\end{proof}

We remark that the empty word is reduced for any totally ordered alphabet and so, in particular,
$y^2\rightarrow y$ is a rewriting rule for any $y\in X$.

Note that, since the rewriting rules in the Knuth--Bendix completion
are strictly length-reducing, the two notions of a reduced word representing
an element $f$ are equivalent (i.e., words of minimal length representing $f$ are precisely those
that cannot be rewritten). In particular, each element $f\in \FHB(X)$ is represented by a
unique reduced word.

Given the form of the rewriting rules (all of the form $uy\to u$), we
obtain immediately the following description of the right Cayley graph.
\begin{corollary}
  \label{corollary.fhb.r_trivial}
  The right Cayley graph of $\FHB(X)$ is the prefix tree on the reduced
  words of its elements, with a loop $u\stackrel{i}{\rightarrow} u$
  whenever $ui$ is not a reduced word (see
  Figure~\ref{figure.fhb.r-graph}). In particular, $\FHB(X)$ is
  $\RR$-trivial and is Karnofsky--Rhodes with respect to $X$.
\end{corollary}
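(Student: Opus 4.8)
The statement is essentially a corollary of Proposition~\ref{proposition.fhb.kb}, so the plan is to read the structure of the right Cayley graph directly off the completed rewriting system. First I would record the two features that do all the work: every completed rule has the form $uy\to u$ with $y\in X$ the (single) last letter of the left-hand side, and the system is confluent and strictly length-reducing. Confluence together with length-reduction gives that every $f\in\FHB(X)$ has a unique reduced representative, which I henceforth identify with $f$; and since a left-hand side occurring as a factor of a prefix of a word occurs as a factor of the whole word, every prefix of a reduced word is again reduced. Thus the reduced words form a prefix-closed set, i.e., the vertex set of a tree rooted at the empty word.

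The key step is a one-case split computing the edge $f\xrightarrow{x}fx$ for a reduced word $f=w$ and a generator $x\in X$. If $wx$ is reduced, then $wx$ represents $fx$ and $w$ is a proper prefix of $wx$, so this edge is exactly the edge from $w$ to its child $wx$ in the prefix tree. If $wx$ is not reduced, then, because $w$ itself is reduced, any left-hand side occurring as a factor of $wx$ must use the terminal letter $x$, hence occur as a suffix; by the shape of the rules that suffix is $xux$ with $u$ a reduced word over $\{z\in X\mid z<x\}$, so $w=w''xu$ and $wx=w''(xux)$ rewrites in one step to $w''(xu)=w$, which is already reduced. By confluence, $w$ is then the reduced form of $wx$, so $fx=f$ and the edge is a loop at $f$. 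Conversely every edge of the prefix tree on reduced words arises this way: a nonempty reduced word $v$ equals $wx$ where $w$ is its prefix of length $|v|-1$ and $x$ its last letter, $w$ is reduced, and $w\xrightarrow{x}v$ is then a non-loop edge of the Cayley graph. This establishes the asserted description: the right Cayley graph is the prefix tree on the reduced words, together with a loop $u\xrightarrow{i}u$ precisely when $ui$ fails to be reduced.

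From this the two remaining assertions are immediate. The $\RR$-classes are the strongly connected components of the right Cayley graph with respect to $X$ (Section~\ref{section.semigroups}); a directed rooted tree has only singleton strong components, and adjoining loops at vertices enlarges none of them, so every $\RR$-class is a singleton and $\FHB(X)$ is $\RR$-trivial. For the Karnofsky--Rhodes property, the description says exactly that after deleting the loop edges the right Cayley digraph with respect to $X$ is a directed rooted tree (rooted at the identity), equivalently that each element of $\FHB(X)$ has a unique reduced word over $X$ — which is the defining condition recalled earlier.

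The only place where the content of Proposition~\ref{proposition.fhb.kb} is genuinely used, and hence the point to get right, is the non-reduced case of the split: one must exploit that every completed rule deletes precisely the last letter (form $uy\to u$) to see that extending a reduced word by a generator either keeps it reduced or collapses it back to itself, and then invoke confluence to conclude that this one-step collapse produces the reduced form. Everything else — prefix-closedness of the set of reduced words, and the fact that a tree-plus-loops digraph has trivial strong components — is routine bookkeeping.
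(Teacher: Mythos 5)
Your proposal is correct and follows exactly the route the paper takes: the paper derives the corollary immediately from the form $uy\to u$ of the completed rules in Proposition~\ref{proposition.fhb.kb}, and your argument simply spells out that observation (prefix-closedness of reduced words, the case split showing an appended generator either gives a tree edge or collapses back to the same reduced word by confluence, and the standard facts identifying $\RR$-classes with strong components and the Karnofsky--Rhodes property with uniqueness of reduced words). No gaps; the extra detail you supply is precisely what the paper leaves implicit.
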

\begin{figure}[h]
  \centering
  \includegraphics{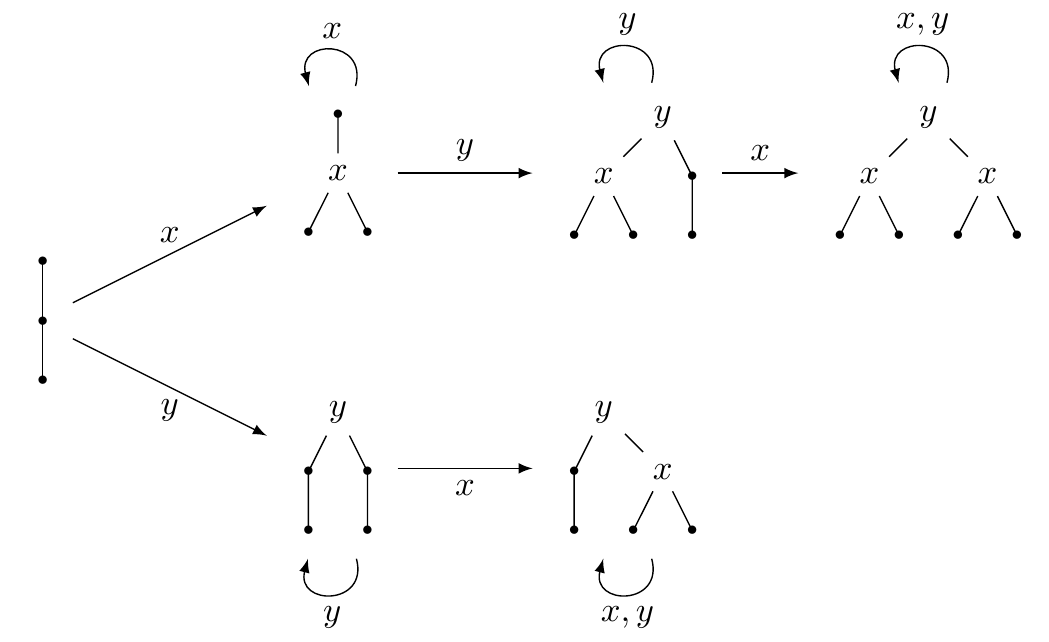}
  \caption{The right Cayley graph of the \fhb{} $\FHB(\{x<y\})$.}
  \label{figure.fhb.r-graph}
\end{figure}
\begin{figure}[h]
  \centering
  \includegraphics{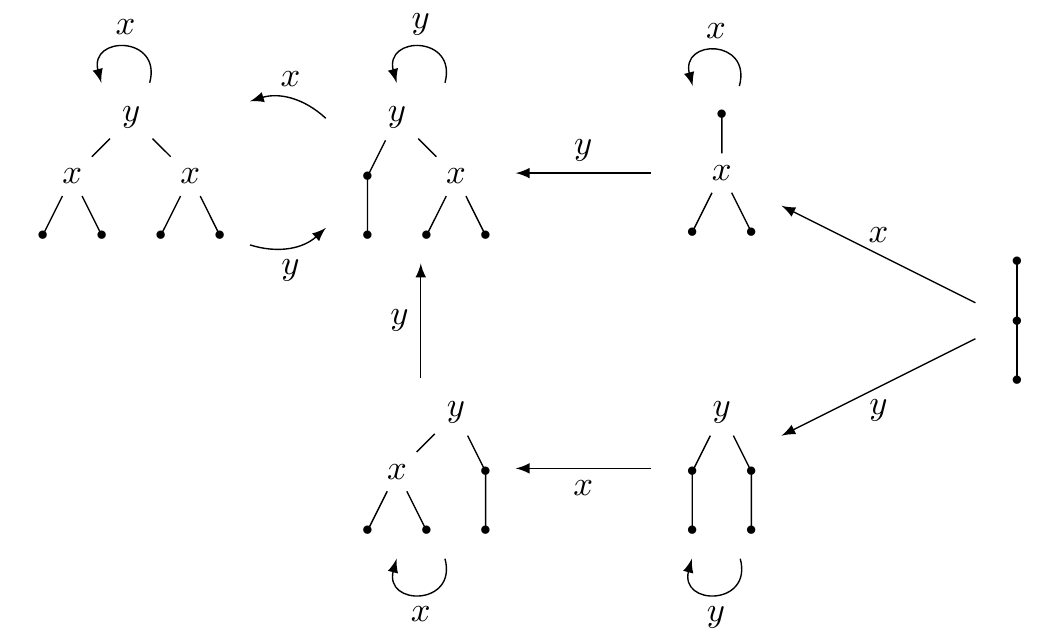}
  \caption{The left Cayley graph of the \fhb{} $\FHB(\{x<y\})$.}
  \label{figure.fhb.l-graph}
\end{figure}

Another immediate consequence is the following description of
the set of reduced words.
\begin{corollary}
  \label{corollary.fhb.reduced_words}
  A word $u$ is a reduced representative of an element of $\FHB(X)$ if and only
  if $u$ does not contain the largest letter $y$ of $X$ and is reduced in $\FHB(X\setminus \{y\})$, or $u$ has
  exactly one occurrence of $y$
  and the factorization $u=vyw$ according to $y$ gives recursively words $v,w$ that
  are reduced with respect to $\FHB(X\setminus \{y\})$.
\end{corollary}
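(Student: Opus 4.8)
The plan is to read the statement off directly from the Knuth--Bendix completion computed in Proposition~\ref{proposition.fhb.kb}, separating words according to how many times they use the largest letter $y$ of $X$. Recall that, since that completion is confluent and length-reducing, a word over $X$ is reduced if and only if it contains no factor of the form $y'u'y'$ where $y'\in X$ and $u'$ is a (possibly empty) reduced word in $\FHB(\{x\in X\suchthat x<y'\})$, and that reducedness is an intrinsic property of a word together with the ambient ordered alphabet.

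The first step I would carry out is the comparison of reducedness over $X$ and over $X\setminus\{y\}$ for a word $w$ that avoids $y$. Reading off the rules from Proposition~\ref{proposition.fhb.kb}: the completed rules for $\FHB(X)$ whose left hand side mentions $y$ all have $y'=y$, hence left hand side $yu'y$, and such a factor cannot occur in $w$; the completed rules for $\FHB(X)$ whose left hand side avoids $y$ have $y'<y$ and, since $\{x\in X\suchthat x<y'\}=\{x\in X\setminus\{y\}\suchthat x<y'\}$ for $y'<y$, are literally the completed rules for $\FHB(X\setminus\{y\})$. Hence $w$ is reduced in $\FHB(X)$ if and only if it is reduced in $\FHB(X\setminus\{y\})$.

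Next I would bound the number of occurrences of $y$ in a reduced word. If a word $w$ has at least two occurrences of $y$, consider the factor $yzy$ between the first two of them, so that $z$ is a word over $X\setminus\{y\}$: if $z$ fails to be reduced over $X\setminus\{y\}$ it contains a forbidden factor, which is then also forbidden inside $w$; if $z$ is reduced over $X\setminus\{y\}$ then $yzy$ is itself the left hand side of a completed rule of $\FHB(X)$. Either way $w$ is not reduced. Finally, for a word $w$ with exactly one occurrence of $y$, write $w=vyw'$ with $v,w'$ over $X\setminus\{y\}$. Any completed rule whose left hand side is a factor of $w$ must have a left hand side $y'u'y'$ free of $y$ --- otherwise $y'=y$ and two copies of $y$ would be forced --- so this factor lies entirely within $v$ or entirely within $w'$; conversely a forbidden factor of $v$ or of $w'$ is a forbidden factor of $w$. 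Thus $w$ is reduced in $\FHB(X)$ exactly when both $v$ and $w'$ are, and by the first step this is exactly when both $v$ and $w'$ are reduced in $\FHB(X\setminus\{y\})$. Assembling the three cases (no $y$, at least two $y$'s, exactly one $y$) yields the claimed characterization, the case $|X|\le 1$ being immediate.

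I do not anticipate a genuine obstacle: the whole argument is bookkeeping on the explicit rule set of Proposition~\ref{proposition.fhb.kb}. The one point deserving care is that ``reduced'' must be matched correctly between the alphabets $X$ and $X\setminus\{y\}$, which is precisely the content of the first step, together with the elementary observation that a single occurrence of the top letter splits a word into two blocks that no single rule can straddle.
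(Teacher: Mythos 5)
Your argument is correct and is precisely the reasoning the paper leaves implicit: the corollary is stated there as an ``immediate consequence'' of the completed rewriting system of Proposition~\ref{proposition.fhb.kb}, and your three-case bookkeeping (no occurrence of $y$, at least two occurrences, exactly one occurrence) together with the observation that the rules not involving $y$ coincide with those of $\FHB(X\setminus\{y\})$ is exactly how that immediacy is meant to be unpacked. No gaps.
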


Corollary~\ref{corollary.fhb.reduced_words} yields a recursive map
$\phi_{X}$ from reduced words of elements
of $\FHB(X)$ to trees. Namely, let $T_n$ be the set of ordered
unlabelled trees having nodes of out-degree 0,1,2 and such that all
leaves are at level $0$ while the root is at level $n$.
They are counted by the sequence $a(0)=1$ and
$a(n) = a(n-1)^2 + a(n-1)$ whose first terms are
\[
	1, 2, 6, 42, 1806,
	3263442, 10650056950806
\]
(see \#A007018 of~\cite{OEIS}).

Take now $u$ a reduced word. If $X$ (and therefore $u$) is empty, let
$\phi_X(u)$ be the tree in $T_0$ consisting of a single leaf. Otherwise, let
$x$ be the largest letter of $X$. If $x$ appears in $u$, write
$u=vxw$, where $v$ and $w$ belong to $X\setminus\{x\}$ and define
$\phi_X(u)$ as the tree, where the root has two subtrees
$\phi_{X\setminus \{x\}}(v)$ and $\phi_{X\setminus \{x\}}(w)$ in this
order. Otherwise, define $\phi_X(u)$ as the tree whose root has
$\phi_{X\setminus \{x\}}(u)$ as single subtree.

\begin{example}
  \label{example.fhb}
  Let $X=\{x_1,x_2,x_3,x_4\}$. Then,
  \begin{displaymath}
    \phi_X(x_3x_2x_4x_1x_2) =
    \raisebox{-7ex}{\includegraphics{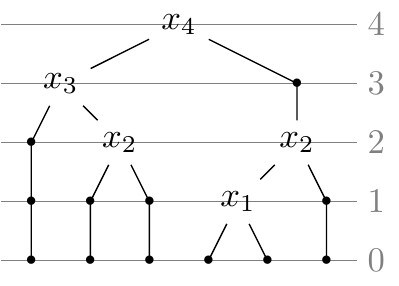}}\,,
  \end{displaymath}
  where, for ease of reading, we drew as additional information the
  generator corresponding to each inner node of out-degree $2$.

  Note that the number of leaves of the tree is given by the length of
  the word plus one.
\end{example}

\begin{proposition}
  \label{proposition.fhb.count}
  The map $\phi_X$ is a bijection between the elements of the \fhb{}
  $\FHB(X)$ and the trees in $T_{|X|}$.
\end{proposition}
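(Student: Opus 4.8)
The plan is to verify that $\phi_X$ is a bijection by induction on $|X|$, exploiting the recursive definition of $\phi_X$ together with the recursive description of reduced words in Corollary~\ref{corollary.fhb.reduced_words}. The base case $|X|=0$ is immediate: the only element of $\FHB(\emptyset)$ is the identity, represented by the empty word, and $T_0$ consists of the single leaf, so $\phi_\emptyset$ is trivially a bijection. For the inductive step, write $Y=X\setminus\{x\}$ where $x$ is the largest letter of $X$, and assume $\phi_Y$ is a bijection from $\FHB(Y)$ to $T_{|Y|}$.

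The key observation is that both sides decompose in parallel. On the monoid side, Corollary~\ref{corollary.fhb.reduced_words} says a reduced word $u$ over $X$ is either (i) a reduced word over $Y$ (no occurrence of $x$), or (ii) of the form $vxw$ with $v,w$ reduced over $Y$, and this factorization is unique since $u$ has exactly one occurrence of $x$. Hence $\FHB(X)$ is in bijection with $\FHB(Y)\sqcup\bigl(\FHB(Y)\times\FHB(Y)\bigr)$. On the tree side, a tree in $T_{|X|}=T_{|Y|+1}$ has a root at level $|Y|+1$ whose children are at level $|Y|$; the root has out-degree $1$ or $2$ (out-degree $0$ is impossible since $|Y|+1\ge 1$), giving a bijection $T_{|Y|+1}\cong T_{|Y|}\sqcup\bigl(T_{|Y|}\times T_{|Y|}\bigr)$; this is precisely the recursion $a(n)=a(n-1)+a(n-1)^2$ quoted before the proposition. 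By construction, $\phi_X$ sends case (i) to a root of out-degree $1$ with subtree $\phi_Y(u)$, and case (ii) to a root of out-degree $2$ with ordered subtrees $\phi_Y(v),\phi_Y(w)$. Thus $\phi_X$ is exactly the map $\mathrm{id}\sqcup(\phi_Y\times\phi_Y)$ under these two decompositions, and since $\phi_Y$ is a bijection by induction, so is $\phi_X$.

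I do not expect a serious obstacle here; the content is entirely bookkeeping. The one point that warrants care is checking that $\phi_X$ is genuinely \emph{well-defined} on elements of $\FHB(X)$ rather than on words: this is where uniqueness of the reduced representative (guaranteed by the terminating, confluent Knuth--Bendix system of Proposition~\ref{proposition.fhb.kb}, as noted after its proof) is used, so that the factorization $u=vxw$ in case (ii) — and hence the pair of subtrees — depends only on the monoid element. Everything else is a matched pair of recursions, and one should also remark (as the example does) that the number of leaves of $\phi_X(u)$ equals $|u|+1$, which follows immediately from the recursion and serves as a useful sanity check but is not needed for the bijectivity argument.
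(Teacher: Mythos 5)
Your proof is correct, and it is essentially the argument the paper intends: the paper states Proposition~\ref{proposition.fhb.count} without proof, treating it as immediate from the parallel recursions of Corollary~\ref{corollary.fhb.reduced_words} and the definition of $T_n$ via $a(n)=a(n-1)^2+a(n-1)$, which is exactly the matched decomposition $\FHB(X)\cong\FHB(Y)\sqcup(\FHB(Y)\times\FHB(Y))$ versus $T_{|Y|+1}\cong T_{|Y|}\sqcup(T_{|Y|}\times T_{|Y|})$ that you spell out. Your remark on well-definedness via uniqueness of reduced representatives is the right point to flag and is justified by the confluent, length-reducing system of Proposition~\ref{proposition.fhb.kb}.
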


\begin{remark}
  \label{remark.fhb.kbcount}
  The number of rules in the Knuth--Bendix completion for $\FHB(X)$ is
  given by $|X|+a(1)+\cdots+a(|X|-1)$.
\end{remark}

\begin{remark}
  \label{remark.leftfix}
  Let $e$ be an idempotent of $\FHB(X)$. Then $e$ fixes $u$ on the
  left, that is, $eu=u$, if and only if the reduced word of $e$ is a
  prefix of that of $u$ (this is an immediate consequence of the right
  Cayley graph being the prefix tree on reduced words, see Corollary~\ref{corollary.fhb.r_trivial}).
\end{remark}

\begin{remark}
  \label{remark.parabolicsubmonoid}
  If $Y\subseteq X$, then the submonoid of $\FHB(X)$ generated by $Y$ is clearly $\FHB(Y)$ with the
  induced ordering because
  the right hand side of each rule in Proposition~\ref{proposition.fhb.kb} has the same set of letters as the left hand side.
\end{remark}

\begin{proposition}\label{p:descentsfhb}
  Take $X=\{x_1<\dots<x_n\}$ and $t=\phi_X(u)$, where $u$ is an
  element of $\FHB(X)$. Then, $i$ is a right descent for $u$ (that is,
  $ux_i=u$) if and only if the $i^{th}$ inner node on the branch from the rightmost
  leaf to the root has out-degree $2$.

  Furthermore, $i$ is a left descent for $u$ if and only if the unique
  reduced word for $u$ starts with $i$ or, equivalently, the leftmost
  node of out-degree $2$ in $t$ is of height $i$.
\end{proposition}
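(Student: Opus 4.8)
The plan is to prove both assertions by induction on $n=|X|$, using the recursive definition of $\phi_X$ together with Remark~\ref{remark.parabolicsubmonoid} (a subalphabet $Y\subseteq X$ generates $\FHB(Y)$ inside $\FHB(X)$), Remark~\ref{remark.leftfix}, and Corollary~\ref{corollary.fhb.reduced_words}. Throughout write $y=x_n$ for the largest letter and $X'=X\setminus\{y\}$; recall that $x_n$ occurs in $u$ iff it occurs in the reduced word of $u$ (the rewriting rules preserve the set of letters occurring), and in that case the reduced word factors uniquely as $u=vyw$ with $v,w$ reduced in $\FHB(X')$, whence $\phi_X(u)$ has a root of out-degree $2$ with left subtree $\phi_{X'}(v)$ and right subtree $\phi_{X'}(w)$; otherwise $\phi_X(u)$ has a root of out-degree $1$ carrying the single subtree $\phi_{X'}(u)$.

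First I would handle the left-descent statement. Since $x_i$ is idempotent with reduced word $x_i$, Remark~\ref{remark.leftfix} gives at once that $x_iu=u$ iff the reduced word of $u$ starts with $x_i$; this is the meaning of ``$i$ is a left descent''. For the tree condition, I read ``the leftmost node of out-degree $2$'' as the out-degree-$2$ node on the branch from the root to the leftmost leaf which is closest to that leaf (equivalently, the first out-degree-$2$ node met on walking up from the leftmost leaf; an in-order traversal argument shows this coincides with the genuinely leftmost out-degree-$2$ node, and such a node exists precisely when $u\ne1$). Now induct on $n$. If $y\notin u$, then $\phi_X(u)$ has out-degree-$1$ root over $\phi_{X'}(u)$, the reduced word and the leftmost leaf are unchanged, and neither condition can hold for $i=n$, so the claim follows from the inductive hypothesis in $\FHB(X')$. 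If $y\in u$, write $u=vyw$, so the reduced word of $u$ is $\mathrm{Red}(v)\,y\,\mathrm{Red}(w)$: when $v$ is empty it starts with $x_n$ and, since $\phi_{X'}$ sends the empty word to a path, the out-degree-$2$ root of $\phi_X(u)$ (at height $n$) is the leftmost out-degree-$2$ node; when $v$ is nonempty the reduced word starts with the first letter of $\mathrm{Red}(v)$, the subtree $\phi_{X'}(v)$ contains an out-degree-$2$ node, and the leftmost out-degree-$2$ node of $\phi_X(u)$ is the one of $\phi_{X'}(v)$, so we conclude by induction.

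For the right-descent statement I would induct on $n$ as well, now tracking the branch from the rightmost leaf to the root and reading ``the $i$-th inner node'' of that branch as the node at height $i$ on it. The two algebraic facts needed, both immediate from the Knuth--Bendix rules and Corollary~\ref{corollary.fhb.reduced_words}, are: if $y\notin u$ then $uy$ is reduced of length $|u|+1$, so $uy\ne u$; and if $u=vyw$ with $v,w$ reduced in $\FHB(X')$, then $uy=vywy=vyw=u$, while for $i<n$ one has $ux_i=u$ iff $wx_i=w$ in $\FHB(X')$ (the reduced form of $wx_i$ in $\FHB(X')$ is some reduced $w'$, and then $vyw'$ is the reduced form of $ux_i$, which equals $u$ iff $w'=w$). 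In the case $y\notin u$ the root of $\phi_X(u)$ has out-degree $1$ (matching that $x_n$ is not a right descent), the rightmost leaf and the branch below the root are those of $\phi_{X'}(u)$, and for $i<n$ the identity $ux_i=u$ passes between $\FHB(X)$ and $\FHB(X')$ by Remark~\ref{remark.parabolicsubmonoid}; apply induction. In the case $y\in u$ the root has out-degree $2$ (matching that $x_n$ is a right descent), the rightmost leaf of $\phi_X(u)$ is that of the right subtree $\phi_{X'}(w)$, and for $i<n$ one uses $ux_i=u\iff wx_i=w$ together with the inductive hypothesis for $\phi_{X'}(w)$. The base case $n\le1$ is trivial.

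I expect the main obstacle to be purely bookkeeping: fixing once and for all which node ``the $i$-th inner node'' and ``the leftmost node of out-degree $2$'' denote (the node at height $i$ on the appropriate spine in both cases), and checking that the recursive decomposition of $\phi_X$ sends the rightmost-leaf spine into the right subtree $\phi_{X'}(w)$ and the leftmost-leaf spine into the left subtree $\phi_{X'}(v)$, so that the inductive hypothesis is applied to the correct factor of $u=vyw$. Once this is pinned down, both inductions go through mechanically.
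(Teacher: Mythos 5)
Your proposal is correct and takes essentially the same route as the paper: the paper likewise derives the right-descent criterion from the completed rewriting system (there phrased as ``$ux_i=u$ iff the reduced word of $u$ has a suffix $x_iv$ with $v$ over smaller letters'') and translates it through the recursive definition of $\phi_X$, handling left descents directly via Remark~\ref{remark.leftfix}. Your induction on the largest letter merely makes explicit the translation that the paper declares immediate.
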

\begin{proof}
  Looking at the completed rewriting system, we see that $ux_i=u$ if
  and only if $u$ admits a suffix of the form $x_iv$ with $v$ in
  $\FHB(\{x_1<\dots<x_{i-1}\})$. From the recursive definition of
  $\phi_X$, this is equivalent to the desired condition on $t$.

  For left descents this is an immediate consequence of
  Remark~\ref{remark.leftfix}.
\end{proof}

For example, $x_3x_2x_4x_1x_2$
  has $2$ and $4$ as right descents (see Example~\ref{example.fhb}).

For $u\in \FHB(\{x_1<\dots<x_n\})$, denote by $D_L(u)$
and $D_R(u)$ the set of left and right descents of $u$, respectively. For
example,
\[
	D_R(x_3x_2x_4x_1x_2)=\{2,4\} \quad \text{and} \quad D_L(x_3x_2x_4x_1x_2)=\{3\}.
\]

For $I\subseteq \{1,\dots,n\}$, define the \emph{right descent class}
indexed by $I$ as
\begin{displaymath}
  \FHB(X)^I=\{u\in \FHB(X) \suchthat D_R(u)=I\}\;.
\end{displaymath}

\begin{proposition}\label{proposition.countLclasssize}
  The size of the right descent class $\FHB(X)^I$ is given by
  $\prod_{i\in I} a(i-1)$. In particular, the minimal ideal of $\FHB(X)$
  is of cardinality $a(1)\cdots a(n-1)$.
\end{proposition}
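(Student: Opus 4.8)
The plan is to push the statement through the tree bijection $\phi_X$ of Proposition~\ref{proposition.fhb.count} and then to count trees. Set $M=\FHB(X)$ with $X=\{x_1<\dots<x_n\}$, so that $\phi_X$ is a bijection $M\to T_n$. The first step is to reinterpret the right descent set combinatorially: by Proposition~\ref{p:descentsfhb}, for $u\in M$ and $t=\phi_X(u)$ one has $i\in D_R(u)$ if and only if the $i^{th}$ inner node on the path from the rightmost leaf of $t$ up to the root has out-degree $2$. Since the leaves of $t$ lie at level $0$, the root at level $n$, and a child always lies one level below its parent, this ``rightmost branch'' consists of exactly $n$ inner nodes $v_1,\dots,v_n$ with $v_i$ at level $i$ (so $v_n$ is the root and $v_1$ the parent of the rightmost leaf), and each $v_i$, being above level $0$, has out-degree $1$ or $2$. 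Hence $\phi_X$ restricts to a bijection between $\FHB(X)^I$ and the set of trees in $T_n$ whose rightmost branch carries out-degree $2$ exactly at the vertices indexed by $I$.

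The second step is the count. A tree $t\in T_n$ whose rightmost branch has out-degree $2$ precisely at the positions in $I$ is determined, freely and uniquely, by the choice, for each $i\in I$, of the subtree attached to the left of $v_i$. Such a subtree is rooted at level $i-1$, has all leaves at level $0$ and out-degrees in $\{0,1,2\}$, hence is an arbitrary element of $T_{i-1}$; conversely, any system of such subtrees glues back to a valid element of $T_n$ with the required rightmost branch. (For $i\notin I$ the node $v_i$ has out-degree $1$, its unique child being $v_{i-1}$, or the rightmost leaf when $i=1$, and contributes no choice.) Therefore the number of such trees is $\prod_{i\in I}|T_{i-1}|=\prod_{i\in I}a(i-1)$, which gives the formula for $|\FHB(X)^I|$.

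For the final sentence I would identify the minimal ideal $\wh 0$ with $\FHB(X)^{\{1,\dots,n\}}$. On one hand, $\wh 0$ is the unique minimal left ideal of $M$ and a left zero semigroup, so for $m\in\wh 0$ and any $y\in M$ we have $Mm\subseteq My$, whence $my=m$; in particular $mx_i=m$ for all $i$, that is, $D_R(m)=\{1,\dots,n\}$. On the other hand, if $D_R(m)=\{1,\dots,n\}$ then $mM=\{m\}$, so $MmM=Mm$ is a two-sided ideal and therefore contains $\wh 0$; choosing $e\in\wh 0$ gives $m=me\in\wh 0$ since $\wh 0$ is an ideal. Hence $|\wh 0|=\prod_{i=1}^{n}a(i-1)=a(1)\cdots a(n-1)$ using $a(0)=1$. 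I expect the only point needing care to be the bookkeeping in the first step — making the level structure of $T_n$ explicit enough that the reconstruction in the second step is manifestly a bijection; nothing here should be hard.
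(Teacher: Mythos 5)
Your proof is correct and is essentially the paper's argument: the paper opens its proof with exactly your grafting description (a straight branch of length $n$ with a subtree from $T_{i-1}$ attached at each $i\in I$), and merely formalizes it by induction on $|I|$ via reduced words rather than through the bijection $\phi_X$ as you do. Your closing paragraph additionally justifies the identification of the minimal ideal with the full descent class $\FHB(X)^X$, which the paper asserts without proof; that argument is also correct.
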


\begin{proof}
  Any tree in $\FHB(X)^I$ can be constructed in a unique way by
  starting with a straight branch of length $n$ and, for each $i\in
  I$, grafting some subtree in $T_{i-1}$ on the left of the $i^{th}$
  inner node of the branch. The tree in Example~\ref{example.fhb} is
  obtained by grafting $\phi_{\{x_1\}}(x_1)\in T_1$ on the second
  inner node and $\phi_{\{x_1<x_2<x_3\}}(x_3x_2)\in T_3$ on the
  fourth.

  Formally, we prove this by induction on $|I|$.  If $I=\emptyset$, then $\FHB(X)^I$ consists
  of just the empty word.  Else, let $j\in I$ be maximal and let $I'=I\setminus \{j\}$.  From the
  proof of Proposition~\ref{p:descentsfhb}, we see that $D_R(w)=I$ if and only if the reduced
  form of $w$ is $ux_jv$ where $u,v$ are reduced words in the alphabet $\{x\mid x<x_j\}$ and
  $D_R(v)=I'$.  Thus there are $a(j-1)\cdot \prod_{i\in I'}a(i-1)$ elements in the descent class of $I$ by induction.

  The final statement, follows because the minimal ideal is the descent class $\FHB(X)^X$.
\end{proof}

\begin{proposition}
  \label{proposition.fhb.l_classes}
  The lattice $\Lambda(\FHB(X))$ is isomorphic to the power set $P(X)$ ordered by reverse
  inclusion (and so the monoid operation is union).  More precisely, the isomorphism sends the
  principal ideal $\FHB(X)e$ generated by an idempotent $e$ to the set of letters appearing in the
  reduced word representing $e$. Consequently, each subset $I=\{i_1<\cdots<i_\ell\}\subseteq\{1,\dots,n\}$ of $X$
  contributes one element to $\Lambda(\FHB(X))$, namely the principal ideal generated by the idempotent
  $e_I=x_{i_\ell}\cdots x_{i_1}$. This corresponding $\LL$-class is the minimal ideal of
  $\FHB(I)$ (viewed as a submonoid of $\FHB(X)$ via Remark~\ref{remark.parabolicsubmonoid}) and is of cardinality
  $a(1)\cdots a(|I|-1)$.
\end{proposition}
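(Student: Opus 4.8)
The plan is to establish the lattice isomorphism $\Lambda(\FHB(X)) \cong P(X)$ by first understanding which idempotents exist and what their principal left ideals look like, then checking that the content map $c$ behaves as claimed, and finally verifying the order-reversal and the meet operation. First I would identify, for each subset $I = \{i_1 < \cdots < i_\ell\} \subseteq \{1,\dots,n\}$, the candidate idempotent $e_I = x_{i_\ell} \cdots x_{i_1}$ (the product of the corresponding generators in \emph{decreasing} order). I would check that $e_I$ is indeed idempotent: using the defining relations $x^2 = x$ and $yxy = yx$ for $x < y$, one shows $e_I e_I = e_I$ by repeatedly pushing each generator past the larger ones to its left and absorbing it — this is a short induction on $\ell$, with the key observation that $e_I$ is already a reduced word (it has exactly one occurrence of its largest letter, and recursively the part to the right is reduced in $\FHB(\{x < x_{i_\ell}\})$, matching Corollary~\ref{corollary.fhb.reduced_words}).

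Next I would show that every idempotent of $\FHB(X)$ is $\mathscr L$-equivalent to some $e_I$, and that $\FHB(X)e_I \leq_{\mathscr L} \FHB(X)e_J$ (i.e.\ $\FHB(X)e_I \subseteq \FHB(X)e_J$) if and only if $I \supseteq J$. The content map $c\colon M \to \Lambda(M)$ sends $m$ to $Mm^\omega$; since $\FHB(X)$ is aperiodic (it is $\RR$-trivial by Corollary~\ref{corollary.fhb.r_trivial}), $m^\omega$ is the unique idempotent power of $m$, and I expect the set of letters appearing in the reduced word of $m$ to be a complete invariant of $c(m)$ — because the rewriting rules of Proposition~\ref{proposition.fhb.kb} preserve the letter set, and because an idempotent is determined up to $\mathscr L$-equivalence by its letter set. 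For the $\mathscr L$-order: $Me_I \subseteq Me_J$ means $e_I = u e_J$ for some $u$, which by the letter-set invariant forces the letters of $e_J$ to be among those of $e_I$, i.e.\ $J \subseteq I$; conversely if $J \subseteq I$ one checks directly that $e_I e_J = e_I$ (again by pushing the generators of $e_J$, which are a sub-multiset appearing on the right, through and absorbing them), hence $Me_I \subseteq Me_J$. This gives the order-reversing bijection $I \mapsto Me_I$ between $P(X)$ and $\Lambda(\FHB(X))$, and since meet in $\Lambda(M)$ is $Me \wedge Mf = M(ef)^\omega$ (stated in the excerpt), the meet corresponds to union of subsets — so the monoid operation on $\Lambda$ is union, as claimed.

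Finally I would identify the $\mathscr L$-class of $e_I$ with the minimal ideal of the submonoid $\FHB(I)$. By Remark~\ref{remark.parabolicsubmonoid}, the submonoid generated by $\{x_i : i \in I\}$ is $\FHB(I)$ with the induced order. The regular $\mathscr L$-class of an idempotent in an $\RR$-trivial monoid is a left-zero semigroup (from the background section), and in $\FHB(I)$ the minimal ideal is the set of elements fixed on the right by every generator — equivalently, by Proposition~\ref{proposition.countLclasssize} applied to $\FHB(I)$, it is the descent class $\FHB(I)^I$, of cardinality $a(1)\cdots a(|I|-1)$. I would argue that this minimal ideal of $\FHB(I)$ coincides with the $\mathscr L$-class $Me_I$ inside $\FHB(X)$: an element $m$ with $c(m) = Me_I$ has reduced word using exactly the letters of $I$, and $m \mathrel{\mathscr L} e_I$ iff $Mm = Me_I$, which one checks matches the condition of lying in the minimal ideal of $\FHB(I)$. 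The cardinality then follows, and consistency with Proposition~\ref{proposition.countLclasssize} (the minimal ideal of $\FHB(X)$ is the descent class indexed by the full set) confirms the $I = X$ case.

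The main obstacle I anticipate is the careful verification that the letter set is a complete invariant of the content map $c$ — i.e.\ that two idempotents with the same set of letters generate the same principal left ideal. This requires knowing not just that $e_I e_J = e_I$ when $J \subseteq I$, but the sharper statement that any idempotent $e$ whose reduced word uses exactly the letters in $I$ satisfies $Me = Me_I$; this amounts to showing $e e_I = e$ and $e_I e = e_I$, which needs a genuine argument with the rewriting system (pushing generators leftward and absorbing them) rather than a formal manipulation. Everything else — idempotency of $e_I$, the order-reversal, the meet formula, the cardinality — should then follow relatively mechanically from the combinatorics already developed in Corollaries~\ref{corollary.fhb.r_trivial} and~\ref{corollary.fhb.reduced_words} and Proposition~\ref{proposition.countLclasssize}.
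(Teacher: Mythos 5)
Your strategy is sound and would yield the proposition, but it is genuinely different from the paper's, and the one step you explicitly defer is the entire crux. The paper disposes of the isomorphism in three lines by a universal-property argument: the letter-content map $\FHB(X)\to P(X)$, $x\mapsto\{x\}$, is a surjective homomorphism onto a semilattice (the relations visibly hold in $P(X)$), and any homomorphism from an $\RR$-trivial monoid to a semilattice factors through the content map $c$; this yields a surjection $\Lambda(\FHB(X))\twoheadrightarrow P(X)$, and since $\Lambda(\FHB(X))$ is a semilattice generated by the $|X|$ elements $c(x)$ while $P(X)$ is the \emph{free} semilattice with identity on $X$, the surjection must be an isomorphism. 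This completely sidesteps the combinatorial question you isolate, namely that any two idempotents with the same letter set are $\LL$-equivalent. Your reduction of that question to $ee_I=e$ and $e_Ie=e_I$ is correct (in an $\RR$-trivial monoid $Me\subseteq Mf$ for idempotents is equivalent to $ef=e$), and the claim is true, but "pushing generators leftward and absorbing them" is not yet a proof. One clean way to close it within your framework: show first that $D_R(e)=c(e)$ for every idempotent $e$ (if $w_1\cdots w_k$ is the reduced word of $e$, then $e=e\cdot w_1\cdots w_k$ together with $\RR$-triviality forces $ew_1\cdots w_i=e$ for all $i$, so every letter of $e$ is a right descent; the reverse containment is Proposition~\ref{p:descentsfhb}). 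Hence every idempotent of content $I$ lies in the descent class $\FHB(I)^I$, i.e.\ in the minimal ideal of $\FHB(I)$, which is a left zero semigroup; this gives $ee_I=e$ and $e_Ie=e_I$ at once, and also directly identifies the $\LL$-class of $e_I$ with that minimal ideal, recovering the cardinality from Proposition~\ref{proposition.countLclasssize} exactly as you propose. What your route buys, once completed, is an explicit description of all idempotents and their $\LL$-classes; what the paper's route buys is brevity and immunity from rewriting-system casework.
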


\begin{proof}
Since the singletons $\{x\}$ with $x\in X$ generate $P(X)$ and satisfy the relations of $\FHB(X)$, we have
a surjective homomorphism $f\colon \FHB(X)\to P(X)$ sending $x$ to $\{x\}$.  It is well known (cf.~\cite{MargolisSteinbergQuiver})
that any homomorphism from an $\mathscr R$-trivial monoid to a semilattice factors through $c$, so we
have that $f$ induces a surjective homomorphism $f'\colon \Lambda(\FHB(X))\to P(X)$.  Since $c(X)$
generates $\Lambda(\FHB(X))$  and $P(X)$ is a free semilattice with identity on $X$, we conclude that $f'$ is an
isomorphism.  The remaining statements follow easily. For example, Proposition~\ref{proposition.countLclasssize}
gives the cardinality of the $\mathscr L$-class associated to $I$.  Also $e_I$ is idempotent by a simple induction
argument of $|I|$ because if $I'=I\setminus \{i_\ell\}$, $e_I=x_\ell e_{I'}$ and hence
$e_Ie_I=x_\ell e_{I'}x_\ell e_{I'}= x_{\ell}e_{I'}e_{I'}=x_{\ell}e_{I'}$ where the penultimate equality uses that the
alphabet of $e_{I'}$ consists of symbols smaller than $x_\ell$ and the last equality uses induction.
\end{proof}

Note that under the isomorphism of $\Lambda(\FHB(X))$ and $P(X)$ we have that $d(u) = D_R(u)$ for $u\in \FHB(X)$.

Our next result shows that $\FHB(X)$ satisfies the conditions of Corollary~\ref{c:generic}.  Thus random walks of
$\FHB(X)$ on finite sets have diagonalizable transition matrices when driven by generic probabilities.  Several
such models will be considered in the subsequent sections.

\begin{proposition}\label{p:todiagonalizability}
Suppose that $u>_{\mathscr R} v$ in $\FHB(X)$.  Then $d(u)\neq d(v)$.  Consequently,  the transition matrix of
any random walk of $\FHB(X)$ on a finite set driven by a probability $P$ is diagonalizable as long as the partial sums
$\sum_{x\in I}P(x)$ are distinct for distinct subsets of $X$.
\end{proposition}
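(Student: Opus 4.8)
The plan is to convert the statement into combinatorics of reduced words. Since $\FHB(X)$ is Karnofsky--Rhodes with respect to $X$ (Corollary~\ref{corollary.fhb.r_trivial}), one has $u\geq_{\RR}v$ exactly when $\mathrm{Red}(u)$ is a prefix of $\mathrm{Red}(v)$ (cf.\ the discussion before Corollary~\ref{productformulacor}), so $u>_{\RR}v$ means $\mathrm{Red}(u)$ is a \emph{proper} prefix of $\mathrm{Red}(v)$; and, under the identification $\Lambda(\FHB(X))\cong P(X)$, the right descent map satisfies $d(u)=D_R(u)$ (as noted just after Proposition~\ref{proposition.fhb.l_classes}). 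Thus it suffices to produce, given $\mathrm{Red}(v)=\mathrm{Red}(u)\,s$ with $s$ a nonempty (necessarily reduced) suffix word, an index lying in the symmetric difference of $D_R(u)$ and $D_R(v)$. My candidate is the index $j$ of the largest letter $x_j$ occurring in $s$, and the goal splits into showing $j\in D_R(v)$ and $j\notin D_R(u)$.

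For $j\in D_R(v)$ I would use the descent criterion read off from the completed rewriting system of Proposition~\ref{proposition.fhb.kb} (and used in the proof of Proposition~\ref{p:descentsfhb}): $wx_i=w$ iff $\mathrm{Red}(w)$ has a suffix $x_iv'$ with $v'$ reduced over $\{x\in X\mid x<x_i\}$. Factoring $\mathrm{Red}(v)$ at the last occurrence of $x_j$, that occurrence lies inside $s$, and everything following it uses only letters $<x_j$ because $x_j$ is maximal among the letters of $s$; hence $\mathrm{Red}(v)$ ends in $x_j$ followed by a reduced word over $\{x<x_j\}$, so $vx_j=v$.

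The step I expect to be the main obstacle is $j\notin D_R(u)$, which I would prove by contradiction. If $ux_j=u$, then $\mathrm{Red}(u)$ has a suffix $x_j\,t$ with $t$ reduced over $\{x<x_j\}$, say $\mathrm{Red}(u)=p\,x_j\,t$; writing also $s=s'\,x_j\,s''$ with $s'$ containing no $x_j$ (hence using only letters $<x_j$, again by maximality of $x_j$ in $s$), the word $\mathrm{Red}(v)=\mathrm{Red}(u)\,s=p\,x_j\,t\,s'\,x_j\,s''$ contains the contiguous factor $x_j\,(ts')\,x_j$. Since $ts'$ is a factor of the reduced word $\mathrm{Red}(v)$, it is itself reduced, and it lies over $\{x<x_j\}$, so $x_j\,(ts')\,x_j\to x_j\,(ts')$ is one of the rules of Proposition~\ref{proposition.fhb.kb}; this contradicts $\mathrm{Red}(v)$ being reduced. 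The delicate points are precisely that $ts'$ is a genuine contiguous factor and that it qualifies as a reduced word over the smaller alphabet, so that Proposition~\ref{proposition.fhb.kb} applies verbatim. Combining the two claims gives $D_R(u)\neq D_R(v)$, i.e.\ $d(u)\neq d(v)$.

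For the concluding sentence: with the first part in hand, $\FHB(X)$ is $\RR$-trivial (Corollary~\ref{corollary.fhb.r_trivial}) and satisfies the hypothesis of Corollary~\ref{c:generic}, so any random walk of $\FHB(X)$ on a finite set driven by a generic probability has a diagonalizable transition matrix. It then remains to observe that, for a probability $P$ supported on the generating set $X$ (as is the case in our applications), Theorem~\ref{theorem.eigenvalues} together with Proposition~\ref{proposition.fhb.l_classes} identifies the eigenvalue attached to $I\subseteq X$ (regarded as an element of $\Lambda(\FHB(X))$) as $\lambda_I=\sum_{c(m)\geq I}P(m)=\sum_{x\in I}P(x)$ --- because $c(m)\geq I$ says that the letter set of $m$ is contained in $I$, and the only generators with that property are the members of $I$ --- so $P$ is generic exactly when the partial sums $\sum_{x\in I}P(x)$ are pairwise distinct over subsets of $X$.
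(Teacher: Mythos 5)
Your argument is correct, but it takes a different route from the paper's. The paper proves a stronger, quantitative statement: encoding $D_R(u)$ as a bit string and ordering bit strings by reverse lexicographic order, it shows that $D_R$ strictly increases along $>_{\RR}$, reducing by induction (via the prefix order) to the one-step case $v=ux_i$, where it checks $x_i\in D_R(v)\setminus D_R(u)$ and that descents at letters larger than $x_i$ are preserved, using the suffix criterion of Proposition~\ref{p:descentsfhb}. You instead argue directly for an arbitrary proper prefix extension $\mathrm{Red}(v)=\mathrm{Red}(u)\,s$: taking $x_j$ the largest letter of $s$, the suffix criterion immediately gives $x_j\in D_R(v)$, and $x_j\notin D_R(u)$ follows by a clean contradiction --- if $\mathrm{Red}(u)$ ended in $x_j t$ with $t$ over smaller letters, then $\mathrm{Red}(v)$ would contain the contiguous factor $x_j(ts')x_j$ with $ts'$ a (necessarily reduced) word over $\{x<x_j\}$, i.e.\ a left-hand side of a rule from Proposition~\ref{proposition.fhb.kb}, contradicting reducedness. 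Your version is shorter and induction-free, exhibiting a single letter in the symmetric difference of the descent sets; the paper's version buys more, namely a monotone invariant ($D_R$ in reverse-lex order) along $\RR$-chains. For the ``consequently'' clause both arguments funnel through Corollary~\ref{c:generic}; your added remark that the partial-sum condition matches genericity because $P$ is supported on the generating set $X$ (so that $\lambda_I=\sum_{x\in I}P(x)$, up to the harmless constant $P(1)$ if the identity carries mass) is a point the paper leaves implicit, and is a worthwhile clarification rather than a gap.
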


\begin{proof}
We prove the equivalent assertion that $D_R(u)\neq D_R(v)$.  Let $X=\{x_1,\ldots, x_n\}$ with $x_1<x_2<\cdots<x_n$.
We can identify subsets of $X$ with bit strings of length $n$ by setting, for $I\subseteq X$, $w_I=w_1\cdots w_n$
where $w_i=1$ if $i\in I$ and $w_i=0$, otherwise. We order bit strings by reverse lexicographical order (that is, by
least significant bit).  We claim that if $u>_{\mathscr R} v$, then $w_{D_R(u)}< w_{D_R(v)}$.  Since $\geq_{\mathscr R}$
is the prefix ordering, it suffices by induction to prove the assertion when $v=ux_i$ with $x_i\in X$.  The fact that
$u\neq v$ implies $x_i\notin D_R(u)$; on the other hand $x_i\in D_R(v)$.  We claim that if $j>i$, then $x_j\in D_R(u)$
if and only if  $x_j\in D_R(v)$.  It will then follow that $w_{D_R(u)}< w_{D_R(v)}$.

By the proof of Proposition~\ref{p:descentsfhb} we have that if $x_j\in D_R(v)$, then $v=ax_jb$ with $a,b$ reduced and
$b\in \FHB(\{x_1,\ldots, x_{j-1}\})$.  But then $b=b'x_i$ and $u=ax_jb'$ with $b'\in \FHB(\{x_1,\ldots, x_{j-1}\})$.  Thus
$x_j\in D_R(u)$.  Conversely, if $x_j\in D_R(u)$ then $u=ax_jb$ where $a,b$ are reduced and $b\in \FHB(\{x_1,\ldots, x_{j-1}\})$.
Then $v=ax_jbx_i$ and $bx_i\in \FHB(\{x_1,\ldots, x_{j-1}\})$ because $i<j$.  Thus $x_j\in D_R(v)$.  This completes the proof
of the first statement.  The second statement is immediate from Corollary~\ref{c:generic}.
\end{proof}

%%%%%%%%%%%%%%%%%%%%%%%%%%%%%%%%%%%%%%%%%%%%%%%%%%%%%%%%%
\subsection{Generalized tree monoids}
\label{subsection.fhb generalization}
Here we define a slight generalization of tree monoids by relaxing the idempotency condition
on the generators, which still admits an analogue of Corollary~\ref{corollary.fhb.r_trivial}.

\begin{definition}[Generalized tree monoid]
 Let $M$ be a monoid generated by elements in $X$ and let $<_X$ be a total order on $X$.
 Assume that for each generator $x\in X$, $x^{k+1}=x^k$ for some $k$.
 Furthermore, suppose that whenever $x<_Xy$ for $x,y\in X$, either $x$ and $y$ commute or
 $y$ is idempotent and $yxy=yx$. Then $M$ is a called a {\em generalized tree monoid}.
\end{definition}

The following proposition, establishing the $\RR$-triviality of generalized tree monoids, is proved via the same idea as
Proposition~\ref{p:todiagonalizability}.

\begin{proposition}
\label{proposition.fhb generalization}
Let $M$ be a generalized tree monoid. Then $M$ is $\RR$-trivial.
\end{proposition}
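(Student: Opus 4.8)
The plan is to mimic the structure of the proof of Proposition~\ref{p:todiagonalizability}, but now working with the generalized relations. Since $M$ is finite and aperiodic-ish (each generator satisfies $x^{k+1}=x^k$), it suffices by the characterization recalled in Section~\ref{section.semigroups} to show that the right Cayley graph of $M$ with respect to $X$ becomes acyclic after removing loops, or equivalently that $\leq_\RR$ is a partial order on $M$. The natural approach is to exhibit a \emph{ranking function}: a map $r\colon M\to \mathbb{N}$ (or into some well-ordered set) that is non-increasing under right multiplication by generators and \emph{strictly} decreasing whenever $m x \neq m$. Once such a function exists, any $\RR$-equivalence $m\mathrel{\RR}m'$ would force $r(m)=r(m')$ with $m'=mu$, $m=m'v$ for words $u,v$ in the generators; strict descent at every non-trivial step then collapses to $m=m'$, giving $\RR$-triviality.

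First I would fix a normal form for elements of $M$. Order the generators $x_1<_X\cdots<_X x_n$. For each generator $x_j$, the relations involving $x_j$ and smaller generators are exactly: $x_j$ either commutes with $x_i$, or $x_j$ is idempotent with $x_j x_i x_j = x_j x_i$ — this is precisely the local structure of $\FHB$, except that the ``$x_j$ commutes with $x_i$'' branch replaces the rewriting rule. So I would run (or invoke the existence of) a Knuth--Bendix-style completion analogous to Proposition~\ref{proposition.fhb.kb}: for a non-idempotent top letter $y$ one only gets the rules $y^{k+1}\to y^k$ together with commutation rules pushing smaller letters past $y$ (after ordering), while for an idempotent top letter $y$ one additionally gets $yuy\to yu$ for $u$ reduced over the smaller alphabet. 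The key point to check is that introducing the commutation rules for the non-idempotent generators, oriented consistently with $<_X$, does not destroy termination and that the resulting system is confluent — here the overlaps to resolve are the triple-overlap between a commutation rule and an idempotency or landslide-type rule, which should close up by the same bookkeeping as in Proposition~\ref{proposition.fhb.kb}. This gives every element of $M$ a canonical word, hence a well-defined length, and the length is the ranking function: every oriented rule is length non-increasing (strictly, for the length-reducing rules) and commutation rules preserve length but are only applied finitely often before a length-reducing rule fires or the word is normal.

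To make the descent argument clean without fully pinning down the normal form, I would instead argue directly as in Proposition~\ref{p:descentsfhb}/Proposition~\ref{p:todiagonalizability}: work in the quotient $\ov M$ of $M$ obtained by imposing $x^2=x$ on all generators; this quotient is a tree monoid, hence $\RR$-trivial by Corollary~\ref{corollary.fhb.r_trivial} together with Remark~\ref{remark.parabolicsubmonoid} (one must check the commuting pairs still satisfy the tree-monoid relations, which they do since $yxy=xyy=xy$ once $y^2=y$, so $\ov M$ is a quotient of $\FHB(X)$). Now if $m\mathrel{\RR}m'$ in $M$, their images in $\ov M$ are $\RR$-equivalent, hence equal, so $m$ and $m'$ have the same image in $\ov M$; combined with a content/exponent invariant recording, for each generator $x$, the exact power of $x$ appearing (which is well-defined in $M$ because the relations $x^{k+1}=x^k$, $yxy=yx$, and $xy=yx$ never change the multiset of exponents until saturation), one pins down $m$ and $m'$ completely, giving $m=m'$.

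The main obstacle I expect is the confluence/termination check for the completed rewriting system when both commutation rules (for non-idempotent generators) and landslide/idempotency rules (for idempotent generators) are present simultaneously: the critical pairs mixing the two types of rules need to be resolved, and one must choose the orientation of the commutation rules (say, always moving the larger generator to the left, matching the convention $e_I=x_{i_\ell}\cdots x_{i_1}$) so that it is compatible with the direction of the landslide rules $yuy\to yu$. If this bookkeeping turns out delicate, the fallback — quotienting to a genuine tree monoid and separately tracking exponents — sidesteps rewriting entirely and is probably the cleaner route to write up; the hard part there is merely verifying that the exponent multiset is a genuine invariant of $M$, i.e.\ that no relation can, e.g., turn $x^2$ into $x$ except the defining one, which is immediate by inspecting the three allowed relation types.
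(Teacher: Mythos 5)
Your opening plan---find a statistic on $M$ that moves strictly monotonically along the non-loop edges of the right Cayley graph---is exactly the paper's strategy, but neither of your concrete routes produces such a statistic on $M$ itself, and the route you call cleaner has a genuine gap. First, the definition of a generalized tree monoid only asserts that the listed relations \emph{hold} in $M$; it does not say $M$ is \emph{presented} by them, so $M$ may satisfy arbitrary further relations and your ``exponent invariant'' is not even well defined on $M$ (and already for the presented monoid the literal claim is false: $yxy=yx$ changes the number of occurrences of $y$; only a count capped at the saturation exponent survives). Second, even where it is defined, the pair (image in $\ov M$, capped exponent counts) does not pin down an element. Concretely, take $X=\{x<_X y\}$ with $x^3=x^2$, $y^2=y$, $yxy=yx$ and no commutation, and let $F$ be the monoid presented by exactly these relations (a generalized tree monoid). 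The elements $xyx$ and $x^2yx$ have the same image $xyx$ in $\ov F\cong\FHB(X)$ and the same capped counts ($x$: two, $y$: one), yet they are distinct: the number of $x$'s preceding the first $y$, capped at $2$, is preserved by each of the three defining relations and equals $1$ for the first element and $2$ for the second. So the step ``one pins down $m$ and $m'$ completely'' fails.

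Your first route (a Knuth--Bendix completion mixing commutation and landslide rules) is left unexecuted---you flag the confluence check yourself---and even if carried out it would only give $\RR$-triviality of the \emph{presented} monoid; to descend to an arbitrary generalized tree monoid you would additionally need that presented monoid to be finite, since homomorphic images of infinite $\RR$-trivial monoids (e.g.\ free monoids) need not be $\RR$-trivial, and finiteness would itself have to be extracted from the completion. The paper sidesteps presentations entirely: for each generator $x$ with $x^{k+1}=x^k$ it sets $f_x(m)=\max\{j\le k \mid m\in Mx^{j}\}$, an intrinsic quantity on $M$, assembles these into a vector along $x_1>_X\cdots>_X x_n$, and checks directly from the hypotheses that the vector strictly increases lexicographically whenever $mx\ne m$ (for $y>_X x$: if $y$ commutes with $x$ then $f_y$ cannot drop, and otherwise $y$ is idempotent, so $f_y(m)\ge 1$ gives $my=m$ and hence $mxy=myxy=myx=mx$). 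That short computation is the ranking function your plan calls for, defined on $M$ itself, and it is the ingredient missing from your write-up.
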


\begin{proof}
  The proof proceeds by defining a statistic $f(m)$ on monoid elements
  that increases strictly, for some appropriate order, along the non-trivial edges
  of the right Cayley graph, which implies $\RR$-triviality.

  Fix $x\in X$ and let $k$ be minimal such that $x^{k+1}=x^k$. For
  $m \in M$, define $f_x(m)$ as the largest integer
  $\leq k$ such that $m = m'x^{f_x(m)}$ for some $m'\in M$. Writing the
  elements of $X$ as $x_1>_X\dots>_X x_n$, associate to each element
  $m$ of the monoid the vector $f(m)=(f_{x_1}(m),\dots,f_{x_n}(m))$.
  When all the generators are idempotent, $f(m)$ is nothing but $\{x\in X \mid mx=m\}$, written as a binary
  vector. We use lexicographic order $<_\lex$ to compare vectors.

  Take $m\in M$ and $x\in X$ such that $mx\ne m$. We want to compare $f(m)$
  and $f(mx)$. Note that $f_x(m)<f_x(mx)$. Take $x<_X y$ in $X$. If
  $f_y(m)=0$, then trivially $f_y(mx)\ge f_y(m)$. Hence we may assume
  that $1\leq f_y(m)$. If $x$ and $y$ commute, then $f_y(mx)\geq
  f_y(m)$. Otherwise, $y$ is idempotent and $yxy = yx$. Since $y$ idempotent implies $my=m$, it follows
  that $mxy = m yxy =m yx = m x$ and thus $f_y(mx)=f_y(m)$ (which is
  1 since $y$ is idempotent).

  We conclude that $f(m)<_\lex f(mx)$, as desired.  It follows that the right Cayley digraph of $M$ is acyclic
  and hence $M$ is $\mathscr R$-trivial.
\end{proof}

%%%%%%%%%%%%%%%%%%%%%%%%%%%%%%%%%%%%%%%%%%%%%%%%%%%%%%%
\section{Toom-Tsetlin model}
\label{section.toom}
%%%%%%%%%%%%%%%%%%%%%%%%%%%%%%%%%%%%%%%%%%%%%%%%%%%%%%%

In statistical physics, the Ising model has been repeatedly studied
from several different points of view because of its inherent simplicity
and yet complex behavior. The two-dimensional Ising model is particularly
interesting because of its exact solution. The Toom model~\cite{toom1980}
is a dynamical variant of the two-dimensional
Ising model designed to study interface growth at low temperatures.

In the model, one considers Ising spins $\pm 1$ on a Cartesian lattice, which are simultaneously updated according
to the following rule: the spin at location $(i,j)$ gets updated to the majority of the spins at
$(i,j), (i,j+1)$ and $(i+1,j)$ with probability $1-p-q$, to $+1$ with probability $p$, and
to $-1$ with probability $q$. This model was considered~\cite{dlss1991a,dlss1991b} in the third
quadrant with the boundary condition that spins on the negative $x$-axis are $+1$ and
spins on the negative $y$-axis are $-1$. In the stationary state for small $p,q$, an interface is formed between 
the $+1$ and $-1$ spins which is a straight line starting at the origin at an angle 
depending on the ``noise'' parameters $p$ and $q$. On the
interface itself, there is a nonzero density of both spins, and the dynamics of the spins on
the interface is often also referred to as the Toom model.

A spin exchange model was proposed in~\cite{jnr1996} in order to understand
the border process of the Toom model. This model was defined
on the semi-infinite integer lattice whose finite analog we study here.

We generalize the model by considering both finite lattice sizes as well and arbitrary
particle numbers rather than just spins $\pm 1$. We find that this generalization has a remarkable connection to
another field of probability, namely the well-studied Tsetlin library~\cite{hendricks1,Fill1,Fill2,BHR}.
The Tsetlin library is a discrete-time Markov chain on permutations of books arranged in a line, where each book
$b_i$ is picked with probability $x_i$ and placed in the front of the line. The stationary
distribution of the Tsetlin library and the eigenvalues of the transition matrices are known
explicitly. There are also tight bounds on the mixing time of the Markov chain.

We consider two generalizations of the Tsetlin library involving multiple books. The first one (see
Section~\ref{subsection.toom multiperm}) with a fixed number of books of certain types,
is a Markov chain on words with fixed content. The second (see Section~\ref{subsection.toom interlibrary}) has a 
natural interpretation in terms of a library with ``interlibrary loan''.
This is a Markov chain on words of fixed length from a given alphabet but not of fixed content.

Let $\mathcal{B} = \{b_1,\dots,b_m\}$ be the alphabet, or equivalently the set of books
in the library. We consider words in $\mathcal{B}$ of length $L$.
Our probability parameters are $x_{b,k}$, for $b \in \B$ and $k \in \{1,\dots,L\}$.  As is usual in
the context of the Tsetlin library, states are indexed by words in the alphabet $\B$ of length $L$.
In both variants, we will see that all eigenvalues of the transition matrices are simple
linear expressions in the parameters~$x_{b,k}$.

%%%%%%%%%%%%%%%%%%%%%%%%%%%%%%%%%%%%%%%%%%%%%%%%%%%%%%%
\subsection{First variant: Tsetlin library with multiple copies of books}
\label{subsection.toom multiperm}

Here we consider the model where there is a fixed number $n_i$ of books $b_i$,
so that the total number of books is $\sum_{i=1}^m n_i = L$. The system is thus defined by a
vector $\vec n \in \N^m$. The configurations can be indexed by words (or multipermutations)
$\pi = (\pi_1, \dots, \pi_L)$ of prescribed content with letters in $\mathcal{B}$; that is, each $\pi_j = b_k$ for some
$1\le k\le m$ and $\sum_{j=1}^L 1_{\{\pi_j = b_k\}} = n_k$.
There are therefore $\binom{L}{n_1,\ldots,n_m}$ configurations.

The dynamics is as follows. Suppose the current state is $\pi$. At each discrete time step, we choose with probability
$x_{b,j}$ a book $b$ and an index $j$ (no greater than the number of copies of $b$) and we move the $j^{th}$ copy
of $b$ to the left, past all books not equal to $b$, until it is next to the $(j-1)^{st}$ copy of $b$.  If $j=1$, we interpret this as
moving $b$ to the front. Formally, if the $j^{th}$ copy of $b$ is in position $k$ of $\pi$, then
the new state becomes $\pi'$ as follows:
\be \label{bulkrules}
\begin{split}
\pi = &(\pi_1, \dots, \pi_{k-1},b,\pi_{k+1},\dots,\pi_L)  \mapsto  \\
\pi' = &\begin{cases}
(b,\pi_1, \dots, \pi_{k-1},\pi_{k+1},\dots,\pi_L), & \text{if $j=1$}, \\
(\pi_1,\dots,\pi_{i-1},b,b,\pi_{i+1},\dots, \pi_{k-1},\pi_{k+1},\dots,\pi_L), & \\
& \hspace{-5cm} \text{if $j>1$, $\pi_i = b$ and $b \notin \{\pi_{i+1},\dots,\pi_{k-1} \}$.}
\end{cases}
\end{split}
\ee
We denote this map by $\toomgens_{b,j}$, or more precisely $\pi'=\toomgens_{b,j}(\pi)$.

When there is exactly one copy of each book, then this Markov chain is the classical Tsetlin library chain.
When $m=2$, this version of the Tsetlin library reduces to a finite analog of the Toom model~\cite{jnr1996},
when all the probabilities are equal. The model consists of Ising spins $\pm 1$ on the integer lattice
$\Z$, where the leftmost spin in a block of spins of type $+1$ or $-1$ hops far enough to the left so that
it becomes the rightmost spin in the next block of spins to its left. Another difference is that the Toom model
is studied in continuous time.

\begin{proposition}
  \label{proposition.irred variant 1}
The Markov chain on words of length $L$ of content $\vec n$ in the alphabet $\B$ defined by
the operators $\{\toomgens_{b,j}\; |\; b \in \B, \, 1 \leq j \leq n_b \}$ is ergodic.
\end{proposition}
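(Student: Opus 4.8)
The plan is to show that $\mathcal M$ is irreducible and that the monoid generated by the operators $\toomgens_{b,j}$ contains an element acting as a constant map on the state space; ergodicity is then immediate from Proposition~\ref{p:hasallconstant}. Write $\wh w_b$ for the word obtained from $w$ by deleting every occurrence of the letter $b$, and for $b\in\B$ set $B_b=\toomgens_{b,n_b}\circ\cdots\circ\toomgens_{b,1}$. The first step is a ``block move'' lemma, proved by a short induction on $j$: writing $w=c_0bc_1b\cdots bc_{n_b}$ with the $c_i$ the maximal factors of $w$ avoiding $b$, one checks that $\toomgens_{b,j}\circ\cdots\circ\toomgens_{b,1}$ sends $w$ to $b^{j}c_0c_1\cdots c_j\,bc_{j+1}\cdots bc_{n_b}$; taking $j=n_b$ gives $B_b(w)=b^{n_b}\wh w_b$. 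Thus $B_b$ collects every copy of $b$ into a single block at the front of the word, leaving the remaining letters in their original relative order.

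Composing these, $B_{b_1}\circ B_{b_2}\circ\cdots\circ B_{b_m}$ sends every word of content $\vec n$ to the sorted word $s:=b_1^{n_1}b_2^{n_2}\cdots b_m^{n_m}$, i.e.\ it acts as the constant map onto $s$; since each $B_b$ is a product of the generators $\toomgens_{b,j}$, this constant map lies in the transition monoid. Consequently every state reaches $s$, and by Proposition~\ref{p:hasallconstant} it suffices to prove that $s$ reaches every state, i.e.\ that $\mathcal M$ is irreducible.

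I would prove that $s$ can reach an arbitrary target $v=v_1\cdots v_L$ by induction on $L$ (the cases $L\le 1$ being trivial). Put $a=v_L$ and $v'=v_1\cdots v_{L-1}$. Applying $B_c$ for every $c\in\B$ with $c\ne a$, in any fixed order, drives $s$ to a word $H\cdot a^{n_a}$, hence to a word $G\cdot a$ with $G$ of the same content as $v'$: none of these block moves ever moves a copy of $a$, so all $a$'s end up as a contiguous block at the end. Now every generator $\toomgens_{b,k}$ with $(b,k)\ne(a,n_a)$, applied to a word of the form $K\cdot a$ with $K$ of content that of $v'$, fixes the trailing $a$ and acts on $K$ exactly as the corresponding generator of the Toom--Tsetlin chain on words of content that of $v'$; this chain has total length $L-1<L$, so by the induction hypothesis it is irreducible, and those generators therefore suffice to steer $G\cdot a$ to $v'\cdot a=v$. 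This gives $s\to^{*}v$ and closes the induction.

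The main obstacle is precisely this irreducibility step: because a single move sends a book all the way to the front (or all the way up against its preceding copy), one cannot build $v$ greedily from the left, since a book occurring in several of $v$'s runs displaces copies already in place. The device above sidesteps this by peeling off the last letter with a block move and delegating the remaining rearrangement to a strictly smaller instance of the same model; the one detail needing care is the verification that the ``safe'' generators acting on $K\cdot a$ genuinely realize all generators of the content-$v'$ chain, which is routine from the explicit description of the maps $\toomgens_{b,k}$ in~\eqref{bulkrules}.
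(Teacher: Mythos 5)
Your proof is correct. The central device is the same as the paper's: the composition $\toomgens_{b,j}\circ\cdots\circ\toomgens_{b,1}$ gathers the first $j$ copies of $b$ into a block, and composing such gadgets over the letters produces a constant map. Where you diverge is in how much you squeeze out of that construction. The paper observes that for \emph{any} target $\gamma=b_1^{n_1}\cdots b_k^{n_k}$ in block form, the single product $\bar\toomgens_1\circ\cdots\circ\bar\toomgens_k$ (with $\bar\toomgens_i=\toomgens_{b_i,\ell(i)}\circ\cdots\circ\toomgens_{b_i,1}$, $\ell(i)$ the number of $b_i$'s in the prefix of $\gamma$ through block $i$) is already the constant map onto $\gamma$; since $\gamma$ is arbitrary, irreducibility and the existence of a constant map come out of one induction on the blocks of $\gamma$. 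You instead only realize the constant map onto the sorted word $s$, and then pay for that with a second, separate induction on $L$ (peeling off the last letter and checking that the generators other than $\toomgens_{a,n_a}$ restrict to the full generator set of the content-$(\vec n-\vec e_a)$ chain on words ending in $a$). That restriction step is verified correctly, and your argument that the $a$'s end up as a contiguous suffix after applying $B_c$ for all $c\neq a$ is sound, so the proof goes through; it is simply longer than necessary. The paper's version is the natural strengthening of your first step: once you can hit an arbitrary target with a constant map, the reachability induction is superfluous. (Both proofs conclude ergodicity the same way, via the presence of constant maps together with irreducibility, cf.\ Proposition~\ref{p:hasallconstant}; the paper also mentions the self-loops $\toomgens_{\pi_1,1}$ for primitivity, which your route does not need.)
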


\begin{proof}
The graph associated to the Markov chain is primitive because of the presence of self-loops, such as the
operator $\toomgens_{\pi_1,1}$ acting on $\pi$.

To prove irreducibility, we show that we can get from any configuration to a specified
configuration. It will be convenient to express the target configuration $\gamma$ in block
form. We canonically represent $\gamma$ as $\gamma_1 \cdots \gamma_k$, where each
$\gamma_i$ is a sequence of the same $b_i\in \mathcal B$ and consecutive blocks do not consist
of the same symbol. So $\gamma=b_1^{n_1}\cdots b_k^{n_k}$ where $b_i\neq b_{i+1}$ for $i=1,\ldots, k-1$ and $\gamma_i =b_i^{n_i}$.

We construct $\gamma$ by building it one block at a time from the right. For each $i$
let $\ell(i)$ be the total number of occurrences of $b_i$ in the
prefix of $\gamma$ up to and including $\gamma_i$, i.e., $\ell(i) = \sum_{j\leq i, b_j=b_i}n_j$ in the above notation.

We define the operator
$\bar{\toomgens}_{i}$ to be the operator $\toomgens_{b_i,\ell(i)}
\circ \cdots \circ \toomgens_{b_i,1}$, where we remind the reader that we are acting on the left. We
then claim that the sequence of operators
\be \label{irred-opers}
	\bar{\toomgens}_1 \circ \cdots \circ \bar{\toomgens}_k
\ee
acting on any configuration $\pi$ returns $\gamma$.  Indeed, a straightforward induction shows that
\[\bar{\toomgens}_i\circ \cdots \circ \bar{\toomgens}_k(\pi)= \gamma_i'\cdots \gamma_k'\pi'\]
where, for $i\leq j\leq k$, 
\[
	\gamma_j' = \begin{cases}\gamma_j, & \text{if there exists $i\leq r<j$ with $b_r=b_j$,}\\
	b_j^{\ell(j)}, & \text{else,}\end{cases}
\] 
and $\pi'$ is word of the appropriate content.  Using that if $\gamma_i$ is the leftmost occurrence
of a block of the symbol $b_i$ in $\gamma$, then $b_i^{\ell(b_i)}=\gamma_i$, it follows that
$\bar{\toomgens}_1 \circ \cdots \circ \bar{\toomgens}_k(\pi)=\gamma$.
\end{proof}

The transition matrix will be denoted by $T_{\vec n}$. To describe our main result, we need
to extend the notion of derangement from permutations to words. A word $\pi$ of content $\vec n$
is called a {\bf derangement} if no letter in $\pi$ is in a position
occupied by the same letter in the sequence
\begin{equation}\label{equation.derangement.def}
(1,\dots,1,2,\dots,2,\dots,m,\dots,m).
\end{equation}
For example $(3,2,1,1)$ is a derangement, whereas $(2,1,1)$ is not since the first 1 sits in the
same slot as a 1 in $(1,1,2)$.

Let $d_{\vec n}$ denote the number of derangements of words of content $\vec n$.
Even and Gillis \cite{even-gillis-1976} first gave an explicit formula for derangements of words
(or multipermutations) in terms of Laguerre polynomials $L_n(x)$,
\be \label{defder}
	d_{\vec n} = (-1)^L \int_0^\infty e^{-x} \prod_{j=1}^m L_{n_j}(x) \mathrm{d}x,
\ee
and Carlitz \cite{carlitz1978} gave the first combinatorial proof of this result.
For $1 \leq j \leq m$ and $I_j \subseteq [n_j]=\{1,2,\ldots,n_j\}$,
let $x_{b_j,I_j} = \sum_{s \in I_j} x_{b_j,s}$.

\begin{theorem}
  \label{theorem.toom.interval.eigenvalues}
The characteristic polynomial of the transition matrix $T_{\vec n}$ is given by
\be
| \lambda \iden \; - \; T_{\vec n} | = \prod_{I_1 \subseteq [n_1], \dots,
    I_m \subseteq [n_m]} \left(\lambda - \sum_{j=1}^m x_{b_j,I_j}
    \right)^{\ds d_{(n_1-|I_1|,  \dots, n_m-|I_m|)}}.
\ee
When $x_{b,k}=x_b$ for $b \in \B$ and all $k$, this simplifies to
\be
| \lambda \iden  \; - \; T_{\vec n} | =
\prod_{(k_1,\dots,k_m) \leq (n_1,\dots,n_m)}
\left(\lambda - \sum_{i=1}^m k_i x_{b_i} \right)^
{\ts d_{(n_1-k_1,  \dots, n_m-k_m)} \prod_{i=1}^m \binom{n_i}{k_i}},
\ee
where $\le$ is component-wise comparison.
\end{theorem}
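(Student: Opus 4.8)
The plan is to exhibit this Markov chain as a random walk on an $\RR$-trivial monoid and then apply Theorem~\ref{theorem.eigenvalues}. Let $M$ be the transition monoid, i.e.\ the submonoid of the full transformation monoid of $\Omega$ generated by the operators $\partial_{b,j}$, and let $P$ be the probability on $M$ with $P(\partial_{b,j})=x_{b,j}$ (note $\sum_{b,j}x_{b,j}=1$). By Proposition~\ref{proposition.irred variant 1} the walk is irreducible, and $P$ is adapted since its support generates $M$. Granting $\RR$-triviality of $M$, Theorem~\ref{theorem.eigenvalues} expresses the characteristic polynomial of $T_{\vec n}$ as $\prod_{X\in\Lambda(M)}(\lambda-\lambda_X)^{m_X}$, so it remains to determine the lattice $\Lambda(M)$ with an idempotent transversal, the eigenvalues $\lambda_X$, and the fixed-point counts $|e_X\Omega|$ that feed the M\"obius-inversion formula for the $m_X$.

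The heart of the matter, and the step I expect to be hardest, is the $\RR$-triviality of $M$. Each $\partial_{b,j}$ is idempotent, but these generators do not satisfy the relations of a left regular band --- for instance $\partial_{b,2}\partial_{b,3}\partial_{b,2}\ne\partial_{b,2}\partial_{b,3}$ whenever $b$ has at least three copies --- so $M$ is not a tree monoid and one cannot simply quote the results of Section~\ref{section.free tree monoid}. Instead I would argue directly that the right Cayley graph of $M$ relative to the generators is acyclic, by producing a statistic on $M$ that increases strictly along every non-loop edge, in the style of the proof of Proposition~\ref{proposition.fhb generalization}. The crucial structural fact is that applying $\partial_{b,j}$ to a configuration moves \emph{only} the $j^{th}$ copy of $b$ --- sliding it leftwards past the intervening letters, whose mutual order it does not disturb --- so it cannot break any adjacency among copies $1,\dots,j-1$ of $b$, nor any adjacency among the copies of a different book. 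Converting this into a monotone integer- or vector-valued statistic on $M$ itself (not merely on $\Omega$), and checking it behaves correctly at loops, is the technical core of the proof.

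Once $\RR$-triviality is in hand, the remainder is bookkeeping. The content map $c$ sends each $\partial_{b_j,s}$ to an atom of $\Lambda(M)$; these $L=\sum_j n_j$ atoms generate $\Lambda(M)$, so it is a quotient of the Boolean lattice on the pairs $(j,s)$, and one checks that (outside a handful of very small degenerate cases) the quotient is trivial, giving $\Lambda(M)\cong\prod_{j=1}^m 2^{[n_j]}$ ordered so that meet is union. For $X\leftrightarrow(I_1,\dots,I_m)$, formula~\eqref{eigenvalueseq} gives $\lambda_X=\sum_{\partial_{b_j,s}\in X}x_{b_j,s}=\sum_{j=1}^m x_{b_j,I_j}$, and a representing idempotent $e_X$ is obtained as a suitable product of the $\partial_{b_j,s}$ with $s\in I_j$. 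A direct count identifies the configurations fixed by $e_X$ with the words of content $(n_1-|I_1|,\dots,n_m-|I_m|)$ --- expand each letter of such a word back into the run of pinned copies prescribed by $X$ --- whence $|e_X\Omega|=\binom{L-k}{n_1-|I_1|,\dots,n_m-|I_m|}$ with $k=\sum_j|I_j|$. Substituting into $m_X=\sum_{Y\le X}|e_Y\Omega|\,\mu(Y,X)$ and carrying out the M\"obius inversion over $\prod_j 2^{[n_j]}$ reproduces exactly the inclusion--exclusion expansion of the multipermutation-derangement number, so $m_X=d_{(n_1-|I_1|,\dots,n_m-|I_m|)}$ (cf.\ Even--Gillis~\cite{even-gillis-1976} and Carlitz~\cite{carlitz1978}). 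This is the first displayed formula; the second then follows by grouping the $\prod_j\binom{n_j}{k_j}$ tuples $(I_1,\dots,I_m)$ with prescribed sizes $|I_j|=k_j$, all of which contribute the eigenvalue $\sum_j k_j x_{b_j}$.

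In short, once the $\RR$-triviality of the transition monoid is established --- which, lacking tree-monoid or band relations, genuinely requires the dynamics of the model --- the spectral computation comes down to recognizing $\Lambda(M)$ as a Boolean lattice, counting fixed points of the transversal idempotents as multinomial coefficients, and identifying the resulting M\"obius inversion with the classical inclusion--exclusion for derangements of multisets.
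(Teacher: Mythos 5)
There is a genuine gap, and it originates in a misreading of the definition of a tree monoid. The relation $yxy=yx$ in $\FHB(X)$ is only imposed when $x<y$, and the total order on the generating set is yours to choose. Lemma~\ref{lemma.toom relation} shows that $yxy=yx$ holds for \emph{every} pair of generators except when $y=\toomgens_{b,i}$ and $x=\toomgens_{b,i+1}$; declaring $\toomgens_{b,i}<\toomgens_{b,i+1}$ (and extending arbitrarily to a total order) therefore makes all the \emph{required} relations hold, which is exactly how Theorem~\ref{theorem.toom R trivial} exhibits $\Monoid_{\vec n}$ as a tree monoid. Your claimed obstruction $\toomgens_{b,2}\toomgens_{b,3}\toomgens_{b,2}\neq\toomgens_{b,2}\toomgens_{b,3}$ is an instance of the excluded case (outer letter smaller than inner letter) and is not a relation the tree monoid presentation demands; the relation it does demand, $\toomgens_{b,3}\toomgens_{b,2}\toomgens_{b,3}=\toomgens_{b,3}\toomgens_{b,2}$, holds. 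Having discarded this route, you propose to prove $\RR$-triviality directly via a statistic that strictly increases along non-loop edges of the right Cayley graph, but you never construct such a statistic --- you explicitly defer it as ``the technical core.'' So the step you yourself identify as the heart of the theorem is not established in your proposal, and the detour was unnecessary: $\RR$-triviality follows from Corollary~\ref{corollary.fhb.r_trivial} once the ordering above is chosen.

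Two further points on the ``bookkeeping'' half. First, you compute in $\Lambda(M)$ for the actual transition monoid and must then argue that no collapses occur in the lattice (your parenthetical about ``degenerate cases''); the paper avoids this entirely by applying Theorem~\ref{theorem.eigenvalues} to the action of the tree monoid itself, where Proposition~\ref{proposition.fhb.l_classes} gives the full Boolean lattice and possibly-null multiplicities are harmless. Second, the fixed-point count $|e_{\vec R}\Omega|=(n_1-|R_1|,\ldots,n_m-|R_m|)!$ is asserted via ``expand each letter back into the run of pinned copies''; that is the right intuition, but making the bijection precise (the order in which the $\toomgens_{b_i,j}$ appear in $e_{\vec R}$ matters, and one must check that later factors never separate blocks created by earlier ones) takes a genuine induction in the paper. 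The remainder of your outline --- eigenvalues $\sum_j x_{b_j,I_j}$, M\"obius inversion over the Boolean lattice, and the identification with the Even--Gillis derangement numbers via Lemma~\ref{lemma.count.derangement} --- matches the paper's argument.
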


We postpone the proof of Theorem~\ref{theorem.toom.interval.eigenvalues} to Section~\ref{subsection.Toom proofs}.

\begin{example}
The transition matrix for
$n_1=n_2=2$ in the lexicographically ordered basis is given by
\[
\left(
\begin {array}{cccccc}
x_{1,1} + x_{1,2}+ x_{2,2} &x_{{1,2}}&x_{{1,2}}&0&0&0\\
\noalign{\medskip}0&x_{1,1}&0&x_{{1,1}}&0&0\\
\noalign{\medskip}0&x_{{2,2}}&x_{1,1}+x_{2,2}&0&x_{{1,1}}&x_{{1,1}}\\
\noalign{\medskip}x_{{2,1}}&x_{{2,1}}&0&x_{1,2}+x_{2,1}&x_{1,2}&0\\
\noalign{\medskip}0&0&x_{{2,1}}&0&x_{2,1}&0\\
\noalign{\medskip}0&0&0&x_{{2,2}}&x_{{2,2}}&x_{1,1}+x_{1,2}+x_{2,2}
\end {array}
\right),
\]
\begin{comment}
\be
\left(
\begin {array}{cccccc}
-\alpha_{{1}}&\beta_{{2}}&\beta_{{2}}&0&0&0\\
\noalign{\medskip}0&-\beta_{{2}}-\alpha_{{2}}-\alpha_{{1}}&0&\beta_{{1}}&0&0\\
\noalign{\medskip}0&\alpha_{{2}}&-\beta_{{2}}-\alpha_{{1}}&0&\beta_{{1}}&\beta_{{1}}\\
\noalign{\medskip}\alpha_{{1}}&\alpha_{{1}}&0&-\beta_{{1}}-\alpha_{{2}}&\beta_{{2}}&0\\
\noalign{\medskip}0&0&\alpha_{{1}}&0&-\beta_{{1}}-\beta_{{2}}-\alpha_{{2}}&0\\
\noalign{\medskip}0&0&0&\alpha_{{2}}&\alpha_{{2}}&-\beta_{{1}}
\end {array}
\right),
\ee
\end{comment}
and its eigenvalues are
\[
1=x_{1,1}+x_{1,2}+x_{2,1}+x_{2,2},\; x_{1,1}+x_{2,1},\; x_{1,1}+x_{2,2},\; x_{1,2}+x_{2,1},\;    x_{1,2}+x_{2,2},\;   0.
\]
When we set $x_{1,1}=x_{1,2}=x_1$ and $x_{2,1}=x_{2,2}=x_2$, we get the eigenvalues $2x_1+2x_2$ with multiplicity 1 and
$x_1+x_2$ with multiplicity 4 as expected.
\end{example}

\begin{corollary}
For the Toom model (i.e. when $m=2$), Theorem~\ref{theorem.toom.interval.eigenvalues} simplifies to
\[
	| \lambda \iden \; - \; T_{(n_1,n_2)} | = \prod_{\substack{I_1 \subseteq [n_1], I_2 \subseteq [n_2]\\
	n_1-|I_1| = n_2-|I_2|}}
         \left(\lambda - x_{b_1,I_1} - x_{b_2,I_2} \right).
\]
\end{corollary}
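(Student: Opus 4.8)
The plan is to obtain this as a direct specialization of Theorem~\ref{theorem.toom.interval.eigenvalues}, the only nontrivial input being the evaluation of the two-letter derangement numbers. Setting $m=2$ in the characteristic polynomial formula of that theorem gives
\[
 | \lambda \iden - T_{(n_1,n_2)} | = \prod_{I_1\subseteq[n_1],\, I_2\subseteq[n_2]} \left(\lambda - x_{b_1,I_1} - x_{b_2,I_2}\right)^{\ds d_{(n_1-|I_1|,\,n_2-|I_2|)}},
\]
so everything reduces to showing that $d_{(a,b)} = 1$ if $a=b$ and $d_{(a,b)}=0$ otherwise.

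First I would prove this claim combinatorially. By definition, a word $\pi$ of content $(a,b)$ is a derangement exactly when it differs in every position from the reference word $1^a2^b$ of~\eqref{equation.derangement.def}. Over a two-letter alphabet this forces $\pi_i=2$ for $1\le i\le a$ and $\pi_i=1$ for $a<i\le a+b$, i.e. $\pi = 2^a1^b$; but this word has content $(a,b)$ only when its number of $2$'s, namely $a$, equals $b$, so $a=b$ is necessary, in which case $\pi = 2^a1^a$ is the unique derangement (the empty-word case $a=b=0$ being covered as well). Alternatively, one can read off the same fact from the Even--Gillis integral~\eqref{defder}, which for $m=2$ becomes $d_{(a,b)} = (-1)^{a+b}\int_0^\infty e^{-x}L_a(x)L_b(x)\,\mathrm{d}x = \delta_{a,b}$ by orthonormality of the Laguerre polynomials.

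Finally I would substitute this evaluation into the product above: every factor indexed by a pair $(I_1,I_2)$ with $n_1-|I_1|\neq n_2-|I_2|$ carries exponent $0$ and disappears, while each factor with $n_1-|I_1| = n_2-|I_2|$ carries exponent $1$, which yields precisely the asserted formula. I do not anticipate any genuine obstacle; the one point requiring care is the identification of $d_{(a,b)}$ with $\delta_{a,b}$, after which only the reindexing of the product remains, and that is routine.
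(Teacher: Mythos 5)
Your proof is correct and follows the same route as the paper: specialize Theorem~\ref{theorem.toom.interval.eigenvalues} to $m=2$ and observe that $d_{(a,b)}=\delta_{a,b}$ for two-letter words, which the paper states without elaboration and you justify both combinatorially and via Laguerre orthonormality. Nothing further is needed.
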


\begin{proof}
For two letters the number of derangements is zero unless there are the same number of each letter,
in which case the number of derangements is one.
\end{proof}

The next theorem provides diagonalizability of the transition matrix for generic probabilities.
\begin{theorem}\label{t.diagonal}
The transition matrix $T_{\vec n}$ is diagonalizable as long as the partial sums of the $x_{b,j}$ over
distinct subsets of indices are distinct.
\end{theorem}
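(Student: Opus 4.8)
The plan is to realize the Toom--Tsetlin chain as a random walk of a free tree monoid and then invoke Proposition~\ref{p:todiagonalizability} directly. Let $X=\{\toomgens_{b,j}\mid b\in\B,\ 1\le j\le n_b\}$ be the set of operators of~\eqref{bulkrules}, totally ordered so that within each book $b$ one has $\toomgens_{b,1}<\toomgens_{b,2}<\cdots<\toomgens_{b,n_b}$, with operators attached to distinct books ordered arbitrarily among themselves. Put $P(\toomgens_{b,j})=x_{b,j}$ and regard $P$ as a probability on $\FHB(X)$ supported on the generating set $X$.

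The key step is to check that the transition monoid $M=\langle X\rangle$ is a homomorphic image of $\FHB(X)$, i.e.\ that $X$ satisfies the defining relations of $\FHB(X)$ for this order. Idempotency $\toomgens_{b,j}^2=\toomgens_{b,j}$ is clear, since reapplying $\toomgens_{b,j}$ once the $j^{th}$ copy of $b$ has reached its target slot does nothing. If $\toomgens_{b,j}$ and $\toomgens_{b',j'}$ come from distinct books they commute, because each permutes only the slots occupied by copies of its own book and its effect is determined by the relative order of those copies, which the other operator leaves untouched; two commuting idempotents $x,y$ satisfy $yxy=yx$ automatically. If $j'<j$ within the same book, a short computation on words gives $\toomgens_{b,j}\,\toomgens_{b,j'}\,\toomgens_{b,j}=\toomgens_{b,j}\,\toomgens_{b,j'}$: after $\toomgens_{b,j}$ has pushed the $j^{th}$ copy of $b$ up against the $(j-1)^{st}$, the leftmost operator $\toomgens_{b,j}$ finds it already in place. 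These are exactly the facts underlying the proof of Theorem~\ref{theorem.toom.interval.eigenvalues} in Section~\ref{subsection.Toom proofs}, so I would simply reuse that verification.

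Granting this, the chain on words of content $\vec n$ is the random walk of $\FHB(X)$ on the state space $\Omega$ driven by $P$: since $P$ is supported on single generators, $T(\alpha,\beta)=\sum_{w\beta=\alpha}P(w)=\sum_{\toomgens_{b,j}\beta=\alpha}x_{b,j}$ is the Toom transition matrix. Proposition~\ref{p:todiagonalizability} now applies with no further work: its hypothesis is precisely that the partial sums $\sum_{x\in I}P(x)$ over distinct $I\subseteq X$ be pairwise distinct, and the index set of $X$ is $\{(b,j)\}$, so this is exactly the hypothesis of the theorem. One can also see this through Theorem~\ref{theorem.eigenvalues}: under the isomorphism $\Lambda(\FHB(X))\cong \mathcal{P}(X)$, the eigenvalue attached to $I$ is $\sum_{(b,s)\in I}x_{b,s}$, so distinctness of partial sums makes $P$ generic in the sense preceding Corollary~\ref{c:generic}, whose remaining hypothesis $u>_{\RR}v\Rightarrow d(u)\ne d(v)$ holds in $\FHB(X)$ by Proposition~\ref{p:todiagonalizability}; this route also recovers the spectrum in Theorem~\ref{theorem.toom.interval.eigenvalues}.

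The only nonroutine point is the identification of $M$ with a quotient of $\FHB(X)$, and within it the cross-book commutation relation. One must argue carefully that moving a copy of $b$ past all non-$b$ letters and moving a copy of $b'$ past all non-$b'$ letters are genuinely independent, in the sense that the landing slot of the relocated $b$ depends only on the relative positions of the copies of $b$ and not on the intervening positions of the copies of $b'$. Once this lemma is available --- and it is the same ingredient needed for $\RR$-triviality in Section~\ref{subsection.Toom proofs} --- the theorem follows by direct citation of Proposition~\ref{p:todiagonalizability}.
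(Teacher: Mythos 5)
Your overall strategy is exactly the paper's: verify that the generators $X_{\vec n}$ satisfy the defining relations of $\FHB(X_{\vec n})$ for the order $\toomgens_{b,1}<\cdots<\toomgens_{b,n_b}$ (this is Lemma~\ref{lemma.toom relation} and Theorem~\ref{theorem.toom R trivial}), and then cite Proposition~\ref{p:todiagonalizability}. The reduction itself is fine, and the ordering you choose is the one that works: the only pair for which $yxy=yx$ fails is $y=\toomgens_{b,i}$, $x=\toomgens_{b,i+1}$, and there $y<x$, so the tree-monoid relation is not required.

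The gap is in your verification of the cross-book relations. It is \emph{not} true that operators attached to distinct books always commute: $\toomgens_{b,1}$ and $\toomgens_{b',1}$ both move the first copy of their book to the absolute front of the shelf, so the order of application determines which book ends up in position one (e.g., on $\pi=b\,b'$ one composition returns $b\,b'$ and the other $b'\,b$). Your heuristic that the effect of $\toomgens_{b,j}$ "depends only on the relative order of the copies of $b$" is exactly what fails here: for $j=1$ the landing slot is an absolute position, not one defined relative to other copies of $b$. Consequently the derivation $yxy=xyy=xy=yx$ from commutativity breaks down for this pair. The needed relation $yxy=yx$ is nevertheless true --- both sides place the first copy of $b'$ at the front and the first copy of $b$ immediately after it --- but it requires a separate direct check, which is precisely the extra case handled in the paper's Lemma~\ref{lemma.toom relation}. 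A similar, more minor, imprecision occurs in your same-book case $j'=j-1$: after $\toomgens_{b,j-1}$ acts, the $(j-1)^{st}$ copy has moved left and is no longer adjacent to the $j^{th}$, so the outer $\toomgens_{b,j}$ does act nontrivially; the identity holds because the $j^{th}$ copy lands in the same place either way, not because the outer operator is idle. With these two verifications repaired, the rest of your argument goes through verbatim.
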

The proof of Theorem~\ref{t.diagonal} is postponed until Section~\ref{subsection.Toom R trivial}.

%%%%%%%%%%%%%%%%%%%%%%%%%%%%%%%%%%%%%%%%%%%%%%%%%%%%%%%
\subsection{Second variant: Tsetlin library with interlibrary loan}
\label{subsection.toom interlibrary}

We generalize the Tsetlin library with multiple copies of  Section~\ref{subsection.toom multiperm} to
include storage or interlibrary loan of books. One imagines that the library can hold $L$ books, and there is
the possibility of borrowing copies of books from an external source (such as storage or another library). We
remark that this model also makes sense from the point of view of the Toom model where this model
has the interpretation of looking at a window of $L$ sites in the one-dimensional lattice.

Our state space is now all possible words of size $L$ in the alphabet $\B$ of size $m$ and the
number of configurations is $m^L$.
We need to define the operators giving rise to a random mapping representation of this Markov chain. With a slight abuse of
terminology, we will again denote
the operators by $\toomgens_{b,j}$ for $b \in \B$, but this time, for all $j \in [L]$. As before, the operator
$\toomgens_{b,j}$ is chosen with probability $x_{b,j}$.
Let $n_b(\pi)$ be the number of occurrences of $b$ in the word $\pi$.

Given a word $\pi$, the operator $\toomgens_{b,j}$ acts as follows. If there are at least $j$ copies of the
book $b$ in $\pi$, then (as before) we move the $j^{th}$ copy of $b$ to the left until it is next to the $(j-1)^{st}$
copy (where if $j=1$, then $b$ is moved to the front). If there are $j-1$ copies of $b$ in $\pi$, then we insert a
new copy of $b$ (from storage or another library) immediately after the $(j-1)^{st}$ copy of $b$.  Finally, if there are
strictly fewer than $j-1$ copies of $b$ in $\pi$, we do nothing. Formally, the transitions are defined by
\be \label{rbrules}
\begin{split}
\pi = &(\pi_1, \dots,\pi_L)  \mapsto  \\
\pi' = &\begin{cases}
\eqref{bulkrules}, & \hspace{-3cm}\text{if $j \leq n_b(\pi)$}, \\
(b,\pi_1, \dots,\pi_{L-1}), & \\
& \hspace{-3cm}\text{if $n_b(\pi)=0$ and $j=1$}, \\
(\pi_1,\dots,\pi_{i-1},b,b,\pi_{i+1},\dots,\pi_{L-1}), & \\
& \hspace{-3cm} \text{if $n_b(\pi)>0, j=n_b(\pi)+1$}, \\
& \hspace{-3cm} \text{$\pi_i = b$ and $b \notin \{\pi_{i+1},\dots,\pi_{L} \}$}, \\
\pi, & \hspace{-3cm} \text{otherwise}.
\end{cases}
\end{split}
\ee

The loan operators in~\eqref{rbrules} are natural extensions of the operators in~\eqref{bulkrules} because
one imagines that a book from somewhere far to the right will jump far enough left so that it becomes the
rightmost book in the rightmost block of books of the same type.
Notice that $\pi$ is fixed by the operator $\toomgens_{\pi_L,n_{\pi_L}(\pi)}$.

We require $x_{b,j}$ to be positive for all $b \in \B$ and $1 \leq j \leq L$.

\begin{proposition}
  \label{proposition.irred variant 2}
The Markov chain on words of length $L$ in the alphabet $\B$ of $m$ letters defined by the
operators $\{\toomgens_{b,j}\; |\; b \in \B, \, 1 \leq j \leq L \}$ is ergodic.
\end{proposition}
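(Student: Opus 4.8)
The plan is to follow the same strategy as the proof of Proposition~\ref{proposition.irred variant 1}: reduce ergodicity to (i) aperiodicity of $\Gamma(\mathcal M)$ and (ii) irreducibility. Part (i) needs no work: as noted just before the statement, every word $\pi$ is fixed by $\toomgens_{\pi_L,\,n_{\pi_L}(\pi)}$, so every vertex of $\Gamma(\mathcal M)$ carries a loop and the greatest common divisor of the cycle lengths is $1$. (Equivalently, once irreducibility is in hand one may invoke Corollary~\ref{c:aperiodicchains}, the self-loops showing that the minimal ideal of the transition monoid is aperiodic.) Thus the content is irreducibility, and, exactly as in the first variant, it suffices to show that an \emph{arbitrary} target word $\gamma$ is reachable from an \emph{arbitrary} starting word $\pi$; since $\gamma$ ranges over all of $\Omega$, this is precisely strong connectivity of $\Gamma(\mathcal M)$.

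To prove this reachability I would reuse the construction from the proof of Proposition~\ref{proposition.irred variant 1} essentially verbatim. Write $\gamma$ in block form $\gamma=b_1^{n_1}\cdots b_k^{n_k}$ with $b_i\in\B$, $b_i\neq b_{i+1}$, $n_1+\cdots+n_k=L$; the differences from the first variant are that a letter may occur in several non-consecutive blocks and that the content of $\gamma$ bears no relation to that of $\pi$. Set $\ell(i)=\sum_{j\le i,\ b_j=b_i}n_j$, put $\bar{\toomgens}_i=\toomgens_{b_i,\ell(i)}\circ\cdots\circ\toomgens_{b_i,1}$, and apply $\bar{\toomgens}_1\circ\cdots\circ\bar{\toomgens}_k$ to $\pi$. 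Two new phenomena occur relative to the fixed-content model. First, whenever a factor $\toomgens_{b_i,r}$ is applied to a word that currently has fewer than $r$ copies of $b_i$, it acts not as a genuine move but as an interlibrary-loan insertion (the clause of~\eqref{rbrules} with second index $n_{b_i}(\cdot)+1$), which is exactly what we want: it manufactures the missing copy of $b_i$ at the appropriate place on the left. Second, if $\pi$ or an intermediate word carries more copies of some letter than $\gamma$ needs, those surplus copies never get selected into a block and so remain inside the garbage suffix $\pi'$ of the invariant $\bar{\toomgens}_i\circ\cdots\circ\bar{\toomgens}_k(\pi)=\gamma_i'\cdots\gamma_k'\,\pi'$ used in the proof of Proposition~\ref{proposition.irred variant 1}; each loan insertion simultaneously deletes the rightmost letter of the current word, i.e., trims $\pi'$, and since $|\gamma|=L$ the suffix $\pi'$ is forced to have disappeared once the word has become $\gamma_1\cdots\gamma_k=\gamma$.

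The main obstacle is redoing the induction of Proposition~\ref{proposition.irred variant 1} in this enlarged setting: one must reprove the invariant $\bar{\toomgens}_i\circ\cdots\circ\bar{\toomgens}_k(\pi)=\gamma_i'\cdots\gamma_k'\,\pi'$, with $\gamma_j'=\gamma_j$ if $b_r=b_j$ for some $i\le r<j$ and $\gamma_j'=b_j^{\ell(j)}$ otherwise, now permitting each $\toomgens_{b_i,r}$ to act either as a move or as a loan, and showing in particular that the right-end deletions only ever consume letters of $\pi'$ and never corrupt an already-built block $\gamma_j'$. There are also a few routine edge cases to dispose of (when $m=1$ the statement is trivial since $\Omega=\{b_1^L\}$; and when the current last occurrence of a letter sits at the very end of the word one must read the loan clause of~\eqref{rbrules} with care). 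Once the invariant is re-established, irreducibility and hence ergodicity follow exactly as in the first variant.
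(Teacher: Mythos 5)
Your reduction of ergodicity to irreducibility plus aperiodicity, and your use of the self-loops $\toomgens_{\pi_L,n_{\pi_L}(\pi)}$ for aperiodicity, agree with the paper. For irreducibility, though, you take a genuinely harder route than the paper does. The paper's proof is two-phase: given the target $\gamma$ of content $(n_1,\dots,n_m)$, it first applies $(\toomgens_{1,n_1}\circ\cdots\circ\toomgens_{1,1})\circ\cdots\circ(\toomgens_{m,n_m}\circ\cdots\circ\toomgens_{m,1})$, which sends \emph{any} starting word to the sorted word $b_1^{n_1}\cdots b_m^{n_m}$; this phase is where all the interlibrary loans are confined, and it is easy to verify because the blocks being assembled are indexed by \emph{distinct} letters of $\B$ (so the right-end deletions demonstrably never reach an already-built block). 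Once the content matches that of $\gamma$, the operator \eqref{irred-opers} from Proposition~\ref{proposition.irred variant 1} is applied as a black box: on a word of the correct content no loan clause of \eqref{rbrules} is ever triggered, so the invariant proved there is reused verbatim. Your single-phase plan instead interleaves loans with moves inside $\bar{\toomgens}_1\circ\cdots\circ\bar{\toomgens}_k$ and therefore genuinely requires the re-done induction that you yourself flag as ``the main obstacle'' and then defer. That induction can in fact be pushed through --- if $b_i$ already occurs among $b_{i+1},\dots,b_k$, the first $\ell(i)$ copies of $b_i$ all lie in the leftmost such block $b_i^{\ell(j_1)}$ with $\ell(j_1)>\ell(i)$, so no loan occurs; if not, the garbage suffix has length at least $N_{b_i}\geq\ell(i)$ and shrinks by exactly one per operator, so every deletion lands in the garbage --- but this case analysis is precisely the work the paper's content-normalization step is designed to avoid. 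I would adopt the paper's decomposition: it lets you cite Proposition~\ref{proposition.irred variant 1} unchanged rather than reproving its invariant in the presence of loans.
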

\begin{proof}
Just as in the Markov chain of the Tsetlin library with multiple copies, the graph of the chain is primitive
because of the presence of self-loops. Since the operators in the former chain are a
subset of the operators here, all the self-loops there also occur here.

To show irreducibility, we again construct a series of operators that take any
configuration to a prescribed one, say $\gamma$. By the proof of
Proposition~\ref{proposition.irred variant 1}, it suffices to construct an operator that will
take any configuration to one with the same content as $\gamma$.

Suppose $\gamma$ has content $(n_1,\dots,n_m)$. Then the sequence of operators
\[
(\toomgens_{1,n_1} \circ \cdots \circ  \toomgens_{1,1}) \circ\cdots\circ
(\toomgens_{m,n_m} \circ \cdots \circ  \toomgens_{m,1})
\]
takes any configuration to $b_1^{n_1} \cdots b_m^{n_m}$, which has the same content as
$\gamma$. (Recall that the operators act on the left).
The operator \eqref{irred-opers} constructed in Proposition~\ref{proposition.irred variant 1} will then
take this configuration to $\gamma$.
\end{proof}

We denote the transition matrix for this model by $T_{m,L}$.

\begin{theorem}
\label{theorem.toom.window.eigenvalues}
  The characteristic polynomial of the transition matrix $T_{m,L}$ is given by
  \begin{equation*}
     | \lambda \iden \; - \; T_{m,L} | = \;\; \left(\lambda - \sum_{j=1}^m  x_{b_j,[L]} \right)
       \prod_{I_1,\ldots,I_m\subsetneq [L]}
      \left( \lambda - \sum_{j=1}^m x_{b_j,I_{j}} \right)^{m_{\vec{I}}}\;,
  \end{equation*}
where the multiplicity $m_{\vec{I}}$ for $\vec{I}=(I_1,\ldots,I_m)$ is given in~\eqref{equation.multiplicity interlibrary} below.
\end{theorem}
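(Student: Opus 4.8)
The plan is to realize $T_{m,L}$ as the transition matrix of a random walk on an $\RR$-trivial monoid and then apply Theorem~\ref{theorem.eigenvalues}. Let $M\subseteq\mathcal T_\Omega$ be the monoid generated by the maps $\partial_{b,j}$ acting on $\Omega=\B^L$, and let $P$ be the probability with $P(\partial_{b,j})=x_{b,j}$; by Proposition~\ref{p:transitionmatrix} the matrix $T_{m,L}$ is the operator $v\mapsto Pv$ on $\mathbb R\Omega$. First I would observe that every $\partial_{b,j}$ is idempotent and that, ordering the generators by copy-index within each book (and arbitrarily across books), each pair of generators either commutes or satisfies the relation $yxy=yx$ with $y$ idempotent; hence $M$ is a generalized tree monoid and is $\RR$-trivial by Proposition~\ref{proposition.fhb generalization}. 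This verification is exactly the one required for Theorem~\ref{t.diagonal}, carried out in Section~\ref{subsection.Toom R trivial}, so here I would simply cite it. Together with ergodicity (Proposition~\ref{proposition.irred variant 2}) this makes $P$ adapted and places us in the setting of Theorem~\ref{theorem.eigenvalues}.

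Next I would determine the eigenvalues. The content map $c\colon M\to\Lambda(M)$ sends $\partial_{b,j}$ to the left ideal it generates, and $\Lambda(M)$ is generated as a meet-semilattice by these images; thus each $X\in\Lambda(M)$ equals $\bigwedge_{(b,j)\in S}c(\partial_{b,j})$ for some $S\subseteq\B\times[L]$, which I encode as a tuple $\vec I=(I_1,\dots,I_m)$ with $I_j\subseteq[L]$. Using \eqref{eigenvalueseq} one computes $\lambda_X=\sum\{x_{b,j}\mid c(\partial_{b,j})\ge X\}=\sum_{j=1}^m x_{b_j,I_j}$, where $x_{b_j,I_j}=\sum_{s\in I_j}x_{b_j,s}$; in particular $\vec I=([L],\dots,[L])$ gives $\lambda=1$. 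Since distinct tuples may collapse to the same element of $\Lambda(M)$ (for example, any tuple with some $I_j=[L]$ maps into the minimal ideal), one lists the eigenvalue $\lambda-\sum_j x_{b_j,I_j}$ for every tuple and assigns multiplicity $0$ to the redundant ones; this is precisely the product in the statement, with the $([L],\dots,[L])$ factor pulled out because it carries the Perron eigenvalue with multiplicity one.

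Finally I would compute the multiplicities from $m_X=\sum_{Y\le X}|e_Y\Omega|\,\mu(Y,X)$, where $\mu$ is the Möbius function of $\Lambda(M)$ and $|e_Y\Omega|$ is the number of words of $\B^L$ fixed by the idempotent $e_Y$. Since $\Lambda(M)$ is a quotient of the Boolean lattice on $\B\times[L]$, writing $Y\leftrightarrow\vec J$ the relevant Möbius values are signs $(-1)^{\sum_b(|J_b|-|I_b|)}$, so $m_{\vec I}=\sum_{\vec J\supseteq\vec I}(-1)^{\sum_b(|J_b|-|I_b|)}|e_{\vec J}\Omega|$. The core of the proof is the combinatorial evaluation of $|e_{\vec J}\Omega|$: one analyzes the action of the decreasing product $e_{\vec J}$ of the generators $\partial_{b,s}$ with $s\in J_b$ and identifies the words it fixes as those whose leftmost block structure is the one forced by $\vec J$, the trailing positions being free --- the interlibrary-loan analogue of the fact that in the fixed-content variant $e_{\vec I}$ fixes precisely the words obtained by appending an arbitrary word to the sorted prefix read off from $\vec I$ (cf.\ the proof of Theorem~\ref{theorem.toom.interval.eigenvalues}). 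Plugging this count into the Möbius sum yields the formula \eqref{equation.multiplicity interlibrary} for $m_{\vec I}$ and, in particular, forces $m_{\vec I}=0$ whenever some but not all of the $I_j$ equal $[L]$.

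The hard part will be this last step. Because the loan operators create new letters, the forced prefix of a word fixed by $e_{\vec J}$ genuinely depends on which indices lie in each $J_b$ and not merely on their number (in contrast to the fixed-content case), so describing the sets $e_{\vec J}\Omega$ precisely and then carrying out the Boolean Möbius inversion in closed form is where the real work lies.
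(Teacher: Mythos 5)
Your framework is exactly the paper's: establish $\RR$-triviality via the tree-monoid relations of Section~\ref{subsection.Toom R trivial}, identify $\Lambda$ of the transition monoid with (a quotient of) the Boolean lattice on the generators $\toomgens_{b,j}$, read off the eigenvalues $\sum_j x_{b_j,I_j}$ from Theorem~\ref{theorem.eigenvalues}, and obtain the multiplicities by M\"obius inversion from the fixed-point counts $|e_{\vec J}\Omega|$ of the decreasing-product idempotents. Your treatment of tuples with some $I_j=[L]$ (where $|e_{\vec J}\Omega|=1$ and the alternating sum collapses to $1$ or $0$) also matches the paper.

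However, the argument is incomplete precisely where you say the real work lies: the evaluation of $|e_{\vec J}\Omega|$. Since the multiplicity $m_{\vec I}$ in the theorem is \emph{defined} by \eqref{equation.multiplicity interlibrary}, which is the M\"obius sum over these fixed-point counts, that computation is not deferrable bookkeeping --- it is the substance of the theorem (the paper devotes Lemma~\ref{lemma.fixed points interlibrary} and a long induction to it, and only \emph{conjectures} a closed form in terms of word derangements). Moreover, your heuristic for the answer --- that the fixed words are ``those whose leftmost block structure is forced by $\vec J$, the trailing positions being free'' --- is not what comes out. The correct count is content-stratified: for $\vec R$ with all $R_i\subsetneq[L]$ and $\sum_i\min(\overline{R}_i)<L+m$, one has $|e_{\vec R}\Omega|=\sum_{\vec n\in I(\vec R)}(\vec n-\vec f(\vec R,\vec n))!$, where the admissibility conditions defining $I(\vec R)$ require $n_i\neq 0$ whenever $1\in R_i$ and allow at most one index with $n_i+1\in R_i$ (in which case the fixed word is forced to \emph{end} in $b_i$ --- a trailing constraint, not a free suffix); and when $\sum_i\min(\overline{R}_i)\geq L+m$ the fixed point is unique. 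Within each admissible content class the count does reduce, after these corrections, to the fixed-content formula \eqref{equation.multinomial.fixedpoint}, but the corrections (the set $I(\vec R)$ and the vector $\vec f(\vec R,\vec n)$) are exactly what the loan operators introduce, and they are what your sketch does not supply.
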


The proof of Theorem~\ref{theorem.toom.window.eigenvalues} is postponed to Section~\ref{subsection.Toom proofs}.
We conjecture that the multiplicities $m_{\vec{I}}$ are again given by
derangement numbers of words as in~\eqref{defder}.

\begin{conjecture}
For $\vec{I}$ with $I_i \subsetneq [L]$ for all $1\le i\le m$ we have
\[
	m_{\vec{I}} = \begin{cases}
	(m-1)\; d_{(|\bar{I}_1|-1, \dots, |\bar{I}_m|-1)}, & \text{if $\sum_i \max(\bar I_i) \leq L+m-1$,}\\
	0, & \text{otherwise,}
	\end{cases}
\]
where $\bar I = [L] \setminus I$ and $\max(I)$ is the maximal element of $I \subseteq [L]$.
\end{conjecture}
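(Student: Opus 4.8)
The plan is to derive the multiplicities $m_{\vec I}$ from the character-theoretic formula of Theorem~\ref{theorem.eigenvalues}, applied to the transition monoid $M_{m,L}$ of the chain; its $\RR$-triviality, which is the hypothesis of that theorem, follows from the generalized tree monoid structure exhibited in Section~\ref{subsection.Toom R trivial}. That formula writes $m_{\vec I}=\sum_{\vec J\le\vec I}|e_{\vec J}\Omega|\,\mu(\vec J,\vec I)$, so the task splits into three parts: (i) identify the lattice $\Lambda(M_{m,L})$ together with a transversal of idempotents; (ii) count the fixed points $|e_{\vec J}\Omega|$ on $\Omega=\B^L$; and (iii) evaluate the resulting M\"obius sum and match it to the word-derangement numbers of~\eqref{defder}.

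For (i): since $M_{m,L}$ is a quotient of a free tree monoid $\FHB(X)$ on the generators $X=\{\toomgens_{b,j}\}$ and $\Lambda(\FHB(X))$ is Boolean (Proposition~\ref{proposition.fhb.l_classes}), $\Lambda(M_{m,L})$ is a quotient lattice, and I would describe its elements through the content map: a monoid element is sent to the tuple $\vec J=(J_1,\dots,J_m)$ recording, for each book $b_j$, the set of copy-indices $s$ that have become right descents (i.e.\ $w\toomgens_{b_j,s}=w$). Building a word greedily out of the operators $\toomgens_{b_j,s}$ with $s\notin J_j$ --- essentially the block word they create --- one sees that a tuple $\vec J$ is realized by some element precisely when $J_j\subsetneq[L]$ for each $j$ (full locking of a book forces the constant word $b_j^L$, i.e.\ the minimal ideal, which accounts for the separate top eigenvalue $\sum_j x_{b_j,[L]}=1$ of multiplicity $1$) \emph{and} the interleaving of the inserted copies is feasible inside a word of length $L$; I would prove that the latter amounts exactly to $\sum_i\max(\bar J_i)\le L+m-1$, and exhibit for each realizable $\vec J$ an explicit idempotent reduced word $e_{\vec J}$ fixing the prefix it produces.

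For (ii) and (iii): a word $\pi\in\B^L$ is fixed by $e_{\vec J}$ iff a canonically determined prefix of $\pi$ of length $d(\vec J)$ --- read off the reduced word of $e_{\vec J}$, with $\sum_j(|\bar J_j|-1)$ positions essentially forced and the ``first-copy'' positions contributing a boundary correction --- equals the block word attached to $\vec J$, the remaining letters being free; this yields $|e_{\vec J}\Omega|$ as a power of $m$ times a small boundary factor. Substituting into $m_{\vec I}=\sum_{\vec J\le\vec I}|e_{\vec J}\Omega|\,\mu(\vec J,\vec I)$ and performing the inclusion--exclusion over the product-of-chains poset of the $J_j$ should collapse, after the same cancellations as in the fixed-content variant (Section~\ref{subsection.Toom proofs}), to $(m-1)$ times a signed alternating sum that is exactly the Even--Gillis Laguerre integral~\eqref{defder} for $d_{(|\bar I_1|-1,\dots,|\bar I_m|-1)}$; for non-realizable $\vec I$ the summand is simply absent and $m_{\vec I}=0$, reproducing the stated case distinction. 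A clean alternative for the last step is to recognize the M\"obius sum termwise as the generating-function coefficient that defines word-derangements in the sense of Carlitz, thereby avoiding an explicit sign-reversing involution.

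The step I expect to be the real obstacle is (i): proving that the realizable tuples in $\Lambda(M_{m,L})$ are exactly those with $J_j\subsetneq[L]$ and $\sum_i\max(\bar J_i)\le L+m-1$, and pinning down the correct idempotent transversal $e_{\vec J}$. Once the lattice and the fixed-point counts $|e_{\vec J}\Omega|$ are in hand, step (iii) should follow the template already used to prove Theorem~\ref{theorem.toom.interval.eigenvalues}; the only genuinely new bookkeeping there is the ``interlibrary-loan'' boundary, which is the source of both the overall factor $m-1$ and the index shift $|\bar I_j|\mapsto|\bar I_j|-1$ relative to the fixed-content case.
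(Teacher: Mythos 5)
You should first be aware that the statement you are trying to prove is stated in the paper as a \emph{conjecture}: the authors prove Theorem~\ref{theorem.toom.window.eigenvalues}, which expresses $m_{\vec I}$ only as the alternating sum~\eqref{equation.multiplicity interlibrary} over the Boolean lattice, with the fixed-point counts $|e_{\vec U}\Omega|$ given by Lemma~\ref{lemma.fixed points interlibrary}, and they explicitly leave the closed derangement form open. So there is no proof in the paper to compare against, and a genuine proof would have to do exactly the step your plan leaves to hope: evaluate that alternating sum in closed form.

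Measured against what the paper already establishes, your steps (i)--(ii) are either redundant or incorrect. The $\RR$-triviality, the identification of $\Lambda(M_{m,L})$ with the full Boolean lattice $P(X_I)$, and the choice of idempotent transversal $e_{\vec J}$ are already in place (Corollary~\ref{c:treemonoidforinterval}, Proposition~\ref{proposition.fhb.l_classes}, Theorem~\ref{theorem.eigenvalues}); in particular, in the paper's framework \emph{every} tuple $\vec J$ indexes an eigenvalue, and the vanishing of $m_{\vec I}$ when $\sum_i\max(\bar I_i)>L+m-1$ must come from cancellation inside the M\"obius sum, not from ``non-realizable'' lattice elements being absent as you propose --- the latter claim is unproven and not how the multiplicities in the conjecture are defined. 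More seriously, your fixed-point description in (ii) is false: the fixed points of $e_{\vec J}$ are not words with a forced prefix and a free suffix, and $|e_{\vec J}\Omega|$ is not a power of $m$ up to a boundary factor. For example, with $m=2$, $L=3$ the single generator $\toomgens_{b_1,2}$ fixes $b_1b_1b_2$, $b_2b_1b_1$ and $b_2b_2b_1$, which share no common prefix; the correct count, by Lemma~\ref{lemma.fixed points interlibrary}, is a sum of multinomial coefficients $\sum_{\vec n\in I(\vec J)}(\vec n-\vec f(\vec J,\vec n))!$ over admissible contents. This inner sum over contents is precisely what blocks the term-by-term matching with Lemma~\ref{lemma.count.derangement} that worked in the fixed-content case, and it is why the factor $m-1$, the index shift $|\bar I_j|\mapsto|\bar I_j|-1$, and the threshold $\sum_i\max(\bar I_i)\le L+m-1$ do not fall out ``after the same cancellations''. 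Since step (iii) --- the only genuinely new content --- is asserted rather than argued (``should collapse'', ``recognize termwise''), the proposal does not close the gap; it essentially restates the open problem on top of machinery the paper already provides, with an incorrect intermediate fixed-point count that would have to be repaired before any evaluation of the sum could even begin.
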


\begin{example}
The transition matrix for $L=2$ and $m=2$ in the lexicographically
ordered basis is given by
\[
\left(
\begin {array}{cccc} x_{1,1}+x_{1,2}+x_{2,2}& x_{1,2}&0&0\\
\noalign{\medskip}0&x_{1,1}+x_{2,2}&x_{1,1}&x_{1,1}\\
\noalign{\medskip}x_{2,1}&x_{2,1}&x_{1,2}+x_{2,1}&0\\
\noalign{\medskip}0&0&x_{2,2}&x_{1,2}+x_{2,1}+x_{2,2}
\end {array}
\right),
\]
and its eigenvalues are
\[
1 = x_{1,1}+x_{1,2}+x_{2,1}+x_{2,2}, \quad x_{1,1}+x_{2,2}, \quad x_{1,2}+x_{2,2},  \quad x_{1,2}+x_{2,1}\;,
\]
as expected by the statement of Theorem~\ref{theorem.toom.window.eigenvalues}.
\end{example}

Again we have diagonalizability of the transition matrix for generic probabilities.
\begin{theorem}\label{t.diagonalv2}
The transition matrix $T_{m,L}$ is diagonalizable as long as the partial sums of the $x_{b,j}$ over distinct
subsets of indices are distinct.
\end{theorem}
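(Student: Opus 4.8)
The plan is to follow the same route as for Theorem~\ref{t.diagonal}: realize $T_{m,L}$ as the transition matrix of a random walk on an $\RR$-trivial monoid and then apply the genericity criterion of Corollary~\ref{c:generic}. Let $M_{m,L}$ denote the transition monoid of the chain, that is, the submonoid of transformations of $\B^L$ generated by the operators $\toomgens_{b,j}$ with $b\in\B$ and $1\le j\le L$, and let $P$ be the probability assigning mass $x_{b,j}$ to $\toomgens_{b,j}$. Since $\supp P$ is the full generating set, $N=\langle\supp P\rangle=M_{m,L}$ and $P$ is adapted, so Theorem~\ref{theorem.eigenvalues} and the results of Section~\ref{rtrivial-markov-chain} become available once $\RR$-triviality is established. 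It then suffices to prove (a) that $M_{m,L}$ is $\RR$-trivial, (b) that $u>_\RR v$ in $M_{m,L}$ implies $d(u)\ne d(v)$, and (c) that the stated hypothesis on partial sums is exactly the assertion that $P$ is generic.

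For (a), I would argue as in Section~\ref{subsection.Toom R trivial} (where Theorem~\ref{t.diagonal} is proved): each generator $\toomgens_{b,j}$ is idempotent (moving the $j$-th copy of $b$ next to the $(j-1)$-st is idempotent, and the loan move, once a new copy has been inserted, does nothing on a second application), and two generators either commute, when they refer to distinct books and thus act on disjoint ``tracks'', or satisfy a tree-monoid relation, for a common book with the larger index absorbing the smaller. This exhibits $M_{m,L}$ as a generalized tree monoid in the sense of Section~\ref{subsection.fhb generalization}, so $\RR$-triviality follows from Proposition~\ref{proposition.fhb generalization} (equivalently, from the statistic argument in its proof, which produces a vector-valued statistic strictly increasing in lexicographic order along every non-loop edge of the right Cayley graph). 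For (b), one tracks the descent set $d(u)$ — the set of generators fixing $u$ on the right, viewed inside $\Lambda(M_{m,L})$ — along a covering $u>_\RR v$ with $v=u\toomgens_{b,j}$ and $v\ne u$: here $\toomgens_{b,j}$ becomes a new right descent of $v$, and, with a suitable total order on the generators $\toomgens_{b,j}$, one shows by a monotonicity argument parallel to the bit-string computation in the proof of Proposition~\ref{p:todiagonalizability} that the characteristic vector of $d(v)$ strictly exceeds that of $d(u)$; in particular $d(u)\ne d(v)$. Finally, for (c), Theorem~\ref{theorem.toom.window.eigenvalues} (or directly Theorem~\ref{theorem.eigenvalues}) shows that every eigenvalue $\lambda_X$, $X\in\Lambda(M_{m,L})$, equals a partial sum $\sum_{j=1}^m x_{b_j,I_j}=\sum_{(b,k)\in S_X}x_{b,k}$ over a subset $S_X$ of the index set $\{(b,k):b\in\B,\ 1\le k\le L\}$, with distinct $X$ giving distinct $S_X$; hence the hypothesis that all partial sums of the $x_{b,k}$ over distinct subsets of indices are distinct says precisely that $\lambda_X\ne\lambda_Y$ for $X\ne Y$, i.e.\ that $P$ is generic. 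Corollary~\ref{c:generic} then yields diagonalizability of $T_{m,L}$.

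The main obstacle is step (b), the descent analysis. The loan operators make $M_{m,L}$ genuinely more delicate than $\FHB(X)$: applying $\toomgens_{b,j}$ may both create a new copy of $b$, adding $\toomgens_{b,j}$ to the descent set, and, as a side effect, change which other operators $\toomgens_{b',j'}$ fix the resulting word, so the descent set can lose elements as well as gain one. The crux is therefore to choose the order on the generators $\toomgens_{b,j}$ so that the gain always dominates any loss, and to verify this dominance carefully enough that $d$ separates $\RR$-comparable elements. Once this is in place, diagonalizability for generic probabilities is a formal consequence of the $\RR$-trivial random walk theory of Section~\ref{rtrivial-markov-chain}.
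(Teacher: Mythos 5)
Your overall strategy (realize the chain as a random walk on a tree monoid, then invoke the genericity criterion) is the right one, but step (b) of your plan is a genuine gap, and you say so yourself: you never actually produce the order on the generators and the monotonicity argument showing that $u>_\RR v$ implies $d(u)\neq d(v)$ \emph{in the transition monoid} $M_{m,L}$. This is not a routine verification. As you observe, the loan operators can make the right descent set both gain and lose elements along an edge of the right Cayley graph, and it is not clear that any ordering of the $\toomgens_{b,j}$ makes the bit-string argument of Proposition~\ref{p:todiagonalizability} go through in the quotient. Corollary~\ref{c:generic} needs the hypothesis for the monoid whose walk you are running, so until this is supplied the proof is incomplete.

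The paper avoids this difficulty entirely, and the fix is worth internalizing. Corollary~\ref{c:treemonoidforinterval} exhibits $M_{X_I}$ as a quotient of the multipermutation monoid $\Monoid_{\vec n}$ with $\vec n=(L,\dots,L)$ (via the truncation map $\pi_L$), hence as a quotient of the free tree monoid $\FHB(X_I)$. The chain is therefore a random walk of $\FHB(X_I)$ itself on $\B^L$, with the action factoring through the quotient. Proposition~\ref{p:todiagonalizability} verifies the descent-separation hypothesis $D_R(u)\neq D_R(v)$ for $u>_\RR v$ \emph{in the free tree monoid}, where the combinatorics of reduced words makes it transparent; and Theorem~\ref{diagonal} is stated so that diagonalizability of $P$ as an operator on $\LO{\FHB(X_I)}$ passes to the transition matrix of \emph{any} action, because $\mathbb R[T]$ is a quotient of the split semisimple algebra $\mathbb R[P]$. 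So one never needs to analyze descents in $M_{m,L}$. A further small inaccuracy in your step (a): generators for distinct books do \emph{not} always commute (by Lemma~\ref{lemma.toom relation}, $\toomgens_{b,1}$ and $\toomgens_{b',1}$ with $b\neq b'$ fail to commute, though they still satisfy $yxy=yx$ in either order); the tree-monoid conclusion survives, but the stated dichotomy is wrong, and checking the relations directly for the loan operators is messier than deriving them by passing to the quotient of $\Monoid_{(L,\dots,L)}$ as the paper does.
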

The proof of Theorem~\ref{t.diagonalv2} is postponed until Section~\ref{subsection.Toom R trivial}.

%%%%%%%%%%%%%%%%%%%%%%%%%%%%%%%%%%%%%%%%%%%%%%%
\subsection{$\RR$-triviality of the Toom--Tsetlin model}
\label{subsection.Toom R trivial}
Let $\vec n=(n_1,\ldots, n_m)$ be in $\mathbb N^m$ with $n_1+\cdots+n_m=L$ and put $\mathcal B=\{b_1,\ldots, b_m\}$.
Set $X_{\vec n}=\{\toomgens_{b_i,k} \mid b_i\in \mathcal B, 1\le k \le n_i\}$ where the $\toomgens_{b_i,k}$ are the mappings
associated to the Toom--Tsetlin model from Section~\ref{subsection.toom multiperm}.

\begin{lemma}
\label{lemma.toom relation}
Each $x\in X_{\vec n}$ is idempotent.  Moreover, we have
\[
	yxy =yx \quad \text{for all $x,y\in X_{\vec n}$}
\]
unless $x=\toomgens_{b,i+1}$ and $y=\toomgens_{b,i}$ for some $b=b_k\in \mathcal B$ and $1\le i < n_k$.
\end{lemma}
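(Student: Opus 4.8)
The statement concerns the operators $\toomgens_{b,j}$ on words of fixed content $\vec n$ defined in equation~\eqref{bulkrules}. I would prove Lemma~\ref{lemma.toom relation} entirely by direct analysis of the effect of these operators on an arbitrary word $\pi$, breaking into cases according to whether $x$ and $y$ involve the same book or different books. The key observation is that $\toomgens_{b,j}$ touches only the copies of $b$ (it slides the $j^{th}$ copy of $b$ leftward past non-$b$ letters until it abuts the $(j-1)^{st}$ copy), and in particular it does not change the relative order of the copies of $b$, nor the relative order of the non-$b$ letters, nor the content of $\pi$.

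\textbf{Step 1: idempotency.} Fix $x=\toomgens_{b,j}$ and apply it to $\pi$. After applying $x$, the first $j$ copies of $b$ in $\pi'=x(\pi)$ form a contiguous block $b^j$ at the front of the word (if $j=1$ this is just the statement that $b$ is now in position~$1$; if $j>1$ the $j$-th copy now immediately follows the $(j-1)$-st, and an easy induction/inspection shows in fact all of the first $j$ copies are consecutive, since none of them had any non-$b$ letter between them that was not already removed). Consequently the $j^{th}$ copy of $b$ in $\pi'$ is already adjacent to the $(j-1)^{st}$ copy (or already at the front when $j=1$), so $\toomgens_{b,j}$ acts as the identity on $\pi'$. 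Hence $x^2=x$. This is the routine part.

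\textbf{Step 2: the commutation $yxy=yx$.} Write $x=\toomgens_{c,i}$, $y=\toomgens_{b,j}$. Consider $yx(\pi)$ and compare with $yxy(\pi)$; equivalently, I show $y$ fixes $yx(\pi)$, i.e. $y(yx(\pi)) = yx(\pi)$, which by Step~1 ($y^2=y$) reduces to showing that applying $x$ after $y$ does not "re-expose" a move for $y$. The cleanest route: observe that $y(\pi)$ always has the property that its first $j$ copies of $b$ are a contiguous block at the front (by Step~1's analysis), and ask whether applying $x=\toomgens_{c,i}$ destroys this property. \emph{Case $c\neq b$:} the operator $x$ moves a copy of $c$ leftward past non-$c$ letters; it cannot separate two copies of $b$ that were adjacent, nor move a copy of $b$ — it can only permute the $b$'s and the other non-$c$ letters as a block relative to the moved $c$. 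The first $j$ copies of $b$ remain a contiguous prefix-block of $b$'s (a copy of $c$ could in principle be inserted, but it slides to abut the previous $c$, which if there is no earlier $c$ goes to the very front — precisely the subtle sub-case to check). In all sub-cases one verifies the "first $j$ copies of $b$ are consecutive and as far left as content allows" invariant survives, so $y$ fixes $x(y(\pi))$. \emph{Case $c=b$ but $i\neq j+1$:} here $x=\toomgens_{b,i}$ with $i\le j$ or $i>j+1$. If $i\le j$, then after $y=\toomgens_{b,j}$ the first $j$ copies of $b$ are already a front-block $b^j$, so $\toomgens_{b,i}$ with $i\le j$ does nothing (its move is already realized), giving $xy(\pi)=y(\pi)$ and hence $yxy(\pi)=yy(\pi)=y(\pi)=yx(\pi)$. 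If $i>j+1$, then $x$ moves the $i^{th}$ copy of $b$ leftward to abut the $(i-1)^{st}$ copy, which is strictly to the right of the front $b^j$-block and is untouched; this does not disturb the first $j$ copies, so again $y$ fixes $x(y(\pi))$.

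\textbf{The main obstacle.} The genuinely delicate point is the case $c\neq b$ where applying $\toomgens_{c,i}$ after $\toomgens_{b,j}$ could, a priori, slide a copy of $c$ all the way to the front (when $i=1$, or when no earlier copy of $c$ exists in the relevant prefix), landing it in front of the $b$'s and thereby potentially "un-fronting" book $b$. I would handle this by tracking the precise description~\eqref{bulkrules} carefully: moving a copy of $c$ to the front places it before position~$1$, so it sits to the \emph{left} of the $b^j$-block; but $\toomgens_{b,j}$'s fixed-point condition is that the $j^{th}$ copy of $b$ immediately follows the $(j-1)^{st}$ (and the first follows the front only up to non-$b$ letters being allowed past) — re-reading the rule, $\toomgens_{b,j}$ moves $b$ past all books \emph{not equal to }$b$, so a stray $c$ in front does not create a move, and the configuration is still fixed by $y$. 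I would write out this verification as the one careful lemma-internal computation, and note that it is exactly the asymmetry that forces the single genuine exception $x=\toomgens_{b,i+1},\,y=\toomgens_{b,i}$: there, $\toomgens_{b,i+1}\toomgens_{b,i}(\pi)$ has its first $i+1$ copies of $b$ consecutive at the front, but applying $\toomgens_{b,i}$ again is a no-op while the relation $yxy=yx$ would instead require comparing against $\toomgens_{b,i}\toomgens_{b,i+1}\toomgens_{b,i}(\pi)$ — and one exhibits a concrete $\pi$ (e.g. with the $i^{th}$ and $(i+1)^{st}$ copies of $b$ far apart) where this fails, confirming the exception is real and not an artifact of the proof.
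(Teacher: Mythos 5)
Your proof has two genuine flaws, and they hit exactly the cases that need care. First, the invariant on which everything rests is false: applying $\toomgens_{b,j}$ does \emph{not} make the first $j$ copies of $b$ into a contiguous block at the front of the word. The operator only moves the $j^{th}$ copy until it abuts the $(j-1)^{st}$; the earlier copies stay where they are. For example $\toomgens_{b,2}(a\,b\,a\,b)=a\,b\,b\,a$, where the two $b$'s are neither at the front nor (in longer examples such as $\toomgens_{b,3}(b\,a\,b\,a\,b)=b\,a\,b\,b\,a$) all consecutive. Idempotency survives, since all one needs is that in $x(\pi)$ the $j^{th}$ copy already sits next to the $(j-1)^{st}$ (or in front, if $j=1$), but your Step~2 case analysis leans on the false stronger statement: the claim that for $c=b$, $i\le j$ one has $xy(\pi)=y(\pi)$ fails on the same example ($\toomgens_{b,1}\toomgens_{b,2}(a\,b\,a\,b)=b\,a\,b\,a\neq a\,b\,b\,a$), and your resolution of the ``main obstacle'' misreads the definition of $\toomgens_{b,1}$: its fixed points are the words that \emph{begin} with $b$, so a stray $c$ slid to the front does create a move for $\toomgens_{b,1}$.

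Second, and more fundamentally, the logical skeleton proves the wrong identity. Showing that ``applying $x$ after $y$ does not re-expose a move for $y$'' means showing $y$ fixes $x(y(\pi))$, which establishes $yxy=xy$, not $yxy=yx$; these coincide only when $x$ and $y$ commute, which is precisely the easy case. In the genuinely non-commuting cases covered by the lemma the intermediate claim is simply false while the conclusion is still true: with $y=\toomgens_{b,2}$, $x=\toomgens_{b,1}$ and $\pi=a\,b\,a\,b$ one gets $x(y(\pi))=b\,a\,b\,a$, which $y$ does not fix ($y(b\,a\,b\,a)=b\,b\,a\,a$), yet $yxy(\pi)=b\,b\,a\,a=yx(\pi)$; similarly for $x=\toomgens_{c,1}$, $y=\toomgens_{b,1}$ with $b\neq c$, the word $x(y(\pi))$ starts with $c$ and is moved by $y$. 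The paper's proof avoids this trap: it checks honest commutation $xy=yx$ whenever the indices of the same book differ by at least two, or the books differ and not both indices are $1$ (then $yxy=y^2x=yx$), and disposes of the two remaining non-commuting cases --- same book with $x=\toomgens_{b,i-1}$, $y=\toomgens_{b,i}$, and different books with both indices $1$ --- by writing out the words explicitly and computing both sides. To repair your argument you would have to abandon the ``$y$ fixes $x(y(\pi))$'' reduction in those two cases and compare $y(x(y(\pi)))$ with $y(x(\pi))$ directly, which is essentially the paper's computation.
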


\begin{proof}
It is clear that each element of $X_{\vec n}$ is idempotent from the definition.  Let $x,y\in X_{\vec n}$.
Note that when $y=\toomgens_{b,i}$ and $x=\toomgens_{b,j}$ with
  $j>i+1$ or $j<i-1$, then $x$ and $y$ commute.  Indeed, $xy$ and $yx$ both have the effect of placing the
  $i^{th}$ and $j^{th}$ copies of $b$ immediately after the $(i-1)^{st}$ and $(j-1)^{st}$ copies of $b$, respectively
  (where this should be interpreted appropriately if $i$ or $j$ is $1$).
  Thus $yxy=yyx=yx$ since $y$ is idempotent.

  Suppose now that $j=i-1$ and let $w\in\mathcal B^L$ have content $\vec n$.   Assume first that $i>2$ and write $w=u_1\,b\,u_2\,b\,u_3\,b\,u_4$,
  where the leftmost $b$ is the $(i-2)^{nd}$ $b$ of $w$ and $b$ does not appear in $u_2,u_3$. Then
  \begin{multline*}
      yxy(u_1\,b\,u_2\,b\,u_3\,b\,u_4) = yx(u_1\,b\,u_2\,b\,b\,u_3\,u_4) = y(u_1\,b\,b\,u_2\,b\,u_3\,u_4)\\
      = u_1\,b^3\,u_2\,u_3\,u_4 = y(u_1\,b^2\,u_2\,u_3\,b\,u_4) = yx(u_1\,b\,u_2\,b\,u_3\,b\,u_4)\,.
  \end{multline*}
  If $i=2$ and $j=1$, and if $w=u_1\, b\, u_2\, b\, u_3$, where $u_1$ and $u_2$ do not contain any $b$s, then $yxy(w)=b^2\,u_1\,u_2\,u_3=yx(w)$.

  Next assume $x=\toomgens_{b,i}$ and $y=\toomgens_{b',j}$ with $b'\neq b$.  We claim that $xy=yx$
  (and hence $yxy=yx$) unless $i=1=j$.  For instance, if neither $i$ nor $j$ is $1$, then applying both operators
  in either order puts the $i^{th}$ copy of $b$ immediately after the $(i-1)^{st}$ copy and the $j^{th}$ copy of $b'$
  immediately after the  $(j-1)^{st}$ copy of $b'$ while preserving the relative order of all remaining books. The situation is similar when
  exactly one of $i,j$ is $1$:
  one book goes to the front and the other immediately after its predecessor of the same type. Trivially, if $i=1=j$ then
  $yxy$ and $yx$ both move the first copy of $b'$ to the front and the first copy of $b$ into the  second position.
\end{proof}

Note that Lemma~\ref{lemma.toom relation} implies that $X_{\vec n}$ generates a tree monoid.

\begin{theorem} \label{theorem.toom R trivial}
The monoid $\Monoid_{\vec n}$ generated by $X_{\vec n}$ is a tree monoid (with respect to an appropriate ordering
on $X_{\vec n}$) and hence $\RR$-trivial.
\end{theorem}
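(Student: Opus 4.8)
The plan is to realize $\Monoid_{\vec n}$ as a homomorphic image of a free tree monoid $\FHB(X_{\vec n})$ for a carefully chosen total order on $X_{\vec n}$, and then to deduce $\RR$-triviality from Corollary~\ref{corollary.fhb.r_trivial} together with the closure of $\RR$-triviality under quotients.

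First I would fix the order on $X_{\vec n}$. Order the books as $b_1 < \dots < b_m$ and declare $\toomgens_{b,j} < \toomgens_{b',j'}$ if $b < b'$, or if $b = b'$ and $j < j'$; that is, use the lexicographic order on the pairs $(b,j)$. The only feature of this choice that matters is that for each fixed book $b$ the associated generators occur in increasing index order, $\toomgens_{b,1} < \toomgens_{b,2} < \dots < \toomgens_{b,n_b}$.

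Next I would check that $\Monoid_{\vec n}$ satisfies all the defining relations of $\FHB(X_{\vec n})$ with respect to this order. By Lemma~\ref{lemma.toom relation} every element of $X_{\vec n}$ is idempotent, so the relations $x^2 = x$ hold in $\Monoid_{\vec n}$. For the braid-type relations I must verify that $yxy = yx$ whenever $x < y$ in $X_{\vec n}$; by Lemma~\ref{lemma.toom relation} this identity can fail only for the exceptional pairs $x = \toomgens_{b,i+1}$, $y = \toomgens_{b,i}$ with $1 \le i < n_b$, and for such a pair the chosen order gives $y = \toomgens_{b,i} < \toomgens_{b,i+1} = x$, so such a pair never arises with $x < y$. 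Hence, by the defining property of a monoid given by generators and relations, the identity map on $X_{\vec n}$ extends to a surjective homomorphism $\FHB(X_{\vec n}) \twoheadrightarrow \Monoid_{\vec n}$, exhibiting $\Monoid_{\vec n}$ as a quotient of $\FHB(X_{\vec n})$ and hence as a tree monoid in the sense of Section~\ref{section.free tree monoid}. Since $\FHB(X_{\vec n})$ is $\RR$-trivial by Corollary~\ref{corollary.fhb.r_trivial} and the class of $\RR$-trivial monoids is closed under homomorphic images (Section~\ref{section.semigroups}), $\Monoid_{\vec n}$ is $\RR$-trivial.

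There is no serious obstacle here: all of the combinatorial content has already been packaged into Lemma~\ref{lemma.toom relation}. The one point requiring a moment's care is the observation that the exceptional pairs in that lemma are exactly those of the form $(\toomgens_{b,i+1}, \toomgens_{b,i})$, i.e.\ (larger index, smaller index) within a single book, so that ordering each book's generators by increasing index keeps every exceptional pair off the list of tree-monoid relations $yxy = yx$ with $x < y$; any total order extending these within-book chains would work equally well.
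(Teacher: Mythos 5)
Your proposal is correct and is essentially the paper's own argument: the paper likewise orders each book's generators by increasing index (taking a topological sorting of exactly that partial order), invokes Lemma~\ref{lemma.toom relation} to see that the exceptional pairs never occur in the form $x<y$, and concludes via Corollary~\ref{corollary.fhb.r_trivial} that $\Monoid_{\vec n}$ is a tree monoid and hence $\RR$-trivial. Your choice of the lexicographic order on pairs $(b,j)$ is just one admissible linear extension, as you note.
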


\begin{proof}
By Lemma~\ref{lemma.toom relation} we can view $\Monoid_{X_{\vec n}}$ as a tree monoid by choosing a topological
sorting of the partial order $<_{X_{\vec n}}$ on $X_{\vec n}$ defined by
$\toomgens_{a,i} <_{X_{\vec{n}}} \toomgens_{b,j}$ if $a=b$ and $i<j$.
Corollary~\ref{corollary.fhb.r_trivial} then provides the $\RR$-triviality of $\Monoid_{X_{\vec n}}$.
\end{proof}

Now let $X_I=\{\toomgens_{b,i}\mid b\in \mathcal B, 1\leq k\leq L\}$ where the $\toomgens_{b,k}$ are the mappings corresponding
to the Toom-Tsetlin library with interlibrary loan from Section~\ref{subsection.toom interlibrary}.  (Here the subscript $I$ in $X_I$ stands for interlibrary.)

\begin{corollary}\label{c:treemonoidforinterval}
The monoid $\Monoid_{X_I}$ generated by $X_I$ is a tree monoid (with respect to an appropriate ordering
on $X_I$) and hence $\RR$-trivial.
\end{corollary}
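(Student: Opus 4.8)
The plan is to establish, for the interlibrary operators $\toomgens_{b,j}$ of Section~\ref{subsection.toom interlibrary}, the exact analogue of Lemma~\ref{lemma.toom relation}, and then to conclude via Corollary~\ref{corollary.fhb.r_trivial} precisely as in the proof of Theorem~\ref{theorem.toom R trivial}. First I would record that every generator $\toomgens_{b,j}\in X_I$ is idempotent: inspecting the nontrivial branches of~\eqref{rbrules}, once $\toomgens_{b,j}$ has acted the $j^{th}$ copy of $b$ (whether already present or freshly loaned in) sits immediately after the $(j-1)^{st}$ copy of $b$ (at the front if $j=1$), so a second application does nothing; and if $\toomgens_{b,j}$ acted as the identity the first time, it does so again. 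Then I would prove the relational claim: for $x,y\in X_I$ one has $yxy=yx$ \emph{unless} $x=\toomgens_{b,i+1}$ and $y=\toomgens_{b,i}$ for some $b\in\mathcal B$ and $1\le i<L$.

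For this relational claim I would run the same case analysis as in Lemma~\ref{lemma.toom relation}, with the extra bookkeeping forced by the loan branch, which may create a copy of a book and, to preserve length $L$, deletes the last letter. When $x=\toomgens_{b,p}$ and $y=\toomgens_{b,q}$ involve the same book $b$ with $|p-q|\ge 2$, the two operators commute: each one only rearranges (or inserts/deletes near) the stretch of the word lying between two consecutive copies of $b$, and for $|p-q|\ge 2$ these stretches are disjoint, whichever of the ``move'' or ``loan'' branches is taken; hence $yxy=yyx=yx$. The gap-one same-book pair $x=\toomgens_{b,i+1}$, $y=\toomgens_{b,i}$ is the genuine exception, while the opposite orientation $x=\toomgens_{b,i-1}$, $y=\toomgens_{b,i}$ — and hence every remaining same-book pair — satisfies $yxy=yx$ by a direct computation tracking the $(i-2)^{nd}$, $(i-1)^{st}$ and $i^{th}$ copies of $b$, mirroring the displayed identity in Lemma~\ref{lemma.toom relation} but now allowing those copies to be loaned in rather than present. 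Finally, when $x$ and $y$ involve distinct books $a\ne b$, the operators commute unless $i=j=1$; in the remaining case $x=\toomgens_{b,1}$, $y=\toomgens_{a,1}$ both act by moving their book to the front (loaning it in if absent), and one checks directly that $yxy$ and $yx$ both place $a$ in front and $b$ in second position, so $yxy=yx$.

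With this lemma in place I would conclude as in Theorem~\ref{theorem.toom R trivial}. Fix any total order on $\mathcal B$ and order $X_I$ by $\toomgens_{a,i}<\toomgens_{b,j}$ iff $a<b$, or $a=b$ and $i<j$. The defining relations of $\FHB(X_I)$ are $x^2=x$ for $x\in X_I$ together with $wvw=wv$ whenever $v<w$ in $X_I$; the former hold by idempotency, and the latter are instances of the identity $yxy=yx$ with $y=w$ and $x=v$, which is never the forbidden case, since that would require the \emph{smaller} generator to occupy the middle ``$x$'' slot as $\toomgens_{b,i}$ against a larger $\toomgens_{b,i+1}$, whereas $v<w$ puts the smaller generator $v$ exactly in the ``$x$'' slot. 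Hence $X_I$ generates a tree monoid (a quotient of $\FHB(X_I)$ via an appropriate ordering), and since $\FHB(X_I)$ is $\RR$-trivial by Corollary~\ref{corollary.fhb.r_trivial} and $\RR$-triviality is inherited by homomorphic images, $\Monoid_{X_I}$ is $\RR$-trivial.

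The main obstacle is the second paragraph: redoing the same-book and cross-book case analysis of Lemma~\ref{lemma.toom relation} with the length-preserving loan-and-delete rule switched on. The delicate points are (i) the edge cases in which an operator acts trivially because the relevant copy is missing or sits in the last position, and (ii) checking that a loaned-in letter and the deleted final letter land in the same place regardless of the order in which two operators are applied. These are routine but require care, and they are the only places where the argument genuinely differs from the proof of Lemma~\ref{lemma.toom relation}. Note also that, for the conclusion, one only ever uses the non-exceptional instances of $yxy=yx$, so the precise delineation of the exceptional pair is not essential — what matters is that all relations of the shape $wvw=wv$ with $v<w$ survive.
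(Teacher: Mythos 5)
Your outline is workable, but it takes a genuinely different and considerably more laborious route than the paper. The paper's proof of Corollary~\ref{c:treemonoidforinterval} is a three-line quotient argument: set $\vec n=(L,\dots,L)\in\mathbb N^m$, let $\Omega\subseteq\mathcal B^{mL}$ be the words of content $\vec n$, and observe that the truncation map $\pi_L(w_1\cdots w_{mL})=w_1\cdots w_L$ is surjective onto $\mathcal B^L$ and intertwines the two actions, $\pi_L\circ\toomgens_{b,k}=\toomgens_{b,k}\circ\pi_L$ (the loan branch of \eqref{rbrules} is precisely what truncation does to the move branch of \eqref{bulkrules} when the relevant copy of $b$ lives beyond position $L$). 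Hence $\Monoid_{X_I}$ is a homomorphic image of $\Monoid_{\vec n}$, and being a tree monoid and being $\RR$-trivial both pass to quotients, so everything follows from Theorem~\ref{theorem.toom R trivial} with no new case analysis. By contrast, you propose to reprove the analogue of Lemma~\ref{lemma.toom relation} directly for the operators of \eqref{rbrules}, and you candidly defer exactly the part where all the work lies: the edge cases where a loan inserts a letter, the last letter is deleted, or an operator acts trivially because a copy is missing or sits in position $L$. Those verifications can be made to go through (every relation you need is true, since it is the image under $\pi_L$ of the corresponding relation in $\Monoid_{\vec n}$), and your ordering of $X_I$ and the reduction to Corollary~\ref{corollary.fhb.r_trivial} are exactly right; but as written the proposal leaves its hardest step as a sketch. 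If you do want a self-contained argument, note that verifying the single intertwining identity \eqref{e:asquotient} is strictly easier than verifying the full family of relations $yxy=yx$ case by case, and it delivers all of them at once.
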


\begin{proof}
Let $\vec n=(L,L,\ldots, L)\in \mathbb N^m$.  Let $\Omega\subseteq \mathcal B^{mL}$ consist of the words of content $\vec n$ (i.e., those
words with exactly $L$ occurrences of each letter).  Define a surjective mapping $\pi_L\colon \Omega\to \mathcal B^L$ by putting
$\pi_L(w_1\cdots w_{mL})=w_1\cdots w_L$.  If $w\in \Omega$, $b\in \mathcal B$ and $1\leq k\leq L$, then it is immediate from the definitions that
\begin{equation}\label{e:asquotient}
	\pi_L(\toomgens_{b,k}(w))=\toomgens_{b,k}(\pi_L(w))\;,
\end{equation}
where $\toomgens_{b,k}$ on the left hand side of \eqref{e:asquotient} is seen as an element of $X_{\vec n}$ and $\toomgens_{b,k}$
on the right hand side is seen as an element of $X_I$.  Because $\pi_L$ is surjective, it follows that $\Monoid_{X_I}$ is a quotient of
$\Monoid_{\vec n}$ and hence is a tree monoid.
\end{proof}

A picture of the right Cayley graph for the Toom--Tsetlin model with interlibrary loan for $L=2$ is
shown in Figure~\ref{figure.toom cayley}.
\begin{figure}
\begin{center}
  \includegraphics[width=\textwidth]{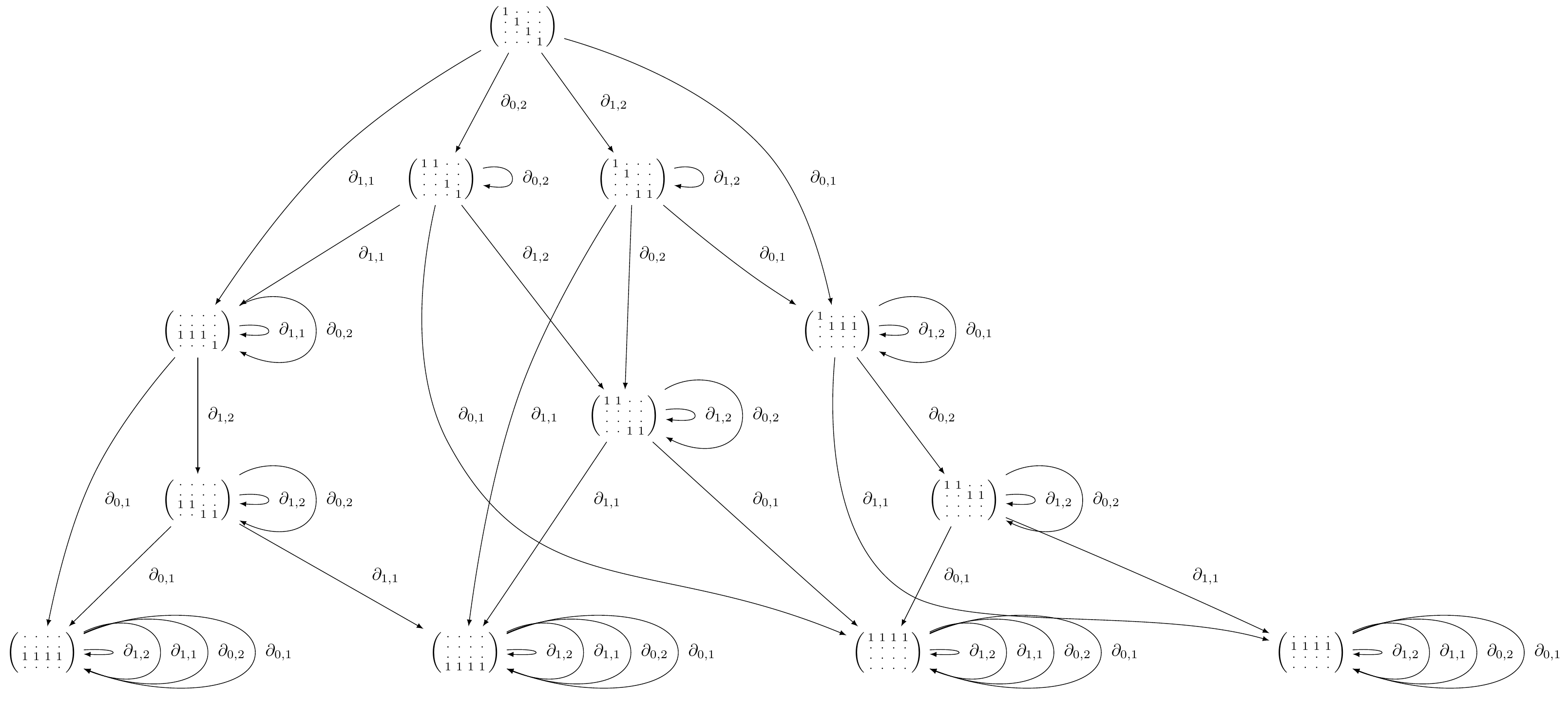}
\end{center}
\caption{The right Cayley graph for the Toom--Tsetlin model with
  interlibrary loan for $L=2$. Each element of the monoid is
  represented by the matrix of its action on the four bookshelves
  $(0, 0), (0, 1), (1, 0), (1, 1)$.
  \label{figure.toom cayley}}
\end{figure}

Theorems~\ref{t.diagonal} and~\ref{t.diagonalv2} are now immediate consequences of
Theorem~\ref{theorem.toom R trivial}, Corollary~\ref{c:treemonoidforinterval}, and Proposition~\ref{p:todiagonalizability}.

%%%%%%%%%%%%%%%%%%%%%%%%%%%%%%%%%%%%%%%%%%%%%%%%%%%%%%%%%%%%
\subsection{Proof of Theorems~\ref{theorem.toom.interval.eigenvalues} and~\ref{theorem.toom.window.eigenvalues}}
\label{subsection.Toom proofs}

Finally we turn to the proof of Theorems~\ref{theorem.toom.interval.eigenvalues} and~\ref{theorem.toom.window.eigenvalues}.
We begin with a lemma generalizing a standard fact about usual derangements. For a vector $\vec{n}=(n_1,\ldots, n_m)$
with non-negative integer entries we denote by
\[
	\vec{n}! = \binom{\sum_{i=1}^m n_i}{n_1,\ldots,n_m} = \frac{\big(\sum_{i=1}^m n_i\big)!}{n_1!\cdots n_m!}
\]
the multinomial coefficient. When $\vec{n}$ contains negative entries, we set $\vec{n}!=0$.

\begin{lemma}\label{lemma.count.derangement}
Let $\vec{n}=(n_1,\ldots, n_m) \in \mathbb N^m$. We order $m$-tuples $\vec R=(R_1,\ldots, R_m)$ of subsets $R_i\subseteq [n_i]$
by the componentwise ordering, i.e., we write $(R_1,\ldots, R_m)\subseteq (S_1,\ldots, S_m)$ if $R_i\subseteq S_i$
for $1\leq i\leq m$. With this notation we have:
\begin{equation*}
    \vec{n}! =
    \sum_{\vec{S}\subseteq [n_1]\times\cdots \times [n_m]} d_{(n_1-|S_1|,  \ldots, n_m-|S_m|)}
\end{equation*}
or equivalently,
\begin{equation*}
	d_{(n_1,  \dots, n_m)} = \sum_{\vec{S}\subseteq [n_1]\times \cdots \times [n_m]}(-1)^{|S_1|+\cdots +|S_m|}
	(n_1-|S_1|,\ldots,n_m-|S_m|)!\;.
\end{equation*}
\end{lemma}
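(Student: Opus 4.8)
The plan is to give a bijective inclusion--exclusion proof, with the Laguerre/Even--Gillis formula \eqref{defder} playing no role. First I would fix $\vec n=(n_1,\ldots,n_m)\in\N^m$ and let $w_0$ be the reference word in~\eqref{equation.derangement.def}. The positions of $w_0$ carrying the letter $b_i$ form one contiguous block of length $n_i$; I enumerate them left to right as the ``$i$-slots'' $1,\ldots,n_i$. For any word $\pi$ of content $\vec n$ I would define $S_i(\pi)\subseteq[n_i]$ to be the set of $i$-slots $t$ such that $\pi$ carries $b_i$ in its $t$-th $i$-slot (the ``fixed'' $i$-slots), and put $\vec S(\pi)=(S_1(\pi),\ldots,S_m(\pi))$. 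The map $\pi\mapsto\vec S(\pi)$ partitions the set of all $\vec n!$ words of content $\vec n$, so the first identity follows once one proves the following \textbf{Claim}: for each $\vec S=(S_1,\ldots,S_m)$ with $S_i\subseteq[n_i]$, the number of words $\pi$ of content $\vec n$ with $\vec S(\pi)=\vec S$ equals $d_{(n_1-|S_1|,\ldots,n_m-|S_m|)}$.

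To prove the Claim I would argue as follows: a word $\pi$ with $\vec S(\pi)\supseteq\vec S$ is exactly one in which every $i$-slot of $S_i$ holds $b_i$; deleting these forced positions leaves precisely the non-$S_i$ $i$-slots, and --- because the $i$-slots occupy consecutive blocks of $w_0$ in the order $i=1,2,\ldots,m$ --- reading the surviving positions left to right they again form consecutive blocks of lengths $n_1-|S_1|,\ldots,n_m-|S_m|$, so they constitute a genuine smaller instance of the same kind, against the reference word of content $(n_1-|S_1|,\ldots,n_m-|S_m|)$. Under this correspondence, the restriction of $\pi$ to the surviving positions is an arbitrary word of that content, and the condition $\vec S(\pi)=\vec S$ exactly (rather than merely $\supseteq$) translates into that restricted word having no fixed point, i.e.\ being a derangement. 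This proves the Claim, and summing over all $\vec S$ yields $\vec n!=\sum_{\vec S}d_{(n_1-|S_1|,\ldots,n_m-|S_m|)}$.

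For the second identity I would use the same correspondence without the ``exactly'' requirement: for each $\vec T=(T_1,\ldots,T_m)$ the number of $\pi$ with $\vec S(\pi)\supseteq\vec T$ is the multinomial coefficient $(n_1-|T_1|,\ldots,n_m-|T_m|)!$ (which is $0$ when an entry is negative, consistent with the stated convention, though that case does not arise here). Thus $(n_1-|T_1|,\ldots,n_m-|T_m|)!=\sum_{\vec S\supseteq\vec T}\#\{\pi:\vec S(\pi)=\vec S\}$. Applying Möbius inversion in the poset $2^{[n_1]}\times\cdots\times 2^{[n_m]}$ --- a product of Boolean lattices, whose Möbius function on an interval $[\vec T,\vec S]$ is $(-1)^{|\vec S|-|\vec T|}$ with $|\vec S|=|S_1|+\cdots+|S_m|$ --- and specializing at $\vec T=\emptyset$, together with the Claim at $\vec S=\emptyset$ (which gives $\#\{\pi:\vec S(\pi)=\emptyset\}=d_{\vec n}$), yields $d_{\vec n}=\sum_{\vec S}(-1)^{|S_1|+\cdots+|S_m|}(n_1-|S_1|,\ldots,n_m-|S_m|)!$. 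Equivalently, one may simply remark that the two displayed formulas are Möbius transforms of one another, so either implies the other.

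The only delicate point --- the main obstacle --- is the bookkeeping in the Claim: one must verify carefully that deleting a prescribed set of fixed $i$-slots really produces a smaller instance whose reference word has content $(n_1-|S_1|,\ldots,n_m-|S_m|)$ (this needs the block structure of $w_0$, namely that the $i$-slots are consecutive and appear in increasing order of $i$), and that imposing $\vec S(\pi)=\vec S$ rather than $\vec S(\pi)\supseteq\vec T$ corresponds exactly to forbidding fixed points in the reduced instance. Both are immediate once the notation is in place, so beyond this there is no real difficulty.
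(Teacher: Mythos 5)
Your proof is correct and follows essentially the same route as the paper: partition the words of content $\vec n$ by their set of fixed positions, note that the class indexed by $\vec S$ has exactly $d_{(n_1-|S_1|,\ldots,n_m-|S_m|)}$ elements, and deduce the second identity by M\"obius inversion on the product of Boolean lattices. Your slot-based bookkeeping (recording which positions of the reference word retain their letter, rather than which copies of each letter land in their block) is the right way to read the paper's terse classification claim --- it is the parametrization under which the classwise count really is the derangement number --- so the extra care you flag as the ``only delicate point'' is well placed.
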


\begin{proof}
The first equation is a simple generalization to words of the corresponding statement for permutation derangements, namely that
the total number of permutations can be written as the number of permutations with a given fixed point
set (and the remainder of the permutation is a derangement).

More precisely, if $\vec S\subseteq [n_1]\times\cdots\times [n_m]$, then there are exactly  $d_{(n_1-|S_1|,  \ldots, n_m-|S_m|)}$ words $w$
of content $\vec n$ such that the $j^{th}$ copy of $i$ occurs in one of the positions occupied by $i$ in \eqref{equation.derangement.def}
if and only if $j\in S_i$.

The second equation follows from the first via M\"obius inversion using that the M\"obius function of a product is
the product of the M\"obius functions and that, for the Boolean lattice, $\mu(A,B)=(-1)^{|B|-|A|}$.
\end{proof}

\begin{proof}[Proof of Theorem~\ref{theorem.toom.interval.eigenvalues}]
  In Lemma~\ref{lemma.toom relation}, we showed that the generators
  of the Toom--Tsetlin model satisfy the relations of the \fhb. Since the \fhb{}
  is $\RR$-trivial by Corollary~\ref{corollary.fhb.r_trivial}, we can apply
  the $\RR$-trivial monoid technology to
  recover eigenvalues. The advantage of doing this is that by
  Proposition~\ref{proposition.fhb.l_classes} we already know that the lattice of idempotent-generated
  left ideals is the full Boolean lattice (so the M\"obius inversion is
  easy), and we have a natural choice of idempotent representatives
  (decreasing products of generators).

  The strategy of the proof is to show that both the multiplicities of the irreducible characters and the derangement
  numbers are obtained by inclusion-exclusion from the same statistic (multinomial numbers), so that
  they coincide.

  We first compute the character (i.e., number of fixed points) of the
  idempotent representatives acting on the state set of the Toom--Tsetlin model from
  Section~\ref{subsection.toom multiperm}.

  Consider a subset $R$ of the generators and, for $1\le i\le m$, set $R_i=\{j\in [n_i]\mid \partial_{b_i,j}\in R\}$.
  Set $r_i=|R_i|$, $\vec{R}=(R_i)_{1\le i\le m}$. Note that $R$ and $\vec{R}$ completely determine each other
  and that $R\subseteq S$ if and only if $\vec{R}\subseteq \vec{S}$, where we write $\vec{R}\subseteq \vec{S}$
  if and only if $R_i\subseteq S_i$ for $i=1,\ldots, m$. Hence we can identify $\Lambda(\FHB(X_{\vec n}))$ with the set
  of such $\vec{R}$ with the dual to this ordering.

  As the idempotent associated to $R$ (or equivalently, $\vec{R}$), we take
  \begin{equation}
  \label{equation.idempotent.toom}
    e_{\vec{R}}= \prod_{i=1}^m\prod_{j\in R_i} \toomgens_{b_i,j}\,,
  \end{equation}
  where the inside products are taken decreasingly along $R_i$ and the outer product is taken increasingly
  along $i=1,\ldots, m$ (reading products from left to right). For example, if $m=2$, $R_1=\{1,3\}$, and
  $R_2=\{2,3,5\}$, we obtain the idempotent
  \begin{equation}
  \label{equation.example.idempotent}
  e_{\vec R}=\toomgens_{1,3}\toomgens_{1,1}\toomgens_{2,5}\toomgens_{2,3}\toomgens_{2,2}.
  \end{equation}

  \noindent
  \textbf{Claim}: The number of fixed points of $e_{\vec{R}}$ is given by the multinomial coefficient
  \begin{equation}
  \label{equation.multinomial.fixedpoint}
      |e_{\vec R}\Omega|=
      (n_1-|R_1|,\ldots,n_m-|R_m|)!\;.
  \end{equation}

  \begin{proof}[Proof of Claim]
  First we sketch the idea of the proof. For a product of
  generators in this order, after some operator $\toomgens_{b,j}$ moves
  the $j^{th}$ $b$ right after the $(j-1)^{st}$ $b$, the succeeding
  generators will never separate them. Hence, if $\{j,\dots,j+k\}\subseteq
  R_i$, then in the result the $(j-1)^{th}$ to $(j+k)^{th}$ $b_i$s are
  consecutive and, if $j,\dots,j+k$ is of maximal length, we say that
  those $b_i$s form a block. Note that there may be two consecutive
  blocks of $b$s. One also has to be a bit careful when
  $j=1$.  For the intuition assume that there
  is a fake $0^{th}$ $b_1$ at the beginning of the word, and a fake $0^{th}$
  $b_i$ just after the first block of $b_{i-1}$s. After the application of the
  full idempotent, there are, besides the first $m$ starting blocks,
  $n_i-r_i$ blocks of $b_i$s for each $1\le i\le m$. Thus,
  producing all the elements in the image set of $e_{\vec{R}}$ amounts to
  choosing among all possible ways to intertwine those blocks of $b$s; there are
  $(n_1-r_1,\ldots,n_m-r_i)!$ such choices.

  Let us now formalize this argument by simultaneous induction on $|R|=r_1+\cdots +r_m$ over all
  possible contents $(n_1,\ldots, n_m)$. By a slight abuse
  we use the same notation for the operators even if we change the content. If $R=\emptyset$, then
  $e_{\vec R}$ is the identity and so the fixed point set is $\Omega$,
  whose cardinality is $\vec{n}!$ as desired.

  Take now $R$ with  $|R|\geq 1$, and assume that the claim holds for all subsets of cardinality
  strictly less than $|R|$. Take $k$ minimal such that $R_k\neq \emptyset$ and let $j$ be the largest element of $R_k$.
  Define $R'$ such that $R_i=R'_i$ whenever $i\neq k$ and $R'_k=R_k\setminus \{j\}$.  Then
  $e_{\vec R}=\toomgens_{b_k,j}e_{\vec {R'}}$.
  Let $\Omega'$ be the set of all words over $\mathcal B$ with content $(n_1,n_2,\ldots, n_{k-1},n_k-1,n_{k+1},\ldots, n_m)$.
  Let $\pi\colon \Omega\twoheadrightarrow \Omega'$
  denote the mapping which erases the $j^{th}$ copy of $b_k$ from a word.  We claim that $\pi$ restricts to a bijection
  $\pi\colon e_{\vec R}\Omega\to e_{\vec R'}\Omega'$. This will complete the proof by applying induction to $e_{\vec R'}$
  because $|R'_k|=|R_k|-1$ and hence $(n_k-1)-|R'_k|=n_k-|R_k|$.

  First observe that if $b\neq b_k$, then
  $\pi\toomgens_{b,i}=\toomgens_{b,i}\pi$ because copies of $b$ can always move past copies of $b_k$. Also, if $i<j$,
  then $\pi\toomgens_{b_k,i}=\toomgens_{b_k,i}\pi$ because $\toomgens_{b_k,i}$ only changes the prefix of a word
  preceding the $j^{th}$ copy of $b_k$. Finally, $\pi\toomgens_{b_k,j}=\pi$.  Thus we have
  $\pi e_{\vec{R}} = \pi e_{\vec{R'}} = e_{\vec{R'}}\pi$
  and hence $\pi (e_{\vec{R}}(w)) = e_{\vec{R'}}\pi(w)$ for all
  $w\in \Omega$.  Therefore, $\pi(e_{\vec{R}}\Omega)\subseteq e_{\vec{R'}}\Omega'$. To complete the proof it is
  convenient to note that $e_{\vec R}\Omega\subseteq \toomgens_{b_k,j}\Omega$.

  There are two cases.  Suppose first that $j>1$. Then the fixed points of $\toomgens_{b_k,j}$ are those words
  where the $j^{th}$ copy of $b_k$ is immediately after the $(j-1)^{st}$ copy of $b_k$. So define
  $\rho\colon \Omega'\to \Omega$ to be the map inserting a $b_k$ immediately after the $(j-1)^{st}$ copy
  of $b_k$.  Trivially, $\pi\rho=1_{\Omega'}$ and if $w\in \toomgens_{b_k,j}\Omega$, then $\rho\pi(w)=w$.
  Thus to show that $\pi\colon e_{\vec{R}}\Omega\to e_{\vec{R'}}\Omega'$ is a bijection, it remains to show
  that $\rho(e_{\vec {R'}}\Omega')\subseteq e_{\vec{R}}\Omega$. Recalling that
  $\pi e_{\vec{R}}\rho  = e_{\vec{R'}}\pi\rho =e_{\vec{R'}}$, it follows that if $u\in e_{\vec{R'}}\Omega'$,
  then $\pi (e_{\vec{R}}(\rho(u)))=u=\pi(\rho(u))$. Thus $e_{\vec{R}}(\rho(u)))$ can differ from $\rho(u)$ only in
  the position of the $j^{th}$ copy of $b_k$.  But in both of these words the $j^{th}$ copy of $b_k$ is immediately
  after the $(j-1)^{st}$ copy of $b_k$.  Thus $\rho(u)=e_{\vec{R}}(\rho(u))$ and so
  $\rho\colon e_{\vec{R'}}\Omega'\to e_{\vec{R}}\Omega$ is inverse to  $\pi\colon e_{\vec R}\Omega\to e_{\vec R'}\Omega$.

  For the case $j=1$, observe that the fixed point set of $\toomgens_{b_k,1}$ consists of those words beginning
  with $b_k$.  So this time, let $\rho\colon \Omega'\to \Omega$ be the mapping inserting $b_k$ at the beginning of
  a word. Then again $\pi\rho=1_{\Omega'}$ and if $w\in \toomgens_{b_k,1}\Omega$, then $\rho\pi(w)=w$. As before,
  it just remains to show that $\rho(e_{\vec {R'}}\Omega')\subseteq e_{\vec{R}}\Omega$. The same argument as the
  previous case shows that if $u\in e_{\vec{R'}}\Omega'$, then $\pi (e_{\vec{R}}(\rho(u)))=u=\pi(\rho(u))$. Thus
  $e_{\vec{R}}(\rho(u)))$ can differ from $\rho(u)$ only in the position of the $1^{st}$ copy of $b_k$.  But  both
  of these words have the $1^{st}$ copy of $b_k$ as their first symbol.  Thus $\rho(u)=e_{\vec{R}}(\rho(u))$,
  completing the proof.
  \end{proof}

Applying Theorem~\ref{theorem.eigenvalues} and recalling the isomorphism between $P(X_{\vec n})$ and
$\Lambda(\FHB(X_{\vec n}))$ ordered by reverse inclusion, there is an eigenvalue $\lambda_R$ corresponding
to each subset $R\subseteq X_{\vec n}$ given by $\lambda_R=\sum_{\toomgens_{b_i,j}\in R} x_{b_i,j}=
\sum_{i=1}^m x_{b_i,R_i}$.  Let us continue to put $r_i=|R_i|$. The multiplicity of this eigenvalue according to
Theorem~\ref{theorem.eigenvalues} is
\begin{align*}
	m_{R}&= \sum_{R\subseteq U} (-1)^{|U|-|R|}|e_{\vec U}\Omega|\\
	&= \sum_{\vec{R}\subseteq \vec{U}}(-1)^{\sum_{i=1}^m|U_i|-r_i} (n_1-|U_1|,\ldots,n_m-|U_m|)!\\
	&=\sum_{\vec{S}\subseteq ([n_1]\setminus R_1,\ldots, [n_m]\setminus R_m)}(-1)^{\sum_{i=1}^m |S_i|}
	(n_1-r_1-|S_1|,\ldots,n_m-r_m-|S_m|)!\\
	&=d_{(n_1-|R_1|,\ldots, n_m-|R_m|)}\;,
\end{align*}
where the penultimate equality reindexes the sum by setting $S_i= U_i\setminus R_i$ and the final equality
is from Lemma~\ref{lemma.count.derangement}.
\end{proof}

\begin{proof}[Proof of Theorem~\ref{theorem.toom.window.eigenvalues}]
  By Corollary~\ref{c:treemonoidforinterval} we know that the monoid $M_{X_I}$ for the interlibrary loan Toom model
  is a tree monoid and $\RR$-trivial. Hence, as before, the lattice of idempotent-generated left ideals is the
  full Boolean lattice $P(X_I)$ by Proposition~\ref{proposition.fhb.l_classes} and we can apply  Theorem~\ref{theorem.eigenvalues}.
  We compute the number of fixed points of the idempotents.  We retain the notation from the proof of Theorem~\ref{theorem.toom.interval.eigenvalues}.

%%%%%%%%%%%%%%%%%%%%%%%%%%%%%%%%%%%%%%%%%%%%%%%%%%Anne's statement%%%%%%%%%%%%%%%%%%%%%%%%%%%%%%%%%%%%%

  \begin{lemma}
  \label{lemma.fixed points interlibrary}
  If $\vec{R}\subsetneq [L]^m$ and $\sum_{i=1}^m \min(\overline{R}_i)<L+m$, then the number of fixed points of
  $e_{\vec{R}}$ is
  \begin{equation}
  \label{equation.fixed points interlibrary}
  |e_{\vec R}\Omega|= \sum_{\vec n\in I(\vec R)}
   	(\vec{n}-\vec{f}(\vec{R},\vec{n}))!\;,
  \end{equation}
  where $I(\vec R)$ consists of those $\vec n\in \mathbb N^m$ such that $\|\vec n\|_1 =L$,
  $n_i \neq 0$ if $1\in R_i$, and there is at most one $i\in \{1,\ldots, m\}$ with $n_i+1\in R_i$.
  Furthermore, $\vec{f}(\vec{R},\vec{n})$ is the $m$-dimensional vector with
  \[
  f_i(\vec{R},\vec{n}) = |\{r\in R_i \mid n_i\ge r-1\}|.
  \]
  Otherwise, $|e_{\vec R}\Omega|= 1$.
  \end{lemma}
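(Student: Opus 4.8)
I would prove Lemma~\ref{lemma.fixed points interlibrary} by the same inductive strategy used in the proof of Theorem~\ref{theorem.toom.interval.eigenvalues}, but now I must also keep track of \emph{which contents} a word of length $L$ can have after the idempotent has acted. The key structural observation is that $e_{\vec R}$ is built (in the prescribed decreasing-within-$b_i$, increasing-in-$i$ order) out of the generators $\toomgens_{b_i,j}$ with $j\in R_i$, and that once $\toomgens_{b_i,j}$ has placed the $j^{th}$ copy of $b_i$ immediately after the $(j-1)^{st}$ copy, no later generator separates them (the loan operators $\toomgens_{b,k}$ with $k>n_b(\pi)+1$ act trivially, so the only active ones are the ``bulk'' moves governing consecutive copies). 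So, exactly as in the first variant, the copies of $b_i$ whose indices form a maximal run $\{j,\dots,j+k\}\subseteq R_i$ get welded into a single block, with an extra welding to a fictitious $0^{th}$ copy when $1\in R_i$.

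First I would reduce, via the same erase/re-insert bijection $\pi,\rho$ as in the Claim of Theorem~\ref{theorem.toom.interval.eigenvalues}, to the case where we peel off the largest index $j\in R_k$ for the smallest active $k$. The subtlety beyond the first variant is that the ambient state set $\Omega=\mathcal B^L$ is \emph{not} of fixed content, so erasing the $j^{th}$ copy of $b_k$ only makes sense on words that actually have $\ge j$ copies of $b_k$; I would stratify $\Omega$ by content $\vec n$ and observe that $e_{\vec R}$ can only produce words whose content lies in $I(\vec R)$: the condition $n_i\ne 0$ when $1\in R_i$ holds because $\toomgens_{b_i,1}$ forces a $b_i$ into the word (from storage if none is present), and the ``at most one $i$ with $n_i+1\in R_i$'' condition reflects that at most one interlibrary-loan insertion can occur per letter in the relevant position and that the total length is fixed at $L$, so we cannot insert a fresh copy for two different letters simultaneously in the tail. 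On each content stratum the argument of the first variant applies verbatim: the fibre of $e_{\vec R}$ over content $\vec n$ has size $(\vec n - \vec f(\vec R,\vec n))!$, where $f_i(\vec R,\vec n)$ counts the generators $\toomgens_{b_i,r}$ with $r\in R_i$ that actually performed a welding on a word of content $\vec n$, i.e. those with $n_i\ge r-1$ (for $r\ge 2$ one needs $n_i\ge r-1$ present copies, and for $r=1$ this is automatic). Summing over $\vec n\in I(\vec R)$ gives \eqref{equation.fixed points interlibrary}.

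Then I would handle the degenerate case. When $\vec R$ fails $\sum_i\min(\overline R_i)<L+m$ — equivalently, when the forced blocks and welded blocks already exhaust the capacity $L$ — the image of $e_{\vec R}$ is a single word: every letter's position is completely determined by the welding constraints, with no remaining freedom to intertwine blocks, so $|e_{\vec R}\Omega|=1$. (This includes the top case $\vec R=[L]^m$, treated separately in Theorem~\ref{theorem.toom.window.eigenvalues}.) I would make the bookkeeping precise by noting $\min(\overline R_i)$ is one more than the length of the initial run $\{1,2,\dots\}\subseteq R_i$, so $\sum_i(\min(\overline R_i)-1)$ counts the letters rigidly pinned to the front blocks, and the remaining $L-\sum_i(\min(\overline R_i)-1)$ ``free'' slots, together with the $\sum_i(|R_i|-(\min(\overline R_i)-1))$ interior weldings, are exactly what the multinomial in \eqref{equation.fixed points interlibrary} counts; it collapses to $1$ precisely when that free count hits zero.

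\textbf{Main obstacle.} The genuinely delicate point is characterizing $I(\vec R)$ correctly and showing the erase/re-insert bijection is well-defined on each content stratum when interlibrary insertions are in play: one must check that peeling off $\toomgens_{b_k,j}$ commutes with erasing the $j^{th}$ copy of $b_k$ \emph{even when that copy was itself freshly inserted from storage}, and that the resulting smaller idempotent $e_{\vec R'}$ lives on the correct smaller state set $\mathcal B^{L-1}$ or $\mathcal B^L$ depending on whether an insertion occurred — i.e. the induction is on $|R|$ but the ``length'' parameter is not literally constant along the recursion. I expect this to require a careful case split ($j>1$ with $n_k\ge j$; $j>1$ with $n_k=j-1$, the insertion case; $j=1$) mirroring the three-way split in \eqref{rbrules}, but each case is routine once the correct statement of $I(\vec R)$ and $\vec f(\vec R,\vec n)$ is in hand.
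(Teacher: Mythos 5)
Your overall skeleton matches the paper's: stratify $\Omega=\mathcal B^L$ by content, reduce to the fixed-content count $(\vec n-\vec f)!$ from the first variant where possible, treat the insertion-active strata specially, and dispose of the degenerate case by noting the forced front blocks. But there is a genuine gap at the heart of the argument: your claim that ``on each content stratum the argument of the first variant applies verbatim'' fails precisely on the strata that make this lemma different from Theorem~\ref{theorem.toom.interval.eigenvalues}, namely those $\vec n$ with $n_i+1\in R_i$ for some $i$. On such a stratum $\Omega_{\vec n}$ is \emph{not} invariant under the operators appearing in $e_{\vec R}$ (the operator $\toomgens_{b_i,n_i+1}$ inserts a copy of $b_i$ and pushes the last letter off the shelf, changing the content), so one cannot run the erase/re-insert bijection of the first variant stratum by stratum; one has to characterize the fixed points of $e_{\vec R}$ of that content from scratch. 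The paper does exactly this: it factors $e_{\vec R}=\alpha\,\toomgens_{b_i,n_i+1}\,\beta$ (with $i$ chosen maximal), shows that a fixed point $w$ of content $\vec n$ must end in $b_i$, that the content never changes and the final $b_i$ never moves throughout the computation of $e_{\vec R}(w)$ (which in particular forces ``at most one $i$ with $n_i+1\in R_i$''), and that $w=ub_i$ is fixed if and only if $u\in\mathcal B^{L-1}$ is fixed by the idempotent with $\toomgens_{b_i,n_i+1}$ deleted — an induction on $L$, not on $|R|$. None of this is routine: ruling out the case where the letter pushed off the end is some $b_j$ with $j<i$ lying to the left of all the $b_i$'s requires a counting argument that derives a contradiction from the standing hypothesis $\sum_k\min(\overline{R}_k)<L+m$.

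That last point is the clearest symptom of the gap: in your proposal the hypothesis $\sum_i\min(\overline{R}_i)<L+m$ is used only to delimit the degenerate (one-fixed-point) case, whereas in the paper it is indispensable \emph{inside} the non-degenerate inductive step. Without it the claimed description of the fixed points on insertion-active strata (and hence the definition of $I(\vec R)$ and the value of $\vec f$, which on those strata includes the extra unit contributed by $r=n_i+1$) is unjustified. Your ``main obstacle'' paragraph correctly identifies where the difficulty sits, but declaring the case split ``routine once the correct statement is in hand'' understates it: that case analysis, including the content-preservation claim, the uniqueness of the insertion-active letter, and the use of the hypothesis to kill the $j<i$ configuration, is the bulk of the paper's proof and is what your write-up leaves unproved.
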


\begin{proof}
If $R_i=[L]$, then there is a unique fixed point of $e_{\vec R}$. Note that for a word to be a fixed point of $e_{\vec R}$, the letter
$b_1$ needs to be in positions $1$ up to $\min(\overline{R}_1)-1$, the letter $b_2$ in positions $\min(\overline{R}_1)$ to $\min(\overline{R}_1)
+\min(\overline{R}_2)-2$, etc. Hence if $\sum_{i=1}^m (\min(\overline{R}_i)-1)\ge L$, there is certainly only one fixed point.

So from now on we assume $R_i\subsetneq [L]$ for all $1\leq i\leq m$ and $\sum_{i=1}^m \min(\overline{R}_i)<L+m$.

Let $\Omega=\mathcal B^L$ and partition $\Omega$ by content:
\[
  	\Omega = \bigcup_{\{\vec{n}\in \mathbb N^m\mid  \|\vec{n}\|_1=L\}} \Omega_{\vec{n}}\;,
\]
where $\Omega_{\vec{n}} \subseteq \Omega$ is the subset of words of content $\vec{n}$.

\noindent
\textbf{Claim}: Let $\vec n=(n_1,\ldots, n_m)$ be in $\mathbb N^m$ with $\|\vec n\|_1=L$. Then
\begin{equation}\label{e:fixedpointsize}
|e_{\vec R}\Omega\cap \Omega_{\vec n}|=\begin{cases}
(\vec n-\vec f(\vec R,\vec n))!, & \text{if}\ \vec n\in I(\vec R),\\
0, & \text{else.}
\end{cases}
\end{equation}
We proceed by induction on $L$ where the case $L=0$ is trivial. Assume $|L|\geq 1$.
We consider two cases.  Suppose first that $n_i+1\notin R_i$ for  $1\leq i\leq m$.  Then $\Omega_{\vec n}$ is invariant under
$\toomgens_{b_i,r}$, for all $1\leq i\leq m$ and $r\in R_i$, and thus under  $e_{\vec R}$.  Also if $r>n_i+1$, then $\toomgens_{b_i,r}$
fixes $\Omega_{\vec n}$.   Therefore, if we define $\vec{Q}$ by $Q_i=\{r\in R_i\mid n_i\geq r\}$, then the action of $e_{\vec R}$ on
$\Omega_{\vec n}$ agrees with that of $e_{\vec Q}$ on $\Omega_{\vec n}$.   But the latter  is exactly the same as the action of
$e_{\vec Q}$ on $\Omega_{\vec n}$ in the monoid $\Monoid_{\vec n}$ for the Toom-Tsetlin model from Section~\ref{subsection.toom multiperm}.
Thus \[|e_{\vec R}\Omega\cap \Omega_{\vec n}|=|e_{\vec Q}\Omega_{\vec n}| = (\vec n-\vec f(\vec R,\vec n))!\] by \eqref{equation.multinomial.fixedpoint}
since $n_i-|Q_i| = f_i(\vec R,\vec n)$ because $n_i\neq r-1$ for $r\in R_i$.

Next suppose that $n_i+1\in R_i$ for some $1\leq i\leq m$.  Choose $i$ maximal with this property.  Let $S=R\setminus \{\partial_{b_i,n_i+1}\}$,
viewed as operators on $\mathcal B^{L-1}$ for the model with one fewer book on the shelf.  We claim that if $e_{\vec R}\Omega\cap \Omega_{\vec n}$
is non-empty, then $i$ is the only index $k$ with $n_k+1\in R_k$ and that $w\in \Omega_{\vec n}$ is fixed by $e_{\vec R}$ if and only if $w=ub_i$
with $e_{\vec S}(u)=u$ and $u\in \Omega_{\vec n-\vec e_i}$ where $\vec e_i$ is the $i^{th}$-standard unit vector and we are working in the model
with $L-1$ books on the shelf. The claim will then follow from the inductive hypothesis because $\vec f(\vec R,\vec n)=f(\vec S,\vec n-\vec e_i)$.

Indeed, suppose that $e_{\vec R}(w)=w$ with $w\in \Omega_{\vec n}$ and factor the expression \eqref{equation.idempotent.toom} as
$e_{\vec R}=\alpha\toomgens_{b_i,n_i+1}\beta$. Assume that $\beta(w)=vb_j$.  Note that $\beta(w)\in \Omega_{\vec n}$ by maximality of $i$.
Suppose first that $j>i$.  Then $\toomgens_{b_i,n_i+1}\beta(w)$ will have $n_j-1$ occurrences of $b_j$.  Since $\alpha$ contains no
operator $\toomgens_{b_j,t}$, applying $\alpha$ to $\toomgens_{b_i,n_i+1}\beta(w)$ cannot create a new $b_j$, and so $w$ cannot
be fixed by $e_{\vec R}$.

Next suppose that $j<i$. Let us first show that $w=ub_i$. Indeed, $\toomgens_{b_i,n_i+1}(vb_j)$ will have $n_i+1$ occurrences of
$b_i$, with the last two consecutive.  Since $w$ is fixed by $e_{\vec R}$, we must be able to factor $\alpha=\alpha'\toomgens_{b,t}\alpha''$
where $\alpha''(\toomgens_{b_i,n_i+1}(vb_j))=zb_ib_i$ and $\toomgens_{b,t}(zb_ib_i)= z'b_i$ has $n_i$ copies of $b_i$.  Since no operator
in $\alpha'$ can insert or move a $b_i$, it follows that $w=\alpha(vb_j)$ must end in $b_i$.  Thus $w=xb_jyb_i$ where $y$ has no $b_j$.
There are two cases.

Suppose first that $x$ contains a $b_i$. Write $w=x'b_ix''b_jy'b_iz$ where $x''y'$ contains no $b_i$. Say these two copies of $b_i$ are the
$p^{th}$ and $(p+1)^{st}$ copy.  Then since $\beta(w)=vb_j$, it follows that $\beta$ contains the operator $\toomgens_{b_i,p+1}$ and
so in $\beta(w)$ the $p^{th}$ and $(p+1)^{st}$ copies of $b_i$ are consecutive.  Since $\alpha\toomgens_{b_i,n_i+1}$ does not
contain $\toomgens_{b_i,p}$, it follows that they remain consecutive in $w=e_{\vec R}(w)=\alpha\toomgens_{b_i,n_i+1}\beta(w)$, a contradiction.

Next suppose that $x$ contains no $b_i$, that is, the last $b_j$ is to the left of all the $b_i$s.  We shall contradict the assumption that $\sum_{k=1}^m\min(\overline{R_k})<L+m$.
From $\beta(w)=vb_j$, we conclude that all copies of $b_i$ are moved passed the last $b_j$ by $\beta$ and so
we have that $\{1,2,\ldots, n_i+1\} \subseteq R_i$.  
Let $n=\min(\overline{R}_i)$. Then we can factor the expression \eqref{equation.idempotent.toom}
into $e_{\vec R}=\gamma\toomgens_{b_i,n-1}\cdots \toomgens_{b_i,1}\gamma'$ and $\toomgens_{b_i,n-1}\cdots \toomgens_{b_i,1}\gamma'(w)
=b_i^{n-1}z$ where $z$ has no $b_i$ and each letter occurs in $z$ no more than it occurs in $w$.  Notice that $w=\gamma(b_i^{n-1}z)$ will have all its $b_i$s consecutive and so, in fact, $w=u'b_i^{n_i}$
(recall that we already showed that $w$ ends in $b_i$).  Since $\gamma$ contains no operator $\toomgens_{b_k,r}$ with $k>i$, we can neither
move, nor reinsert any letter $b_k$ of $z$ with $k>i$.  Thus we conclude that all $b_k$ in $z$ satisfy $k<i$.  In order for $\gamma$ to take
$b_i^{n-1}z$ to the word $u'b_i^{n_i}$, we must have that, for each letter $b_k$ in $z$, the expression for $\gamma$ has a factor
$\toomgens_{b_k,r}\toomgens_{b_k,r-1}\cdots\toomgens_{b_k,1}$ and the total number of operators coming from such factors must be
at least $|z|=L-n+1$. Let us lower bound $\sum_{k=1}^m\min(\overline{R_k})$ by $m$ (because each $1\leq k\leq m$ contributes
at least $1$) plus an additional $n-1$ for $k=i$ plus an additional 
$L-n+1$ coming from those $k$ with $b_k$ appearing in $z$, and hence
yielding factors of $\gamma$ of the above form.  This implies
\[
	\sum_{k=1}^m\min(\overline{R_k})\geq m+n-1+L-n+1 = L+m\;.
\]
This is a contradiction.

We are left now with the case $j=i$, and consequently $n_i\neq 0$.  We now claim that, for all factorizations $e_{\vec R}=\rho\sigma$
of the expression \eqref{equation.idempotent.toom}, we have $\sigma(w)$ ends in $b_i$ and has content $\vec n$.  In other words, we
claim that when computing $e_{\vec R}(w)$ the content never changes and the last $b_i$ never moves.  This, in particular, will imply
that there is no other $t$ with $n_t+1\in R_t$ (else the content would change at some point).  From $\beta(w)=vb_i$ with $vb_i$ having
content $\vec n$, if $\sigma$ is a suffix of $\beta$ the statement is clear. Also for $\sigma=\toomgens_{b_i,n_i+1}\beta$, we have
$\sigma(w)=\toomgens_{b_i,n_i+1}(vb_i)=vb_i$, as desired.  No operator in $\alpha$ can move the $n_i^{th}$ copy of $b_i$.
Hence when computing $\alpha(vb_i)$, if the content is ever changed then the last $b_i$ will be removed and cannot be reinserted.
But then $w$ cannot be fixed by $e_{\vec R}$. Thus the claim is also true when $\sigma$ contains $\toomgens_{b_i,n_i+1}\beta$ as a suffix.

It remains to show that $w=ub_i$ with content $\vec n$ is fixed by $e_{\vec R}$ if and only if $e_{\vec S}(u)=u$.  Assume first that
$e_{\vec R}(w)=w$ and write $\beta(w)=vb_i$ as above. Since $vb_i$ has content $\vec n$, we have
$ub_i=w=\alpha\toomgens_{b_i,n_i+1}(vb_i)=\alpha(vb_i)=\alpha\beta(w)=\alpha\beta(ub_i)$.  As the last $b_i$ is never moved and the content is
never changed when computing $\alpha\beta(ub_i)$, we deduce that $u=\alpha\beta(u)=e_{\vec S}(u)$.

Conversely, assume that $e_{\vec S}(u)=u$.  If $n_i\notin R_i$ (and hence $n_i\notin S_i$), then $\Omega_{\vec n-\vec e_i}$ is invariant under each of the operators $\toomgens_{b_k,r}$ appearing in $e_{\vec S}$ (i.e., with $r\in S_k$).  Thus $\beta(ub_i) = \beta(u)b_i$ and
$e_{\vec R}(w) =\alpha\toomgens_{b_i,n_i+1}(\beta(u)b_i)=\alpha(\beta(u)b_i)=\alpha\beta(u)b_i=e_{\vec S}(u)b_i=ub_i=w$.  If
$n_i\in R_i$ (and hence $n_i\in S_i$), then from $e_{\vec S}(u)=u$ and $(n_i-1)+1=n_i\in S_i$, we must have by the above that
$u$ ends in $b_i$, this $b_i$ never moves when computing $e_{\vec S}(u)$ and the content never changes during the computation.  Therefore, writing
$\beta=\toomgens_{b_i,n_i}\beta'$, we then have $\beta'(u) = v'b_i$ where $v'b_i$ has content $\vec n-\vec e_i$ and
$\beta(u)=\toomgens_{b_i,n_i}(v'b_i)=v'b_i$.  But then $\beta(w) = \toomgens_{b_i,n_i}\beta'(ub_i)=\toomgens_{b_i,n_i}(\beta'(u)b_i)
=\toomgens_{b_i,n_i}(v'b_ib_i)=v'b_ib_i=\beta(u)b_i$ and so $e_{\vec R}(w)=\alpha\toomgens_{b_i,n_i+1}\beta(w)=\alpha\toomgens_{b_i,n_i+1}(\beta(u)b_i)
= \alpha(\beta(u)b_i)=\alpha\beta(u)b_i=e_{\vec S}(u)b_i=ub_i=w$.  This completes the proof of \eqref{e:fixedpointsize}

The lemma is now immediate from \eqref{e:fixedpointsize}.
\end{proof}

%%%%%%%%%%%%%%%%%%%%

  As in the proof of Theorem~\ref{theorem.toom.interval.eigenvalues}, we are going to apply Theorem~\ref{theorem.eigenvalues}
  to find the multiplicity $m_{\vec{R}}$ for each eigenvalue $\lambda_{\vec{R}}$. First suppose that there exists
  at least one index $1\le i\le m$ such that $R_i=[L]$. In this case
  \[
  m_{\vec{R}} = \sum_{\vec{R} \subseteq \vec{U}} (-1)^{\|\vec{U}\|_1-\|\vec{R}\|_1} |e_{\vec{U}} \Omega|
  = \sum_{\vec{R} \subseteq \vec{U}} (-1)^{\|\vec{U}\|_1-\|\vec{R}\|_1} 1
  = \begin{cases} 1, & \text{if $\vec{R}=[L]^m$,}\\ 0, & \text{otherwise,} \end{cases}
  \]
  as desired.

  Now let $\vec{R}$ be such that $R_i \subsetneq [L]$ for all $1\le i\le m$. Then
  \begin{equation}
  \label{equation.multiplicity interlibrary}
	m_{\vec{R}} = \sum_{\vec{R}\subseteq \vec{U}} (-1)^{\|\vec{U}\|_1-\|\vec{R}\|_1}|e_{\vec U}\Omega|
  \end{equation}
  with $|e_{\vec U}\Omega|$ as in Lemma~\ref{lemma.fixed points interlibrary}.
\end{proof}

%%%%%%%%%%%%%%%%%%%%%%%%%%%%%%%%%%%%%%%%%%%%%
\section{Nonabelian directed sandpile model}
%%%%%%%%%%%%%%%%%%%%%%%%%%%%%%%%%%%%%%%%%%%%%
\label{sandpile}

In this section we briefly show that the monoid associated to the landslide
nonabelian sandpile model introduced in~\cite{ayyer_schilling_steinberg_thiery.sandpile.2013}
can be shown to be $\RR$-trivial using the \fhb{} technique of Section~\ref{section.free tree monoid}.
In~\cite{ayyer_schilling_steinberg_thiery.sandpile.2013} this property was proved using the
wreath product.

%%%%%%%%%%%%%%%%%%%%%%%%%%%%%%%%%%%%%%%%%%%%
\subsection{The landslide nonabelian directed sandpile model}

The landslide nonabelian directed sandpile model is defined on an arborescence.
An \emph{arborescence} is a directed graph with a special vertex being the root such that there is
exactly one directed path from any vertex to the root. Vertices without an incoming edge
are called the leaves. Let $V$ be the set of all vertices of the arborescence. We associate
to each vertex $v\in V$ a threshold $T_v$. Then the state space of the Markov chain
is defined to be
\begin{equation*}
	\Omega = \{ (t_v)_{v\in V} \mid 0 \le t_v \le T_v\}\;.
\end{equation*}

We consider two types of operators on the state space (which are the generators of the underlying monoid),
the source and topple operators. There is a source operator $\source_v$, for each $v\in V$, which informally works as follows:
a grain enters at $v$ and stays at the first vertex below its threshold on the unique path from $v$ to the root; if no such vertex exists,
then the grain leaves the system. The topple operator $\tau_v$, for $v\in V$, takes all grains at vertex $v$ and topples
them to the first available slots along the unique path from $v$ to the root (again grains which cannot find available slots leave the system).

Letting $\su(v)$ be the unique successor of the vertex $v$ in the arborescence, we can formally define these
operators recursively by picking one fixed leaf $\ell$ and writing any configuration as $(t_\ell,t)$, where $t_\ell$
is the number of grains at $\ell$ and $t$ is the state on the remaining vertices. Then we have
\begin{align}
\label{equation.sandpile recursion}
	\source_\ell(t_\ell,t) &= \begin{cases}(t_\ell+1,t), & \text{if}\ t_\ell<T_\ell\\ 
	(T_\ell,\source_{\su(\ell)}t), & \text{if}\ t_\ell=T_\ell\end{cases}
\nonumber \\
\source_v(t_\ell,t)&=(t_\ell,\source_{v}t) & (v\neq \ell)\\
\tau_\ell(t_\ell,t)&= (0,\source_{\su(\ell)}^{t_\ell}t) \nonumber \\
\tau_v(t_\ell,t)&=(t_\ell,\tau_{v}t) & (v\neq \ell). \nonumber
\end{align}

For more details, see~\cite{ayyer_schilling_steinberg_thiery.sandpile.2013}.

%%%%%%%%%%%%%%%%%%%%%%%%%%%%%%%%%%%%%%%%%%%%
\subsection{$\RR$-triviality of the landslide directed sandpile model}
We begin with a lemma which enables us to use the generalization of the \fhb{} of
Section~\ref{subsection.fhb generalization} to prove $\RR$-triviality.
Let $X_\tau$ be the set of generators of the landslide nonabelian directed sandpile model.

\let\del=\partial
\begin{lemma}
  \label{lemma.landslide.relations}
  We claim that any two operators $x$ and $y$ in $X_\tau$ commute, except
  when $y=\tau_u$ and $x=\tau_v$ or $x=\source_v$ for two nodes $u$ and
  $v$ with $u$ on the path from $v$ to the root. When $x$ and $y$ do not commute,
  $y$ is an idempotent, and $yxy = yx$.
\end{lemma}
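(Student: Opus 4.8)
The plan is to prove all three assertions --- the commutation of "distant" operators, the idempotency of topple operators, and the relation $yxy = yx$ in the non-commuting case --- by a single induction on the arborescence, exploiting the recursive definition~\eqref{equation.sandpile recursion}. Fix a leaf $\ell$ and write a configuration as $(t_\ell, t)$ as in~\eqref{equation.sandpile recursion}. Let $\su^*(v)$ denote the set of vertices on the unique path from $v$ to the root (including $v$ and the root). The cleanest way to organize this is to first establish, as a preliminary sublemma, that $\tau_v$ is idempotent for every $v$: intuitively, after toppling all grains off $v$ they cannot return (the arborescence is acyclic and $v$ has no incoming edges relevant to the path structure once emptied), so a second application of $\tau_v$ does nothing. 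Formally, $\tau_\ell(t_\ell, t) = (0, \source_{\su(\ell)}^{t_\ell} t)$ has zero grains at $\ell$, so applying $\tau_\ell$ again gives $(0, \source_{\su(\ell)}^{0}(\source_{\su(\ell)}^{t_\ell} t)) = (0, \source_{\su(\ell)}^{t_\ell} t)$; for $v \neq \ell$ one uses $\tau_v(t_\ell, t) = (t_\ell, \tau_v t)$ and the inductive hypothesis on the smaller arborescence obtained by deleting $\ell$.

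Next I would prove the commutation statement. Take $x, y \in X_\tau$ that are \emph{not} of the excluded form, so neither "$y = \tau_u$, $x = \tau_v$ with $u \in \su^*(v)$" nor "$y = \tau_u$, $x = \source_v$ with $u \in \su^*(v)$" holds (and the symmetric statement with $x,y$ swapped). The claim is $xy = yx$ as operators on $\Omega$. The argument is a case analysis according to which of $x, y$ mention the chosen leaf $\ell$ (i.e.\ equal $\source_\ell$ or $\tau_\ell$). If neither mentions $\ell$, both act only on the $t$-component and we invoke induction on the arborescence with $\ell$ removed. If exactly one, say $x$, is a leaf operator while $y$ is not, then $y$ acts only on the $t$-coordinate; one checks from~\eqref{equation.sandpile recursion} that $x$ commutes with any operator supported on vertices $\neq \ell$ \emph{unless} $y$'s "entry point" lies on the path $\su^*(\ell)$ and $y$ is a topple --- but that is precisely an excluded case (with the roles $y = \tau_u$, $x \in \{\source_\ell, \tau_\ell\}$ and $u \in \su^*(\ell)$), so it does not occur here. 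The remaining subtlety is when $y = \source_{\su(\ell)}$-type propagation interacts with $x = \source_\ell$ overflowing; here one verifies directly that both orders deposit the same grains, since addition of grains to distinct "first available slots" along a common tail of the path is order-independent when no toppling is involved. Finally, if both $x$ and $y$ are leaf operators at $\ell$, the only pairs are $\{\source_\ell, \tau_\ell\}$ and $\{\source_\ell, \source_\ell\}$; the pair $(\source_\ell, \tau_\ell)$ is $(x,y) = (\source_v, \tau_u)$ with $u = v = \ell \in \su^*(\ell)$, hence excluded, so among leaf--leaf pairs there is nothing to check beyond the trivial $\source_\ell \source_\ell = \source_\ell \source_\ell$.

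It remains to treat the non-commuting case: $y = \tau_u$ and $x \in \{\tau_v, \source_v\}$ with $u \in \su^*(v)$, $u \neq v$ possible but also $u = v$ in the topple case. We must show $yxy = yx$, knowing already (sublemma) that $y = \tau_u$ is idempotent. Again induct on the arborescence and split on whether $v = \ell$. If $v \neq \ell$, then $x$ and $y$ both act on the $t$-component (note $u \in \su^*(v)$ and $v \neq \ell$ forces $u \neq \ell$), so the relation follows from the inductive hypothesis. If $v = \ell$: then $y = \tau_u$ with $u \in \su^*(\ell)$, so $y(t_\ell, t) = (t_\ell, \tau_u t)$ if $u \neq \ell$, or $y(t_\ell, t) = (0, \source_{\su(\ell)}^{t_\ell} t)$ if $u = \ell$. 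In the sub-case $u = \ell$ (so $x = \source_\ell$ or $x = \tau_\ell$), $yxy$ empties $\ell$, then $x$ adds or moves grains at $\ell$, then $y$ empties $\ell$ again, flushing everything down $\su(\ell)$; comparing with $yx$ and using idempotency of the source propagation $\source_{\su(\ell)}$ on the empty-$\ell$ configuration together with the identity $\source_{\su(\ell)}^{a+1} = \source_{\su(\ell)} \circ \source_{\su(\ell)}^{a}$ gives equality. In the sub-case $u \neq \ell$, the key observation is that $x$ (acting at leaf $\ell$) and $\tau_u$ interact only through what $\source_\ell$ or $\tau_\ell$ pushes into the subtree below $\su(\ell)$; one reduces to the identity $\tau_u \circ \source_{\su(\ell)}^{a} \circ \tau_u = \tau_u \circ \source_{\su(\ell)}^{a}$ on the $t$-component, which is an instance of the \emph{same} relation one level down the tree (with $y = \tau_u$, $x = \source_{\su(\ell)}$, and $u \in \su^*(\su(\ell))$), handled by induction. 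I expect this last reduction --- unwinding the leaf case so that the interaction between a source/topple at $\ell$ and a downstream topple $\tau_u$ becomes an honest instance of the inductive hypothesis --- to be the main obstacle, because it requires carefully tracking how many grains propagate past $\su(\ell)$ and verifying that the "number of grains leaving the leaf region" is itself unaffected by whether $\tau_u$ has already fired. Once that bookkeeping is pinned down, everything else is routine case-checking against~\eqref{equation.sandpile recursion}. With the lemma in hand, $X_\tau$ satisfies the hypotheses of Definition (Generalized tree monoid) after choosing any linear extension of the partial order "$y = \tau_u$ above $x$ whenever $u \in \su^*(v)$ and $x$ is a source or topple at $v$", and Proposition~\ref{proposition.fhb generalization} yields $\RR$-triviality.
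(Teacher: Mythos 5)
Your strategy coincides with the paper's: fix a leaf $\ell$, write configurations as $(t_\ell,t)$, and verify all three assertions (idempotency of the $\tau_v$, commutation of non-excluded pairs, and $yxy=yx$ for excluded pairs) by induction on the arborescence via the recursion~\eqref{equation.sandpile recursion}. The paper only writes out the $\source_u\source_v$ commutation in full and declares the remaining cases ``similar,'' so your proposal is if anything more explicit about the case structure, and your computations in the sub-cases you do spell out (e.g.\ $\tau_\ell\source_\ell\tau_\ell=\tau_\ell\source_\ell$ via $(0,\source_{\su(\ell)}^{t_\ell+1}t)$) are correct.

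One step is stated too glibly, and you have half-noticed it yourself. Reducing the case $y=\tau_u$, $x=\tau_\ell$, $u\in\su^*(\su(\ell))$ to the identity $\tau_u\,\source_{\su(\ell)}^{a}\,\tau_u=\tau_u\,\source_{\su(\ell)}^{a}$ is right, but this is \emph{not} literally ``an instance of the same relation one level down'': the inductive hypothesis gives only the exponent-one identity $\tau_u\source_{\su(\ell)}\tau_u=\tau_u\source_{\su(\ell)}$, and for $a\geq 2$ the word $yx^ay$ is irreducible modulo the relations $y^2=y$, $yxy=yx$ alone (since $\source_{\su(\ell)}$ is not idempotent, the trick used in Proposition~\ref{proposition.fhb.kb} of interleaving extra $y$'s is unavailable). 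So you must either strengthen the induction hypothesis to include $\tau_u m\tau_u=\tau_u m$ for every product $m$ of operators at vertices upstream of $u$ (in particular all powers $\source_w^a$), or verify the exponent-$a$ identity semantically. This is exactly the ``bookkeeping'' you flag as the main obstacle; with that repair made explicit, the argument is complete and matches the paper's.
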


\begin{proof}
 We first check that any two operators $\source_u$ and $\source_v$
 commute. This is obvious if $u=v$.
 If neither $u$ nor $v$ is the fixed leaf $\ell$, then the result is clear from \eqref{equation.sandpile recursion} and induction.
 So without loss of generality, assume that $v=\ell$.
 Then, applying induction
 and the recursion formula~\eqref{equation.sandpile recursion} (see
 also~\cite[Table 1]{ayyer_schilling_steinberg_thiery.sandpile.2013})
 we obtain that if $t_v<T_v$, then
  \begin{displaymath}
     \source_u\circ\source_v(t_v, t) = \source_u(t_v+1, t) = (t_v+1, \source_u t) =
    \source_v(t_v,\source_u t) = \source_v\circ\source_u (t_v,t);
  \end{displaymath}
  and if $t_v=T_v$, then
  \begin{multline*}
    \source_u\circ \source_v(T_v, t) = \source_u(T_v, \source_{s(v)}(t))
    = (T_v, \source_u \circ \source_{s(v)}(t)) \\
    = (T_v, \source_{s(v)}\circ \source_u(t))
    = \source_v (T_v,\source_u(t)) = \source_v \circ \source_u (T_v,t).
  \end{multline*}
  The other commutation relations are treated similarly.

  In the remaining case, $y=\tau_u$ is idempotent as desired, and the
  relation is checked similarly.
\end{proof}

\begin{theorem}
  The monoid $M_\tau = \langle \source_v, \tau_v \mid v\in V \rangle$ of the landslide directed sandpile model is $\RR$-trivial.
\end{theorem}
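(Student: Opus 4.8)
The plan is to recognize $M_\tau$, equipped with a suitable total order on its generating set $X_\tau=\{\source_v,\tau_v\mid v\in V\}$, as a \emph{generalized tree monoid} in the sense of Section~\ref{subsection.fhb generalization}, and then to quote Proposition~\ref{proposition.fhb generalization}. Concretely, I need to verify three things: (i) every generator $x\in X_\tau$ satisfies $x^{k+1}=x^k$ for some $k$; (ii) there is a total order $<_{X_\tau}$ on $X_\tau$; and (iii) whenever $x<_{X_\tau}y$ fail to commute, $y$ is idempotent and $yxy=yx$. Conditions (ii)--(iii) will follow almost immediately from Lemma~\ref{lemma.landslide.relations}.

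First I would check (i). Each topple operator is idempotent: after applying $\tau_v$ the site $v$ is empty, so a second toppling of $v$ moves no grains and $\tau_v^2=\tau_v$. For a source operator $\source_v$ and any configuration $c$, the sequence $c,\source_v c,\source_v^2 c,\dots$ has non-decreasing total number of grains, because a freshly sourced grain either comes to rest in some slot (strictly increasing the count) or slides off the root (leaving the count, hence the configuration, unchanged); since the total number of grains is bounded by $N:=\sum_{v\in V}T_v$, we get $\source_v^{\,N+1}=\source_v^{\,N}$ as operators on $\Omega$. In particular $M_\tau$ is a finite monoid, being generated by transformations of the finite set $\Omega$.

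Next I would fix the order. By Lemma~\ref{lemma.landslide.relations}, the only non-commuting pairs of generators are $\{\tau_u,\tau_v\}$ with $u$ on the geodesic from $v$ to the root, and $\{\tau_u,\source_v\}$ with $u$ on the geodesic from $v$ to the root; in either case the operator $\tau_u$ is idempotent and satisfies $\tau_u x\tau_u=\tau_u x$. I would therefore choose $<_{X_\tau}$ so that every source operator is smaller than every topple operator, and so that among topple operators $\tau_u<_{X_\tau}\tau_{u'}$ whenever $u'$ is a proper ancestor of $u$ in the arborescence; such a total order exists because the ancestor relation on $V$ is a partial order (this is precisely where the tree structure is used) and hence admits a linear extension, with all remaining comparisons made arbitrarily. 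With this order, in every non-commuting pair the larger generator is the distinguished $\tau_u$, so condition (iii) holds verbatim from the lemma. Therefore $M_\tau$ is a generalized tree monoid, and Proposition~\ref{proposition.fhb generalization} gives that it is $\RR$-trivial.

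The only genuinely delicate point is the stabilization argument for (i) — that repeated sourcing at a fixed vertex eventually becomes idempotent — together with the observation that the ordering constraints coming from Lemma~\ref{lemma.landslide.relations} are acyclic (they point only from source operators to topple operators, and from a topple operator toward the topple operators of its ancestors), so that a compatible total order genuinely exists; everything else is routine bookkeeping. An alternative, more hands-on route would be to skip the generalized-tree-monoid formalism and directly build the statistic $f(m)=(f_x(m))_{x\in X_\tau}$ used in the proof of Proposition~\ref{proposition.fhb generalization}, showing it increases lexicographically along non-loop edges of the right Cayley graph, but reusing the proposition is cleaner.
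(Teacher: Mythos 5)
Your proposal is correct and follows essentially the same route as the paper: verify the eventual idempotence $\source_v^{k+1}=\source_v^{k}$ (which the paper leaves as an "easily checked fact" and you justify via the bounded, non-decreasing grain count), choose a total order putting each $\tau_u$ above the operators it fails to commute with as dictated by Lemma~\ref{lemma.landslide.relations}, and invoke Proposition~\ref{proposition.fhb generalization}. Your order (all sources below all topples) is a harmless refinement of the paper's, and both are valid linear extensions of the same acyclic constraints.
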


\begin{proof}
  We choose the following total order on the elements of the generators in $X_\tau$
  such that for the nodes $u,v$ of the tree, $\tau_v<_{X_\tau} \tau_u$ and $\source_v <_{X_\tau} \tau_u$ whenever $u$ is
  in the path from the root to $v$ (where $v=u$ is allowed in the second case). Then Lemma~\ref{lemma.landslide.relations}
  and the easily checked fact (using induction and \eqref{equation.sandpile recursion}) that $\source_v^m=\source_v^{m+1}$ for $m$ large enough imply
  that the hypotheses of Proposition~\ref{proposition.fhb generalization} are satisfied. Therefore,
  the monoid $M_\tau$ is $\RR$-trivial.
\end{proof}

%%%%%%%%%%%%%%%%%%%%%%%%%%%%%%%%%%%%%%%%%%%%%%%%%%%%%%%%%%%
\section{The exchange walk on a Coxeter group}
%%%%%%%%%%%%%%%%%%%%%%%%%%%%%%%%%%%%%%%%%%%%%%%%%%%%%%%%%%%
\label{section.coxeter}
This section requires the reader to be familiar with basic notions from the theory of finite Coxeter groups.  Standard
references include~\cite{Brown:book2,bjornerbrenti}.

Let $(W,S)$ be a finite Coxeter system.  Let $R(w)$ denote the set of reduced expressions of an element $w$ of $W$.
If $w_0$ is the longest element of $W$, then $R(w_0)$ can be viewed as the set of maximal chains in the weak order
on $W$.  Let us denote words over $S$ by Greek letters in what follows and write $[\alpha]_M$ for the image of
$\alpha\in S^*$ in an $S$-generated monoid $M$.  Let $s\in S$ and let $\alpha=s_1\cdots s_m$ be a reduced
decomposition of $w_0$. Then, by the Exchange Condition for Coxeter groups, there is a unique index $i$ such that
$e_s(\alpha)=ss_1\cdots \wh{s_i}\cdots s_m$ is a reduced decomposition of $w_0$ where $\wh{s_i}$ means omit
$s_i$.  For example, if $W=(\mathbb Z/2\mathbb Z)^n$ with $S$ the standard unit vectors,
then $w_0$ is the all-ones vector and the reduced decompositions of $w_0$ are all linear orderings of $S$
(written as words).  Then $e_s$ moves $s$ to the front of a linear
ordering of $S$, as in the Tsetlin library. Another example is presented in
Figure~\ref{figure.exchange_walk}.

\begin{figure}[h]
  \centering
  \includegraphics[width=\textwidth]{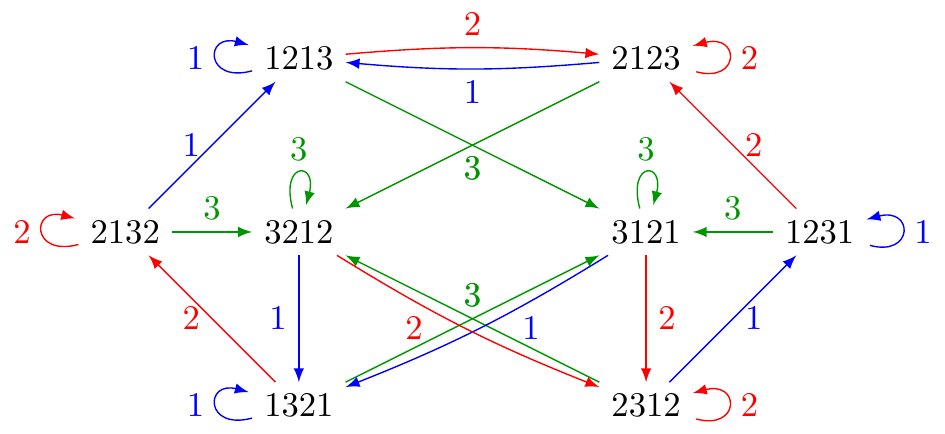}
  \caption{The exchange walk on $(W,S)=(S_3\times S_2$,
    $\{s_1,s_2,s_3\})$, where $s_1$ and $s_2$ satisfy the braid
    relation, and $s_3$ commutes with $s_1$ and $s_2$. For short, the
    reduced word $s_1s_2s_1s_3$ is denoted by $1213$.}
  \label{figure.exchange_walk}
\end{figure}

Consider a probability $P$ on $S$ and consider the following Markov chain, which we call the \emph{exchange walk on $(W,S)$}.
The state set is $R(w_0)$.  Transitions are given by changing from state $\alpha$ to state $e_s(\alpha)$ with probability
$P(s)$.  For the example above of $(\mathbb Z/2\mathbb Z)^n$ we recover the Tsetlin library. The main goal of this section
is to use the theory of $\RR$-trivial monoids and Markov chains developed in this paper to prove properties of the exchange walk on
$(W,S)$.

To state our main result of this section, we need some notation.  Let $W_J=\langle J\rangle$ be the standard parabolic
subgroup associated to $J\subseteq S$. Let
\[
	D_R(w) = \{s\in S\mid \ell(ws)<\ell(w)\}
\]
be the set of right descents of $w\in W$. Let $w_J$ denote the longest element of $W_J$; note that $w_J$
is an involution and $w_S=w_0$. Our result is the following.

\begin{theorem}\label{exchangethm}
Let $(W,S)$ be a finite Coxeter system and let $P$ be a probability on $S$ with support $S$.  Let $T$
be the transition matrix of the exchange walk on $(W,S)$. Then the exchange walk is ergodic and the following hold.
\begin{enumerate}
\item The eigenvalues of $T$ are
\[
	\lambda_J = \sum_{s\in J}P(s),
\]
where $J\subseteq S$.
\item The multiplicity of $\lambda_J$ as an eigenvalue is given by
\[
	\sum_{K\supseteq J}(-1)^{|K|-|J|}\cdot |R(w_Kw_0)|.
\]
\item The stationary distribution $\pi$ is given as follows: if $\alpha =s_1\cdots s_m$ is a reduced decomposition
of $w_0$, then
\[
	\pi(\alpha) = \prod_{i=1}^m\frac{P(s_i)}{1-\lambda_{D_R([s_1\cdots s_{i-1}]_W)}}.
\]
\item Let $m$ be the length of the longest element $w_0\in W$, $n=|S|$ be the number of generators (usually called the
\emph{rank} of $W$), and $p = \min\{P(s) \mid s\in S\}$. Then, the mixing time for the exchange walk is $O(m/p)$ in general
and $O(mn)$ when $P$ is the uniform distribution on $S$.
\end{enumerate}
\end{theorem}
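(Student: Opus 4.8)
The plan is to apply the general $\RR$-trivial machinery of Sections~\ref{section.random walks on monoids}--\ref{rtrivial-markov-chain} to the transition monoid $M=\langle e_s\mid s\in S\rangle\subseteq\mathcal T_{R(w_0)}$ of the exchange walk, which acts faithfully on $\Omega=R(w_0)$ by construction. Unlike the examples of Sections~\ref{section.toom}--\ref{sandpile}, $M$ need not be a tree monoid, so one cannot shortcut through Section~\ref{section.free tree monoid}; instead everything is extracted from Theorem~\ref{theorem.eigenvalues}, Theorem~\ref{productformula2} and Corollary~\ref{t:couplingfromthepast}, once three facts about $M$ are in hand: that $M$ is $\RR$-trivial, that $M$ acts transitively on $\Omega$, and the fixed-point counts of a distinguished family of idempotents. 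First I would establish transitivity and the existence of constant maps simultaneously: for $\beta=t_1\cdots t_m\in R(w_0)$, the composite $e_{t_1}\circ\cdots\circ e_{t_m}$ acts as the constant map with image $\beta$, proved by induction on $m$ using the Exchange Condition (each $e_{t_i}$ forces the $i$-th letter, and the deletion condition shows the earlier letters are undisturbed). Next, $\RR$-triviality: I would show the right Cayley graph of $M$ is acyclic by exhibiting a statistic that strictly increases along every non-loop edge $u\mapsto ue_s$; the natural candidate records the element of $W$ whose reduced word is the common prefix of $\{u(\alpha)\mid\alpha\in\Omega\}$ (a ``stable prefix'' which can only grow, in weak order, under right multiplication), refined by finer data such as the partition of $\Omega$ induced by $u$ to force strict growth at the steps where the prefix does not yet move. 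With $\RR$-triviality, $M$ is aperiodic with left-zero minimal ideal $\wh0$ consisting of the constant maps, so $\wh0\cong\Omega$; combined with transitivity, Corollary~\ref{c:aperiodicchains} gives ergodicity, and Theorem~\ref{idempotentconvergence}/Corollary~\ref{t:couplingfromthepast} identify the walk on $\wh0$ with the exchange walk.

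The heart of parts (1)--(2) is a fixed-point lemma that I would isolate: for $J\subseteq S$ a suitable product $\epsilon_J$ of the $e_s$ with $s\in J$ is an idempotent of $M$ that acts by prepending one fixed reduced word $\rho_J$ of the parabolic longest element $w_J$; hence its fixed-point set is $\{\alpha\in R(w_0)\mid\rho_J\text{ is a prefix of }\alpha\}$, which, using the length-additivity $\ell(w_0)=\ell(w_J)+\ell(w_Jw_0)$ coming from $w_J\le_L w_0$ in left weak order (as $J\subseteq D_L(w_0)=S$), is in bijection with $R(w_Jw_0)$ via $\rho_J\gamma\mapsto\gamma$. Thus $|\epsilon_J\Omega|=|R(w_Jw_0)|$. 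Now I would run the argument in the proof of Theorem~\ref{theorem.eigenvalues}: $\mathbb R\Omega$ has a composition series with one-dimensional factors, so $T$ is similar to an upper-triangular matrix whose diagonal entries are the values $\chi_X(P)$ for $X\in\Lambda(M)$; since $P$ is supported on the $e_s$ and $\chi_X(e_s)\in\{0,1\}$, each such value equals $\sum_{s\in J_X}P(s)=\lambda_{J_X}$ with $J_X=\{s\in S\mid c(e_s)\ge X\}$, giving (1) (every eigenvalue is some $\lambda_J$; those $\lambda_J$ with $J$ not of this form have multiplicity $0$). For (2), the permutation character satisfies $\theta=\sum_{X\in\Lambda(M)}m_X\chi_X$; evaluating at $\epsilon_J$ and using $c(\epsilon_J)=\bigwedge_{s\in J}c(e_s)$ gives $|R(w_Jw_0)|=\sum_{X\le c(\epsilon_J)}m_X$, and since $J\mapsto c(\epsilon_J)$ is an order-reversing meet-homomorphism its fibres are Boolean intervals, so a Möbius inversion over $P(S)$ turns $\sum_{K\supseteq J}(-1)^{|K|-|J|}|R(w_Kw_0)|$ into $\sum_{X\colon J_X=J}m_X$, i.e.\ the exponent of $(\lambda-\lambda_J)$ in the characteristic polynomial of $T$.

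For (3) I would verify directly that the displayed formula defines a probability distribution and is fixed by $T$: writing $1-\lambda_{D_R(w)}=\sum_{s\notin D_R(w)}P(s)$, this is exactly the normalization for ``extending a reduced prefix'', and the recursion on reduced words of $w_0$ (the first letter ranges over $D_L(w_0)=S$; given $s_1=s$ the tail ranges over $R(sw_0)$) makes both $\sum_\alpha\pi(\alpha)=1$ and $T\pi=\pi$ a straightforward induction; irreducibility then forces this to be the unique stationary distribution. (Alternatively one can derive it from Theorem~\ref{productformula2} applied to the walk of $M$ on $\wh0\cong R(w_0)$, using that the right-descent data $d(\cdot)$ of $M$ is computed by right descents in $W$, and telescoping the reduced-word sum.) Finally, (4) follows from Corollary~\ref{t:couplingfromthepast}, which gives $\|T^n\nu-\pi\|_{TV}\le P^{\ast n}(M\setminus I)$ with $I$ the ideal of constant maps, together with Lemma~\ref{l:statisticbound} applied to a statistic $f\colon M\to\mathbb N$ with $f(1)=\ell(w_0)=m$, $f(m)=0$ iff $m$ acts as a constant, $f$ non-increasing under products, and $f$ strictly decreasing with probability at least $p=\min_s P(s)$ at each step — a coupon-collector-type argument: at any non-absorbed state some exchange move locks in one more position of the output word. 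The Chernoff bound in Lemma~\ref{l:statisticbound} then yields mixing time $O(m/p)$ in general, specializing to $O(mn)$ for $P$ uniform.

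The parts I expect to be genuine obstacles are the two structural inputs about $M$: producing a statistic that actually witnesses acyclicity of the right Cayley graph (and the analogous one underlying the mixing bound), and proving the prefix/fixed-point lemma, which is the essential Coxeter-combinatorial content and where the Exchange Condition and the parabolic subword property really enter; once these are secured, parts (1)--(4) are bookkeeping inside the framework already built in Sections~\ref{section.random walks on monoids}--\ref{rtrivial-markov-chain}.
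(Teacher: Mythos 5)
Your overall scaffolding coincides with the paper's: both proofs run the exchange walk through Theorem~\ref{theorem.eigenvalues}, the Karnofsky--Rhodes product formula (Corollary~\ref{productformulacor}/Theorem~\ref{productformula2}), and Lemma~\ref{l:statisticbound}; your two ``key facts'' --- that the idempotent attached to $J\subseteq S$ has fixed-point set in bijection with $R(w_Jw_0)$ (via $\ell(w_0)=\ell(w_J)+\ell(w_Jw_0)$), and that the mixing bound comes from the length statistic $f(\alpha)=\ell(w_0)-\ell(\alpha)$, which drops with probability at least $p$ because $w_0$ is the unique element with $D_R(w)=S$ --- are exactly the ones the paper uses. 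Part (3) and the identification $d(\alpha)=D_R([\alpha]_W)$ also match.

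The genuine gap is the structural input you yourself flag: you never establish $\RR$-triviality, nor the prefix characterization of fixed points, and the route you propose (a ``stable prefix'' statistic on the transition monoid $M\subseteq\mathcal T_{R(w_0)}$, ``refined by finer data such as the partition of $\Omega$ induced by $u$'') is not a construction --- it is unclear that any such refinement is strictly monotone along non-loop edges, and trying to force it essentially amounts to rediscovering the right object. The paper sidesteps the transition monoid entirely: it introduces the abstract monoid $R(W,S)$ (the Karnofsky--Rhodes expansion of the $0$-Hecke monoid), presented by the relations $\alpha s=\alpha$ for $\alpha$ a reduced word and $s\in D_R([\alpha]_W)$, and proves that this length-reducing rewriting system is \emph{confluent} by checking overlaps using Tits' solution to the word problem. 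From confluence everything you listed as an obstacle falls out at once: the elements of $R(W,S)$ are precisely the reduced words of $W$, the right Cayley graph is the prefix tree of reduced words (hence acyclic, hence $\RR$-trivial and Karnofsky--Rhodes, and $em=m$ iff $\mathrm{Red}(e)$ is a prefix of $\mathrm{Red}(m)$ --- your prefix/fixed-point lemma), $\Lambda(R(W,S))\cong P(S)$ via the content map so the M\"obius inversion is over the Boolean lattice with no fibre bookkeeping, and the minimal ideal is $R(w_0)$ with the generators acting by the exchange operators, giving transitivity, constants and ergodicity. Since the general theory of Sections~\ref{section.random walks on monoids}--\ref{rtrivial-markov-chain} never requires the acting monoid to be faithful, one applies it to $R(W,S)$ directly; proving $\RR$-triviality of the faithful image, as you attempt, is both harder and unnecessary. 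To close your proof you should replace the unspecified statistic by this confluence argument (or an equivalent one), after which parts (1)--(4) are, as you say, bookkeeping.
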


To prove Theorem~\ref{exchangethm} we introduce an $\mathscr R$-trivial monoid $R(W,S)$, which
is the Karnofsky--Rhodes expansion of the $0$-Hecke monoid $H(W,S)$. First we recall that notion of the $0$-Hecke
monoid. Details can be found in~\cite{Carter0Hecke,Norton,denton_hivert_schilling_thiery.2010,Denton,Fayers,DoubleCatalan}.

The \emph{$0$-Hecke monoid} $H(W,S)$ is the monoid generated by $S$ and whose defining relations are the same
commutation and braid relations as those of $W$, but the quadratic relation $s^2=1$ is replaced by $s^2=s$ for $s\in S$.
It follows from Tits' solution to the word problem for Coxeter groups that the reduced expressions for $H(W,S)$ and $W$ are
the same and that two reduced expressions are equivalent in $W$ if and only if they are equivalent in $H(W,S)$.  Thus
the elements of $W$ are in bijection with the elements of $H(W,S)$ via the map that sends $w\in W$ to the unique
element $\pi_w$ of $H(W,S)$ that has the same reduced decompositions as $w$.  Moreover, the
idempotents of $H(W,S)$ are the elements $e_J=\pi_{w_J}$ with $J\subseteq S$.  The monoid $H(W,S)$ is both
$\mathscr R$- and $\mathscr L$-trivial (and hence $\mathscr J$-trivial). Also $\Lambda(H(W,S))$ is isomorphic to
$P(S)$ (ordered by reverse inclusion) via $H(W,S)e_J\mapsto J$.  For $w\in W$ and $s\in S$, one has
$s\in D_R(w)$ if and only if $\pi_ws=\pi_w$.

We define $R(W,S)$ here directly (the reader can refer to~\cite{Elston} for the Karnosfky--Rhodes expansion and its
properties in general). Let
\[
	R=\bigcup_{w\in W}R(w)\,.
\]
Define $R(W,S)$ to be the monoid with generators $S$ and relations $\alpha s=\alpha$ whenever
$\alpha\in R$ and $s\in D_R([\alpha]_W)$ (or equivalently, whenever $[\alpha s]_{H(W,S)}=[\alpha]_{H(W,S)}$).
Notice that we have a natural surjective homomorphism $\psi\colon R(W,S)\to H(W,S)$ because $H(W,S)$ satisfies
these relations.  Consider the rewriting system $\mathcal R$ with rules $\alpha s\to \alpha$ whenever $\alpha$ is a
reduced expression for $W$ and $s\in S$ with $[\alpha s]_{H(W,S)}=[\alpha]_{H(W,S)}$.  This rewriting system is
length-reducing and defines $R(W,S)$.  We claim that it is complete.

First note that any word $\alpha\in S^*$ can be rewritten using $\mathcal R$ to a reduced expression for $W$ by
scanning from left to right and erasing right descents as they occur (this uses that in a Coxeter group $s\notin D_R(w)$
implies that $\ell(ws)=\ell(w)+1$). Also note that reduced words (in the Coxeter sense) cannot be rewritten since the left
hand side of each rule of $\mathcal R$ is not reduced for $W$ and factors of reduced words are reduced. Next, observe
that any overlap of two rules is of the form $\alpha\beta s\to \alpha\beta$ and $\beta s\gamma s'\to \beta s\gamma$ where
$s,s'\in S$, $\alpha\beta$ and
$\beta s\gamma$ in $S^*$ are reduced for $W$ and $[\alpha\beta s]_{H(W,S)}=[\alpha\beta]_{H(W,S)}$ and
$[\beta s\gamma s']_{H(W,S)}=[\beta s\gamma]_{H(W,S)}$. As observed at the beginning of this paragraph, there is a word
$\rho\in S^*$ reduced for $W$ such that $\alpha\beta\gamma\Rightarrow_{\mathcal R}^*\rho$. Also, we have
\begin{multline*}
	[\rho s']_{H(W,S)}=[\alpha\beta\gamma s']_{H(W,S)}=[\alpha\beta s\gamma s']_{H(W,S)}
	= [\alpha\beta s\gamma]_{H(W,S)}\\
	 = [\alpha\beta\gamma]_{H(W,S)}=[\rho]_{H(W,S)}
\end{multline*}
and so $\rho s'\to \rho$ belongs to $\mathcal R$.  Therefore,
\[
	\rho\mathrel{\prescript{}{\mathcal R}\Leftarrow}\rho s'\mathrel{\prescript{*}{\mathcal R}\Leftarrow}\alpha\beta\gamma
	s'\mathrel{\prescript{}{\mathcal R}\Leftarrow}
	\alpha\beta s\gamma s'\Rightarrow_{\mathcal R} \alpha\beta s\gamma\Rightarrow_{\mathcal R} \alpha\beta\gamma
	\Rightarrow_{\mathcal R}^* \rho.
\]
It follows that $\mathcal R$ is confluent.  We conclude that $R$ is the set of reduced words for $\mathcal R$ and
so we can identify $R(W,S)$ with $R$ where the product is given by concatenation followed by scanning from left
to right, removing descents.  Moreover, since all the rewriting rules of $\mathcal R$ are of the form $\alpha s=\alpha$
with $\alpha$ reduced, it follows that the right Cayley digraph of $R(W,S)$ with respect to $S$ is the prefix tree of $R$ and so $R(W,S)$ is
$\mathscr R$-trivial and Karnofsky-Rhodes. Also $\alpha s=\alpha$ if and only if $s\in D_R([\alpha]_W)$ for $\alpha\in R$.

If $s\in D_R([\alpha]_W)$, then $s$ appears in $\alpha$ by standard Coxeter theory and so both sides of each rule $\alpha s\to \alpha$
of $\mathcal A$ have the same letters.  Thus the projection $S^*\to P(S)$
(where the latter is made a monoid with union) given by $s\mapsto \{s\}$ factors through $R(W,S)$.  The same argument
as in the proof of Proposition~\ref{proposition.fhb.l_classes}  shows that $\Lambda(R(W,S))$ is isomorphic to $P(S)$ ordered
by reverse inclusion and, moreover, that $c(\alpha)$ is the set of letters in $\alpha$ and $d(\alpha) = D_R([\alpha]_W)$ under the
identification of $\Lambda(R(W,S))$ with $P(S)$. Here $c$ and $d$ are the content and descent maps from
Section~\ref{subsection.spectrum}. The minimal ideal of $R(W,S)$ is $R(w_0)$ and the action of $S$ on the left of it is
via the operators $e_s$ described above.

\begin{proof}[Proof of Theorem~\ref{exchangethm}]
With the above arguments, the proof of most of Theorem~\ref{exchangethm} is straightforward from
Corollary~\ref{productformulacor}.  The multiplicities follow by observing that if we fix
$\alpha_K\in R(w_K)$ for each $K\subseteq S$, then $\alpha_K\cdot R(w_0)$ (in $R(W,S)$) consists of
all reduced expressions of $w_0$ the form $\alpha_K\beta$ where $\beta$ is a reduced decomposition
of the shortest element of the right coset $W_Kw_0$, which is precisely $w_K^{-1}w_0=w_Kw_0$. This proves
points (1)-(3).

To prove point (4), let $\ell(\alpha)$ denote, as usual, the length of a reduced word $\alpha\in R$.  Then $\alpha$
belongs to the minimal ideal of $R(W,S)$ if and only if $\ell(\alpha)=m$ ($
=\ell(w_0)$). If $\ell(\alpha)<m$, then there is at least
one element $s\in S$ with $\alpha s$ reduced (and hence $\ell(\alpha s)=\ell(\alpha)+1)$ because $w_0$ is the
unique element $w$ of $W$ with $D_R(w)=S$.  Thus if we run the right random walk on $R(W,S)$ driven by $P$
with initial state the empty word, then the statistic $\ell$ on $R(W,S)$ starts at $0$ and increases with probability
at least $p=\min_{s\in S} P(s)$ until it reaches the value $m$, when a constant map is obtained.

Applying Lemma~\ref{l:statisticbound} with $f(\alpha)=m-\ell(\alpha)$ as the
statistic, yields a bound on the mixing time
of $\frac{2(m+c-1)}{p}$, where we require that after $k$ steps $\|P^{*k} - \pi\|_{TV}\le e^{-c}$ with $\pi$ the
stationary distribution. When all
generators in $S$ appear with uniform probability $p=1/n$, the
mixing time is $O(mn)$.
\end{proof}

Notice that the canonical projection $\psi\colon R(W,S)\to H(W,S)$ has the property that
$\alpha\in R(W,S)$ belongs to the minimal ideal if and only if $\psi(\alpha)=\pi_{w_0}$.
As $\pi_{w_0}$ is the zero element of $H(W,S)$, it follows that the probability of obtaining
a constant map for the right random walk on $R(W,S)$ driven by $P$ is the probability
of absorption into $\pi_{w_0}$ for the right random walk on the $0$-Hecke monoid $H(W,S)$ driven by $P$.
Hence the mixing time of the exchange walk for $(W,S)$ is bounded above by the absorption
time into $\pi_{w_0}$ for the right random walk on $H(W,S)$ driven by $P$ by \eqref{stationary}.

\begin{example}[Tsetlin library]
  Consider the Tsetlin library, realized as exchange walk for the
  Coxeter system $W=(\mathbb Z/2\mathbb Z)^n$ with the standard basis
  $S$. In this case $R(W,S)$ is the free left regular band on $S$ and $H(W,S)$
   is the power set of $S$ under union. The
  random walk on $H(W,S)$ driven by $P$ is exactly the coupon
  collector chain; therefore its mixing time is $O(n\log n)$. This
  shows that the upper bound (here $O(n^2)$) given by
  Theorem~\ref{exchangethm} is not always tight. This is because the
  argument does not take advantage of the fact that, at the beginning
  of the chain, the probability of collecting a good coupon is closer
  to $1$ than to $1/n$.
\end{example}

\begin{example}[Exchange walk for the symmetric group]
Note that, when $W=\mathcal S_n$ is the symmetric group, then the elements of $H_n=H(\mathcal S_n,S)$
can identified with permutations.  The action on the right of a permutation $\sigma$ of the
generator $s_i\in S$ corresponding to the transposition $(i\ i+1)$ is to fix $\sigma$ if $\sigma(i)>\sigma(i+1)$
and otherwise to send $\sigma$ to $\sigma\circ (i\ i+1)$.  Thus the right random walk on $H_n$ driven
by the uniform distribution on $S$ is the Markov chain that has initial state the identity permutation and
at each step of the chain picks uniformly randomly a position $1\leq i\leq n-1$ of the permutation and
swaps positions $i,i+1$ if they are in order, and otherwise does nothing. This Markov chain absorbs
into the permutation in which all pairs of positions are inverted. The absorption time for this discrete time analogue of the oriented
swap process studied in~\cite{AHR09} was given in~\cite[Theorem~1.4]{BBHM06} to be $O(n^2)$ (where $p=1$
in the setting of~\cite{BBHM06}).  This then translates to an $O(n^2)$ bound on the mixing time for the exchange walk on
the symmetric group $\mathcal S_n$, which is better than $O(n^3)$ provided by our Theorem~\ref{exchangethm}.
\footnote{We thank Zachary Hamaker for pointing out the relation of our chain to \cite{AHR09,BBHM06}.}
\end{example}

\begin{theorem}
The mixing time for the exchange walk on $\mathcal S_n$ is $O(n^2)$.
\end{theorem}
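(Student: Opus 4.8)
The plan is to bypass the generic $O(mn)=O(n^3)$ bound coming from Theorem~\ref{exchangethm}(4) by exploiting the explicit description of the right random walk on the $0$-Hecke monoid $H_n=H(\mathcal S_n,S)$ together with a known quadratic bound for the associated sorting process. Fix $P$ to be the uniform distribution on $S=\{s_1,\dots,s_{n-1}\}$. Recall from the discussion following the proof of Theorem~\ref{exchangethm} that, via the canonical homomorphism $\psi\colon R(\mathcal S_n,S)\to H_n$, an element $\alpha$ of the minimal ideal $\wh 0=R(w_0)$ of $R(\mathcal S_n,S)$ is characterized by $\psi(\alpha)=\pi_{w_0}$; since every state of the right random walk on $R(\mathcal S_n,S)$ is a reduced word and $\psi$ restricts to the identity on $S$, the absorption time $\tau$ of that walk into $\wh 0$ coincides, under the obvious coupling, with the absorption time into the zero $\pi_{w_0}$ of the right random walk on $H_n$ driven by $P$. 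By equation~\eqref{stationary} (equivalently, by Corollary~\ref{t:couplingfromthepast} together with $\wh 0\subseteq I$), for every initial distribution $\nu$ on $R(w_0)$ one has $\|T^k\nu-\pi\|_{TV}\le \mathrm{Pr}\{\tau>k\}$, and by Corollary~\ref{adaptedwalkmarkovbound} also $\|T^k\nu-\pi\|_{TV}\le \frac{1}{k+1}E[\tau]$. Hence it suffices to prove $E[\tau]=O(n^2)$.

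The second step is to recognize $\tau$. Writing permutations in one-line notation, the right random walk on $H_n$ driven by the uniform $P$ starts at the identity permutation and, at each step, picks a uniformly random position $i\in\{1,\dots,n-1\}$ and transposes the entries in positions $i$ and $i+1$ when they are in increasing order, doing nothing otherwise; it is absorbed exactly when the reverse permutation $w_0=n\,(n-1)\cdots 1$ is produced, since $w_0$ is the unique permutation with full right descent set. This is precisely the discrete-time, fully biased ($p=1$) instance of the biased adjacent-transposition shuffle of \cite{BBHM06} and of the oriented swap process of \cite{AHR09}. By \cite[Theorem~1.4]{BBHM06}, the number of steps this process needs to reach $w_0$ is $O(n^2)$; in particular $E[\tau]=O(n^2)$. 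Plugging this into the bound of the previous paragraph gives $d(k)=\sup_{\nu}\|T^k\nu-\pi\|_{TV}\le \tfrac14$ once $k$ is a sufficiently large multiple of $n^2$, that is, $t_{mix}(1/4)=O(n^2)$ for the exchange walk on $\mathcal S_n$, which is the claim.

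The only point genuinely requiring care is the bridge between the monoid-theoretic setup and the probabilistic literature: one must verify that the process arising as the right random walk on $H_n$ is literally the $p=1$ specialization of the chain analyzed in \cite{BBHM06} — in particular that ``leave a non-descent pair untouched'' matches their dynamics, and that their Theorem~1.4 supplies an honest $O(n^2)$ upper bound on the sorting time (equivalently, the absorption time of the associated exclusion/swap process) rather than only an asymptotic statement in a weaker sense. Since Corollary~\ref{adaptedwalkmarkovbound} needs only the first-moment bound $E[\tau]=O(n^2)$, no concentration input beyond what \cite{BBHM06} provides is required; and the coupling identifying $\tau$ on $R(\mathcal S_n,S)$ with $\tau$ on $H_n$ is immediate from the two facts recalled above (reduced-word states and $\psi|_S=\mathrm{id}$), so there is no real obstacle beyond this bookkeeping.
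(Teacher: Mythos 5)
Your proposal is correct and follows essentially the same route as the paper: identify absorption into the minimal ideal of $R(\mathcal S_n,S)$ with absorption into the zero $\pi_{w_0}$ of the $0$-Hecke monoid walk via $\psi$, recognize that walk as the $p=1$ adjacent-transposition sorting process of \cite{BBHM06} (the discrete-time oriented swap process of \cite{AHR09}), invoke \cite[Theorem~1.4]{BBHM06} for the $O(n^2)$ absorption time, and conclude via \eqref{stationary}. The only cosmetic difference is that you route the final step through $E[\tau]$ and Corollary~\ref{adaptedwalkmarkovbound}, while the paper applies \eqref{stationary} directly; both are valid given the cited bound.
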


%%%%%%%%%%%%%%%%%%%%%%%%%%%%%%%%%%%%%%%%%%%%%%%%%%%%%%%%%%%%
\bibliographystyle{alpha}
\bibliography{domwals}
%%%%%%%%%%%%%%%%%%%%%%%%%%%%%%%%%%%%%%%%%%%%%%%%%%%%%%%%%%%
\end{document}